\definecolor{webgreen}{rgb}{0,.5,0}
\definecolor{webbrown}{rgb}{.6,0,0}
\DeclareSymbolFont{cyrletters}{OT2}{wncyss}{m}{n}
\DeclareMathSymbol{\sha}{\mathalpha}{cyrletters}{"58}
\patchcmd{\subsection}{-.5em}{.5em}{}{}
\patchcmd{\subsection}{.5\linespacing}{1.25\linespacing}{}{}
\patchcmd{\section}{.7\linespacing}{1.5\linespacing}{}{}
\patchcmd{\@startsection}{\@afterindenttrue}{\@afterindentfalse}{}{}
\newtheorem{thm}{Theorem} 
\newtheorem*{thm*}{Theorem}
\newtheorem{prop}{Proposition}[section]
\newtheorem{lem}[prop]{Lemma}
\newtheorem{lemma}[prop]{Lemma}
\newtheorem{cor}[prop]{Corollary}
\newtheorem{corollary}[prop]{Corollary}
\newtheorem{alsotheorem}[prop]{Theorem}
\newtheorem{obs}[prop]{Observation}
\newtheorem*{obs*}{Observation}
\newtheorem{ex}{Example}
\theoremstyle{definition}
\newtheorem{definition}[prop]{Definition}
\newtheorem{rem}[prop]{Remark}
\theoremstyle{remark}
\newtheorem*{rmk*}{Remark}
\newtheorem*{ex*}{Example}
\newcommand{\R}{\mathbb{R}}
\newcommand{\A}{M}
\newcommand{\del}{\partial}
\newcommand{\sh}{\sha}
\newcommand{\grad}{\nabla}
\newcommand{\mergeset}{\mathcal{M}}
\newcommand{\mergecnt}{\mu}
\newcommand{\TA}{{\text{\bfseries\ttfamily a}}}
\newcommand{\TB}{{\text{\bfseries\ttfamily b}}}
\newcommand{\TC}{{\text{\bfseries\ttfamily c}}}
\newcommand{\TD}{{\text{\bfseries\ttfamily d}}}
\newcommand{\TI}{{\text{\bfseries\ttfamily i}}}
\newcommand{\TJ}{{\text{\bfseries\ttfamily j}}}
\newcommand{\TS}{{\text{\bfseries\ttfamily s}}}
\newcommand{\TT}{{\text{\bfseries\ttfamily t}}}
\newcommand{\TX}{{\text{\bfseries\ttfamily x}}}
\newcommand{\TY}{{\text{\bfseries\ttfamily y}}}
\newcommand{\TZ}{{\text{\bfseries\ttfamily z}}}
\newcommand{\To}{{\text{\bfseries\ttfamily 0}}}
\newcommand{\Ti}{{\text{\bfseries\ttfamily 1}}}
\newcommand{\Tii}{{\text{\bfseries\ttfamily 2}}}
\newcommand{\Tiii}{{\text{\bfseries\ttfamily 3}}}
\newcommand{\THH}{{\text{\bfseries\ttfamily H}}}
\newcommand{\TTT}{{\text{\bfseries\ttfamily T}}}
\newcommand{\TEE}{{\text{\bfseries\ttfamily E}}}
\newcommand{\TNN}{{\text{\bfseries\ttfamily N}}}
\newcommand{\TWW}{{\text{\bfseries\ttfamily W}}}
\newcommand{\TSS}{{\text{\bfseries\ttfamily S}}}
\newcommand{\Mp}{\mathcal{M}}
\newcommand{\Np}{\mathcal{N}}
\newcommand{\shp}{\mathcal{L}}
\newcommand{\RTR}{\mathcal{R}}
\newcommand{\proj}{\mathcal{P}}
\newcommand{\npt}{\texorpdfstring{\spacefactor 1007}{}}
\newcommand{\Exp}{\operatorname{\mathbb{E}}}
\newcommand{\PR}{\operatorname{\mathbb{P}}}
\newcommand{\Var}{\operatorname{V}}
\newcommand{\Cov}{\operatorname{Cov}} 
\newcommand{\word}{\operatorname{word}} 
\newcommand{\kpa}{\boldsymbol{\kappa}}
\newcommand{\lmda}{\boldsymbol{\lambda}}
\DeclareMathOperator{\rank}{rank}
\DeclareMathOperator{\stab}{stab}
\DeclareMathOperator{\SPAN}{span}
\subjclass[2020]{Primary 60C05, 60Fxx; Secondary 62Gxx}
\title{Spectral Analysis of Word Statistics}
\author{Chaim Even-Zohar}
\address{
Department of Mathematics, Technion \\[0.5em]
Haifa 3200003, Israel \\[0.5em]
\texttt{chaime@technion.ac.il}}
\author{Tsviqa Lakrec}
\address{     
Section de math\'ematiques, Universit\'e de Gen\`eve
\\[0.5em]
UNI DUFOUR
24, rue du G\'en\'eral Dufour
Case postale 64
1211 Gen\`eve 4, Suisse
\\[0.5em]
\texttt{tsviqa@gmail.com}}
\author{Ran J. Tessler}
\address{
Department of Mathematics, Weizmann Institute of Science\\[0.5em]
POB 26, Rehovot 7610001, Israel \\[0.5em]
\texttt{ran.tessler@weizmann.ac.il }}
\date{\today}
\begin{document}

\begin{abstract}
\vspace{2em}

Given a random text over a finite alphabet, we study the frequencies at which fixed-length words occur as subsequences. As the data size grows, the joint distribution of word counts exhibits a rich asymptotic structure. We investigate all linear combinations of subword statistics, and fully characterize their different orders of magnitude using diverse algebraic tools.

Moreover, we establish the spectral decomposition of the space of word statistics of each order. We provide explicit formulas for the eigenvectors and eigenvalues of the covariance matrix of the multivariate distribution of these statistics. Our techniques include and elaborate on a set of algebraic word operators, previously studied and employed by Dieker and Saliola (Adv~Math, 2018).

Subword counts find applications in Combinatorics, Statistics, and Computer Science. We revisit special cases from the combinatorial literature, such as intransitive dice, random core partitions, and questions on random walk. Our structural approach describes in a unified framework several classical statistical tests. We propose further potential applications to data analysis and machine learning.
\vspace{1em}
\end{abstract}

\maketitle

\section{Introduction}

\subsection{Word Statistics}

Sequences over a finite alphabet are ubiquitous in pure and applied mathematics and lie at the core of many probabilistic models. They may represent steps of a random walk, words of group generators, discrete-valued time series, DNA segments, or output of pseudorandom generators, to mention a few examples. In the analysis of such sequences, one often considers various numerical \emph{statistics}, in order to capture the main features of the data, extract meaningful information, apply further processing, or make informed decisions. It is therefore important to examine general families of such statistics and thoroughly understand their expected behavior.

\smallskip
\emph{Subword counts} give rise to a broad family of word statistics, which this work investigates. Given a finite alphabet $\Sigma = \{\TA,\TB,\TC,\dots\}$, a pattern $u \in \Sigma^k$, and a longer text $w \in \Sigma^n$, we consider $\#u(w)$, the number of occurrences of~$u$ as a subsequence of~$w$. The copies of~$u$ that we count do not have to appear consecutively in the text, nor to be disjoint. For example \mbox{$\#\texttt{fee}(\texttt{referee}) = 3$}. Many well-studied word statistics are special cases of these counts, or finite linear combinations of them.

\smallskip
\emph{Randomized models} provide a natural setting for investigating words and their statistics. They help us analyze these fundamental objects via typical instances, and guide us in developing relevant tools for applications. Here are two basic models for a random word~$w \in \Sigma^n$, that appear naturally in various contexts and applications. 
\begin{itemize}[leftmargin=15pt]
\setlength{\itemsep}{0.5em}
\item 
\textbf{One-Sample: $\mathcal{W}(n,\mathbf{p})$} where $\mathbf{p}=(p_\TA, p_\TB, \dots) \in (0,1)^{|\Sigma|}$ and $\sum_{\TX} p_\TX = 1$ \\[0.25em] 
The letters of~$w$ are independent, and every letter $w_i=\TX$ with probability~$p_\TX$. 
\item
\textbf{Multi-Sample: $\mathcal{W}'(\mathbf{n})$} where $\mathbf{n}=(n_\TA, n_\TB,\dots) \in \mathbb{N}^{|\Sigma|}$ and $\sum_\TX n_\TX = n$ \\[0.25em] 
Every word~$w$ with exactly $n_\TX=\#\TX(w)$ for every~$\TX$, is equally likely.
\end{itemize}
The word models $\mathcal{W}$ and $\mathcal{W}'$ parallel the two best-studied random graph models on $n$ labeled vertices. For graphs, $\mathcal{G}(n,p)$ selects every edge with probability~$p$ independently, and $\mathcal{G}'(n,m)$ selects exactly $m$ edges uniformly from all possible ways~\cite{janson2011random}. While the two kinds of models share many asymptotic properties, they differ in some important aspects, especially regarding subgraph counts, and in our models -- subword counts.

\subsection{Spaces of Subword Counts}

We start with a general presentation of our approach to subword statistics. Some new results and special cases will be mentioned, but full formal statements are deferred to the subsequent~\S\ref{results}. 

Let $k \in \mathbb{N}$, and consider the random variables $\#u$, for all $k$-letter words $u \in \Sigma^k$. For the sake of this general discussion, the distribution of the underlying $w \in \Sigma^n$ may be either $\mathcal{W}(n,\mathbf{p})$ or~$\mathcal{W}'(\mathbf{n})$. In the latter model, we let $n_\TX = p_\TX n$ and the same general statements apply up to minor changes. 

How is the subword count $\#u$ distributed as the text size $n$ grows? By summation over all $\tbinom{n}{k}$ potential occurrences, one can see that the expected value and variance are
$$ \Exp\left[\#u\right] \;=\; \frac{p_{u_1}\!\cdots p_{u_k}}{k!}\, n^k \pm O(n^{k-1})\,, \;\;\;\;\;\;\;\; \Var\left[\#u\right] \;=\; O\left(n^{2k-1}\right) $$

It follows that the vector of \emph{subword frequencies}, $\#u/\tbinom{n}{k}$ for $u \in \Sigma^k$, satisfies a law of large numbers: 
$$ \mathbf{X}_k \;:=\; \left\{\tfrac{\displaystyle \#u}{\tbinom{n}{k}}\right\}_{u \in \Sigma^k} \;\;\;\xrightarrow[\text{\;in probability\;}]{n \to \infty} \;\;\; \Exp\left[\mathbf{X}_k\right] \;=\; \mathbf{p}^{\otimes k} $$

It is then natural to study interactions between different subword counts. In general, there is a nonzero correlation between $\#u$ and $\#v$, even in the limit as $n \to \infty$. These correlations are encoded in the following $|\Sigma|^k$-dimensional central limit theorem, as we will see later on.
$$ \sqrt{n}\left(\mathbf{X}_k - \Exp\left[\mathbf{X}_k\right]\right) \;\;\;\xrightarrow[\text{\;in distribution\;}]{n \to \infty} \;\;\; \mathcal{N}\left(\mathbf{0},\, \lim_{n \to \infty} n  \Cov\left[\mathbf{X}_k\right] \right) $$

However, the multivariate Gaussian limit reveals only a small part of the asymptotic picture. It turns out that the rank of the limiting covariance matrix is much lower than~$|\Sigma|^k$, so that the limit law is supported on a low-dimensional subspace. In terms of linear combinations of the form $\sum_u f_u\, \#u$ with $f_u \in \mathbb{R}$, many of those are significantly more concentrated than their individual constituents, and should be scaled differently. 

Let $\mathbb{R}\Sigma^k$ denote the space of formal linear combinations of $k$-letter words over~$\Sigma$. Every $f = \sum_u f_u u \in \mathbb{R}\Sigma^k$ defines a scalar random variable $\#f$ by linearity. One desirable goal is to find the typical order of magnitude of all~$\#f$. The first step in our approach is \emph{grading} the space of all subword combinations. This grading provides an orthogonal decomposition $\mathbb{R}\Sigma^k = \bigoplus_{r} V_r$ such that $\Exp[(\#f/n^k)^2] = \Theta(1/n^r)$ for every nonzero $f \in V_r$.

The next goal is to analyze the random variables within each component, that is, $n^{r/2} \mathbf{X}_k$ projected onto~$V_r$. The spaces of word statistics in our models come with natural inner product structures. The most fundamental and most practical objective is a basis of statistics that diagonalizes the covariance matrix of this multivariate distribution, in the spirit of principal component analysis, PCA. Thus, the second step is a \emph{spectral} decomposition of each component~$V_r$. 

Having a full explicit decomposition of this form, the precise leading term of the variance $\Var[\#f]$ can be readily obtained for any feature $f$ which is a scalar projection of~$\mathbf{X}_k$. It lets one identify and compare the various ``modes'' of the joint distribution, which reveals much of its structure. 

Our main contribution is the implementation of this plan. We provide gradings of word statistics by scale, and diagonalizations by second moments, as stated below in~\S\ref{results}. These are demonstrated on diverse examples in~\S\ref{examples}. Several previously-studied word statistics naturally arise as special cases, including some of order smaller than $1/\sqrt{n}$. Also, new families of word statistics constructed this way seem to be meaningful and useful. 

The analysis of multivariate statistical features of ordered or sequential data is a direct practical application of our work. Linear decompositions of data on combinatorial structures have been studied since the seminal monograph by Diaconis \cite[\S8]{diaconis1988group}, which introduced the use of algebraic tools such as representations of the symmetric group. However, the crucial issue of choosing bases for components has mostly been left arbitrary, depending on matters of convenience, or ad~hoc interpretations. Our proposed approach, which turns to the second moment structure of typical data distributions, aims to provide a systematic treatment that seems very natural from a practical perspective. In fact, the random word models we use make it particularly well-suited for extracting features in the high-noise regime.

\subsection{Main Results}
\label{results}

We now present the scaling decompositions and the spectral decompositions of the subword statistics of random words. The one-sample model $\mathcal{W}(n,\mathbf{p})$, where the letters are independent, is treated in Theorems \ref{main1}~and~\ref{main2}. Theorems \ref{main3}~and~\ref{main4} concern the more involved setting of the multisample model $\mathcal{W}'(\mathbf{n})$, with randomly ordered letters. All the components can be obtained by straightforward elementary computations, using Gaussian elimination and combinatorial manipulations on words. The details of the constructions are deferred to~\S\ref{defs}. 

\medskip

Let $w$ be a random word in the model $\mathcal{W}(n, \mathbf{p})$. Recall that $\Sigma = \{\TA,\TB,\cdots\}$ is a finite alphabet, so that $d := |\Sigma| \geq 2$, and the characters of $w$ are independent and distributed with $\mathbf{p} = (p_\TA, p_\TB, \cdots) \in \mathbb{R}^d$. We count the subwords $u \in \Sigma^k$ occurring in $w \in \Sigma^n$, and study the normalized statistic,
$$ \bar\#u(w) \;:=\; \frac{\#u(w)}{\tbinom{n}{k}} \;\in\; [0,1]$$
Moreover, we study all linear combinations of the random variables $\bar\#u$ for $u \in \Sigma^k$. Every formal sum $f = \sum_u f_u u$ in the $d^k$-dimensional space
$$ W_k \;:=\; \mathbb{R}\Sigma^k $$
defines such statistics $\#f$ and $\bar\#f$ by linearity. 

Working with a single length $k \in \mathbb{N}$ is not a real restriction. Indeed, Proposition~\ref{gradingw} gives compatible linear embeddings $W_k \hookrightarrow W_{k+1}$. Therefore, every $W_k$ contains all $W_{j}$ for $j < k$. The space of all subword statistics is thus denoted
$$ W \;:=\; \bigcup_{k\in\mathbb{N}}\, W_k $$ 

In order to establish the scaling of $\#f$ for every $f \in W$, we study the structure of every~$W_k$. Definition~\ref{wkr} introduces a grading on the spaces $W_k$, which will yield a well-defined grading on $W$. Every space $W_k$ decomposes into $k+1$ subspaces, denoted as follows.
\begin{align*}
& W_k \;=\; W_{k0} \,\oplus\, W_{k1} \,\oplus\, \dots \,\oplus\, W_{kk} \\ 
& \dim W_{kr} \;=\; \tbinom{k}{r}(d-1)^r 
\end{align*}
This primary decomposition depends on the probability vector $\mathbf{p}$. The following theorem asserts that it determines the order of magnitude in $n$ of any statistic in $W_k$, and the different components are uncorrelated. 

\begin{thm}
[Grading under $\mathcal{W}(n,\mathbf{p})$]
\label{thm:main_grading_variance}
\label{main1}
$ $ \\[0.25em]
Let $k \in \mathbb{N}$ and $r \in \{0,1,\dots,k\}$. For every nonzero statistic $f \in W_{kr}$ there exists $C_{f,\mathbf{p}} > 0$ such that
$$ n^r \, \Exp_{\,\mathcal{W} (n,\mathbf{p})}\left[\left(\bar\#f\right)^2\right] \; \xrightarrow[\;n \to \infty\;]{}\; C_{f,\mathbf{p}}. $$
Moreover, for every $r' \neq r$ and $f' \in W_{kr'}$, \;$\Exp_{\,\mathcal{W}(n,\mathbf{p})}\left[\,\bar\#f\;\bar\#f'\,\right] \, = \, 0$.
\end{thm}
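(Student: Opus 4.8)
My plan is to treat $\#f$ as a $U$-statistic of degree $k$ and to run the whole argument through the Hoeffding-type decomposition underlying Definition~\ref{wkr}. Writing $w = w_1\cdots w_n$, I would set $\#f(w) = \sum_{1 \le i_1 < \cdots < i_k \le n} h_f(w_{i_1},\dots,w_{i_k})$ with kernel $h_f(x_1,\dots,x_k) := f_{x_1\cdots x_k}$, and use the elementary single-word identity $\Exp_{\mathcal W(n,\mathbf p)}[\#z] = \tbinom{n}{|z|}\prod_j p_{z_j}$. The feature of Definition~\ref{wkr} I would rely on is that $f \in W_{kr}$ precisely when $h_f = \sum_{|T| = r} h_f^{(T)}$, a sum over $r$-subsets $T \subseteq \{1,\dots,k\}$ of functions $h_f^{(T)}$ depending only on the coordinates in $T$ and satisfying $\sum_{x \in \Sigma} p_x\, h_f^{(T)}(\dots,x,\dots) = 0$ in each coordinate of $T$ separately --- i.e. the degree-$r$ part of the Hoeffding decomposition of $h_f$ with respect to $\mathbf p$; the count $\tbinom{k}{r}(d-1)^r$ matches.

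The engine is a degeneracy lemma: for index sets $I, J \subseteq \{1,\dots,n\}$ and internal position sets $T, T'$, the expectation $\Exp[h_f^{(T)}(w_{I\downarrow T})\, h_{f'}^{(T')}(w_{J\downarrow T'})]$ --- where $I\downarrow T$ is the subset of $I$ at internal positions $T$ --- vanishes unless $I\downarrow T = J\downarrow T'$ as subsets of $\{1,\dots,n\}$: conditioning on all letters outside a position that lies in one set but not the other kills the factor attached to it, by degeneracy. Expanding $\Exp[\#f\,\#f'] = \sum_{I,J}\Exp[h_f(w_I) h_{f'}(w_J)]$ and applying the lemma, the surviving terms force $|I\downarrow T| = |J\downarrow T'|$, that is $r = r'$; so for $r \ne r'$ the sum is $0$ for every $n$, which is the \emph{moreover} clause, exactly. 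For $f' = f$ (so $r' = r$) the surviving terms are indexed by the common $r$-subset $R := I\downarrow T = J\downarrow T'$ together with completions $I, J \supseteq R$ and the sets $T, T'$; each contributes the $L^2(\mathbf p^{\otimes r})$ inner product $\langle h_f^{(T)}, h_f^{(T')}\rangle$ (after relabelling $T$ and $T'$ order-preservingly as $\{1,\dots,r\}$), which depends on $(T,T')$ only, times the number of completions of $R$ realizing internal positions $T$ (resp.\ $T'$). That number is $\prod_{i=0}^r \tbinom{g_i}{a_i(T)}$, where $g_0,\dots,g_r$ are the $r{+}1$ gap sizes of $R$ inside $\{1,\dots,n\}$ and $(a_0(T),\dots,a_r(T))$ is the composition of $k - r$ recording how many added positions fall in each gap. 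Summing over $R$ is a convolution of products of binomials, whose generating function has a pole at $1$ of order exactly $\sum_i (a_i(T) + a_i(T') + 1) = 2k - r + 1$; hence $\Exp[(\#f)^2]$ is a polynomial in $n$ of degree $2k - r$, and dividing by $\tbinom{n}{k}^2 \sim n^{2k}/(k!)^2$ yields
\[
n^r\,\Exp_{\mathcal W(n,\mathbf p)}\!\big[(\bar\#f)^2\big]\ \longrightarrow\ C_{f,\mathbf p} = \frac{(k!)^2}{(2k-r)!}\sum_{T,T'}\Big(\prod_{i=0}^r \tbinom{a_i(T) + a_i(T')}{a_i(T)}\Big)\,\big\langle h_f^{(T)},\, h_f^{(T')}\big\rangle_{\mathbf p^{\otimes r}} .
\]

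The one genuine point is that $C_{f,\mathbf p} > 0$ for $f \ne 0$. Since the inner product is positive definite on $r$-fold degenerate functions, it suffices to show that the weight matrix $C = (c_{T,T'})$, with $c_{T,T'} = \prod_{i=0}^r \tbinom{a_i(T) + a_i(T')}{a_i(T)}$, indexed by the $r$-subsets $T$ of $\{1,\dots,k\}$ --- equivalently by the compositions $a(T)$ of $k - r$ into $r{+}1$ parts --- is positive definite: then $C_{f,\mathbf p}$ is a sum of copies of this form applied to the coordinates of the $h_f^{(T)}$ in an orthonormal basis, vanishing only when all $h_f^{(T)} = 0$, i.e.\ $f = 0$. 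For positive definiteness of $C$ I would use $\tbinom{a+b}{a} = \big\langle t^a/a!,\, t^b/b!\big\rangle$ for the inner product $\langle p, q\rangle = \int_0^\infty p(t) q(t) e^{-t}\,dt$: thus $\big(\tbinom{a+b}{a}\big)_{a,b\ge0}$ is the Gram matrix of the linearly independent functions $t^a/a!$, and $c_{T,T'}$ is a product over the $r{+}1$ gaps of such entries, i.e.\ the $(T,T')$ entry of the Gram matrix of the tensors $\bigotimes_{i=0}^r t^{a_i(T)}/a_i(T)!$. Distinct $T$ give distinct compositions, hence linearly independent tensors, so this Gram matrix --- and with it $C$ --- is positive definite.

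The main obstacle is exactly the previous paragraph: recognizing the limiting constant as a quadratic form in the Hoeffding components whose weight matrix is a manifestly positive-definite Gram matrix. The remainder --- expanding the $U$-statistic, using degeneracy to isolate the surviving pair-products, and reading off the leading term of a convolution of binomial coefficients --- is of the ``straightforward elementary'' kind anticipated in \S\ref{results}. (As a consistency check, the same bookkeeping can be run inside the shuffle algebra via $\Exp[\#f\,\#g] = \Exp[\#(f \uparrow g)]$ for the infiltration product $\uparrow$, the top-length component being the shuffle $f \sha g$; it is the Hoeffding form that makes both the orthogonality and the positivity transparent.)
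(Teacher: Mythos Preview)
Your proposal is correct and follows essentially the same route as the paper's proof in \S\ref{proof1}: both recognise $W_{kr}$ as the degree-$r$ layer of a Hoeffding decomposition with respect to $\mathbf p$, both expand $\Exp[\#f\,\#f']$ over index pairs and use the mean-zero property to kill all cross terms (giving exact orthogonality for $r\neq r'$ at every $n$), and both identify the leading coefficient as a quadratic form in the Hoeffding components with weight matrix $\bigl(\prod_i \tbinom{a_i(T)+a_i(T')}{a_i(T)}\bigr)_{T,T'}$. Your matrix $C$ is precisely the paper's $M_{kr}$ of Definition~\ref{mkr}, via Lemma~\ref{mkrmkrmkr}(1).

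The one genuine difference is the positivity argument for this matrix. The paper obtains it from the Gram form $M_{kr}(f,f')=\langle \mathcal D_{k-r}f,\mathcal D_{k-r}f'\rangle$ of Lemma~\ref{mkrmkrmkr}(2) (via Vandermonde), or alternatively from the simplex integral of Remark~\ref{JN91}. Your argument --- writing $\tbinom{a+b}{a}=\int_0^\infty (t^a/a!)(t^b/b!)e^{-t}\,dt$ and hence realising $C$ as the Gram matrix of the linearly independent tensors $\bigotimes_i t_i^{a_i(T)}/a_i(T)!$ in $L^2\bigl((\mathbb R_{\ge 0})^{r+1},\,e^{-\sum t_i}dt\bigr)$ --- is a third, self-contained route that bypasses the word-operator machinery entirely. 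It is arguably the cleanest way to get strict positivity, though unlike the paper's operator approach it does not by itself deliver the eigenvalues of $M_{kr}$ needed for Theorem~\ref{main2}.
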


\begin{rmk*}
This decomposition also has the property that $W_{k0}, \dots, W_{kk}$ are pairwise orthogonal. Here we work with an inner product on $W_k$, naturally induced from the measure $\mathcal{W}(k,\mathbf{p})$, and denoted~$\langle f,f'\rangle_{\mathbf{p}}$, see Definition~\ref{inner}.
\end{rmk*}

We further refine each component $W_{kr}$ into $k-r+1$ orthogonal subspaces. For every $k \geq r \geq 1$, the following decomposition is given in Definition~\ref{wkrm}: 
\begin{align*}
& W_{kr} \;=\; W_{kr0} \,\oplus\, W_{kr1} \,\oplus\, \dots \,\oplus\, W_{kr(k-r)} \\ 
& \dim W_{krm} \;=\; \tbinom{r+m-1}{m}(d-1)^r
\end{align*}
This secondary decomposition yields a full asymptotic diagonalization of the covariance of~$W_k$, as follows.

\begin{thm}[Spectrum under $\mathcal{W}(n,\mathbf{p})$] $ $ \\[0.25em]
\label{main2}
Let $k \in \mathbb{N}$, $r \in \{1,\dots,k\}$, and $m,m' \in \{0,\dots,k-r\}$. For every $f \in W_{krm}$ and $f' \in W_{krm'}$ 
$$ \Exp_{\,\mathcal{W}(n,\mathbf{p})}\left[\left(n^{r/2}\,\bar\#f\right)\left(n^{r/2}\,\bar\#f'\right)\right] \;\; \xrightarrow[\;n \to \infty\;]{}\;\; \frac{(k!)^2 \, \left\langle f,f' \right\rangle_{\mathbf{p}}}{(k+m)!(k-r-m)!} $$
In particular, if $m'\neq m$ then this limit is 
$\left\langle f,f' \right\rangle_{\mathbf{p}} = 0$.
\end{thm}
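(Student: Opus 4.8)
The plan is to reduce the limit of the (rescaled) second moments $\Exp[(n^{r/2}\bar\#f)(n^{r/2}\bar\#f')]$ to a purely algebraic computation on $W_k$, and then to identify the resulting bilinear form with the stated multiple of $\langle\,\cdot\,,\,\cdot\,\rangle_{\mathbf p}$. By Theorem~\ref{main1} we already know that, for $f \in W_{kr}$, $n^r\Exp[(\bar\#f)^2]$ converges to a positive constant, so the quadratic form $Q_n(f,f') := n^r\Exp[\bar\#f\,\bar\#f']$ on $W_{kr}$ converges, as $n\to\infty$, to some symmetric positive-definite bilinear form $Q_\infty$ on $W_{kr}$; polarization handles the off-diagonal $f \neq f'$ case. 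So the real content is: (i) compute $Q_\infty$ explicitly via the combinatorics of subword occurrences, and (ii) show that on the piece $W_{krm}\times W_{krm'}$ it equals $\tfrac{(k!)^2}{(k+m)!(k-r-m)!}\langle f,f'\rangle_{\mathbf p}$, which in particular forces $Q_\infty$ to vanish across distinct $m,m'$ — i.e.\ the secondary decomposition is $Q_\infty$-orthogonal and hence diagonalizes the limiting covariance.

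First I would write $\#f(w)\#f'(w) = \sum_{S,S'} \mathbbm{1}[w|_S \text{ matches } f]\cdot\mathbbm{1}[w|_{S'}\text{ matches }f']$ over $k$-subsets $S,S'$ of $[n]$, and take expectations under $\mathcal W(n,\mathbf p)$. Because the letters are independent, the expectation of a product term depends only on the ``merge pattern'' of the pair $(S,S')$: how the two increasing $k$-tuples interleave and which of their positions coincide. Grouping by the number $j = |S\cap S'|$ of shared positions and by the order type of the merge, the number of $(S,S')$ with a given pattern is $\Theta(n^{2k-j})$, so after dividing by $\binom{n}{k}^2 = \Theta(n^{2k})$ and multiplying by $n^r$, only the terms with $j = r$ survive in the limit (terms with $j<r$ are killed because the corresponding ``contracted'' pairing annihilates $W_{kr}$ — this is exactly what the grading in Theorem~\ref{main1} encodes — and terms with $j>r$ are lower order). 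Thus $Q_\infty(f,f')$ is a finite sum, over merge patterns with exactly $r$ coincidences, of bilinear expressions $\sum \langle \text{(a word operator applied to }f), \text{(a word operator applied to }f')\rangle_{\mathbf p}$, with explicit rational coefficients coming from the $\Theta(n^{2k-r})$ lattice-point counts (these are the ``volumes'' of the simplices parametrizing the free coordinates, giving the $\tfrac{1}{(k+m)!(k-r-m)!}$ denominators after tracking how $m$ records the split of the $2(k-r)$ non-shared positions on the two sides).

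Next I would recognize the operators that appear: the coincidence-$r$ merges are built from the shuffle and the ``deconcatenation''/contraction maps on words that the paper attributes to Dieker–Saliola, and $W_{krm}$ should be characterized (via Definition~\ref{wkrm}, which I am assuming) precisely as a common eigenspace for the relevant self-adjoint combination of these operators, with the eigenvalue on $W_{krm}$ being $\tfrac{(k!)^2}{(k+m)!(k-r-m)!}$. Concretely, I expect $Q_\infty$ to be expressible as $\langle f, \mathcal K f'\rangle_{\mathbf p}$ for an explicit operator $\mathcal K$ on $W_{kr}$ that is a polynomial in one ``number operator'' measuring $m$; diagonality across $m\neq m'$ then follows because $\mathcal K$ acts as a scalar on each $W_{krm}$ and the $W_{krm}$ are $\langle\cdot,\cdot\rangle_{\mathbf p}$-orthogonal. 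The scalar can be pinned down either by a direct generating-function evaluation of the simplex volumes, or — more cheaply — by testing on one convenient vector in each $W_{krm}$ (e.g.\ a vector supported on a single coset of the alphabet-collapsing subgroup), where both sides are elementary to evaluate and matching them determines the constant for all of $W_{krm}$ by irreducibility/transitivity of the symmetric-group action on that component.

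The main obstacle, I expect, is the bookkeeping in step two: organizing the sum over merge patterns with exactly $r$ coincidences so that the $n\to\infty$ limit visibly collapses onto the operators defining $W_{krm}$, and proving that the cross-terms between different $m$ really do cancel rather than merely being absent in some convenient basis. Establishing that $\mathcal K$ is diagonal on the secondary decomposition — equivalently, that the limiting covariance form respects the $m$-grading — is the crux; once that is in hand, computing the eigenvalue $\tfrac{(k!)^2}{(k+m)!(k-r-m)!}$ is a self-contained (if slightly fiddly) evaluation of a ratio of factorials coming from volumes of order polytopes, and the reduction of the probabilistic statement to this algebraic one is routine given Theorem~\ref{main1}.
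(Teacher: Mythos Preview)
Your broad outline is the paper's: expand $\#f\,\#f'$ over pairs of $k$-subsets, group by the merge pattern, keep only the overlap-$r$ contribution, and recognize the limiting bilinear form as $\langle f,\mathcal K f'\rangle_{\mathbf p}$ for an explicit operator $\mathcal K$ on $W_{kr}$. The paper then makes this precise by (i) using the tensor factorization $\Phi_{kr}:W_{kr}\cong V_{kr}\otimes(\Ti^\perp)^{\otimes r}$ to reduce $\mathcal K$ to $M_{kr}\otimes I$, (ii) writing $M_{kr}=\sum_{j\ge 0}\tfrac{1}{(j!)^2}\sh^j\del^j$ (Lemma~\ref{Akr_identities}), (iii) using the single commutation relation $\del\sh-\sh\del=(2k-r+1)I$ to show that each $\sh^j\del^j$ is a polynomial in $B=\sh\del$ (Lemmas~\ref{lem:commutation_relations},~\ref{lem:R_m_in terms of B}), and (iv) proving that the eigenspaces of $B$ are exactly the $V_{krm}$ of the $\del$-filtration with eigenvalues $(k-r-m)(k+m)$ (Lemma~\ref{lem:creation_annihilation}); the $M_{kr}$-eigenvalue $\binom{2k-r}{k+m}$ then drops out by Vandermonde, giving the constant $\tfrac{(k!)^2}{(k+m)!(k-r-m)!}$.

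There is a genuine gap at your crux step. You propose to show that $\mathcal K$ acts as a scalar on each $W_{krm}$ ``by irreducibility/transitivity of the symmetric-group action on that component,'' and then to pin the scalar down on one test vector. But the $W_{krm}$ (and already the $V_{krm}$) are \emph{not} $S_k$-submodules: for instance $\ker\del\subset V_{31}$ is spanned by $\Ti\Ti\Tii-2\,\Ti\Tii\Ti+\Tii\Ti\Ti$, and applying the transposition $(12)$ sends this to $\Ti\Ti\Tii+\Ti\Tii\Ti-2\,\Tii\Ti\Ti$, which is not a multiple of it. So Schur's lemma is unavailable, and a single test vector tells you nothing until you have independently shown that $\mathcal K$ preserves each $W_{krm}$. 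The actual mechanism is the $\mathfrak{sl}_2$-type relation $[\del,\sh]=(2k-r+1)I$: it is this, not any symmetric-group equivariance, that forces $B=\sh\del$ (hence any polynomial in $B$, hence $M_{kr}$) to be diagonal on the filtration by $\ker\del^j$, which is precisely how $W_{krm}$ is defined. Relatedly, your reading of $m$ as ``the split of the $2(k-r)$ non-shared positions on the two sides'' is not right; $m$ is the depth in the $\del$-filtration, and the eigenvalue $\binom{2k-r}{k+m}$ arises from summing $(k-r-m)(k+m)$-shifted products rather than from a direct merge-pattern count.
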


In Theorem~\ref{structure} we present a concise and practical description of the spaces~$W_{krm}$, which provides insight into their structure. We establish an explicit isomorphism between $W_{krm}$ and $U_{krm} \otimes (\mathbb{R}^{d-1})^{\otimes r}$, where $U_{krm}$ are spaces of \emph{multivariate orthogonal polynomials on the discrete simplex}, described in Definitions \mbox{\ref{phikr}-\ref{ukrm}}.

\begin{rmk*}
We will see that if $f \in W_{kr}$ then $\bar\#f$ is a so-called \emph{U-statistic of rank~$r$}. This fact provides some additional information on the distribution of these random variables. See~\S\ref{ustats}-\S\ref{gustats}.
\end{rmk*}

\medskip

We now turn to the other model $\mathcal{W}'(\mathbf{n})$ where the random word $w$ has a prescribed \emph{composition} $\mathbf{n} =  (n_\TA,n_\TB,n_\TC,\dots)$, meaning $\#\TX(w)=n_\TX$ for every letter $\TX \in \Sigma$. Denote the set of such words by $\tbinom{\mathsmaller{\Sigma}}{\mathbf{n}}$, and denote their length by $n = |\mathbf{n}| := {\sum}_\TX n_\TX$. The number of words in the set $\tbinom{\mathsmaller{\Sigma}}{\mathbf{n}}$ is the multinomial coefficient $\tbinom{n}{\mathbf{n}} = n!/(n_\TA!n_\TB!\cdots)$, and each such word is equally likely in~$\mathcal{W}'(\mathbf{n})$.

As before, we count the occurrences of subwords $u \in \Sigma^k$ and analyze the random variables $\#u$, or $\#f$ for linear combinations $f = \sum_u f_u u$. However, in this model, it is sufficient to consider words~$u \in \tbinom{\mathsmaller{\Sigma}}{\kpa}$, fixing the composition $\kpa = (k_\TA,k_\TB,\dots)$ of~$u$. Indeed, Proposition~\ref{embed} shows how subwords of different compositions reduce to this case. We therefore work in the linear space of formal sums of words of composition~$\kpa$, denoted
$$ W_{\kpa} \;=\; W_{(k_\TA ,k_\TB,\dots)} \;:=\; \mathbb{R}\tbinom{\Sigma}{\kpa} $$
Note that $\dim W_{\kpa} = \tbinom{k}{\kpa}$ where $k = |\kpa|$. For $u\in \tbinom{\mathsmaller{\Sigma}}{\kpa}$, a natural choice of normalization is 
$$ \tilde\# u \;:=\; \frac{\#u}{\prod_{\TX  \in \Sigma} \tbinom{n_\TX}{k_\TX}} \;\in\; [0,1]$$
extended to $\tilde\#f$ for linear combinations $f = \sum_uf_uu \in W_{\kpa}$. Without loss of generality, we assume $k_\TA \geq k_\TB \geq \dots > 0$ unless stated otherwise.

Our primary decomposition of $W_{\kpa}$ is based on representations of the symmetric group~$S_k$. The space $W_{\kpa}$ admits an action of $S_k$ by reordering all $k$-letter words in its basis. The implied decomposition of $W_{\kpa}$ as a direct sum of simple $S_k$ representations is well-studied and briefly reviewed in~\S\ref{repr}. Definition~\ref{Wkkr} uses it to describe the following $k-k_\TA+1$ components of word statistics.
$$ W_{\kpa} \;=\; W_{\kpa0} \oplus W_{\kpa1} \oplus \dots \oplus W_{\kpa(k-k_\TA)} $$
The next theorem asserts that the word statistics in~$W_{\kpa r}$ have order of magnitude~$n^{-r/2}$, and that different components $W_{\kpa r}$ and $W_{\kpa r'}$ are asymptotically uncorrelated. By $\mathbf{n}/n \to \mathbf{p}$ we denote the assumption that the parameters $\mathbf{n} = (n_\TA,n_\TB,\dots)$ grow such that $n_{\TX}/n \to p_{\TX}>0$ as $n = |\mathbf{n}| \to \infty$, for every~$\TX$.

\begin{thm}
[Grading under $\mathcal{W}'(n_\TA, n_\TB, n_\TC, \dots)$]
$ $ 
\label{main3}\label{thm:main3}
\\[0.25em]
Let $f \in W_{\kpa r}$ be a nonzero statistic of composition $\kpa = (k_\TA,k_\TB,\dots)$ where $r \in \left\{0,\dots,|\kpa|{-}k_\TA\right\}$, and suppose that $\mathbf{n}/n \to \mathbf{p}$. Then, there exists $C'_{f,\mathbf{p}} > 0$ such that
$$ n^r \, \Exp_{\,\mathcal{W}' (\mathbf{n})}\left[\left(\tilde\#f\right)^2\right] \; \xrightarrow[\;n\to \infty\;]{}\; C'_{f,\mathbf{p}}\;. $$
Moreover, for every $r' \neq r$ and $f' \in W_{\kpa r'}$
$$ \Exp_{\,\mathcal{W}'(\mathbf{n})}\left[\left(n^{r/2}\,\tilde\#f\right)\left(n^{r'/2}\,\tilde\#f'\right)\right] \;\xrightarrow[\;n\to \infty\;]{}  \; 0 \;.$$
\end{thm}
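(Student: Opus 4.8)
The plan is to reduce the multisample estimate to the one-sample estimate of Theorem \ref{main1} by a conditioning / de-Poissonization argument, using the fact that a word drawn from $\mathcal{W}(n,\mathbf{p})$, conditioned on its composition, is distributed exactly as $\mathcal{W}'(\mathbf{n})$. First I would fix the composition $\kpa$ and recall from \S\ref{repr} how the $S_k$-isotypic decomposition $W_{\kpa} = \bigoplus_r W_{\kpa r}$ sits inside the graded piece $W_{k}$ of the one-sample model: each simple $S_k$-module appearing in $W_{\kpa}$ corresponds, under the embedding $W_{\kpa}\hookrightarrow W_k$, to a specific $W_{kr}$, with the index $r$ matching (this is the content of Definition~\ref{Wkkr}, which I would invoke). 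The normalizations must be matched as well: $\tilde\#u$ divides by $\prod_\TX\binom{n_\TX}{k_\TX}$ while $\bar\#u$ divides by $\binom{n}{k}$, and since $n_\TX/n\to p_\TX$ we have $\binom{n}{k}\big/\prod_\TX\binom{n_\TX}{k_\TX} \to k!\big/\prod_\TX k_\TX!\cdot\prod_\TX p_\TX^{k_\TX}$ times a $\kpa$-dependent constant; so $\tilde\#f$ and $\bar\#f$ differ by an explicit asymptotic scalar, harmless for the $\Theta(n^{-r})$ claim.

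The core step is the conditioning identity. Let $w\sim\mathcal{W}(n,\mathbf{p})$ and let $\mathbf{N}=(\#\TX(w))_\TX$ be its (random) composition. Then
$$ \Exp_{\mathcal{W}(n,\mathbf{p})}\left[\left(\bar\#f\right)^2\right] \;=\; \sum_{\mathbf{m}} \PR[\mathbf{N}=\mathbf{m}]\; \Exp_{\mathcal{W}'(\mathbf{m})}\left[\left(\bar\#f\right)^2\right]. $$
By the local central limit theorem for the multinomial, $\mathbf{N}$ concentrates in a $\sqrt{n}$-window around $n\mathbf{p}$, and one expects $\Exp_{\mathcal{W}'(\mathbf{m})}[(\tilde\#f)^2]$ to vary slowly (continuously in the rescaled parameter $\mathbf{m}/n$) across that window. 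Granting that, the left side — which by Theorem~\ref{main1} is $\asymp n^{-r}$ with a positive limit $C_{f,\mathbf{p}}$ — is a weighted average of the right-side quantities over compositions $\mathbf{m}\approx n\mathbf{p}$, forcing $\Exp_{\mathcal{W}'(\mathbf{n})}[(\tilde\#f)^2]=\Theta(n^{-r})$ as well, with a limit $C'_{f,\mathbf{p}}$ that is $C_{f,\mathbf{p}}$ up to the explicit normalization scalar above. Strict positivity of $C'_{f,\mathbf{p}}$ follows because $f\neq 0$ in $W_{\kpa}$ implies $f\neq 0$ in $W_{k}$, so $C_{f,\mathbf{p}}>0$. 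For the cross-term vanishing, $f\in W_{\kpa r}$ and $f'\in W_{\kpa r'}$ with $r\neq r'$ land in $W_{kr}$ and $W_{kr'}$ respectively, and the same conditioning identity applied to $\bar\#f\,\bar\#f'$ together with the orthogonality clause of Theorem~\ref{main1} gives that $\Exp_{\mathcal{W}'(\mathbf{n})}[\tilde\#f\,\tilde\#f']$ is $o(n^{-(r+r')/2})$ — in fact, pairing the polynomial-in-$n$ structure of these moments (each $\Exp_{\mathcal{W}'(\mathbf{m})}[\#f\,\#f']$ is, before normalization, a polynomial in the entries of $\mathbf{m}$ of controlled degree) against the fact that the averaged version is exactly $0$ for all $n$.

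The main obstacle is making the "slowly varying" heuristic rigorous: one needs quantitative control on how $\Exp_{\mathcal{W}'(\mathbf{m})}[(\#f)^2]$ depends on $\mathbf{m}$, uniformly for $\mathbf{m}$ in the $\sqrt n$-window, so that the multinomial concentration of $\mathbf{N}$ can be converted into an honest asymptotic for the single term $\mathbf{m}=\mathbf{n}$. I would handle this by writing $\Exp_{\mathcal{W}'(\mathbf{m})}[\#f\,\#f']$ as an explicit finite sum over pairs of occurrence-index-sets (as in the variance bound $\Var[\#u]=O(n^{2k-1})$ sketched in the introduction), which exhibits it as a ratio of two polynomials in $\mathbf{m}$ — numerator a polynomial of degree $\le 2k-r$ (the cancellation to degree $2k-r$ being exactly the algebraic content that $f\in W_{\kpa r}$, inherited from the grading), denominator $\prod_\TX\binom{m_\TX}{k_\TX}^2$. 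Polynomials are Lipschitz on compact sets, giving the needed uniform continuity in $\mathbf{m}/n$ after rescaling. An alternative, perhaps cleaner route is to bypass conditioning entirely and re-run the combinatorial computation of Theorem~\ref{main1} directly in the $\mathcal{W}'(\mathbf{n})$ model, tracking the leading $n$-power of $\Exp_{\mathcal{W}'(\mathbf{n})}[\#f\,\#f']$ via the same Gaussian-elimination-on-words bookkeeping used in \S\ref{defs}; the $S_k$-equivariance of the occurrence-counting pairing ensures the isotypic components $W_{\kpa r}$ are exactly the eigenspaces of the relevant Gram form, which is what produces both the order $n^{-r}$ and the cross-orthogonality. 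I would present the conditioning argument as the main proof and remark on the direct computation as a consistency check.
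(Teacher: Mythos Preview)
Your central claim --- that under the inclusion $W_{\kpa}\hookrightarrow W_k$ the component $W_{\kpa r}$ lands inside $W_{kr}$ --- is false, and this is not a technicality but the crux of the argument. Already for $\kpa=(1,1)$ and $\mathbf{p}=(\tfrac12,\tfrac12)$ one has $\TA\TB+\TB\TA\in W_{\kpa 0}$ while $\TA\TB+\TB\TA=\tfrac12(\Ti\Ti-\Tii\Tii)$ lies in $W_{20}\oplus W_{22}$, not in $W_{20}$. What \emph{is} true, by an $S_k$-equivariance argument, is the weaker inclusion $W_{\kpa r}\subseteq\bigoplus_{r'\ge r}W_{kr'}$; but even granting this, the de-averaging step breaks down. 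From $\sum_{\mathbf{m}}\PR[\mathbf{N}=\mathbf{m}]\,\Exp_{\mathcal{W}'(\mathbf{m})}[(\bar\#f)^2]=O(n^{-r})$ and Lipschitz continuity of $\mathbf{m}\mapsto\Exp_{\mathcal{W}'(\mathbf{m})}[(\bar\#f)^2]$ in $\mathbf{m}/n$ you only get $\Exp_{\mathcal{W}'(\mathbf{n})}[(\bar\#f)^2]=O(n^{-r})+O(n^{-1/2})$, which is useless for $r\ge 2$. Upgrading this would require showing that the polynomial in $\mathbf{m}$ has degree exactly $2k-r$, but your justification (``inherited from the grading'') is circular: that cancellation is precisely what Theorem~\ref{main3} asserts. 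The same circularity afflicts the cross-term argument: exact vanishing of the \emph{averaged} cross-moment says nothing about any specific composition without the same degree control.

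The paper takes a completely different route. It realizes $\mathcal{W}'(\mathbf{n})$ via $|\Sigma|$ independent samples of uniforms on $[0,1]$, so that $\tilde\#f$ becomes a generalized (multisample) U-statistic with kernel $f_{\word}$. The second moment then expands as a sum over overlap vectors $\mathbf{r}\le\kpa$ with coefficients $m_{\mathbf{r}}(f)=\Exp[(f^{(\mathbf{r})})^2]$, and the theorem reduces to showing that $\rank f=r$ for nonzero $f\in W_{\kpa r}$. This is done via Lemmas~\ref{easy}--\ref{hard}, whose algebraic heart is Lemma~\ref{stabsum}: for $f\in W_{\kpa r}$ one has $fa_I=0$ for every $|I|<r$ (forcing $f^{(\mathbf{r})}\equiv 0$ when $|\mathbf{r}|<r$), while some $fa_I\neq 0$ with $|I|=r$ (producing a nonzero piecewise-polynomial $f^{(\mathbf{r})}$). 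The Specht-module definition of $W_{\kpa r}$ enters exactly here, through the interaction of $b_{\lmda}$ with the stabilizer sums $a_I$, and there is no appeal to the one-sample Theorem~\ref{main1}.
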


\begin{rmk*}
The components $W_{\kpa 0},W_{\kpa 1},W_{\kpa 2},\dots$ are pairwise orthogonal with respect to the standard inner product of $W_{\kpa}$, denoted $\langle-,-\rangle$. See~\S\ref{repr}. 
\end{rmk*}

\begin{rmk*}
Similar to the first random model, in fact we will see that the random variables $\tilde \# f$ are \emph{generalized U-statistics of rank $r$}. See~\S\ref{gustats}.
\end{rmk*}

\medskip

The next result elaborates on the two-sample random model $\mathcal{W}'(n_\TA,n_\TB)$. Here $w$ is a uniformly random word of length $n = n_\TA+n_\TB$ with $\#\TA(w)=n_\TA$ and $\#\TB(w)=n_\TB$, and we count all subwords of composition $\kpa=(k_\TA,k_\TB)$ with $k_\TA \geq k_\TB \geq 1$, where $k = |\kpa| = k_\TA + k_\TB$. The primary decomposition of $W_{\kpa}$ already gives the components $W_{\kpa r}$ for $r \in \{0,\dots,k_\TB\}$.

The full decomposition of $W_{\kpa}$ will be given by Definition~\ref{wkrij}, that refines every $W_{\kpa r}$ into $(k-2r+1)r$ orthogonal subspaces as follows:
\begin{align*}
&W_{\kpa r} \;=\; \bigoplus_{i=0}^{k-2r} \; \bigoplus_{j=0}^{r-1} \; W_{\kpa rij} \;\;\;\;\;\;\;\;\;\; r \in \{1,\dots,k_\TB\} \\[0.5em]
&\dim W_{\kpa rij} \;=\; \frac{(k-2r-i+j+1)\,(k-i-j-2)!}{(k-i-r)!\,(r-j-1)!} \end{align*}
We do not consider the case $r=0$, because $W_{\kpa 0}$ is simply the 1-dimensional space of constant statistics.

This decomposition yields the following full asymptotic diagonalization of the covariance matrix. In writing $f \in W_{\kpa rij}$ it is implied that $r,i,j$ are any numbers in the applicable ranges $r \in \{1,\dots,k_\TB\}$, $i \in \{0,\dots,k-2r\}$, and $j\in\{0,\dots,r-1\}$, where as usual $k=k_\TA+k_\TB$ and $n=n_\TA+n_\TB$.

\begin{samepage}
\begin{thm}[Spectrum under $\mathcal{W}'(n_\TA,n_\TB)$] $ $ \label{main4 nisayon}
\label{main4}
\label{thm:main4}
\nopagebreak
\\[0.25em]
Let $\kpa=(k_\TA,k_\TB)$. For every two word statistics $f \in W_{\kpa rij}$ and $f' \in W_{\kpa r'i'j'}$
$$ \Exp_{\,w\in\mathcal{W}(n_\TA,n_\TB)}\left[ \left(\left(\tfrac{n_\TA n_\TB}{n}\right)^{r/2} \tilde\#f \right)\;\left( \left(\tfrac{n_\TA n_\TB}{n}\right)^{r'/2} \tilde\#f'\right) \right] \;\;\; \xrightarrow[\;\;n_\TA,n_\TB \;\to\; \infty\;\;]{} \;\;\; \Lambda_{\kpa rij} \left\langle f,f' \right\rangle $$
where 
$$ \Lambda_{\kpa rij} \;:=\; \frac{(k_\TA!)^2\,(k_\TB!)^2\,(k-2r)!\,(k-2r+1)!}{(k_\TA-r)!\,(k_\TB-r)!\,i!\,(2k-r-i-j)!\,(k-2r+1+j)!} \vspace{0.5em} $$
In particular, if $(r',i',j') \neq (r,i,j)$ then this limit is $\left\langle f,f' \right\rangle = 0$.
\end{thm}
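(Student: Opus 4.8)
The plan is to reduce the statement to the spectral analysis of a single self-adjoint bilinear form on each graded piece $W_{\kpa r}$ — the leading term of the covariance — and to identify its eigenspaces with the subspaces $W_{\kpa rij}$ of Definition~\ref{wkrij}. By Theorem~\ref{main3} the components with $r'\neq r$ are already taken care of: the corresponding scaled mixed moments tend to $0$, and replacing the normalization $n^{r/2}$ by $\big(\tfrac{n_\TA n_\TB}{n}\big)^{r/2}$, which is of the same order, does not affect this. So it suffices to fix $r\in\{1,\dots,k_\TB\}$ and, for $f,f'\in W_{\kpa r}$, determine the limit of $\mathcal{B}_n(f,f'):=\big(\tfrac{n_\TA n_\TB}{n}\big)^{r}\Exp_{\mathcal{W}'(\mathbf{n})}[\tilde\#f\,\tilde\#f']$, and show it equals $\Lambda_{\kpa rij}\langle f,f'\rangle$ whenever $f\in W_{\kpa rij}$ and $f'\in W_{\kpa ri'j'}$.

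Next I would expand the raw second moment combinatorially. Sorting the pairs of occurrences of two words $u,v\in\tbinom{\Sigma}{\kpa}$ in $w$ according to which letters they share yields
$$ \Exp_{\mathcal{W}'(\mathbf{n})}\big[\#u(w)\,\#v(w)\big] \;=\; \sum_{a=0}^{k_\TA}\sum_{b=0}^{k_\TB}\frac{(n_\TA)_{2k_\TA-a}\,(n_\TB)_{2k_\TB-b}}{(2k-a-b)!}\;M_{a,b}(u,v), $$
where $(m)_\ell=m(m-1)\cdots(m-\ell+1)$ and $M_{a,b}(u,v)$ counts the order-preserving ways to merge $u$ and $v$ into one word in which exactly $a$ of the $\TA$-positions and $b$ of the $\TB$-positions of $u$ are identified with those of $v$; each $M_{a,b}$ is a symmetric bilinear form on $W_{\kpa}$. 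Dividing by $\big(\tbinom{n_\TA}{k_\TA}\tbinom{n_\TB}{k_\TB}\big)^2$, multiplying by $\big(\tfrac{n_\TA n_\TB}{n}\big)^r$, and using $n_\TX/n\to p_\TX$ together with $(n_\TX)_{2k_\TX-a}/\tbinom{n_\TX}{k_\TX}^{2}\sim (k_\TX!)^2 n_\TX^{-a}$, the $(a,b)$ term acquires a factor of order $n^{r-a-b}$: it diverges if $a+b<r$, vanishes if $a+b>r$, and tends to $\tfrac{(k_\TA!)^2(k_\TB!)^2}{(2k-r)!}\,p_\TA^{\,b}p_\TB^{\,a}$ if $a+b=r$. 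Because $\mathcal{B}_n$ stays bounded by Theorem~\ref{main3} for every admissible $\mathbf{p}$, matching the distinct orders of divergence while letting $\mathbf{p}$ vary forces $M_{a,b}(f,f')=0$ for all $f,f'\in W_{\kpa r}$ and all $a+b<r$; this is the algebraic content of the grading behind Theorem~\ref{main3}. Hence, for $f,f'\in W_{\kpa r}$,
$$ \mathcal{B}_n(f,f') \;\xrightarrow[\;n\to\infty\;]{}\; \frac{(k_\TA!)^2(k_\TB!)^2}{(2k-r)!}\sum_{a+b=r}p_\TA^{\,b}p_\TB^{\,a}\,M_{a,b}(f,f') \;=:\; \frac{(k_\TA!)^2(k_\TB!)^2}{(2k-r)!}\,\big\langle f,\,\Gamma_r^{\mathbf{p}}f'\big\rangle, $$
where $\Gamma_r^{\mathbf{p}}$ is the self-adjoint operator on $W_{\kpa r}$ representing this limiting form.

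It then remains to diagonalize $\Gamma_r^{\mathbf{p}}$. Writing $\lambda_{rij}:=\tfrac{(2k-r)!}{(k_\TA!)^2(k_\TB!)^2}\Lambda_{\kpa rij}$, the whole theorem amounts to $\Gamma_r^{\mathbf{p}}|_{W_{\kpa rij}}=\lambda_{rij}\,\mathrm{Id}$, which yields both the cross-block vanishing and the eigenvalue. Since the $r+1$ functions $p_\TA^{\,b}p_\TB^{\,a}=p_\TA^{\,r-a}(1-p_\TA)^a$ with $a+b=r$ are linearly independent, $\Gamma_r^{\mathbf{p}}$ can have $\mathbf{p}$-independent eigenspaces only if the $W_{\kpa rij}$ are simultaneous eigenspaces of all the forms $M_{a,b}$ with $a+b=r$; given that, $\mathbf{p}$-independence of the eigenvalue forces $M_{a,b}|_{W_{\kpa rij}}=\tbinom{r}{a}\lambda_{rij}\,\mathrm{Id}$, after which $\sum_{a+b=r}\tbinom{r}{a}p_\TA^{\,r-a}p_\TB^{\,a}=(p_\TA+p_\TB)^r=1$ recovers the claim. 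So the core task is to show that each $W_{\kpa rij}$ is a joint eigenspace of the merging forms with the displayed product eigenvalue. I would carry this out using the explicit construction of $W_{\kpa rij}$ in \S\ref{defs}: realizing $W_{\kpa r}$ as the Specht copy of $S^{(k-r,r)}$ inside $W_{\kpa}$, equipped with the ``down'' and ``up'' word operators of Dieker--Saliola type, so that the indices $i$ and $j$ label joint eigenvalues of two commuting operators built from them; the forms $M_{a,b}$ are polynomials in the same operators, and evaluating them on the highest-weight spanning vectors of $W_{\kpa rij}$ produces the ratios of factorials appearing in $\Lambda_{\kpa rij}$. An alternative is to realize $\mathcal{W}'(n_\TA,n_\TB)$ as $\mathcal{W}(n,\mathbf{p})$ conditioned on the composition and transfer Theorem~\ref{main2}, tracking how the conditioning refines its single index $m$ into the pair $(i,j)$.

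The step I expect to be the main obstacle is precisely this last diagonalization. Although $\Gamma_r^{\mathbf{p}}$ is manifestly self-adjoint, the merging operators $M_{a,b}$ are \emph{not} $S_k$-equivariant, so the decomposition $W_{\kpa r}=\bigoplus_{i,j}W_{\kpa rij}$ is strictly finer than the $S_k$-isotypic one and is invisible to representation theory alone. Establishing that it is nonetheless a common eigenspace decomposition for all the $M_{a,b}$, with eigenvalues of the clean multiplicative form $\tbinom{r}{a}\lambda_{rij}$ and, crucially, with eigenspaces that do not move with $\mathbf{p}$, is where the Dieker--Saliola operator calculus and essentially all of the computation are concentrated.
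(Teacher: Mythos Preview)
Your overall architecture matches the paper's: expand the second moment via merging bilinear forms indexed by $(a,b)=(r_\TA,r_\TB)$, observe that on $W_{\kpa r}$ only the terms with $a+b=r$ survive, and reduce Theorem~\ref{main4} to the single claim that each $W_{\kpa rij}$ is a simultaneous eigenspace of all these forms with eigenvalue $\tbinom{r}{a}\lambda_{rij}$, whence the binomial sum collapses to $(1/n_\TA+1/n_\TB)^r=(n/(n_\TA n_\TB))^r$. That is exactly the paper's roadmap, and your identification of the obstacle is on target.

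Where your sketch is too optimistic is the sentence ``the forms $M_{a,b}$ are polynomials in the same operators''. They are not: the merging operators $\Np^{\kpa}_{(a,b)}$ are \emph{not} right multiplications by group-algebra elements, so they do not automatically commute with the Dieker--Saliola machinery, and there is no direct polynomial identity to invoke. The paper gets around this in two steps. First, it writes $\Np^{\kpa}_{(a,b)}$ as $\Theta$-conjugation of the one-sample merging matrix $M_{k,r}$ by replacement operators turning $k_\TA-a$ of the $\TA$'s and $k_\TB-b$ of the $\TB$'s into a neutral letter (Definition~\ref{NKR}). Second --- and this is the idea you are missing --- it proves a \emph{proportionality principle} (Proposition~\ref{prop:Mprime_depends_only_on_diag}): after projecting to the Specht component $S^{(k-r,r)}$, all the operators $\Mp^{(k_\TA,k_\TB)}_{(a,b)}$ with $a+b=r$ are \emph{scalar multiples, up to conjugation by $\Theta_{\TA\TB}$,} of the single operator $\Mp^{(k-r,r)}_{(0,r)}$, and the scalar is precisely $\tbinom{k-2r}{k_\TB-r}\tbinom{r}{a}$. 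This relies on a nontrivial identity of compositions of $\Theta$'s on the Specht module (Lemma~\ref{lem:Thetas_relation}), proved by an explicit tableau computation with a sign-reversing involution. Only after this reduction does the Dieker--Saliola calculus enter: $\Mp^{(k-r,r)}_{(0,r)}$ is (via Lemma~\ref{Akr_identities}) a polynomial in $\sh_\TA\del_\TA$, hence shares eigenspaces with the random-to-random operator, and Proposition~\ref{prop:diagonalizing_M_prime} computes its eigenvalues on $\shp_\TB^j\sh_\TA^i\ker\del_\TA$ by two commutator lemmas (Lemmas~\ref{lem:0j} and~\ref{lem:ij}). Your plan would need both of these pieces; the first is the one that is genuinely new and not suggested by the operator framework alone.

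Two smaller points. Theorem~\ref{main4} is stated for $\min(n_\TA,n_\TB)\to\infty$ without assuming $n_\TX/n\to p_\TX$, so for the $r\neq r'$ case you cannot simply cite Theorem~\ref{main3}; the paper redoes that estimate directly in \eqref{eq:multisample different r case}. And your alternative of conditioning $\mathcal{W}(n,\mathbf{p})$ on the composition is not pursued in the paper; it is unclear it would cleanly produce the bigrading $(i,j)$ from the single index $m$ of Theorem~\ref{main2}.
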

\end{samepage}

\begin{rmk*}
This spectral decomposition of $W_{\kpa r}$ does not depend on $p_\TA$ and $p_\TB$, if these are respectively the limits of $n_\TA/n$ and $n_\TB/n$, as in Theorem~\ref{main3}. This remarkable property is not true in general, in the case of three samples or more.

In fact, the limit in this theorem is taken with respect to any $n_\TA$ and $n_\TB$ such that $\min(n_\TA,n_\TB) \to \infty$. This is a relaxation of the assumption of Theorem~\ref{main3} that $n_\TX/n$ converges to a positive constant $p_\TX$ for every~$\TX \in \Sigma$. For this reason, the formulation of Theorem~\ref{main4} restates the case~$r \neq r'$.
\end{rmk*}

%{\color{red} [Optional: Only mention merging operator, informally, as one element that appears everywhere in the proofs, spectral decomposition, various forms] [TL: We shouldn't define the merging operator prematurely, it is not the point here]}

\subsection{Examples}
\label{examples}

We list a variety of examples for subword statistics, as special cases of our treatment. We examine how they are scaled and classified according to the scheme of Theorems~\ref{main1}-\ref{main4}. All computations of the decompositions and second moments are straightforward from the definitions in~\S\ref{defs}, and can be done automatically. 

We keep the discussion brief, as our main purpose is not to study these particular examples but to demonstrate how various statistics from diverse contexts unify under one framework. Nevertheless, in several cases our perspective sheds new light on them, or points to potential generalizations.

\smallskip
\begin{ex}
Warm Up: Coin Flips
\end{ex}
\noindent
A sequence of $n$ tosses of a fair coin gives a word in $\{\THH,\TTT\}^n$, distributed by $\mathcal{W}(n,(\tfrac12,\tfrac12))$. The decomposition for $k=1$ gives $W_{10} =  \SPAN\{\THH+\TTT\}$ and $W_{11} =  \SPAN\{\THH-\TTT\}$. As Theorem~\ref{main1} claims, the former yields $\bar\#(\THH+\TTT) \equiv 1$, of constant order. The latter statistic, of order $1/\sqrt{n}$, is the ``observed bias'' of the coin under the fairness hypothesis. Computing the decomposition for $k=2$,
\begin{itemize}[leftmargin=15pt]
\item $W_{20}\; = \SPAN\{\THH\THH+\THH\TTT+\TTT\THH+\TTT\TTT\}$
\item $W_{210} = \SPAN\{\THH\THH-\TTT\TTT\}$
\item $W_{211} = \SPAN\{\THH\TTT-\TTT\THH\}$
\item $W_{220} = \SPAN\{\THH\THH+\TTT\TTT-\THH\TTT-\TTT\THH\}$
\end{itemize}

The first two come from $W_{10}$ and $W_{11}$ via the embedding $W_1 \hookrightarrow W_2$. The new statistic $\bar\#(\THH\TTT-\TTT\THH)$ may be interpreted as the tendency of tails to occur after heads. It also scales as $1/\sqrt{n}$, but Theorem~\ref{main2} implies that its variance is $\tfrac13$ of that of $\bar\#(\THH\THH-\TTT\TTT)$, and these two statistics are uncorrelated. By Theorem~\ref{clt}, their joined distribution is asymptotically binormal. The fourth statistic scales as~$1/n$ and leads to the next example.

\smallskip
\begin{ex}
Pearson's $\chi^2$ Test Statistic
\end{ex}
\noindent
The following holds up to a \emph{constant} correction of smaller order in~$n$:
$$ \bar\#(\THH\THH+\TTT\TTT-\THH\TTT-\TTT\THH)\;=\; 2\bar\#(\THH\THH+\TTT\TTT)-1 \;\approx\; \frac{ (\bar\#\THH - 0.5)^2}{0.5} + \frac{(\bar\#\TTT - 0.5)^2}{0.5} $$ 
This is the classical Pearson's $\chi^2$ test statistic for fitting the frequencies of $\THH$ and $\TTT$ to the distribution $(0.5,0.5)$~\cite{pearson1900criterion}. This fact extends to any finite-dimensional distribution vector~$\mathbf{p}$. The combination $\sum_{\TX}\bar\# \TX\TX/p_{\TX} - 1$, which is essentially Pearson's $\chi^2$ statistic, always lies in~$W_{220}$. 

\smallskip
\begin{samepage}
\begin{ex}
Functions on the Boolean Hypercube
\end{ex}
\noindent
Consider a binary stream $w \in \{\To,\Ti\}^n$, distributed with $\mathcal{W}(n,(p,q))$. The subword statistics of $w$ correspond to $\mathbb{R}\{\To,\Ti\}^k$, or equivalently Boolean functions $f:\{\To,\Ti\}^k \to \mathbb{R}$, so they take the form $\sum_u f(u)\,\bar\#u$. 
\end{samepage}

The primary decomposition of $\mathbb{R}\{\To,\Ti\}^k$ follows the so-called ``slices'' of the Fourier basis of Boolean functions. Namely, we expand all the ``monomials'' with $k-r$ times $(\To+\Ti)$ and $r$ times $(q\,\To-p\,\Ti)$ to obtain $\tbinom{\mathsmaller{k}}{r}$ combinations that span $W_{kr}$. For example, the expansion of $(q\,\To-p\,\Ti)^k$ from $W_{kk}$ gives the most concentrated statistic, with variance~$\sim k!/n^k$ by Theorems~\ref{main1}-\ref{main2}. For $p=q=\tfrac12$, it is the bias of the parities of $k$-bit subwords of~$w$.

The diagonalization in each slice introduces a finer decomposition into orthogonal subspaces, which, as we will see in \S\ref{poly}, correspond to special orthogonal polynomials.
For example, in order $1/\sqrt{n}$ we obtain the basis $\{P_i(1)f_1 + \dots + P_i(k)f_k\}_{0 \leq i < k}$, where $f_1,\dots,f_k$ are so-called ``dictatorship'' functions, and $P_i(x)$ are the orthogonal polynomials of the uniform measure on~$\{1,\dots,k\}$. As $n$ grows, these $k$ statistics tend to independent Gaussian distributions, see~\S\ref{gustats}.

\smallskip
\begin{ex}
Discrete L\'evy Area
\end{ex}
\noindent
A word in the four cardinal winds $\{\TEE,\TNN,\TWW,\TSS\}^n$, with $\mathbf{p} = (\tfrac14,\tfrac14,\tfrac14,\tfrac14)$, may represent a random walk of $n$ steps on the square grid $\mathbb{Z}^2$. Let
$$ a \;=\; (\TEE\TNN - \TWW\TNN + \TWW\TSS - \TEE\TSS) - (\TNN\TEE - \TNN\TWW + \TSS\TWW - \TSS\TEE) $$
Viewing the walk $w$ as a path in $\mathbb{R}^2$, the statistic $\#a(w)$ is the signed area $\int x\, dy - \int y\, dx$. This is a discrete analogue of the important \emph{L\'evy area} of a two-dimensional Brownian motion \cite{levy1951wiener,guivarc1977marches}. An automated computation can show that $a \in W_{22}$. Theorems~\ref{main1} and~\ref{main2} give the scaling and the asymptotic variance: $\Exp[\bar\#a^2] \sim 1/n^2$. This statistic of second order has a particularly simple limit law, $\#a/n \to A$ with $f_{A}(x)=\text{sech}(\pi x)$, same as the continuous L\'evy area.

If the walk $w$ terminates at the origin, then either half of $a$ yields $\#a' := \#a/2 = \oint x\,dy$, the enclosed algebraic area. Such closed random walks are modeled by words in the multisample $\mathcal{W}'(n,n,n,n)$. The terms of $a'$ have different compositions: $(1,1,0,0)$, $(0,1,1,0)$, etc., but all can be embedded in the space $W_{\kpa}$ for $\kpa=(1,1,1,1)$ using Proposition~\ref{embed}. 

Again, an automated computation shows that $a' \in W_{\kpa 2}$ and $\#a'$ scales as $n$ by Theorem~\ref{main3}. However, in this case, the normalized area $\#a'/4n$ tends to the logistic distribution, with density $f_{A'}(x) = \pi\, \text{sech}^2(2 \pi x)$, similar to a Brownian excursion in the continuous case. This limit was studied in the context of random knots~\cite{even2016invariants}, because $\#a'$ has the same distribution as a two-component linking number generated from petal diagrams and random permutations. Extensions to walks in~$\mathbb{Z}^d$ are interesting in the context of random 3-manifolds obtained via surgery from $d$-component links. A~related statistic was studied in~\cite{mingo1998distribution}.

\smallskip

\begin{ex}
Two-Sample Statistical Tests
\label{statistical}
\end{ex}
\noindent
Consider two real-valued samples $X_1,\dots,X_n$ and $Y_1,\dots,Y_m$ drawn independently from unknown continuous distributions, denoted by the random variables $X$ and~$Y$. The relative order of the observations induces a word $w$ over $\{\TX,\TY\}$ of length~$n+m$. For example, if $X_2 < Y_3 < Y_1 < X_1 < Y_2$ then $w=\TX\TY\TY\TX\TY$. If the two distributions coincide, denoted $X \sim Y$, then $w$~is exactly as in the random model~$\mathcal{W}'(n,m)$. This is the null hypothesis of several nonparametric tests for comparing two distributions. 

Persson~\cite{persson1979new} represents several two-sample test statistics in terms of subword counts in~$w$. We review these statistics below.
\begin{itemize}[leftmargin=15pt]
\itemsep0.25em
\item \textbf{Mann--Whitney U \cite{mann1947test, wilcoxon1945individual}.} 
This test statistic, $U = \#\TY\TX$ estimates how much $P(Y<X)$ deviates from $1/2$ for randomly selected $X$ and~$Y$. The equivalent combination $u = (\TY\TX - \TX\TY)/2$ lies in the component $W_{\kpa 1}$ where $\kpa=(1,1)$. The null distribution of~$\tilde\#u$ is asymptotically normal with variance ${\tfrac{1}{12}}(\tfrac{1}{m} + \tfrac{1}{n})$, reproduced by Theorems~\ref{main3}-\ref{main4}.
\item \textbf{Cram\'er--von Mises criterion~\cite{lehmann1951consistency}.}
The above $U$ might fail to detect $X \not\sim Y$ when the probability of $X<Y$ happens to be exactly~$1/2$. However, given four independent replications $X$, $X'$, $Y$, and $Y'$, the probability that $\max(X,X')<\min(Y,Y')$ or $\max(Y,Y')<\min(X,X')$ is $1/3$ if and only if $X \sim Y$. Otherwise, it is greater than $1/3$ by an $L^2$ difference between the distribution functions $F_X$ and~$F_Y$. This difference can be estimated by $2\,\tilde\#t$ for the following centralized combination in $W_{(2,2)}$:
$$ t \;=\; \tfrac13(\TX\TX\TY\TY + \TY\TY\TX\TX) - \tfrac16(\TX\TY\TY\TX + \TY\TX\TX\TY + \TX\TY\TX\TY + \TY\TX\TY\TX) $$
Theorems~\ref{main3}-\ref{main4} give $t \in W_{(2,2)201}$ and $\Var[\tilde\#t] \;\sim\; \tfrac{1}{45}(\tfrac{1}{m} + \tfrac{1}{n})^2$ in agreement with~\cite{anderson1962distribution}.
\item \textbf{Watson's $U^2$ \cite{watson1962goodness}.}
Now suppose that $\{X_i\}$ and $\{Y_i\}$ are samples on the circle~$S^1$. In this case, the previous test for $X \sim Y$ depends on an arbitrary choice of a starting point. Another notion of difference by Watson can be estimated by $\tilde\#s$, for the following \emph{rotation invariant} combination.
$$ s \;=\; \tfrac{1}{12}(\TX\TX\TY\TY + \TY\TY\TX\TX + \TX\TY\TY\TX + \TY\TX\TX\TY) - \tfrac16(\TX\TY\TX\TY + \TY\TX\TY\TX) $$
This is not a principal direction of the covariance, but $s = v + \tfrac14 t$ for $v \in W_{(2,2)200}$, and in fact $W_{(2,2)2} = \SPAN\{s,t\}$. By Theorem~\ref{main4}, $\Var[\tilde\#v] \sim \tfrac{1}{720}(\tfrac{1}{m} + \tfrac{1}{n})^2$, so $\Var[\tilde\#s] \sim \tfrac{1}{360}(\tfrac{1}{m} + \tfrac{1}{n})^2$. 
\end{itemize}
\vspace{0.25em}
Finally, we mention the possibility of similarly analyzing Cram\'er--von Mises type tests for the classical $K$-sample problem~\cite{kiefer1959k, puri1965some}. It is also possible to study such functionals with higher $L_p$~norms of $(F_X-F_Y)$ as word statistics. This may be interesting because the infinity norm gives another popular two-sample test by Kolmogorov--Smirnov.

\newcommand{\boxes}[1]{\raisebox{-2pt}{\tikz{
\foreach \i/\k in {#1} \foreach \j in {1,...,\i} 
\draw (0.12*\j, -0.12*\k) rectangle (0.12*\j+0.12, -0.12*\k+0.12);}}}

\smallskip
\begin{ex}
Simultaneous Core Partitions 
\end{ex}
\noindent
Representing a \emph{partition} with square boxes, for example \;\boxes{7/0,5/1,4/2}\,, the \emph{hook} of each box is the set of boxes directly to its right or below it, e.g.~\boxes{4/0,1/1,1/2}\, is the hook of box~4 in row~1 of our example. A~partition that avoids hooks with exactly $p$ boxes is \emph{$p$-core}, which arose in the study of $p$-modular representations. If $s$ and $t$ are coprime, then the number of partitions that are simultaneously $s$-core and $t$-core is finite, and equals~$\tfrac{1}{s+t}\tbinom{s+t}{s}$ as shown by Anderson~\cite{anderson2002partitions} using a clever bijection to words in~$\Sigma = \{\TS,\TT\}$.

Starting from a random word in the model $\mathcal{W}'(s,t)$, one can apply a suitable rotation and reverse Anderson's map, and obtain a uniformly distributed $(s,t)$-core partition. The perspective of word statistics is particularly useful for understanding its properties. It has been shown in~\cite{even2020sizes} that  $\tfrac{1}{24}(s^2-1)(t^2-1)-\tfrac12(\#\TS\TT\TS\TT + \#\TT\TS\TT\TS)$ gives the number of boxes in the random partition. This has proven a curious relation between the size distribution of $(s,t)$-core partitions and the null distribution of Watson's~$U^2$, due by Zeilberger~\cite{ekhad2015explicit}, and has simplified other results on this problem.

\smallskip
\begin{ex}
Intransitive Dice
\label{dice}
\end{ex}
\noindent
Consider three dice labeled $\{\TA,\TB,\TC\}$ with $n$ faces each, and assign the values $\{1,\dots,3n\}$ at random to their faces. Such a set of dice is described by a random word $w$ in $\mathcal{W}'(n,n,n)$, with $i$ assigned to the die~$w_i$. The \emph{bias} of $\TA$ vs $\TB$ is a random variable, measuring how much $\TA$ is more likely to win in a single match with $\TB$, defined $\beta_{\TA\TB} = \tilde\#(\TB\TA-\TA\TB)$ as a subword statistic. The set of dice $\TA,\TB,\TC$ is \emph{intransitive} if $\beta_{\TA\TB},\beta_{\TB\TC},\beta_{\TC\TA}$ are all positive or all negative, a surprising possibility brought up by Efron~\cite{gardner2001colossal}. For example, if $n=3$ and $w=\TB\TA\TC\TA\TC\TB\TC\TB\TA$ then $\beta_{\TA\TB}=\beta_{\TB\TC}=\beta_{\TC\TA}=\tfrac19$. 

In order to analyze the joint distribution of the biases, we first embed them in a common space $W_{\kpa}$ for $\kpa={(1,1,1)}$. We map $\TB\TA-\TA\TB \,\mapsto\, \TC\TB\TA+\TB\TC\TA+\TB\TA\TC-\TC\TA\TB-\TA\TC\TB-\TA\TB\TC$ and same for the other biases, in accordance with Proposition~\ref{embed}. The $1/\sqrt{n}$ scaling of each bias follows from the decomposition of Theorem~\ref{main3}. Their covariance matrix and higher moments were computed \emph{exactly for every~$n$} by Zeilberger~\cite{ekhad2017treatise}, who conjectured a multinormal limit distribution based on the leading terms. Indeed, this multinormal limit follows from our formulation and Theorem~\ref{clt}: 
$$ \sqrt{3n} \;\;\tilde\#\!\left[\begin{matrix} \TB\TA-\TA\TB \\ \TC\TB-\TB\TC \\ \TA\TC-\TC\TA \end{matrix}\right] \;\; \xrightarrow[\;\text{in distribution}\;]{n \to \infty} \;\; \mathcal{N}\left(\left[\begin{matrix} 0 \\ 0 \\ 0 \end{matrix}\right],\; \left[\begin{array}{rrr}
     2 & -1 & -1  \\
     -1 & 2 & -1  \\
     -1 & -1 & 2 
\end{array}\right]\right)$$

This asymptotic covariance is degenerate, with matrix-rank~2. The limit distribution is supported on the plane $x+y+z=0$ in~$\mathbb{R}^3$. As an immediate consequence, intransitivity occurs with probability tending to zero as $n \to \infty$, because this plane meets the two intransitive octants only at the origin.

One way to amplify the phenomenon of intransitivity is to consider random models that reduce the typical bias, as done in a recent PolyMath project~\cite{conrey2016intransitive,polymath2017blog,hkazla2020probability}. However, these models impose quantitative conditions on the face values, and abandon the distribution-free formulation of the problem. 

The viewpoint of subword spaces suggests to capture some notion of intransitivity by looking at the smaller order component $\beta_{\TA\TB}+\beta_{\TB\TC}+\beta_{\TC\TA}$, arising from the subword combination,
$$ g \;=\; \TC\TB\TA + \TB\TA\TC + \TA\TC\TB - \TA\TB\TC - \TB\TC\TA - \TC\TA\TB \;\in\; W_{\kpa 2} $$
We note that this statistic can be nonzero also in the event of transitive dice, but it may be viewed as the ``intransitive component'' in their biases.

Zeilberger~\cite{zeilberger2016doron} studied $\#g$ as a special case of the \emph{Gepner statistic}, and derived the \emph{Gepner polynomials} for its moments, whose leading terms suggest that $\#g/n^2$ converges in law to a logistic distribution. Indeed, $\#g$ may be viewed as the L\'evy area of the walk $w$ projected on the above supporting plane. This is discussed in Appendix~\S\ref{dice3}. Generalizations to sets of four dice or more are also natural in this representation, and left for another time.

\smallskip
\begin{ex}
Path Signature and Machine Learning
\end{ex}
\noindent 
Finally, we describe a potential application to machine learning, which will be investigated in future work. In many application areas, the data takes the form of a long random-like text over a finite alphabet. This may either be a stream of symbols that comes with a natural ordering or ``time'' parameter, or a mixture of $d$ samples of real-valued data points, as in Example~\ref{statistical}. Suppose that one wishes to classify, model, estimate a parameter, or learn a function of such sequences, say, by applying a neural network. Then, the input sequence first has to be summarized as a vector of \emph{characteristic features}, of reasonable length. 

The \emph{signature method} is a generic way of extracting feature sets for sequential data. The basic idea is to embed the data as a path $[0,1] \to \mathbb{R}^d$, and then to use features from its \emph{signature}, which is the graded sequence of its iterated integrals. The coordinates of the signature are definite integrals of the path $(x_t,y_t,\dots)_{0\leq t \leq 1}$ such as $\int_{t}dx_t$, $\int_{t}dy_t$, $\int_{t<s}dx_tdx_s$, $\int_{t<s}dx_tdy_s$, and so on. This method has achieved success in several recent machine learning applications to financial data, clinical symptoms, handwriting recognition, and more \cite[for overviews]{levin2013learning, chevyrev2016primer}. The notion of path signature originates in the fundamental theory of rough paths \cite{chen1958integration, lyons1998differential}.

Although the signature method has been applied mainly to vector-valued time series and spatial data,  a text over $d$ symbols also naturally embeds as a path in $\mathbb{R}^d$. Every appearance of a letter $\TX$ contributes a unit step along the axis that corresponds to~$\TX$. The signature of the resulting path is essentially the set of subword statistics in the given text, where the $k$th level corresponds to subwords of $k$ letters.

Now, our results on the diagonalization of the space of subword statistics provide a suitable choice of basis for feature selection in the signature. Such a basis may be crucial for addressing several important challenges, such as how and where to truncate the coordinates of the signature, how to adjust input parameters in specific applications, how to interpret the contribution of the various characteristic features, etc. 

One prediction we would like to make is that our suggested basis of attributes will actually be most beneficial in the \emph{high-noise} regime. Indeed, our decomposition diagonalizes the joint distribution under randomness, so it seems particularly preferable to use it when the signal is hidden in strong random noise. Under such assumptions, our basis comprises uncorrelated features and distinguishes between statistics that scale differently with the data length.

\subsection{Acknowledgments}
C.~E.~was supported by the Lloyd’s Register Foundation / Alan Turing Institute programme on Data-Centric Engineering.
T.~L.~was supported by the ISF grant 891/15 and ERC 2020 grant HomDyn 833423.
R.~T.~(incumbent of the Lillian and George Lyttle Career Development Chair) was supported by the ISF grant No. 335/19 and by a research grant from the Center for New Scientists of Weizmann Institute. 

\section{Decompositions}
\label{defs}

This section includes the definitions and constructions required for our main results in~\S\ref{results}, given in full detail. At the same time, it may serve as an overview of the tools we are going to use in order to establish them.

\subsection{Primary Decomposition in the One-Sample Model}\label{sec:Wkr decomp} 

\begin{definition}
\label{inner}
We equip the space $W_k = \mathbb{R}\Sigma^k$ with an inner product, induced by the probability measure $\mathcal{W}(k,\mathbf{p})$. Given two formal combinations of words $u \in \Sigma^k$, $f = \sum_u f_u u$ and $f' = \sum_{u} f'_u u$, let $\langle f,f' \rangle_{\mathbf{p}} = \sum_{u} p_u f_u f'_u$ where $p_u = p_{u_1}p_{u_2}\cdots p_{u_k}$ is the probability of $u$ under this distribution.
\end{definition}

Our decomposition of $W_k$ is defined via a basis of the space~$\mathbb{R}\Sigma$, denoted by numerals $D := \{\Ti, \Tii, \dots, \TD\}$. Let $\Ti := \sum_{\TX\in\Sigma} \TX$, and let $\Tii, \Tiii, \dots, \TD$ complete it to an orthonormal basis with respect to the above inner product, so that $\langle\TI,\TJ\rangle_{\mathbf{p}} = \delta_{\TI\TJ}$ for $\TI,\TJ \in D$. Note that the words $D^k$ are an orthonormal basis of~$W_k$.

\begin{ex*}
For $\Sigma = \{\TA,\TB,\TC\}$ and $\mathbf{p}=(\tfrac13,\tfrac13,\tfrac13)$, one possible choice is:
$$ \Ti = \TA+\TB+\TC,\;\;\; \Tii = \sqrt{\tfrac{3}{2}}(\TA-\TB),\;\;\; \Tiii = \sqrt{\tfrac{1}{2}}(\TA+\TB-2\TC) $$
\end{ex*}

\begin{definition}
\label{wkr}
$W_{kr} = \mathrm{span}\,\{e \in D^k : \#\Ti(e) = k-r\}.$
\end{definition}

It readily follows from this definition that $\dim W_{kr} = \tbinom{k}{r}(d-1)^r$, and $W_{k0},\dots,W_{kk}$ are pairwise orthogonal, and together span~$W_k$. In conclusion, this vector space carries a grading, 
$$ W_k \;=\; \mathbb{R}\Sigma^k \;=\; \mathbb{R}D^k \;=\; W_{k0} \oplus W_{k1} \oplus \dots \oplus W_{kk} $$
as stated in \S\ref{results}. The subword combinations $f \in W_{kr}$ are the homogeneous elements of degree~$r$ in this grading. The content of Theorem~\ref{main1} is that they give rise to statistics $\bar\#f$ of order~$n^{-r/2}$, and in fact it follows that they are U-statistics of rank~$r$, see~\S\ref{ustats}.

\begin{rmk*}
The component $W_{kr}$ is independent of the choice of the basis elements $\{\Tii,\Tiii,\ldots,\TD\}$. Indeed, $W_{kr}$ is characterized as the linear span of all $a_1 a_2\cdots a_k$ where $k-r$ of the $a_i$ are $\Ti$ and the other $a_i \in \Ti^{\perp} \subset \mathbb{R}\Sigma$.
\end{rmk*}

\subsection{The Algebra of Words} 
\label{operators}

Our set of tools includes several operations on words and word spaces. This algebraic approach follows and elaborates on the recent work of Dieker and Saliola~\cite{dieker2018spectral}.

We have already implicitly denoted the \emph{concatenation} of $u \in \Sigma^k$ and $v \in \Sigma^j$ by $uv \in \Sigma^{k+j}$, which bilinearly extends to formal sums. 

Another well-known bilinear operation, the \emph{shuffle} product $u \shuffle v$, is the formal sum of all $\tbinom{k+j}{k}$ ways to merge $u$ and~$v$, extended bilinearly to word sums. It is formally defined by the recursive rule $u\TX \shuffle v\TY = (u\TX \shuffle v)\TY + (u \shuffle v\TY)\TX$, where the empty word $\phi$ satisfies $\phi \shuffle v = v \shuffle \phi = v$. Clearly $u \shuffle v = v \shuffle u$. For fixed $v \in \Sigma^r$ and $k \geq 0$, we define the following \emph{insertion} operator.

\begin{definition}
\label{shuffle}
$\sh_v : \mathbb{R}\Sigma^k \to \mathbb{R}\Sigma^{k+r}$ is defined by ~$\sh_v u = u \shuffle v$.
\end{definition}

\begin{ex*}
$\sh_\TB \TA\TA = \TA\TA\TB + \TA\TB\TA + \TB\TA\TA $, $\;\;\sh_{\TA\TB}\TA = 2\TA\TA\TB + \TA\TB\TA$
\end{ex*}

Conversely, we define a \emph{deletion} operator, which sums all possible ways to remove an occurrence of a given subword. Formally, it is defined on words by $\del_{v\TX}u\TY = (\del_{v\TX}u)\TY + \delta_{\TX\TY}\del_v u$ where $\del_\phi v = v$ and $\del_v \phi = 0$ if $v \neq \phi$.

\begin{definition}
$\del_v : \mathbb{R}\Sigma^k \to \mathbb{R}\Sigma^{k-r}$ is the linear extension of the above $\del_v$.
\end{definition}

\begin{ex*}
$\del_\TA \TA\TA\TB\TA = 2\TA\TB\TA + \TA\TA\TB$, $\;\;\del_{\TA\TB}\TB\TB\TA\TA = 0$
\end{ex*}

These operators will sometimes be denoted $\sh_v^{(k)}$ or $\del_v^{(k)}$ to emphasize that they act on the space $\mathbb{R}\Sigma^k$. We will often apply $\sh_\TX$ or $\del_\TX$ where $\TX$ is a single letter from $\Sigma$ or~$D$, and we will abbreviate $\sh = \sh_\Ti$ and $\del = \del_\Ti$, as defined on $\mathbb{R}D^k = \mathbb{R}\Sigma^k$. These two operators will play important roles our following definitions and proofs.

$\del$ and $\sh$ are closely related. Assuming $\langle u,v\rangle = \delta_{uv}$ for words $u,v,w$ over an alphabet~$D$, then $\del_w$ and $\sh_w$ are \emph{dual} with respect to this inner product. Indeed, $\left\langle \del_w u, v \right\rangle = \left\langle u, \sh_w v \right\rangle$ follows from straightforward counting of ways to merge $v$ and $w$ into $u$. This readily extends to $\left\langle \del_w f,g \right\rangle = \left\langle f, \sh_w g \right\rangle$, for any formal sums $f,g \in \mathbb{R}D^*$.

\subsection{Full Decomposition in the One-Sample Model} 
\label{full}

The decomposition of every subspace $W_{kr}$ is defined in terms of the operator~$\del$. Since this space is spanned by words in $D^k$ with $\#\Ti=k-r$, it is annihilated by $k-r+1$ applications of~$\del$. Hence, the iterated deletion operators $\del, \del^2, \del^3, \ldots$ give a decomposition of~$W_{kr}$ if we take in each kernel the orthogonal complement of the next one, with respect to the inner product~$\langle-,-\rangle_{\mathbf{p}}$.

\begin{definition}
\label{wkrm}
$W_{krm} := \left(\ker \del^{k-r-m+1}\right) \cap \left(\ker \del^{k-r-m}\right)^{\perp} \subseteq W_{kr} $ 
\end{definition}

\begin{ex*}
For $|\Sigma|=2$, $W_{210} = \mathrm{span}\{\Ti\Tii + \Tii\Ti\}$, $\;W_{211} = \mathrm{span}\{\Ti\Tii - \Tii\Ti\}$ 
\end{ex*}

Theorem~\ref{main2} asserts that these $k-r+1$ spaces $W_{kr0}, W_{kr1}, \dots, W_{kr(k-r)}$ asymptotically diagonalize the covariance matrix of the word statistics in~$W_{kr}$. It also determines the leading term of the variance within each component. 

\begin{rmk*}
We will see in the proof in~\S\ref{proof2} that $\dim W_{krm} = \tbinom{m+r-1}{m}(d-1)^r$.
\end{rmk*}

\subsection{Universal Grading for Subword Statistics} \label{grading}

We now use the other operator~$\sh$. Let $v \in \Sigma^j$ and $w \in \Sigma^n$ where $j<k\leq n$. The following rule may be interpreted as the law of total probability when picking random locations:
$$ \bar\#v(w) \;=\; \sum_{u \in \Sigma^k}\bar\#v(u)\;\bar\#u(w) $$

\begin{ex*}
$\bar\#\TA \;=\; \bar\#\TA\TA  + \tfrac12\,\bar\#\TA\TB  + \tfrac12\,\bar\#\TB\TA$
\end{ex*}

This rule suggests natural embeddings between word spaces, that linearly extend the map $v \mapsto \sum \bar\#v(u)u$. These maps can be specified in terms of the operator~$\sh_\Ti:W_k \to W_{k+1}$ by the following observation.

\begin{prop}
\label{gradingw}
$\bar\#f(w) =  \bar\#\left(\tfrac{1}{k+1}\sh f\right)(w)\;$ for $f \in W_{k}$, $w \in \Sigma^n$, $n>k$. 
\end{prop}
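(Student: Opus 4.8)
The plan is to verify the identity by a direct double-counting argument on words, matching the combinatorial meaning of both sides. Recall that $\bar\#v(w) = \#v(w)/\binom{n}{|v|}$, so the claim $\bar\#f(w) = \bar\#\big(\tfrac{1}{k+1}\sh f\big)(w)$ for $f \in W_k$ is, by linearity, equivalent to showing for a single word $v \in \Sigma^k$ that
\[
  \frac{\#v(w)}{\binom{n}{k}} \;=\; \frac{1}{k+1}\sum_{\TX \in \Sigma} \frac{\#(v\shuffle \TX)(w)}{\binom{n}{k+1}},
\]
since $\sh v = \sh_\Ti v = v \shuffle \Ti = \sum_{\TX\in\Sigma} v\shuffle \TX$. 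So the first step is to reduce, by bilinearity of $\shuffle$ and linearity of $\bar\#$, to this single-word statement.

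The second step is to interpret $\sum_{\TX} \#(v\shuffle\TX)(w)$ combinatorially. A copy of $v\shuffle\TX$ in $w$ is a choice of an increasing sequence of $k+1$ positions in $w$ whose induced subword, after deleting one of the $k+1$ positions, spells $v$ — but one must be careful about the multiplicity with which the shuffle produces each word. The clean way to see it: $\sum_{\TX\in\Sigma}\#(v\shuffle\TX)(w)$ counts pairs consisting of an occurrence of $v$ in $w$ (an increasing $k$-tuple of positions $i_1<\dots<i_k$ with $w_{i_j}=v_j$) together with one extra position $i \in \{1,\dots,n\}$ distinct from all the $i_j$ — because inserting the letter $w_i$ into $v$ at the appropriate slot (determined by where $i$ falls among the $i_j$) yields one term of $v\shuffle w_i$, and conversely every occurrence of every $v\shuffle\TX$ arises exactly once this way. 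Thus $\sum_{\TX}\#(v\shuffle\TX)(w) = (n-k)\,\#v(w)$. This is the heart of the argument; I expect the bookkeeping about shuffle multiplicities versus ``insert at a chosen slot'' to be the main obstacle, and the cleanest route is the recursive definition $u\TX\shuffle v\TY = (u\TX\shuffle v)\TY + (u\shuffle v\TY)\TX$, or else the standard fact that a copy of $v\shuffle\TX$ is precisely a copy of $v$ plus one extra marked position carrying letter $\TX$.

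The final step is arithmetic. Using $\sum_\TX\#(v\shuffle\TX)(w) = (n-k)\#v(w)$ and $\binom{n}{k+1} = \binom{n}{k}\cdot\frac{n-k}{k+1}$, the right-hand side becomes
\[
  \frac{1}{k+1}\cdot\frac{(n-k)\,\#v(w)}{\binom{n}{k+1}} \;=\; \frac{1}{k+1}\cdot\frac{(n-k)\,\#v(w)}{\binom{n}{k}\,\frac{n-k}{k+1}} \;=\; \frac{\#v(w)}{\binom{n}{k}} \;=\; \bar\#v(w),
\]
which is the left-hand side. Reassembling by linearity over a general $f = \sum_v f_v v \in W_k$ completes the proof. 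One should note $n > k$ is used to guarantee $\binom{n}{k+1}\neq 0$ and that the factor $n-k$ is positive, so the normalized counts are well defined.
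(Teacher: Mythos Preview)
Your proof is correct. The paper does not give a detailed proof of this proposition; it presents it as an observation following the ``law of total probability'' identity $\bar\#v(w) = \sum_{u \in \Sigma^{k+1}}\bar\#v(u)\,\bar\#u(w)$, which yields the result once one notes that the coefficient of $u$ in $\sh v$ equals $\#v(u)$. Your direct double-counting argument (establishing $\#(\sh v)(w) = (n-k)\,\#v(w)$) is essentially the same idea unpacked explicitly, and is a perfectly good way to write it out.
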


The map $\sh$ is one to one, as shown in Propositions~\ref{wv} and~\ref{lem123}. Therefore, the identifications $\frac{1}{k+1}\sh:W_{k} \hookrightarrow W_{k+1}$ yield a common vector space for word statistics,
$$ W \;:=\; \bigcup_{k \geq 0} W_k $$
It is straightforward from the definitions that these embeddings respect the primary decomposition of~$W_k$, meaning
$\sh\,W_{kr} \subseteq W_{(k+1)r}$
for all $r \leq k$. 

The following alternative definition for the secondary components $W_{krm}$ will follow from Proposition~\ref{wv} and Lemma~\ref{shwkrm}: 
$$ W_{krm} \;=\; \sh^{k-r-m} \ker \left( \del |_{W_{(r+m)r}} \right) $$
Therefore, these components are identified by
$\sh\,W_{krm} = W_{(k+1)rm}$
for every $m \leq k-r$. In conclusion, there exists a well-defined double grading,
$$ W \;=\; W_0 \,\oplus\, \bigoplus_{r = 1}^{\infty} \bigoplus_{m = 0}^{\infty} W_{(r+m)rm} $$
The identification by $\sh$ respects the orthogonality of components, each one scaling by a constant factor $\beta_{krm}/k^2$, see the proof of Lemma~\ref{shwkrm}. Note that this scaling is anisotropic between different~$W_{krm}$. For example, $W_{210}$ and $W_{211}$ as given in~\S\ref{full} rescale differently, and thus $\langle\Ti\Tii,\Tii\Ti\rangle_{\mathbf{p}}=0 \neq \langle\sh\Ti\Tii,\sh\Tii\Ti\rangle_{\mathbf{p}}$.

\subsection{Structure of the Components} 

Our next goal is to make the components $W_{krm}$ as explicit and meaningful as possible. Since they are defined in terms of $\del = \del_\Ti$, and regard all other $\{\Tii,\dots,\TD\}$ the same, they admit the tensor structure described below.

\begin{definition}
\label{vkrm}
In the special case $|\Sigma|=2$ we write $V$ instead of $W$:
\begin{enumerate}
\itemsep0.25em
\item 
$ V_{kr} := \mathrm{span}\,\left\{e \in \{\Ti,\Tii\}^k : \#\Ti(e) = k-r\right\} $
\item 
$V_{krm} := \left(\ker \del^{k-r-m+1}\right) \cap \left(\ker \del^{k-r-m}\right)^{\perp} \subseteq V_{kr} $ 
\end{enumerate}
\end{definition}

\begin{definition}
\label{phikr}
Let $k \geq r \geq 0$. The isomorphism
$$ \Phi_{kr} : W_{kr} \;\xrightarrow{\sim}\; V_{kr} \otimes (\Ti^\perp)^{\otimes r} $$
is defined via $\Phi_{kr}(e) = \pi(e) \otimes \rho(e)$
for every basis word $e\in D^k$, where
\label{pyro}
\begin{itemize}
\itemsep0.25em
\item 
$\pi(e) \in \{\Ti,\Tii\}^k$ is obtained from $e$ by replacing every $\TI \neq \Ti$ by $\Tii$.
\item 
$\rho(e) \in D^{k - \#\Ti(e)}$ is obtained by removing all occurrences of~$\Ti$.
\end{itemize}
\end{definition}

\begin{ex*}
$\Phi_{53}(\Ti\Tii\Tiii\Ti\Tiii) = \Ti\Tii\Tii\Ti\Tii \otimes \Tii\Tiii\Tiii\;$ , $\;\pi(\Ti\Ti\Ti) = \Ti\Ti\Ti$,
$\;\rho(\Ti\Ti\Ti) = \phi$
\end{ex*}

This factorization is compatible with the $\del_\Ti$ operator via:
$\Phi_{kr}(\del e) = (\del\pi(e)) \otimes \rho(e)$.
It also respects the inner product, because: $\langle \pi(e),\pi(e')\rangle_{\mathbf{p}} \cdot \langle \rho(e),\rho(e')\rangle_{\mathbf{p}} = \delta_{e,e'}$. Therefore,

\begin{prop}
\label{wv}
$\Phi_{kr}$ induces $W_{krm} \;\cong\; V_{krm} \otimes (\Ti^\perp)^{\otimes r}\;$ for every $m \in \{0,\dots,k-r\}$.
\end{prop}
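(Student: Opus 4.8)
The statement is that the isomorphism $\Phi_{kr}:W_{kr}\xrightarrow{\sim}V_{kr}\otimes(\Ti^\perp)^{\otimes r}$ carries the subspace $W_{krm}$ onto $V_{krm}\otimes(\Ti^\perp)^{\otimes r}$. The plan is to verify the three compatibility properties of $\Phi_{kr}$ already announced in the text immediately before the statement, and then push them through the definitions of $W_{krm}$ and $V_{krm}$ as iterated kernels and orthogonal complements of $\del$. First I would record, on basis words $e\in D^k$, the intertwining identity $\Phi_{kr}(\del e)=(\del\pi(e))\otimes\rho(e)$: this holds because $\del=\del_\Ti$ only ever removes copies of $\Ti$, which are exactly the letters that $\pi$ keeps as $\Ti$ and that $\rho$ strips out, so the "which $\Ti$ to delete" bookkeeping on $e$ and on $\pi(e)$ is literally the same, and the $\rho$-part is untouched. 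By linearity this gives, as operators, $\Phi_{kr}\circ\del = (\del\otimes\mathrm{id})\circ\Phi_{kr}$ on $W_{kr}$, hence $\Phi_{kr}\circ\del^{\,s} = (\del^{\,s}\otimes\mathrm{id})\circ\Phi_{kr}$ for every $s\ge 0$.

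Next I would use that $\Phi_{kr}$ is an isometry for the relevant inner products — the text notes $\langle\pi(e),\pi(e')\rangle_{\mathbf p}\,\langle\rho(e),\rho(e')\rangle_{\mathbf p}=\delta_{e,e'}$, and on the tensor side the inner product of $V_{kr}\otimes(\Ti^\perp)^{\otimes r}$ is the product of the two factor inner products, so $\langle\Phi_{kr}(e),\Phi_{kr}(e')\rangle=\delta_{e,e'}=\langle e,e'\rangle_{\mathbf p}$, and this extends bilinearly. Combining the two facts: for any subspace, $\Phi_{kr}$ maps $\ker\del^{\,s}\subseteq W_{kr}$ isomorphically onto $\ker(\del^{\,s}\otimes\mathrm{id})$, and since $\del^{\,s}\otimes\mathrm{id}$ kills a simple tensor $x\otimes y$ iff $\del^{\,s}x=0$, that kernel is exactly $(\ker\del^{\,s}\subseteq V_{kr})\otimes(\Ti^\perp)^{\otimes r}$. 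Being an isometry, $\Phi_{kr}$ also commutes with taking orthogonal complements inside $W_{kr}$. Therefore
$$
\Phi_{kr}(W_{krm}) = \Phi_{kr}\!\left((\ker\del^{k-r-m+1})\cap(\ker\del^{k-r-m})^{\perp}\right)
= \left((\ker\del^{k-r-m+1})\cap(\ker\del^{k-r-m})^{\perp}\subseteq V_{kr}\right)\otimes(\Ti^\perp)^{\otimes r},
$$
and the parenthesized factor is precisely $V_{krm}$ by Definition \ref{vkrm}(2). One small point to check en route is that the orthogonal complement of $A\otimes B$ inside $V_{kr}\otimes(\Ti^\perp)^{\otimes r}$, when $B=(\Ti^\perp)^{\otimes r}$ is the whole second factor, equals $A^{\perp}\otimes(\Ti^\perp)^{\otimes r}$; this is the standard fact that $(A\otimes C)^{\perp}=A^{\perp}\otimes C$ when tensoring with a fixed full factor $C$ and using the product inner product.

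I do not expect a serious obstacle here; the proposition is essentially a formal consequence of the properties of $\Phi_{kr}$ that have been set up. The one place that needs genuine (though routine) care is the intertwining identity $\Phi_{kr}(\del e)=(\del\pi(e))\otimes\rho(e)$ at the level of basis words: one has to check that the multiplicities are preserved, i.e.\ that deleting the $i$-th occurrence of $\Ti$ from $e$ and then applying $\pi,\rho$ gives the same term, with the same coefficient, as first applying $\pi$ and deleting the $i$-th $\Ti$ there — and that $\del$ acting on $e$ never produces a term outside $W_{(k-1)r}$ in a way that would spoil the factorization. This is immediate from the recursive definition of $\del_{v\TX}$ restricted to $v\TX=\Ti$, but it is worth spelling out once. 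Everything after that is linear algebra of tensor products with a fixed full factor.
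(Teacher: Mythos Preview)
Your proposal is correct and follows exactly the approach the paper takes: the paper states the two compatibility facts $\Phi_{kr}(\del e)=(\del\pi(e))\otimes\rho(e)$ and $\langle\pi(e),\pi(e')\rangle_{\mathbf p}\cdot\langle\rho(e),\rho(e')\rangle_{\mathbf p}=\delta_{e,e'}$ immediately before the proposition and then simply writes ``Therefore,'' leaving the deduction implicit. Your write-up is precisely the spelling-out of that deduction, including the routine tensor-algebra observations about kernels and orthogonal complements with a full second factor.
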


\subsection{Discrete Orthogonal Polynomial Spaces} 
\label{poly}

It now remains to explore the structure of~$V_{krm}$. The following set of definitions characterizes $V_{krm}$ and thereby $W_{krm}$ using spaces of polynomials. 

\begin{definition}
\label{deltakr}
Consider the following discrete $r$-simplex in the integer grid.
$$ \Delta_{kr} \;:=\; \left\{\left(d_0,\dots,d_r\right) \in \mathbb{Z}^{r+1} \;\left|\; \begin{aligned} & d_0 \geq 0, \; d_1 \geq 0, \; \dots  \\ &d_0 + \ldots + d_r = (k-r) \end{aligned} \right.\right\} $$
Note that $\left|\Delta_{kr}\right| = \binom{k}{r}$. The \emph{bilinear pairing} with respect to $\Delta_{kr}$ of two $(r+1)$-variate real polynomials $P,Q \in \mathbb{R}[x_0,\dots,x_r]$ is defined as 
$$ \left\langle P, Q\right\rangle_{kr} \;:=\; \sum_{\,\mathbf{d} \in \Delta_{kr}} P(\mathbf{d}) \, Q(\mathbf{d}) $$
\end{definition}

\begin{ex*} $\Delta_{31} = \{(0,2),(1,1),(2,0)\}\;$,
$\;\left\langle x_1^2, \,1 \right\rangle_{31} = 4\cdot1 + 1\cdot1 + 0\cdot1 = 5$
\end{ex*}

\begin{ex*} $\Delta_{32} = \{(1,0,0),(0,1,0),(0,0,1)\}\;$,
$\;\left\langle x_1, \,x_2 \right\rangle_{32} = 0$
\end{ex*}

\begin{definition}
\label{ukrm}
Let $\mathbb{R}_m\left[x_1,\dots,x_r\right]$ be the subspace of polynomials of total degree at most~$m$ in $r$ variables, excluding $x_0$. The \emph{orthogonal polynomial spaces} $U_{krm}$ are recursively defined as follows.
$$ U_{krm} \;:=\; \left\{ P \in \mathbb{R}_m\left[x_1,\dots,x_r\right] \;\left|\; \begin{array}{c} \forall\,Q \in U_{kr0} \cup U_{kr1} \cup \dots \cup U_{kr(m-1)}, \vspace{0.5em} \\  \left\langle P, Q\right\rangle_{kr} = 0 \end{array} \right.\right\} $$
\end{definition}

\begin{ex*} Applying Gram--Schmidt:
$U_{310} = \mathrm{span}\left\{1\right\}$,  
$U_{311} = \mathrm{span}\left\{x_1-1\right\}$, \\
$U_{312} = \mathrm{span}\left\{3x_1^2-6x_1+1\right\}$, and $U_{31m} = \{0\}$ for $m>2$.
\end{ex*}

\begin{ex*} $U_{320} = \mathrm{span}\left\{1\right\}$,  
$U_{321} = \mathrm{span}\left\{3x_1-1,\,2x_2+x_1-1\right\}$.
\end{ex*}

\begin{definition}
\label{psikr}
Consider the map $\Psi_{kr} :\, \mathbb{R}[x_0,\dots,x_r] \to V_{kr}$ defined by
$$ \Psi_{kr}(P) \;:=\; \sum_{\mathbf{d} \in \Delta_{kr}} P(\mathbf{d})\;\Ti^{d_0}\Tii\,\Ti^{d_1}\Tii\,\Ti^{d_2} \cdots \Tii\,\Ti^{d_r} $$
\end{definition}

\begin{ex*}
$\Psi_{42} \left(x_0^2 \right) = 2^2\,\Ti\Ti\Tii\Tii + 1^2\,\Ti\Tii\Ti\Tii + 1^2\,\Ti\Tii\Tii\Ti$
\end{ex*}

\begin{ex*}
$\Psi_{31} \left(3x_1^2-6x_1+1\right) = \Tii\Ti\Ti - 2\cdot\Ti\Tii\Ti + \,\Ti\Ti\Tii$
\end{ex*}

\begin{rmk*}
Note that $\langle P, P' \rangle_{kr} = \left\langle \Psi_{kr}(P), \Psi_{kr}(P') \right\rangle_{\mathbf{p}}$.
\end{rmk*}

By the next theorem, the restriction of $\Psi_{kr}$ to $\R_{k-r}[x_1,\ldots,x_r]$ is an isometry to the word space~$V_{kr}$, such that the orthogonal polynomial spaces map to the components of word statistics.

\begin{alsotheorem}
\label{structure}
$\Psi_{kr}$ induces $ U_{krm} \cong V_{krm} \; $ for every $m \in \{0,\dots,k-r\}$.
\end{alsotheorem}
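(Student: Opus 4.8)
\emph{Strategy and Step~1 (isometry on low-degree polynomials).} I would reduce the statement to showing that $\Psi_{kr}$, restricted to polynomials of total degree at most $k-r$, is an isometry carrying the degree filtration $\mathbb{R}_0\subseteq\mathbb{R}_1\subseteq\cdots\subseteq\mathbb{R}_{k-r}[x_1,\dots,x_r]$ onto the flag $V_{kr0}\subseteq V_{kr0}\oplus V_{kr1}\subseteq\cdots\subseteq V_{kr}$ built from the iterated kernels $\ker\del^j$; matching successive orthogonal complements of the two flags then identifies $U_{krm}$ with $V_{krm}$. To set this up, identify $\Delta_{kr}$ with the lattice simplex $\{(d_1,\dots,d_r)\in\mathbb{Z}_{\ge 0}^{\,r}:d_1+\dots+d_r\le k-r\}$ via $d_0:=(k-r)-\sum_{i\ge 1}d_i$. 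Since the words $\{\Ti,\Tii\}^k$ are orthonormal for $\langle-,-\rangle_{\mathbf{p}}$, the identity recorded after Definition~\ref{psikr} says that $\Psi_{kr}$ sends $\langle-,-\rangle_{kr}$ to $\langle-,-\rangle_{\mathbf{p}}$. As $\dim\mathbb{R}_{k-r}[x_1,\dots,x_r]=\binom{k}{r}=|\Delta_{kr}|=\dim V_{kr}$, it remains only to check that $\Psi_{kr}$ is injective on $\mathbb{R}_{k-r}[x_1,\dots,x_r]$, equivalently that no nonzero polynomial of total degree $\le k-r$ in $r$ variables vanishes on that lattice simplex. This is the classical well-posedness of polynomial interpolation on a lattice simplex, which I would establish by writing down explicit Lagrange interpolants as products of the elementary factors $\binom{x_i}{a_i}$ and $\binom{(k-r)-\sum_{i=1}^r x_i}{(k-r)-\sum_{i=1}^r a_i}$, of total degree exactly $k-r$. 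It follows that $\langle-,-\rangle_{kr}$ is positive definite on $\mathbb{R}_{k-r}[x_1,\dots,x_r]$, so Definition~\ref{ukrm} yields the orthogonal direct sums $\mathbb{R}_m[x_1,\dots,x_r]=U_{kr0}\oplus\cdots\oplus U_{krm}$ for all $0\le m\le k-r$.

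\emph{Step~2 (the insertion operator through $\Psi_{kr}$).} Inserting one copy of $\Ti$ into the basis word $\Ti^{d_0}\Tii\cdots\Tii\Ti^{d_r}$ produces $\Ti^{d_0}\Tii\cdots\Ti^{d_i+1}\cdots\Tii\Ti^{d_r}$ with multiplicity $d_i+1$, for each $i$. Reindexing by the composition of the output word gives $\sh\,\Psi_{kr}(P)=\Psi_{(k+1)r}(\mathcal S_kP)$ with
\[ \mathcal S_kP \;=\; \Bigl((k+1-r)-\textstyle\sum_{i=1}^r x_i\Bigr)P \;+\; \sum_{i=1}^r x_i\,P(x_1,\dots,x_{i-1},x_i-1,x_{i+1},\dots,x_r). \]
The degree-$(\deg P+1)$ homogeneous parts of the two summands cancel, so $\mathcal S_k$ is degree-nonincreasing and maps each $\mathbb{R}_m[x_1,\dots,x_r]$ into itself. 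Iterating, $\sh^{j}\circ\Psi_{(k-j)r}=\Psi_{kr}\circ(\mathcal S_{k-1}\circ\cdots\circ\mathcal S_{k-j})$, whose image on $\mathbb{R}_{k-j-r}[x_1,\dots,x_r]$ sits inside $\Psi_{kr}(\mathbb{R}_{k-j-r}[x_1,\dots,x_r])$; since $\sh$ is one-to-one (see~\S\ref{grading}), so is $\sh^{j}$, and as both sides then have dimension $\binom{k-j}{r}$ by Step~1, we obtain $\operatorname{Im}\bigl(\sh^{j}\colon V_{(k-j)r}\to V_{kr}\bigr)=\Psi_{kr}\bigl(\mathbb{R}_{k-j-r}[x_1,\dots,x_r]\bigr)$ for $0\le j\le k-r$.

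\emph{Step~3 (dualize and conclude).} On $V$ the inner product $\langle-,-\rangle_{\mathbf{p}}$ makes $\{\Ti,\Tii\}^{*}$ orthonormal, so by~\S\ref{operators} the operators $\del$ and $\sh$ are mutually adjoint; hence $\ker\bigl(\del^{j}\colon V_{kr}\to V_{(k-j)r}\bigr)=\bigl(\operatorname{Im}\sh^{j}\bigr)^{\perp}=\Psi_{kr}\bigl(\mathbb{R}_{k-j-r}[x_1,\dots,x_r]^{\perp}\bigr)$, where $\perp$ is taken inside $\mathbb{R}_{k-r}[x_1,\dots,x_r]$ under $\langle-,-\rangle_{kr}$, using that $\Psi_{kr}$ is an isometric isomorphism. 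Substituting $j=k-r-m+1$ and $j=k-r-m$ into $V_{krm}=\ker\del^{k-r-m+1}\cap(\ker\del^{k-r-m})^{\perp}$ and pulling back through $\Psi_{kr}$, and using positive-definiteness, gives
\[ \Psi_{kr}^{-1}(V_{krm})\;=\;\mathbb{R}_{m-1}[x_1,\dots,x_r]^{\perp}\cap\mathbb{R}_m[x_1,\dots,x_r]\;=\;\{P\in\mathbb{R}_m[x_1,\dots,x_r]\colon P\perp\mathbb{R}_{m-1}[x_1,\dots,x_r]\}, \]
which by Step~1 is precisely $U_{krm}$. Therefore $\Psi_{kr}$ restricts to an isomorphism $U_{krm}\xrightarrow{\ \sim\ }V_{krm}$, in fact an isometry between $\langle-,-\rangle_{kr}$ and $\langle-,-\rangle_{\mathbf{p}}$, which is the assertion. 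I expect the main work to be the combinatorial bookkeeping in Step~2 — pinning down $\mathcal S_k$ and the top-degree cancellation — together with the interpolation input of Step~1; Step~3 is then pure manipulation of adjoints and orthogonal complements.
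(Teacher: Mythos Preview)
Your proof is correct, and it overlaps substantially with the paper's argument while streamlining a couple of points. Both proofs identify the same intertwining operator (your $\mathcal S_k$ is the paper's $\sh_{kr}^*$ of Definition~\ref{sh*}, and your Step~2 identity is Lemma~\ref{sh*2}), and both rest on $\Psi_{kr}$ being an isometry from $\mathbb{R}_{k-r}[x_1,\dots,x_r]$ onto $V_{kr}$. For injectivity of $\Psi_{kr}$ you invoke Lagrange interpolation on the lattice simplex, whereas the paper (Lemma~\ref{psi1}) gives a short induction using discrete partial derivatives; both are fine.

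The genuine difference is in how the two proofs link the degree filtration to the kernel filtration of $\del$. The paper proceeds by induction on $k$: it uses Lemma~\ref{shwkrm} ($\sh:V_{krm}\xrightarrow{\sim}V_{(k+1)rm}$), which in turn draws on the eigenspace analysis of $B=\sh\del$ from Lemma~\ref{lem:creation_annihilation}, together with $\sh_{kr}^*$ being a degree-\emph{preserving} automorphism (Lemma~\ref{sh*1}). You instead argue directly that $\operatorname{Im}(\sh^j\colon V_{(k-j)r}\to V_{kr})=\Psi_{kr}(\mathbb{R}_{k-j-r})$ by combining (i) degree-\emph{nonincreasingness} of $\mathcal S_k$, (ii) injectivity of $\sh$, and (iii) a dimension count, and then dualize via $\ker\del^j=(\operatorname{Im}\sh^j)^\perp$. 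This bypasses both the induction and any appeal to the spectral structure of $B$, at the cost of the adjointness input in Step~3. Your route is slightly more self-contained; the paper's route has the side benefit of simultaneously exhibiting the $\sh$-compatibility of the grading used in~\S\ref{grading}.
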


The proof of Theorem~\ref{structure} is given in~\S\ref{sec23}. Together with $\Phi_{kr}$ from Proposition~\ref{wv} above, it allows the construction of explicit orthogonal bases for the word statistics in every~$W_{krm}$. 

At the end of~\S\ref{sec23}, we discuss a more symmetric description of these spaces using \emph{homogeneous} polynomials in $\mathbb{R}_m[x_0,\dots,x_r]$. That representation suggests further refinements of the components.

\begin{rmk*}
In the case $r=1$, the resulting polynomials are, up to a simple reparameterization, the so-called discrete Chebyshev polynomials of the second type. These polynomials serve as eigenvectors of the standard representation $S^{(k-1,1)}$ of the symmetric group in the context of card shuffling, as was observed in \cite[\S5.2]{UR02}. Our decomposition generalizes them to the multivariate setting.
\end{rmk*}

\subsection{Statistics in the Multi-Sample Model} 
\label{multi}

Theorems~\ref{main3} and~\ref{main4} concern the random model $\mathcal{W}'(\mathbf{n})$, where $\mathbf{n} = (n_\TA,n_\TB,\dots)$ and every letter~$\TX$ occurs exactly $n_{\TX}$ times. The statistics under consideration are word combinations in $W_{\kpa} = \mathbb{R}\tbinom{\mathsmaller{\Sigma}}{\kpa}$ where $\kpa = (k_\TA,k_\TB,\dots)$, so that every letter~$\TX$ appears $k_{\TX}$ times. Before studying the structure of the space~$W_{\kpa}$, we explain why it is sufficient to consider this kind of combinations, with words of the same composition~$\kpa$.

Recall the normalized statistics $\tilde\#f(w) = \#f(w) / \prod_\TX \tbinom{n_\TX}{k_\TX}$ from \S\ref{results}. It follows that for every $f \in W_{\kpa}$ and $\TX \in \Sigma$,
$$ \tilde\#f(w) \;=\; \tilde\#\left[ \frac{1}{k_\TX+1}\sh_\TX f\right](w) $$
assuming that the word $w$ satisfies $\#\TX(w) > k_{\TX}$. Hence subword statistics of composition~$\kpa$ can also be expressed by statistics of composition $\kpa + \TX := (k_\TA, k_\TB,\dots,k_\TX+1,\dots)$. By iterating, one can similarly express $f$ by words of composition ${\kpa+\TX+\TY}$ for any $\TX,\TY \in \Sigma$, with possibly~$\TX=\TY$. Note that $\sh_\TX$ and $\sh_\TY$ commute, so the resulting combination only depends on~$\TX+\TY$. In general, the word statistic $f \in W_{\kpa}$ can be expressed in every $W_{\kpa'}$ such that $\kpa' \geq \kpa$ \emph{pointwise}, meaning
$k_\TX' \geq k_\TX$ for every $\TX \in \Sigma$. The maps $\sh_\TX$ are injective, as restrictions of those considered in~\S\ref{grading}. In conclusion, 

\begin{prop}
\label{embed}
For every $\kpa \leq \kpa'$, there exists an embedding $\iota: W_{\kpa} \hookrightarrow W_{\kpa'}$ such that $\tilde\# [\iota f](w) = \tilde\#f(w)$ for all $f \in W_{\kpa}$, $w \in \tbinom{\mathsmaller{\Sigma}}{\mathbf{n}}$, $\mathbf{n} \geq \kpa'$.
\end{prop}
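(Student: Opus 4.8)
The plan is to build the embedding $\iota$ explicitly as a composition of single-letter insertion operators and then verify the two required properties: that it preserves the normalized statistic $\tilde\#$, and that it is injective. First I would reduce to the case $\kpa' = \kpa + \TX$ for a single letter $\TX \in \Sigma$, since any $\kpa \leq \kpa'$ is reached from $\kpa$ by finitely many such unit increments, and the composition of embeddings with the stated properties again has those properties. For the unit step, I would set $\iota := \frac{1}{k_\TX+1}\sh_\TX$, restricted to $W_{\kpa}$; one first checks that $\sh_\TX$ maps $W_{\kpa} = \mathbb{R}\tbinom{\mathsmaller{\Sigma}}{\kpa}$ into $W_{\kpa+\TX} = \mathbb{R}\tbinom{\mathsmaller{\Sigma}}{\kpa+\TX}$, which is immediate since shuffling a word of composition $\kpa$ with the single letter $\TX$ yields a sum of words of composition $\kpa + \TX$.

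Next I would establish the identity $\tilde\#[\iota f](w) = \tilde\#f(w)$ for every $w \in \tbinom{\mathsmaller{\Sigma}}{\mathbf{n}}$ with $\mathbf{n} \geq \kpa + \TX$. This follows from the displayed ``law of total probability'' relation already stated in~\S\ref{multi}, namely $\tilde\#f(w) = \tilde\#\big[\frac{1}{k_\TX+1}\sh_\TX f\big](w)$ whenever $\#\TX(w) > k_\TX$; the hypothesis $\mathbf{n} \geq \kpa+\TX$ guarantees $n_\TX \geq k_\TX + 1 > k_\TX$, so the relation applies. Chaining this over the sequence of unit steps from $\kpa$ to $\kpa'$ gives $\tilde\#[\iota f](w) = \tilde\#f(w)$ for all $f \in W_{\kpa}$ and all $w$ with $\mathbf{n} \geq \kpa'$, as required. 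I should be careful to note that different orders of performing the unit increments give the same map, since the operators $\sh_\TX$ and $\sh_\TY$ commute (they are both instances of right-shuffling and the shuffle product is commutative and associative), so $\iota$ is well-defined independently of the chosen path.

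Finally, injectivity of $\iota$ reduces to injectivity of each $\sh_\TX : W_{\kpa} \to W_{\kpa+\TX}$. These maps are restrictions of the insertion operators $\sh_v$ from~\S\ref{grading}, and the injectivity of $\sh = \sh_\Ti$ on $\mathbb{R}D^k$ is asserted there (to be proved via Propositions~\ref{wv} and~\ref{lem123}); the single-letter insertion $\sh_\TX$ for $\TX \in \Sigma$ is the analogous statement in the original alphabet basis, and restricts to the subspace of words of composition $\kpa$. I expect the main obstacle to be precisely this injectivity claim: unlike the statistic-preservation identity, which is a direct bookkeeping consequence of the total-probability rule, injectivity of $\sh_\TX$ requires the structural input about the insertion operators developed later in the paper. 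Given that input, one argues that $\sh_\TX$ restricted to $W_{\kpa}$ is injective because $W_{\kpa}$ sits inside the larger word space on which $\sh_\TX$ (equivalently, a suitable $\sh_v$) is already known to be one-to-one, and injectivity is inherited by restriction to any subspace; composing injective maps then yields an injective $\iota$, completing the proof.
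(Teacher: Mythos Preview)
Your proposal is correct and matches the paper's approach exactly: the paper likewise constructs $\iota$ as an iterated composition of the scaled single-letter insertions $\tfrac{1}{k_\TX+1}\sh_\TX$, invokes the displayed total-probability identity in~\S\ref{multi} for statistic preservation, notes commutativity of the $\sh_\TX$ for well-definedness, and defers injectivity of each $\sh_\TX$ to the structural results on insertion operators from~\S\ref{grading} (Propositions~\ref{wv} and~\ref{lem123}). In fact the paper does not give a separate formal proof at all---the proposition is simply announced as ``In conclusion'' after that discussion---so your write-up is, if anything, more explicit than the original; your identification of the injectivity step as the one non-trivial input is also exactly how the paper treats it.
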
 

This proposition justifies our focus on the linear spaces $W_{\kpa}$.
Statistics that combines subword counts from different compositions can always be expressed by some combination in a common larger $\kpa$.

\begin{ex*}
$\tilde\#\TA\TA\TB + \tilde\#\TA\TB\TB \;=\; \tilde\#\left[2\,\TA\TA\TB\TB + \TA\TB\TA\TB + \frac12\,\TA\TB\TB\TA + \frac12\,\TB\TA\TA\TB \right]$ 
\end{ex*}

Moreover, one can use the embeddings $W_{\kpa} \hookrightarrow W_{\kpa'}$ to identify these spaces, with well-defined statistics $\tilde\#f$. By the commutativity mentioned before, these identifications are compatible with each other, and yield a common space for all word statistics on~$\mathcal{W}'(\mathbf{n})$:
$$ W_{*} \;:=\; \bigcup_{\kpa \geq \mathbf{0}} W_{\kpa}$$

Finally, we use the following notation for the standard inner product on every space~$W_{\kpa}$.

\begin{definition}
\label{inner2}
Let $\langle-,-\rangle$ be the inner product on $W_{\kpa}$ that makes the words in $\tbinom{\mathsmaller{\Sigma}}{\kpa}$ an orthonormal basis.
\end{definition}

Note that this inner product differs by a constant factor from Definition~\ref{inner}, used for the spaces $\mathbb{R}\Sigma^k$. In particular, it does not depend on a parameter of the model such as~$\mathbf{p}$. No confusion should arise because these spaces are studied in different random model.

\subsection{Reordering, Representations and Replacement} 
\label{repr}

The symmetric group acts on words by \emph{reordering}. Given a permutation $\tau \in S_k$ and a word $u = u_1\cdots u_k \in \Sigma^k$, we let $u \tau = u_{\tau(1)}\cdots u_{\tau(k)}$. 

This action linearly extends to the group ring $\mathbb{R}S_k$ and to formal sums in~$\mathbb{R}\Sigma^k$, or to the subspace $W_{\kpa} = \mathbb{R}\tbinom{\mathsmaller{\Sigma}}{\kpa}$. We denote $A := \mathbb{R}S_k$.

\begin{ex*}
$(\TA\TA\TB\TC + 8\,\TC\TB\TA\TA) \, (\textrm{id} - (2341)) \,=\, \TA\TA\TB\TC + 8\,\TC\TB\TA\TA - \TA\TB\TC\TA  - 8\,\TB\TA\TA\TC$
\end{ex*}

As another example, consider the following \emph{averaging} operator on the space $W_{\kpa}$, defined in terms of $A$'s action.

\begin{definition}
\label{stab}
For $I \subseteq \{1,\dots,k\}$, let $a_I := \sum_{\tau \in \stab I} \tau$,
where the pointwise stabilizer, $\stab I := \left\{\tau \in S_k \,\mid\, \forall i \in I, \tau(i) = i \right\}$.
\end{definition}

\begin{ex*}
$(\TA\TA\TA\TA\TB)a_{\{1,2\}} \,=\, 2\,\TA\TA\TA\TA\TB + 2\,\TA\TA\TA\TB\TA + 2\,\TA\TA\TB\TA\TA$
\end{ex*}

Since $W_{\kpa}$ is a right $A$-module, it decomposes into simple representations of the symmetric group~$S_k$. This decomposition is a classical topic. Here we recall some necessary definitions and results, and refer the reader to~\cite[Lecture~4]{fulton2013representation} or \cite[\S 2.11]{sagan2013symmetric}. Our notation is similar to that of~\cite[\S 5.4.2]{dieker2018spectral}, though we use letters rather than numerals.

We throughout use the \emph{lexicographical} ordering $\TA < \TB < \TC < \cdots$ on the finite alphabet~$\Sigma$. A~finite sequence of integers $\lmda = (\lambda_\TA, \lambda_\TB, \lambda_\TC, \dots)$ is called a \emph{partition} if $\lambda_\TA \geq \lambda_\TB \geq \lambda_\TC \geq \cdots > 0$, which is denoted by $\lmda \vdash k$ if $k = \sum_\TX \lambda_\TX$. As noted in the introduction, without loss of generality it is sufficient to study $W_{\kpa}$ where the word composition $\kpa$ is a \emph{partition}.

Every partition $\lmda$ corresponds to a simple $A$-module, as follows. Fix the word $\alpha_{\lmda} := \TA^{\lambda_\TA}\TB^{\lambda_\TB}\TC^{\lambda_\TC}\cdots \in \tbinom{\mathsmaller{\Sigma}}{\lmda}$, and consider the subgroup $Q_{\lmda} \leq S_k$ containing all permutations that permute the positions of the 1st occurrence of each letter in~$\alpha_{\lmda}$, the positions of the 2nd occurrences, and so on. Consider the element $b_{\lmda} := \sum_{\tau \in Q_{\lmda}} \mathrm{sign}(\tau) \tau \in A$. We remark that other choices of $\alpha$ having composition $\lmda$ and other ``transversal'' subgroups $Q$ work as well.

\begin{definition}
\label{specht}
The \emph{Specht} $A$-module of $\lmda \vdash k$ is $S^{\lmda} := \alpha_{\lmda} b_{\lmda} A \subseteq W_{\lmda}$.
\end{definition}

\begin{ex*}
$\lmda = (3,2)$, $\alpha_{\lmda} = \TA\TA\TA\TB\TB$, $Q_{\lmda} = S_{\{1,4\}} \times S_{\{2,5\}} \times S_{\{3\}}$, $b_{\lmda} = \mathrm{id} - (14) - (25) + (14)(25)$, $S^{\lmda} = (\TA\TA\TA\TB\TB - \TA\TB\TA\TB\TA - \TB\TA\TA\TA\TB + \TB\TB\TA\TA\TA)A$.
\end{ex*}

The modules $S^{\lmda}$ for $\lmda \vdash k$ are all the simple $A$-modules up to isomorphism. In order to find the simple $A$-submodules of~$W_{\kpa}$, we introduce another word operator. A \emph{table} of words $T = (t_\TA, t_\TB, \dots)$ assigns a word over  $\Sigma$ to every letter in~$\Sigma$. Every table $T$ has a \emph{shape} $\lmda = \lmda(T)  = (\lambda_\TA,\lambda_\TB,\dots)$ such that $\lambda_\TX = |t_\TX|$ for every $\TX \in \Sigma$, and a \emph{composition} $\kpa = \kpa(T) = (k_\TA,k_\TB,\dots)$ such that $k_{\TX} = \sum_{\TY}\#\TX(t_{\TY})$. Note that $\lmda$ and $\kpa$ do not have to be partitions.

\begin{ex*}
$T = \left[\begin{smallmatrix*}[l]t_\TA\\t_\TB\\t_\TC\end{smallmatrix*}\right] = \left[\begin{smallmatrix*}[l]\TA\TA\TB\\\TB\TC\\\TC\end{smallmatrix*}\right]$ with $\lmda = (3,2,1)$ and $\kpa = (2,2,2)$.
\end{ex*}

\begin{samepage}
\begin{definition}
\label{Theta}
Consider a table $T$ of shape $\lmda$ and composition $\kpa$. The \emph{replacement} operator, 
$$ \Theta[T] : W_{\lmda} \;\to\; W_{\kpa} $$
maps every word $u \in \tbinom{\mathsmaller{\Sigma}}{\lmda}$ to the sum of all words in $\tbinom{\mathsmaller{\Sigma}}{\kpa}$ that are obtained from $u$ by replacing the $\TA$s by the letters of $t_\TA$ in any order, the $\TB$s by the letters $t_\TB$ in any order, and so on. This extends linearly to~$W_{\lmda}$.
\end{definition}
\end{samepage}

\begin{ex*}
If $t_\TA = \TA^{k_\TA}$, $t_\TB = \TB^{k_\TB}$, etc., then $\Theta[T]$ is the identity map on~$W_{\kpa}$.
\end{ex*}

\begin{ex*}
With $t_\TA = \TA\TA\TB$, $t_\TB = \TB\TC$ and $t_\TC = \TC$ as above,
$$ \Theta\left[\begin{smallmatrix*}[l]\TA\TA\TB\\\TB\TC\\\TC\end{smallmatrix*}\right](\TC\TB\TB\TA\TA\TA) \;=\; \TC\TB\TC\TA\TA\TB + \TC\TB\TC\TA\TB\TA + \TC\TB\TC\TB\TA\TA + \TC\TC\TB\TA\TA\TB + \TC\TC\TB\TA\TB\TA + \TC\TC\TB\TB\TA\TA $$
\end{ex*}

$\Theta[T]$ is equivariant under the actions of~$S_k$. Hence $\Theta[T]:S^{\lmda} \to W_{\kpa}$ yields either~0 or an isomorphic copy of the $A$-module $S^{\lmda}$. A table $T = (t_\TA,t_\TB,\dots)$ is called \emph{semistandard} if $\kpa(T)$ and $\lmda(T)$ are partitions, every word $t_\TX$ is weakly increasing, and every column $(t_{\TA i},t_{\TB i},\dots)$ is strictly increasing. Thus, the two examples above are semistandard. \emph{Young's Rule} says that a semistandard $T$ gives a nonzero copy of $S^{\lmda}$ in $W_{\kpa}$, and together they provide a decomposition into simple $A$-modules, as follows:
$$ W_{\kpa} \;=\; \bigoplus_{\substack{T\;\mathrm{semistandard} \\[2pt] \kpa(T) = \kpa}}\, \Theta[T] S^{\lmda(T)} $$

\begin{ex*}
$W_{(2,1,1)} \;=\; S^{(2,1,1)} \;\oplus\; \Theta\left[\begin{smallmatrix*}[l]\TA\TA\\\TB\TC\end{smallmatrix*}\right]S^{(2,2)} \;\oplus\; \Theta\left[\begin{smallmatrix*}[l]\TA\TA\TB\\\TC\end{smallmatrix*}\right]S^{(3,1)} \;\oplus\; \Theta\left[\begin{smallmatrix*}[l]\TA\TA\TC\\\TB\end{smallmatrix*}\right]S^{(3,1)} \;\oplus\; \Theta\left[\begin{smallmatrix*}[l]\TA\TA\TB\TC\end{smallmatrix*}\right]S^{(4)}$
\end{ex*}

The multiplicity of $S^{\lmda}$ in $W_{\kpa}$ is the so-called \emph{Kostka number} $K_{\kpa\lmda}$, which is the number of semistandard tables with these $\kpa$ and $\lmda$. For example, $K_{211,31} = 2$ is demonstrated above. Note that $K_{\kpa\lmda}$ vanishes if $k_\TA > \lambda_\TA$.

\subsection{Primary Decomposition in the Multi-Sample Model} 
\label{decomposition3}

The decomposition $W_{\kpa} \;=\; W_{\kpa0} \oplus W_{\kpa1} \oplus \dots \oplus W_{\kpa(k-k_\TA)}$ is based on Young's Rule, grouping together submodules with the same $\lambda_\TA$ as follows.

\begin{definition}
\label{Wkkr}
Let $\kpa = (k_\TA,k_\TB,\dots) \vdash k$ and $r \in \{0,1,\dots,k-k_\TA\}$.
$$ W_{\kpa r} \;:=\; \bigoplus_{\substack{T\;\mathrm{semistandard} \\[2pt] \kpa(T) = \kpa \\[1pt] \lambda_\TA(T)=k-r}}\, \Theta[T] S^{\lmda(T)} $$
\end{definition}

The components $W_{\kpa r}$ are pairwise orthogonal, with respect to the inner product of Definition~\ref{inner2}. This follows from the orthogonality of copies of different $S^{\lmda}$ in Young's Rule.

This completes the required definitions for Theorem~\ref{main3}, which asserts that the normalized subword statistics $\tilde\#f$ for $f \in W_{\kpa r}$ have order~$n^{-r/2}$, and statistics of different components are asymptotically uncorrelated. 

The proof of Theorem~\ref{main3} in~\S\ref{proof3} provides equivalent descriptions of the components $W_{\kpa r}$ of~$W_{\kpa}$. See Lemmas~\ref{easy}, \ref{hard}, \ref{stabsum}. Together with the above-mentioned pairwise orthogonality, these properties provide some shortcuts for practical computation of the primary decomposition, rather than applying Young's Rule.

\subsection{Replacement, Projection, Lifting, and Card-Shuffling Operators} 
\label{moreoperators}

Before presenting the refined decomposition of subword statistics in the two-sample model, we enhance our toolbox with several more linear word operators from~\cite{dieker2018spectral}. First, here is an abbreviation for a special case of the replacement operator.

\begin{definition}
\label{thetaab}
Let $\TX,\TY \in \Sigma$ and $\lmda = (\lambda_\TA, \lambda_\TB,\dots)$. 
$$ \Theta_{\TX\TY} \;:=\; \Theta[T] : W_{\lmda} \;\to\; W_{\lmda-\TX+\TY} $$
where $T = (t_\TA,t_\TB,\dots)$ is such that $t_\TX = \TX\cdots\TX\TY$ and $t_\TZ = \TZ\cdots\TZ$ for every $\TZ$ other than~$\TX$.
\end{definition}

\begin{ex*}
$\Theta_{\TA\TB}: W_{(4,6)} \to W_{(3,7)}\;$ is the map $\Theta[T]$ for
$T = \left[\begin{smallmatrix*}[l]t_\TA\\t_\TB\end{smallmatrix*}\right] = \left[\begin{smallmatrix*}[l]\TA\TA\TA\TB\\\TB\TB\TB\TB\TB\TB\end{smallmatrix*}\right]$.
\end{ex*}

\begin{ex*}
$\Theta_{\TA\TB}\, \TS\TA\TB\TA\TB\TA = \TS\TB\TB\TA\TB\TA + \TS\TA\TB\TB\TB\TA + \TS\TA\TB\TA\TB\TB$
\end{ex*}

In other words, the linear operator $\Theta_{\TX\TY}$ maps every word to the sum of all words obtained by replacing one occurrence of $\TX$ by~$\TY$. It is defined on all word spaces $W_{\lmda}$ where it is understood to give 0 on words with no occurrence of~$\TX$.

\smallskip
In the two-letter case $\kpa = (k_\TA,k_\TB)$, it is easy write the decomposition of $W_{\kpa}$ into $W_{\kpa r}$ by applying Young's rule from \S\ref{repr}-\S\ref{decomposition3}. The result is as follows.

\begin{lem} 
\label{lem:Wkr=Specht}
Let $k_\TA \geq k_\TB \geq 0$.
$$ W_{(k_\TA,k_\TB)} \;=\; \Theta_{\TA\TB}^{k_{\TB}} S^{(k_\TA+k_\TB,0)}  \,\oplus \dots \oplus\, \Theta_{\TA\TB}^2 S^{(k_\TA+2,k_\TB-2)} \,\oplus\, \Theta_{\TA\TB} S^{(k_\TA+1,k_\TB-1)} \,\oplus\, S^{(k_\TA,k_\TB)}$$
Therefore, for $\kpa = (k_\TA,k_\TB)$ and $k = k_\TA+k_\TB$, we have $W_{\kpa r} = \Theta_{\TA\TB}^{k_\TB-r} S^{(k-r,r)}$.
\end{lem}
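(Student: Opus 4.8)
The plan is to invoke Young's Rule for the two-letter case directly, which gives a particularly clean combinatorial picture. First I would recall from \S\ref{repr} that $W_{(k_\TA,k_\TB)} = \bigoplus_T \Theta[T] S^{\lmda(T)}$, the sum ranging over semistandard tables $T$ with composition $\kpa = (k_\TA,k_\TB)$. Since the alphabet has only two letters, such a table has the form $T = (t_\TA, t_\TB)$ with $t_\TA, t_\TB$ weakly increasing words over $\{\TA,\TB\}$ and every column strictly increasing. The strict-increase condition on columns forces $t_\TA$ to begin with a block of $\TA$'s whose length equals $|t_\TB|$ (those are the positions with a column of height two), and the remaining entries of $t_\TA$ form the top row of a height-one strip. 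Writing $|t_\TB| = k_\TB - r$ for the number of height-two columns, one sees that $t_\TB = \TB^{k_\TB-r}$ is forced (it is weakly increasing and sits below a block of $\TA$'s, and it must contribute exactly $k_\TB$ letters $\TB$ in total together with $t_\TA$). Counting the letters: $t_\TA$ has $k_\TB - r$ copies of $\TA$ forming the height-two columns, then must account for the remaining $k_\TA - (k_\TB - r)$ letters $\TA$ and $r$ letters $\TB$, placed weakly increasingly, so $t_\TA = \TA^{k_\TA - k_\TB + r}\cdot(\text{mixture})$. Being semistandard and weakly increasing pins this down uniquely: $t_\TA = \TA^{k_\TA}\TB^{\,?}$ is not right in general, but in any case for each value of $r \in \{0,\dots,k_\TB\}$ there is exactly one semistandard table, of shape $\lmda = (k_\TA + r', k_\TB - r')$ for an appropriate $r'$.

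Next I would identify the shape. The shape $\lmda(T) = (|t_\TA|, |t_\TB|)$ has $|t_\TB| = k_\TB - r$ by the above, hence $|t_\TA| = k - (k_\TB - r) = k_\TA + r$, so $\lmda(T) = (k_\TA + r, k_\TB - r)$ where now $r$ ranges over $\{0,1,\dots,k_\TB\}$. This matches the claimed summands $S^{(k_\TA + r, k_\TB - r)}$ appearing with multiplicity one, which is consistent with the Kostka numbers $K_{(k_\TA,k_\TB),(k_\TA+r,k_\TB-r)} = 1$ (the number of semistandard tables is one, as just argued). So the decomposition reads
$$
W_{(k_\TA,k_\TB)} \;=\; \bigoplus_{r=0}^{k_\TB} \Theta[T_r] \, S^{(k_\TA+r,\,k_\TB-r)},
$$
where $T_r$ is the unique semistandard table of composition $\kpa$ and shape $(k_\TA+r, k_\TB-r)$.

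Then I would check that $\Theta[T_r] = \Theta_{\TA\TB}^{k_\TB - r}$ up to the identification of domains, and that this is the summand indexed by $W_{\kpa r}$ in Definition~\ref{Wkkr}. For the first point: by Definition~\ref{thetaab}, $\Theta_{\TA\TB} : W_{(a,b)} \to W_{(a-1,b+1)}$ replaces one $\TA$ by a $\TB$ in all ways; iterating $k_\TB - r$ times starting from $W_{(k_\TA+r, k_\TB - r)}$ lands in $W_{(k_\TA, k_\TB)}$, and one checks that the composite replacement operator coincides with $\Theta[T_r]$ — this is the content of the examples following Definition~\ref{thetaab}, and formally follows since $\Theta$ is functorial with respect to composing tables (replacing, then replacing again, is replacing by the composite table), and the composite table built from $k_\TB - r$ copies of the elementary $\TA\!\to\!\TB$ swap is exactly $T_r$ by the weakly-increasing/strictly-increasing bookkeeping above. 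For the second point: Definition~\ref{Wkkr} groups the Young's-Rule summands by $\lambda_\TA(T) = k - r$; here $\lambda_\TA(T_{r'}) = k_\TA + r' = k - (k_\TB - r')$, so $\lambda_\TA = k - r$ forces $r' = k_\TB - r$, i.e. $W_{\kpa r} = \Theta[T_{k_\TB - r}] S^{(k-r, r)} = \Theta_{\TA\TB}^{k_\TB - r} S^{(k-r,r)}$, as claimed.

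The main obstacle I anticipate is the bookkeeping identifying the iterated elementary replacement $\Theta_{\TA\TB}^{k_\TB - r}$ with the single replacement operator $\Theta[T_r]$ attached to the semistandard table: one must be careful that composing replacement operators corresponds to composing tables in the correct order, and that the unique semistandard table of the required shape and composition is indeed the "staircase" table obtained this way (rather than merely having the right shape). Both facts are essentially combinatorial identities about the operators $\Theta$ from \cite{dieker2018spectral}, and once the functoriality $\Theta[T] \circ \Theta[T'] = \Theta[T \ast T']$ is pinned down the rest is routine. Everything else — that each Kostka number is $0$ or $1$ in the two-letter case, that the shapes are exactly $(k_\TA+r,k_\TB-r)$, and pairwise orthogonality — is immediate from the general theory recalled in \S\ref{repr}.
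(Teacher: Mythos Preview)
Your approach is exactly what the paper does: it simply states that the lemma follows ``by applying Young's rule from \S\ref{repr}--\S\ref{decomposition3}'' and gives no further argument, so your plan of enumerating the semistandard two-row tables and reading off the decomposition is the intended one.

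There is, however, a bookkeeping slip in your middle step. If $T_r$ has shape $(k_\TA+r,\,k_\TB-r)$, then $\Theta[T_r]:W_{(k_\TA+r,\,k_\TB-r)}\to W_{(k_\TA,k_\TB)}$ turns $r$ copies of $\TA$ into $\TB$, so $\Theta[T_r]$ is (up to the scalar $r!$) equal to $\Theta_{\TA\TB}^{\,r}$, not $\Theta_{\TA\TB}^{\,k_\TB-r}$. Your sentence ``iterating $k_\TB-r$ times starting from $W_{(k_\TA+r,\,k_\TB-r)}$ lands in $W_{(k_\TA,k_\TB)}$'' does not check: after $s$ applications of $\Theta_{\TA\TB}$ one is in $W_{(k_\TA+r-s,\,k_\TB-r+s)}$, which equals $W_{(k_\TA,k_\TB)}$ only for $s=r$. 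With this correction your final substitution $r'=k_\TB-r$ gives $W_{\kpa r}=\Theta[T_{k_\TB-r}]S^{(k-r,r)}\propto\Theta_{\TA\TB}^{\,k_\TB-r}S^{(k-r,r)}$, and since the scalar does not affect the image this is exactly the claim of the lemma.
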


\begin{rmk*}
In the notation of~\cite{dieker2018spectral}, $\,M^{(k-r,r)} := W_{(k-r,r)}$. In fact $M^{(k-r,r)}$ and~$S^{(k-r,r)}$ more often denote these modules when equivalently viewed as submodules of $\mathbb{R}S_k$ or $\mathbb{C}S_k$ rather than linear word spaces. In the proof of Theorem~\ref{main4}, we occasionally use this notation when more appropriate.
\end{rmk*}

The \emph{projection} from the word space $W_{\kpa}$ to one of its direct summands is another useful operator at our service. In the two-letter case, for $r \in \{0,\dots,k_\TB\}$ we denote it by
$$ \proj_r : W_{(k_\TA,k_\TB)} \to W_{(k_\TA,k_\TB)r} $$
In the proof, we mostly consider the component $r=k_{\TB}$, in which case the projection is denoted $\proj^{\lmda} : M^{\lmda} \to S^{\lmda}$ where $\lmda = (k_{\TA},k_{\TB})$.
These projections on $S^{\lmda}$ are a special case of the isotypic projector associated with the Specht module~$S^{\lmda}$. They can be computed based on the character of this module, as follows.

\begin{definition}
\label{proj}
For $\lmda \vdash k$, let
$\proj^{\lmda} : f \mapsto f\,\pi_{\lmda}\;$ where $\displaystyle \;\pi_{\lmda} = \frac{\dim S^{\lmda}}{k!} \sum_{\sigma \in S_k} \overline{\chi_{\lmda}(\sigma)}\,\sigma \in A$. 
\end{definition}

The next operator, \emph{lifting} which has been introduced in~\cite{dieker2018spectral}, plays a crucial role in the spectral decomposition. In general, it maps words from $W_{\kpa}$ to $W_{\kpa + \TX}$, adding a letter $\TX$ to the composition of the word. In contrast to Proposition~\ref{embed}, it does not simply insert~$\TX$, but also includes correction terms that make sure that the result lies in the right Specht module. Below we define the cases relevant to the two-sample model.
\begin{definition}
\label{levelup}
The operators $\mathcal{L}_\TA:W_{(k_\TA,k_\TB)} \to W_{(k_\TA+1,k_\TB)}$ and $\mathcal{L}_\TB:W_{(k_\TA,k_\TB)} \to W_{(k_\TA,k_\TB+1)}$ are defined as follows.
\begin{align*}
\shp_{\TA}\, f \;&:=\; \sh_\TA \,f \\ 
\shp_{\TB}\, f \;&:=\; \sh_\TB \,f \;-\; \frac{1}{k_\TA - k_\TB + 1} \,\Theta_{\TA\TB} \,\sh_\TA\, f 
\end{align*}
\end{definition}

Finally, we mention the \emph{random to random} operator arising from the analysis of card shuffling, which is a main object of study in~\cite{dieker2018spectral}, and is needed here as well. Here we write it in the two-letter case.

\begin{definition}
\label{r2r}
$\;\mathcal{R} \,:=\, \sh_\TA\, \del_\TA \,+\, \sh_\TB\, \del_\TB$
\end{definition}

\begin{ex*}
$\mathcal{R}\,\TA\TA\TB \;=\; 5\,\TA\TA\TB + 3\,\TA\TB\TA + \TB\TA\TA $
\end{ex*}

This operator sums over all the ways to remove a letter from the word and insert it back at some place. One of its important properties is that it can be represented as right multiplication by an element of~$A$.

\subsection{Full Decomposition in the Two-Sample Model} 
\label{twosample}

In the two-letter case, we define the following refinement of the primary decomposition of $W_{(k_\TA,k_\TB)}$ from Definition~\ref{Wkkr}.

\begin{definition}\label{wkrij}
Let $\kpa=(k_\TA,k_\TB)$, such that $k_{\TA} \geq k_{\TB} \geq 1$ and $k = |\kpa| = k_\TA+k_\TB$. For every $r \in \{1,\dots,k_{\TB}\}$ we define the following $r(k-2r+1)$ submodules of $W_{\kpa r}$.  
$$ W_{\kpa rij} \;:=\; \Theta_{\TA\TB}^{k_b-r}\;\shp_\TB^j \; \sh_\TA^i \;\ker\left(\del_\TA {\Big |}_{\displaystyle W_{(k-r-i,r-j),r-j}}\right) \;\;\;\;\;\;\;\;\;\; \begin{aligned}
&i \in \{0,\dots,k-2r\}
\\    
&j \in \{0,\dots,r-1\}
\end{aligned} $$ 
\end{definition}

\begin{rmk*}
Note that $W_{(k-r-i,r-j),r-j} = S^{(k-r-i,r-j)}$, as in Lemma~\ref{lem:Wkr=Specht}.
\end{rmk*}

Theorem~\ref{main4} states that this is the spectral decomposition of the covariance matrix of statistics from $W_{(k_\TA,k_\TB)}$ in the random model $\mathcal{W}'(n_a,n_b)$. It will be shown as part of the proof in~\S\ref{proof4} that for every $r \in \{1,\dots,k_\TB\}$ these components yield an orthogonal decomposition:
\begin{align*}
&W_{\kpa r} \;=\; \bigoplus_{i=0}^{k-2r} \; \bigoplus_{j=0}^{r-1} \; W_{\kpa rij} \;\;\;\;\;\;\;\;\;\; r \in \{1,\dots,k_\TB\} 
\end{align*}
Regarding $r=0$, we remark that it will occasionally be convenient to denote the trivial component as $W_{\kpa 0k0} := W_{\kpa 0}$, so that in this case we only consider~$(i,j) = (k,0)$.

\subsection{Asymmetric U-Statistics}
\label{ustats}

Our work builds and expands on the general framework of \emph{U-statistics}, first studied by Hoeffding~\cite{hoeffding1948class}. These are sums of the form
$$ U_n \;=\; \frac{1}{\tbinom{n}{k}} \sum_{i_1 < i_2 < \cdots < i_k} h\left(X_{i_1},X_{i_2},\cdots,X_{i_k}\right) $$ where the random variables $X_1, X_2, \dots$ are iid in some probability space~$\mathcal{X}$, and the \emph{kernel} function $h:\mathcal{X}^k \to \mathbb{R}$ is \emph{symmetric} with respect to permuting the $k$ inputs. We throughout assume $\Exp h^2 < \infty$.

U-statistics have a well-developed theory that provides information on their asymptotic properties \cite{serfling1980approximation, lee1990u, koroljuk1994theory, janson1997gaussian}. In the generic case $\sqrt{n} U_n$ tends to a Gaussian, but \emph{degenerate} cases are scaled as $n^{r/2}U_n$ and tend to other limit laws, with the rank $r$ defined as follows.

\begin{definition}
\label{rank}
The \emph{rank} of $U_n$ is the smallest number $r$ of inputs of~$h$ such that $\Exp[h\,|\,X_1\dots X_r]$ is not almost surely constant.
\end{definition}

Writing the variance of $U_n$ as a double sum, and grouping together terms by the number of common inputs, as follows, gives a leading nonvanishing term of order~$n^{-\rank f}$.

\begin{prop}
\label{varun}
$ \displaystyle \; \Var[U_n] \;=\; \sum_{r=1}^k \frac{\tbinom{k}{r} \tbinom{n-k}{k-r}}{\tbinom{n}{k}} \Var\left[\,\Exp[h\,|\,X_1\dots X_r]\,\right]$
\end{prop}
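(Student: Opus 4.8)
The plan is to expand the variance of $U_n$ as a double sum over the $\binom{n}{k}^2$ ordered pairs of $k$-subsets of the index set $\{1,\dots,n\}$, and then to regroup the terms according to how much the two subsets overlap. Writing $S=\{i_1<\dots<i_k\}$ and $T=\{j_1<\dots<j_k\}$, we have
$$ \Var[U_n] \;=\; \frac{1}{\binom{n}{k}^2}\sum_{S,T}\Cov\!\left[h(X_S),\,h(X_T)\right], $$
where $X_S$ denotes $(X_{i_1},\dots,X_{i_k})$. When $|S\cap T|=\ell$, the covariance $\Cov[h(X_S),h(X_T)]$ depends only on $\ell$ by the iid assumption and the symmetry of $h$; call this common value $\zeta_\ell$, and note $\zeta_0=0$ since disjoint subsets give independent arguments. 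So I would first establish $\Var[U_n]=\binom{n}{k}^{-2}\sum_{\ell=1}^k N_{\ell}\,\zeta_\ell$, where $N_\ell = \binom{n}{k}\binom{k}{\ell}\binom{n-k}{k-\ell}$ counts the ordered pairs $(S,T)$ with $|S\cap T|=\ell$ (choose $S$, choose the $\ell$ common indices inside $S$, choose the remaining $k-\ell$ indices of $T$ outside $S$). This already simplifies the claim to $\Var[U_n]=\sum_{\ell=1}^k \frac{\binom{k}{\ell}\binom{n-k}{k-\ell}}{\binom{n}{k}}\zeta_\ell$.

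The next step is to identify $\zeta_\ell$ with $\Var\big[\Exp[h\mid X_1\dots X_\ell]\big]$. The standard device is the conditioning/tower property: for $S,T$ with $S\cap T$ of size $\ell$, condition on the shared variables $X_{S\cap T}$. Since the non-shared parts of $S$ and of $T$ are disjoint and independent of each other given $X_{S\cap T}$, we get
$$ \Exp\!\left[h(X_S)h(X_T)\right] \;=\; \Exp\!\left[\Exp[h(X_S)\mid X_{S\cap T}]\cdot \Exp[h(X_T)\mid X_{S\cap T}]\right] \;=\; \Exp\!\left[g_\ell^2\right], $$
where $g_\ell := \Exp[h(X_1,\dots,X_k)\mid X_1,\dots,X_\ell]$ (again using symmetry of $h$ to relabel). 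Subtracting $(\Exp h)^2 = (\Exp g_\ell)^2$ gives $\zeta_\ell = \Var[g_\ell]$. Plugging this in yields exactly the stated formula.

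I do not anticipate a genuine obstacle here; the content is essentially Hoeffding's classical variance decomposition and the proof is bookkeeping plus one application of the tower property. The one point that needs a little care is the combinatorial identity $\sum_{\ell=1}^k N_\ell = \binom{n}{k}^2$ — equivalently Vandermonde's identity $\sum_\ell \binom{k}{\ell}\binom{n-k}{k-\ell}=\binom{n}{k}$ — which serves as a sanity check that every pair $(S,T)$ has been counted exactly once, and the verification that $N_\ell$ is correct for the boundary cases $\ell=k$ (where $S=T$, $N_k=\binom{n}{k}$) and $\ell=0$ (disjoint pairs, which contribute $0$ and so may be dropped from the sum). A secondary bookkeeping point is to make sure the relabeling by symmetry is legitimate: because $h$ is symmetric and the $X_i$ are iid, $\Cov[h(X_S),h(X_T)]$ really is a function of $|S\cap T|$ alone, independent of which particular indices are involved, so the regrouping by overlap size is valid. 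The finiteness $\Exp h^2<\infty$ assumed in the text guarantees all these expectations exist. With these checks in place the proposition follows immediately.
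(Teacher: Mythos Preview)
Your proof is correct and follows exactly the approach the paper indicates: expanding the variance as a double sum over pairs of $k$-subsets and grouping terms by the number of common inputs, then identifying each covariance via conditioning on the shared variables. The paper does not spell out the details beyond that one-line description, so your write-up is in fact more explicit than the original.
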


U-statistics naturally extend to the \emph{asymmetric} setting, where the summation in $U_n$ is taken over a kernel $h$ that is no longer assumed to be symmetric. In this case, which is less frequently discussed in the literature, the order in which the samples $X_1,\dots,X_n$ are given does matter. 

The word statistics we study in the random model $\mathcal{W}(n,\mathbf{p})$ can be formulated as asymmetric U-statistics. If $\mathcal{X}$ is the finite probability space $(\Sigma,\mathbf{p})$, and $h(x_1,\dots,x_k) = \mathds{1}[x_1\cdots x_k = u]$ for $u \in \Sigma^k$, then $U_n$ is distributed exactly as $\bar\#u$. So is $\bar\# f$ for every $f \in W_k$, by taking linear combinations. Theorems~\ref{main1} and~\ref{main2} analyze the second moment behavior of asymmetric U-statistics, for any finite sample space~$\mathcal{X}$. We expect certain parts of our analysis to extend to ``infinite alphabets'' as well.

In the other direction, the theory of U-statistics gives some general asymptotic information on the statistics that we study, beyond their scaling and second-moment diagonalization. The distribution of $\bar\#f$ weakly converges, and the limit has the form of a multiple stochastic integral, admitting an infinite expansion of degree~$r$ in Gaussian variables, although sometimes it can be simplified~\cite[\S{XI.2}]{janson1997gaussian}.

For some purposes, asymmetric U-statistics are reduced to the symmetric formulation. In short, take $(X_i,Y_i)$ iid in the product space $\mathcal{X}' = \mathcal{X} \times U(0,1)$, and define $h':(\mathcal{X}')^k \to \mathbb{R}$ by feeding $X_1,\dots,X_k$ to $h$ sorted by their $Y_i$ coordinate. The resulting symmetric $U'_n$ is distributed as the asymmetric~$U_n$. Still, the structure arising from the asymmetric formulation deserves special investigation. See~\cite{janson2018renewal} dedicated to other phenomena in asymmetric U-statistics, and~\cite[\S7]{janson1991asymptotic} for a generalization to \emph{unsymmetric statistics on ordered graphs}. These two works focus on the case of rank~1.

\subsection{Generalized U-Statistics}
\label{gustats}

The word statistics in Theorems~\ref{main3} and~\ref{main4}, in the multisample random model $\mathcal{W}(n_\TA,n_\TB,\dots)$, require the class of so-called \emph{generalized U-statistics}, which are based on more than one sample~\cite{hoeffding1948class}. 

For example, consider two independent samples $X_1, \dots, X_n$ and $Y_1, \dots, Y_m$ iid in respective probability spaces $\mathcal{X}$ and $\mathcal{Y}$, and a kernel $h:\mathcal{X}^k\times\mathcal{Y}^l \to \mathbb{R}$ symmetric to permuting the $X$-inputs or the $Y$-inputs. Then the following random variable is a \emph{two-sample} U-statistic:
$$ U_{nm} \;=\; \frac{1}{\tbinom{n}{k}\tbinom{m}{l}}\; \sum_{\substack{i_1 < \cdots < i_k \\ j_1 < \cdots < j_l}} h\left(X_{i_1},\cdots,X_{i_k}; Y_{j_1},\cdots,Y_{j_l}\right) $$
Much of the theory of U-statistics extends to the multisample case, though its treatment in the literature is often quite terse. 

Recall the word statistics $\tilde\#f$ in the random model $\mathcal{W}'(n_\TA,n_\TB,\dots)$ as in Theorem~\ref{main3}, where $f \in W_{\kpa} = \mathbb{R}\tbinom{\Sigma}{\kpa}$ and $\kpa = (k_\TA,k_\TB,\dots)$. These random variables can be represented as multisample U-statistics, with the samples $X_{\TX i} \sim U(0,1)$ for each $\TX \in \Sigma$ and $i \in \{1,\dots,n_\TX\}$, and kernel functions having $k_\TX$ inputs from each sample $\{X_{\TX 1},\dots,X_{\TX n_\TX}\}$. The real samples are mapped to a word, by means of sorting by their $[0,1]$-values and reading their $\Sigma$-labels. We make this description more formal in~\S\ref{proof3}, and discuss their notion of rank.

This representation provides additional asymptotic information on the distribution beyond the scaling and second-moment structure, as in the one-sample case. For both cases, we mention the following multivariate central limit theorem, useful when the rank $r=1$. 

\begin{alsotheorem} \cite[page~142]{lee1990u} 
\label{clt}
Let $\mathbf{U}_\mathbf{n} = (U^{(1)}_{\mathbf{n}},\dots,U^{(\ell)}_{\mathbf{n}})$ be $\ell$ generalized U-statistics, on common samples $\{X_{\TX 1},\dots,X_{\TX n_\TX}\}$ where $n_{\TX}/n \to p_{\TX} > 0$ for every~$\TX$ as $n = |\mathbf{n}| \to \infty$. Then the vector
$$ \mathbf{Z}_{\mathbf{n}} \;:=\; \sqrt{n} \left(\mathbf{U}_\mathbf{n} - \Exp \mathbf{U}_\mathbf{n} \right) \;\;\xrightarrow[\;\text{\rm in distribution }\;]{n \to \infty}\;\; \mathbf{Z}$$
where $\mathbf{Z}$ is a multivariate Gaussian distribution of mean $\mathbf{0}$ and covariance $\lim_{n \to \infty} \Cov \left[ \mathbf{Z}_{\mathbf{n}} \right]$.
\end{alsotheorem}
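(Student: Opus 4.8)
The statement is classical; the plan is to follow the \emph{projection method} for U-statistics, adapted to the multisample setting, and to deduce the multivariate claim from a one-dimensional one via the Cramér--Wold device. Since finite linear combinations of generalized U-statistics on a common family of samples are again generalized U-statistics (after the routine rewriting of each kernel to a common input arity), it suffices to prove that for every fixed $\mathbf{a} = (a_1,\dots,a_\ell) \in \R^\ell$ the scalar $\mathbf{a}\cdot\mathbf{Z}_\mathbf{n} = \sqrt{n}\bigl(\sum_t a_t U^{(t)}_\mathbf{n} - \Exp \sum_t a_t U^{(t)}_\mathbf{n}\bigr)$ converges to a univariate Gaussian; Cramér--Wold then upgrades this to joint convergence and forces the limit to be the centered Gaussian whose covariance is $\lim_n \Cov[\mathbf{Z}_\mathbf{n}]$, provided that limit exists.

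So let $U_\mathbf{n}$ be a single generalized U-statistic with kernel $h$, $\Exp h^2 < \infty$, on samples $\{X_{\TX i} : \TX \in \Sigma,\; 1 \leq i \leq n_\TX\}$, taking $k_\TX$ inputs from sample $\TX$. I would write its \emph{Hoeffding decomposition}, $U_\mathbf{n} = \Exp U_\mathbf{n} + \sum_{\mathbf{S} \neq \mathbf{0}} U_\mathbf{n}^{(\mathbf{S})}$, where $\mathbf{S} = (S_\TX)_\TX$ runs over tuples of subsets $S_\TX \subseteq \{1,\dots,k_\TX\}$ not all empty, and $U_\mathbf{n}^{(\mathbf{S})}$ is the completely degenerate U-statistic built from the $|\mathbf{S}|$-th canonical part of $h$; orthogonality of the $U_\mathbf{n}^{(\mathbf{S})}$ follows from the degeneracy. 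The variance bookkeeping, the multisample analogue of Proposition~\ref{varun}, gives $\Var\bigl[U_\mathbf{n}^{(\mathbf{S})}\bigr] = \Theta\bigl(n^{-|\mathbf{S}|}\bigr)$ uniformly as $\mathbf{n}/n \to \mathbf{p}$. The \emph{Hájek projection} $\hat U_\mathbf{n}$ collects exactly the first-order terms, $\hat U_\mathbf{n} = \sum_{\TX} \sum_{i=1}^{n_\TX} g_\TX(X_{\TX i})$ with $g_\TX(x) = \tfrac{k_\TX}{n_\TX}\bigl(\Exp[h \mid X_{\TX 1} = x] - \Exp h\bigr)$, an honest sum of independent, mean-zero, square-integrable summands ($\Exp[h\mid X_{\TX 1}] \in L^2$ by Jensen), and $\Var[U_\mathbf{n} - \Exp U_\mathbf{n} - \hat U_\mathbf{n}] = \sum_{|\mathbf{S}| \geq 2}\Theta(n^{-|\mathbf{S}|}) = O(n^{-2})$; hence $\sqrt{n}\,(U_\mathbf{n} - \Exp U_\mathbf{n} - \hat U_\mathbf{n}) \to 0$ in $L^2$.

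Since the Hájek projection is linear in the kernel, $\mathbf{a}\cdot\mathbf{Z}_\mathbf{n}$ equals $\sqrt{n}\sum_\TX\sum_{i=1}^{n_\TX}\bigl(\sum_t a_t g_\TX^{(t)}(X_{\TX i})\bigr)$ plus an $L^2$-null remainder. The first part is a sum of finitely many blocks, each consisting of $n_\TX$ i.i.d., mean-zero, $L^2$ summands, each of size $\asymp n^{-1/2}$, with $n_\TX/n \to p_\TX > 0$; the Lindeberg--Feller condition for this triangular array is readily verified by a standard truncation argument, so the array CLT applies. Its variance is $n\sum_\TX n_\TX \Var\bigl[\sum_t a_t g_\TX^{(t)}(X_{\TX 1})\bigr] \to \sum_\TX p_\TX^{-1}\Var\bigl[\sum_t a_t k_\TX^{(t)}\Exp[h_t \mid X_{\TX 1}]\bigr]$, which is also the limit of $\mathbf{a}^\top \Cov[\mathbf{Z}_\mathbf{n}] \mathbf{a}$, because the higher-order terms contribute only $O(1/n)$ to $n\Cov[U^{(s)}_\mathbf{n},U^{(t)}_\mathbf{n}]$; in particular $\Cov[\mathbf{Z}_\mathbf{n}]$ converges. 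Slutsky's theorem then combines the CLT for the projection with the vanishing remainder to give the univariate conclusion, and Cramér--Wold completes the proof.

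I expect the only real work to be the multisample variance bookkeeping --- organizing the Hoeffding decomposition over tuples $\mathbf{S}$ of subsets, checking that cross-terms vanish, and confirming that $\Var[U_\mathbf{n}^{(\mathbf{S})}]$ has the stated order uniformly under $\mathbf{n}/n \to \mathbf{p}$; everything after that is the classical Hájek-projection argument together with Cramér--Wold. This is exactly the material developed in \cite[Ch.~4]{lee1990u}, which is why the theorem is merely quoted here.
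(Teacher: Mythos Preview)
The paper does not give its own proof of this theorem; it is simply quoted from Lee's book \cite[page~142]{lee1990u}, so there is nothing to compare against. Your sketch is the standard Hájek-projection argument combined with Cramér--Wold, which is exactly the route taken in that reference, and it is correct.
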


We apply the theorem in the one-sample random model $\mathcal{W}(n,(p_\TA,p_\TB,\dots))$. For $k \in \mathbb{N}$, we consider the $|\Sigma|^k$ U-statistics $\{\bar\#u : u \in \Sigma^k\}$, and naturally consider $\mathbf{Z}_{\mathbf{n}}$ as distributed in $W_k = \mathbb{R}\Sigma^k$. It is easy to see from Theorem~\ref{main1} that the support of the multinormal limiting distribution $\mathbf{Z}$ is the subspace $W_{k1}$ of dimension~$(|\Sigma|-1)k$.

In the multisample random model $\mathcal{W}'(n_\TA,n_\TB,\dots)$, we apply the theorem to the generalized U-statistics $\{\tilde\#u : u \in \tbinom{\mathsmaller{\Sigma}}{\kpa}\}$ where $\kpa = (k_\TA,k_\TB,\dots)$. Similarly, considering $\mathbf{Z}_{\mathbf{n}}$ as an element of $W_{\kpa}$, the multinormal limit distribution is supported on the subspace $W_{\kpa 1}$ of dimension $(|\Sigma|-1)(|\kpa|-1)$, since it has $|\Sigma|-1$ copies of the standard representation $S^{(|\kpa|-1,1)}$.

\section{One-Sample}
\label{proofs12}
This section proves Theorems~\ref{main1} and~\ref{main2}, and the next proves Theorems~\ref{main3} and~\ref{main4}.

\subsection{Proof of Theorem \ref{main1}}
\label{proof1}

Throughout, $w \in \Sigma^n$ will denote a random word in $\mathcal{W}(n,\mathbf{p})$. Let $f$ be a formal sum of words: $f = \sum_u f_u u \in W_{k} = \mathbb{R}\Sigma^k$ where $u \in \Sigma^k$ and $f_u \in \mathbb{R}$. Since every occurrence of $u$ contributes one $f_u$ to $\#f$, one can write
$$ \#f(w) \;=\; \sum_{u \in \Sigma^k}f_u\,\#u(w)  \;\;=\; \sum_{1 \leq t_1 < t_2 < \dots < t_k \leq n} f\left(w_{t_1}w_{t_2}\cdots w_{t_n}\right) $$
Here we denote $f(v) = f_v$ for $f \in \mathbb{R}\Sigma^k$ and $v \in \Sigma^k$. We use this notation to indicate that $f$ is also seen as an element of the dual of $\mathbb{R}\Sigma^k$, which is identified with $\mathbb{R}\Sigma^k$ by letting $u(v) = \delta_{uv}$ for $u,v \in \Sigma^k$. This determines $f(g)$ by linearity for any $f,g \in \mathbb{R}\Sigma^k$, which will come useful later. 

Recall that we have also defined an inner product on such statistics. Namely $\langle f,f'\rangle_{\mathbf{p}} = \Exp_u[f(u)f'(u)] = \sum_{u} p_u f_u f_u'$ where $p_u = \prod_i p_{u_i}$ for each word $u  = u_1 u_2 \cdots u_k \in \Sigma^k$. We have let $D = \{\Ti, \Tii, \dots, \TD\}$ be an orthonormal basis of $\mathbb{R}\Sigma$, with $\Ti(\TX)=1$ for every $\TX\in\Sigma$. It follows that all words $e = e_1e_2\cdots e_k \in D^k$ form an orthonormal basis of $\R\Sigma^k$. In this section, we denote this inner product by $\langle-,-\rangle$ without the subscript $\mathbf{p}$.

We expand according to this orthonormal basis the application of $f \in \mathbb{R}\Sigma^k$ on a word $u \in \Sigma^k$ as above:
$$ f(u) \;=\; \sum_{e \in D^k} \langle f, e \rangle e(u) \;=\; \sum_{e \in D^k} \langle f, e \rangle e_1(u_1)e_2(u_2)\cdots e_k(u_k) $$
Note that we have used the multiplicativity of the functional $e(u)$ under concatenation, which is straightforward from its definition.

The proof will proceed by plugging this decomposition of $f(u)$ into the above expansion of $\#f(w)$, as follows.
$$ \#f(w) \;=\; \sum_{e \in D^k} \langle f, e \rangle \sum_{t_1 < \dots < t_k} e_1\left(w_{t_1}\right) e_2\left(w_{t_2}\right)\cdots e_k\left(w_{t_k}\right) $$
This representation allows the asymptotic analysis of the second moments $\Exp[(\#f)^2]$ and $\Exp[(\#f)(\#f')]$. What makes it particularly useful is the following observation, that the functionals $e_j(w_{t_j})$ have simple averages over random letters.

\begin{obs}\label{obs:pigul}
Consider a random letter $\TX \in \Sigma$ distributed according to the probability vector $\mathbf{p}$. For $\TI,\TJ \in D$,
\begin{enumerate}
\itemsep0.25em
\item\label{it:pigul1}
$\Exp_\TX[\TI(\TX)] = \delta_{\TI\Ti}$
\item\label{it:pigul2} 
$\Exp_\TX[\TI(\TX)\TJ(\TX)] = \delta_{\TI\TJ}$
\end{enumerate}
\end{obs}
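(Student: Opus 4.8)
The plan is to reduce both identities to the single fact that $D=\{\Ti,\Tii,\dots,\TD\}$ is orthonormal for the inner product $\langle-,-\rangle_{\mathbf{p}}$, which is how it was constructed in the first place. The bridge is the remark that averaging a functional over one $\mathbf{p}$-random letter is literally the $k=1$ instance of that inner product. Concretely, I would first record the two elementary identities, valid for any $g,h\in\mathbb{R}\Sigma$ viewed as functionals on $\Sigma$:
$$ \Exp_\TX\bigl[g(\TX)\bigr] \;=\; \sum_{\TX\in\Sigma} p_\TX\,g(\TX) \;=\; \langle \Ti, g\rangle_{\mathbf{p}}, \qquad \Exp_\TX\bigl[g(\TX)h(\TX)\bigr] \;=\; \sum_{\TX\in\Sigma} p_\TX\,g(\TX)h(\TX) \;=\; \langle g, h\rangle_{\mathbf{p}}. $$
The first equality in each chain is just the definition of expectation over the distribution $\mathbf{p}$; the left-hand identity then uses $\Ti(\TX)=1$ for every $\TX\in\Sigma$, and the right-hand one is Definition~\ref{inner} specialized to $k=1$.

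Part~(1) follows by setting $g=\TI$: we get $\Exp_\TX[\TI(\TX)] = \langle\Ti,\TI\rangle_{\mathbf{p}}$, which is $\delta_{\Ti\TI}$ by orthonormality of $D$. Part~(2) follows by setting $g=\TI$ and $h=\TJ$: we get $\Exp_\TX[\TI(\TX)\TJ(\TX)] = \langle\TI,\TJ\rangle_{\mathbf{p}} = \delta_{\TI\TJ}$ for the same reason.

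The only point that requires any care — the closest thing to an obstacle in so short an argument — is keeping the conventions straight: one must remember that $\TI(\TX)$ denotes the coefficient of $\TX$ when $\TI$ is expanded in the $\Sigma$-basis, under the identification of $\mathbb{R}\Sigma$ with its dual made just before the observation, and that the pairing appearing here is the $\mathbf{p}$-weighted form $\langle-,-\rangle_{\mathbf{p}}$ rather than the unweighted coordinate dot product. Once these conventions are aligned, both statements are immediate and no further computation is needed.
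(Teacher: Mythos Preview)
Your proof is correct and takes essentially the same approach as the paper, which simply remarks that the observation is immediate from the definition of $D$ as an orthonormal basis with respect to the $\mathbf{p}$-based inner product. You have just spelled out explicitly the identification $\Exp_\TX[g(\TX)h(\TX)] = \langle g,h\rangle_{\mathbf{p}}$ that the paper leaves implicit.
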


This observation is immediate from the definition of~$D$ as an orthonormal basis with respect to the inner product that is based on~$\mathbf{p}$. 

\medskip 

We start with a lemma that analyzes the second moments of the statistics~$\#e$, for the basis elements $e \in D^k$. They are given in terms of $m_{\ell}(e,e')$, the number of ways to merge two words $e$ and $e'$ into a longer one of length~$\ell$, as defined here.

\begin{definition}
\label{mcoefs}
The $\ell$th \emph{merging coefficient} of $e \in D^k$ and $e' \in D^{k'}$ is
\begin{equation*}
m_{\ell}\left(e,e'\right) \;=\; \left|\left\{\left(I,I'\right)\;\middle|\;\;
\begin{aligned}
& I = \{i_1, i_2, \dots, i_k\} \;\;\; i_1 < i_2 < \dots \;\;\; \\
& I' = \{i_1', i_2', \dots, i_{k'}'\} \;\;\; i_1' < i_2' < \dots \;\;\; \\
& I \cup I' = \{1,2,\dots,\ell\} \\
& i_j = i_{j'}' \;\implies\; e_j = e_{j'}' \\
& i_j \in I \setminus I' \;\implies\; e_j = \Ti \\
& i_{j'}' \in I' \setminus I \;\implies\; e_{j'}' = \Ti  
\end{aligned}
\right\}\right|
\end{equation*}
\end{definition}
%{\color{blue}I removed the unneeded definition of a \emph{merging set} and deleted: and the $\ell$th \emph{merging coefficient} of $e \in D^k$ and $e' \in D^{k'}$ is $m_{\ell}\left(e,e'\right)=\left|\mergeset_{\ell}\left(e,e'\right)\right|.$}

\begin{ex*}
$m_3(\Ti\Tii,\Tii\Ti) = 1$ since the only merging pair $(I,I')$ is $((1,2),(2,3))$.
\end{ex*}

\begin{ex*}
$m_3(\Ti\Tii,\Ti\Tii) = 2$ with $(I,I') = ((1,3),(2,3))$ or $((2,3),(1,3))$.
\end{ex*}

\begin{ex*}
$m_3(\Ti\Tii,\Ti\Tiii) = 0$ since there is no suitable merging.
\end{ex*}

\begin{lemma}
\label{lem:multiplication_of_u_statistics_r}
\label{merge}
If $e \in D^k$ and $e' \in D^{k'}$ then
$$ \Exp_w\left[\#e(w)\,\#e'(w)\right] \;=\; \sum_{\ell=\max\left(k,k'\right)}^{k+k'} m_{\ell}\left(e,e'\right)\,\binom{n}{\ell} $$
\end{lemma}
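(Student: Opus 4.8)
The plan is to expand both $\#e(w)$ and $\#e'(w)$ using the representation established just before the lemma, namely
$$ \#e(w) \;=\; \sum_{1 \le t_1 < \dots < t_k \le n} e_1(w_{t_1}) \cdots e_k(w_{t_k}), $$
and similarly for $\#e'(w)$ with indices $1 \le t_1' < \dots < t_{k'}' \le n$. Multiplying these two sums and taking the expectation over the random word $w \in \mathcal{W}(n,\mathbf{p})$ gives a double sum over pairs of increasing index tuples $(t_1,\dots,t_k)$ and $(t_1',\dots,t_{k'}')$. The key point is that since the letters $w_1,\dots,w_n$ are independent, the expectation of the product $\prod_j e_j(w_{t_j}) \prod_{j'} e_{j'}'(w_{t_{j'}'})$ factors as a product over the distinct positions appearing among the $t$'s and $t'$'s.

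At a position used by exactly one tuple, Observation~\ref{obs:pigul}\eqref{it:pigul1} contributes a factor $\delta_{\TI\Ti}$, so the term survives only if the corresponding letter of $e$ (or $e'$) is $\Ti$; at a position used by both tuples, say $t_j = t_{j'}'$, Observation~\ref{obs:pigul}\eqref{it:pigul2} contributes $\delta_{e_j e_{j'}'}$, so the term survives only if $e_j = e_{j'}'$. Therefore a pair of index tuples contributes $1$ to the expectation if the induced data matches the constraints in Definition~\ref{mcoefs}, and $0$ otherwise. Concretely: setting $I = \{t_1,\dots,t_k\}$, $I' = \{t_1',\dots,t_{k'}'\}$, and $\ell = |I \cup I'|$, the surviving pairs are exactly those whose image under the order-preserving relabeling $I \cup I' \cong \{1,\dots,\ell\}$ lies in $\mergeset_\ell(e,e')$. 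Grouping the double sum first by $\ell = |I \cup I'|$ and then by the choice of which $\ell$ positions of $\{1,\dots,n\}$ are used — there are $\binom{n}{\ell}$ such choices, and the matching condition depends only on the relative order, not on the actual positions — yields
$$ \Exp_w\left[\#e(w)\,\#e'(w)\right] \;=\; \sum_{\ell} m_\ell(e,e') \binom{n}{\ell}. $$
Finally, the range of $\ell$ is pinned down by noting that $|I \cup I'| \ge \max(|I|,|I'|) = \max(k,k')$ and $|I \cup I'| \le |I| + |I'| = k + k'$, and that $m_\ell$ vanishes outside this range.

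The only real bookkeeping obstacle is making the passage ``$\binom{n}{\ell}$ choices, matching depends only on relative order'' fully precise: one should fix a set $S \subseteq \{1,\dots,n\}$ of size $\ell$, use the unique order isomorphism $S \to \{1,\dots,\ell\}$ to transport the pair $(I,I')$ into a pair of subsets of $\{1,\dots,\ell\}$ whose union is all of $\{1,\dots,\ell\}$, and observe that the product-of-Kronecker-deltas value computed above is invariant under this relabeling and equals $\mathds{1}[(I,I') \in \mergeset_\ell(e,e')]$. Summing over $S$ gives the factor $\binom{n}{\ell}$, and summing over $\ell$ completes the proof. No genuine difficulty arises; the statement is essentially a careful counting argument built on the two identities in Observation~\ref{obs:pigul}.
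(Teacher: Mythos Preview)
Your proposal is correct and follows essentially the same approach as the paper's proof: expand the product as a double sum over position tuples, factor the expectation using independence, apply Observation~\ref{obs:pigul} to reduce each term to a 0/1 indicator matching the conditions in Definition~\ref{mcoefs}, and then group by the union of positions (your $S$, the paper's $L$) to extract the factor $\binom{n}{\ell}$ and the count $m_\ell(e,e')$. The paper's bookkeeping is phrased slightly differently---it records the triplet $(L,I,I')$ directly rather than speaking of an order isomorphism---but the argument is the same.
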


\begin{proof}
For $w\in\Sigma^n$, it holds that
\begin{multline*}
\label{e_eq:f_flat_times_flat}
\# e\left(w\right)\cdot\# e'\left(w\right) \;=\; \sum_{\substack{1\le t_1 <\dots<t_{k} \le n \\ 1\le t'_1 <\dots<t'_{k'} \le n}} {e}(w_{t_1},\dots,w_{t_{k}})\; e'(w_{t'_1},\dots,w_{t'_{k'}}) \\
\;=\; \sum_{\substack{1\le t_1 <\dots<t_{k} \le n \\ 1\le t'_1 <\dots<t'_{k'} \le n}} e_1(w_{t_1}) e_2(w_{t_2}) \cdots e_k(w_{t_k}) \; e'_1(w_{t'_1}) e'_2(w_{t'_2}) \cdots e'_{k'}(w_{t'_{k'}})
\end{multline*}
For each term in the sum, 
we denote its set of positions in $w$ by
$$ L \;:=\; \left\{l_1,\dots,l_{\ell}\right\} \;=\; \{t_1,\dots,t_{k}\}\cup\{t'_1,\dots,t'_{k'}\} $$
such that $1\le l_1<\dots<l_{\ell}\le n$, and we record which of these $\ell$ positions correspond to $e$ and which ones to $e'$ by 
\begin{align*}
&I \;:=\; \{i_1,\dots,i_k\} \;\;\text{such that}\;\; t_j = l_{i_j}\;\;\text{for}\;\; j \in \{1,\dots,k\} \\
&I' \;:=\; \{i'_1,\dots,i'_{k'}\} \;\;\text{such that}\;\; t'_j = l_{i'_j}\;\;\text{for}\;\; j \in \{1,\dots,k'\} \end{align*}
Note that
$i_1 < i_2 < \dots < i_k$ and $i'_1 < i'_2 < \dots < i'_{k'}$ and $I \cup I' \;=\; \{1,\dots,\ell\}$. The positions $\{t_j\}$ and $\{t'_j\}$ can be uniquely reconstructed from any such $\{i_j\}$ and $\{i'_j\}$ given~$L$. In other words, there is a bijection between the terms in the sum and such triplets~$(L,I,I')$. 

Using this notation, we restate the above sum,
$$ \# e\left(w\right)\cdot\# e'\left(w\right) \;=\;
\sum_{L,I,I'}\; {e_{1}} \left(w^{\phantom{8}}_{l_{i_1}}\right) \cdots {e_{k}} \left(w^{\phantom{8}}_{l_{i_k}}\right)\, {e'_{1}} \left(w^{\phantom{8}}_{l_{i'_{1}}}\right) \cdots {e'_{k'}} \left({w_{l_{i'_{k'}}}}\right) $$
and take expectation over $w$ with respect to the product measure $\mathcal{W}(n,\mathbf{p})$,
\begin{multline*}
\Exp_{w}\left[\# e\left(w\right)\cdot{\# e'\left(w\right)}\right] \;=\;
\sum_{L,I,I'}\; \Exp_w\left[\; \prod_{i_j\in I} {e_{j}} \left(w_{l^{\phantom{8}}_{i_j}}\right)\; \prod_{i'_{j'}\in I'} {e'_{j'}} \left(w_{l_{i'_{j'}}}\right)  \right] \\[0.5em]
\;=\; \sum_{L,I,I'} \; \prod_{i_j\in I\setminus I'}\Exp_\TX\left[ e_j^{\phantom{8}} (\TX)\right] \; \prod_{i'_{j'}\in I'\setminus I}\Exp_\TX\left[ e'_{j'} (\TX)\right] \prod_{i_j = i'_{j'}\in I\cap I'}\Exp_\TX\left[ e_j (\TX) e'_{j'} (\TX)\right]\end{multline*}
Here, we have used the independence of different letters in $w$, to separate the expectation of the product into expectations over single letters. By observation~\ref{obs:pigul}, the term corresponding to $(L,I,I')$ equals 1 if all the following hold and 0 otherwise.
\begin{enumerate}
\itemsep0.5em
\item 
$e_j = \Ti$ for every $i_j\in I\setminus I'$,
\item 
$e'_{j'} = \Ti$ for every $i'_{j'}\in I'\setminus I$,
\item 
$e_j = e'_{j'}$ for every $i_j = i'_{j'}\in I \cap I'$.
\end{enumerate}
By the definition of the coefficients ${m}_{\ell}(e,e')$, summing all such terms with a given~$\ell$ gives a contribution of 
$$\binom{n}{\ell}\;{m}_{\ell}(e,e'),$$ where the binomial comes from picking the positions $l_1,\ldots,l_{\ell}.$ Then we sum over $\ell\in\{\max(k,k'),\ldots,k+k'\}$,
and the lemma is proven.
\end{proof}

The special role of the letter $\Ti$ in the above expansion leads us to break up the basis words $e \in D^k$ as in Definition~\ref{phikr}. Recall that $e\mapsto \left(\pi(e), \rho(e)\right)$, such that in
$\pi(e) \in \{\Ti,\Tii\}^k$ every $\TI \neq \Ti$ is replaced by $\Tii$, and in
$\rho(e) \in D^{k - \#\Ti(e)}$ all~$\Ti$s are removed. Clearly, the length of $\rho(e)$ equals $\#\Tii(\pi(e))$, and the original word $e$ is reconstructable from such $\pi(e)$ and~$\rho(e)$. The following lemma uses this representation to determine if the merging coefficient vanishes. 

\begin{lemma}
\label{pepe}
Let $e \in D^k$ and $e' \in D^{k'}$. The merging coefficient $m_\ell(e,e') = 0$ unless
$$ \rho(e) \;=\; \rho(e') \;\in\; \left(D \setminus \{\Ti\} \right)^r $$
where 
$$ r \;=\; \#\Tii(\pi(e)) \;=\; \#\Tii(\pi(e')) \;\in\; \left\{0,\dots,\min(k,k')\right\}$$
and
$$ \ell \;\in\; \left\{\max(k,k') , \dots, k+k'-r \right\} $$
\end{lemma}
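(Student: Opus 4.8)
The plan is to show directly that a nonempty merging set $\mergeset_\ell(e,e')$ forces all the claimed identities, by analysing a single pair $(I,I')$ inside it. First I would isolate the ``non-$\Ti$ skeleton'' of each word: put $J := \{j : e_j \neq \Ti\}$ and $J' := \{j' : e'_{j'} \neq \Ti\}$, so that $\rho(e)$ is the subword $(e_j)_{j \in J}$ read in increasing order of the index, $|J| = \#\Tii(\pi(e))$, and likewise for $e'$. Given any $(I,I') \in \mergeset_\ell(e,e')$, the key observation is that for $j \in J$ the position $i_j$ cannot lie in $I \setminus I'$, since the corresponding rule in Definition~\ref{mcoefs} would then force $e_j = \Ti$; hence $i_j \in I \cap I'$, say $i_j = i'_{j'}$ for the unique $j'$, and the matching rule $e_j = e'_{j'}$ shows $e'_{j'} \neq \Ti$, i.e.\ $j' \in J'$. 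This defines a map $\sigma : J \to J'$, $\sigma(j) := j'$.

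Next I would check that $\sigma$ is an order isomorphism. It is strictly increasing, hence injective, because both $j \mapsto i_j$ and $j' \mapsto i'_{j'}$ are strictly increasing. Running the symmetric construction with the roles of $e$ and $e'$ exchanged gives a strictly increasing injection $J' \to J$, and a pair of injections in both directions between finite chains forces $|J| = |J'| =: r$; a strictly increasing injection between two $r$-element chains is then a bijection, the unique order isomorphism between them. Consequently $e_j = e'_{\sigma(j)}$ for every $j \in J$, which read off in increasing order says exactly $\rho(e) = \rho(e')$, a word in $(D \setminus \{\Ti\})^r$, with $r = \#\Tii(\pi(e)) = \#\Tii(\pi(e'))$; and since $r = |J| \leq k$ and $r = |J'| \leq k'$ we also get $r \leq \min(k,k')$. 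So whenever any of these fails, $\mergeset_\ell(e,e') = \emptyset$ and $m_\ell(e,e') = 0$.

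Finally I would settle the range of $\ell$ by the counting identity $\ell = |I \cup I'| = |I| + |I'| - |I \cap I'| = k + k' - |I \cap I'|$. Since $I, I' \subseteq \{1,\dots,\ell\}$ we get $\ell \geq \max(k,k')$, and since $\{i_j : j \in J\} \subseteq I \cap I'$ was shown above, we have $|I \cap I'| \geq r$, whence $\ell \leq k + k' - r$. This yields the stated range and completes the proof. I expect the only non-mechanical point to be the two-sided-injection argument that pins down $|J| = |J'|$ and hence $\rho(e) = \rho(e')$; everything else is bookkeeping against the definition of the merging set.
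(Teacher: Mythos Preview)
Your proof is correct and follows essentially the same approach as the paper's proof, which argues briefly that the non-$\Ti$ letters of $e$ and $e'$ must occupy the same positions in the merged word and hence $\rho(e)=\rho(e')$, with the bound on $\ell$ following from the fact that these $r$ positions lie in $I\cap I'$. Your version is simply more explicit, spelling out the order-preserving bijection $\sigma:J\to J'$ that the paper leaves implicit.
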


\begin{proof}
By the definition of the $\ell$th merging coefficient $m_\ell(e,e')$ of $e \in D^k$ and $e' \in D^{k'},$ it vanishes unless all non-$\Ti$ letters in $e$ and $e'$ appear with the same multiplicity and order, that is, $\rho(e) = \rho(e')$.

The length of $\rho(e)$ is the number of non-$\Ti$ letters in $e$, which is $r=\#\Tii(\pi)$, since all these letters are changed to $\Tii$ under $\pi$. The same reasoning applies to $e'$, so that $r = \#\Tii(\pi(e'))$. Clearly $r \leq k$ and $r \leq k'$.

The length of the merging, $\ell$, cannot be less than the longest of $e,e',$ on the one hand.
On the other hand, since each non-$\Ti$ letter of $e$ is mapped to the same place as the corresponding non-$\Ti$ letter of $e',$ the length cannot be greater than $k+k'-r.$
\end{proof}

The above lemma shows that $\rho$ induces a block structure on the covariance matrix of all $\Exp_w[\#e(w)\,\#e'(w)]$, since the entry of $e \in D^k$ and $e' \in D^{k'}$ vanishes if $\rho(e) \neq \rho(e')$. By Definition~\ref{pyro}, if $\rho(e) = \rho(e')$ then the merging coefficient $m_\ell(e,e') = m_\ell(\pi(e),\pi(e'))$, since the non-$\Ti$ letters are known to match and may be replaced by~$\Tii$s. Hence, it is enough to study these covariances only for $\rho^{-1}(\Tii\Tii\cdots\Tii)$, i.e., words over $\{\Ti,\Tii\}$ with $\#\Tii=r$. Then for each of the $(d-1)^r$ blocks that correspond to $\{\Tii,\dots,\TD\}^r$, Lemma~\ref{merge} gives exactly the same block of covariances.

Restricting to a specific length $k=k'$ and $r \in \{0,\dots,k\}$, we have $\tbinom{k}{r}$ words, corresponding to both the rows and columns of every block in the covariance matrix. Its leading terms are given the following notation.

\begin{definition}
\label{mkr}
Let $k \geq r \geq 0$. The \emph{merging matrix} $M_{kr}$ is an $\tbinom{k}{r}$-by-$\tbinom{k}{r}$ matrix of positive integers, whose rows and columns are indexed by all words $e, e' \in \{\Ti,\Tii\}^k$ with $\#\Tii(e) = \#\Tii(e') = r$. Its entries are given by
$$ \left[M_{kr}\right]_{e,e'} \;=\; m_{2k-r}(e,e')$$
\end{definition}

\begin{ex*}
$M_{21} = \left[\begin{smallmatrix} 2&1 \\ 1&2 \end{smallmatrix}\right]\;$, rows and columns indexed by $(\Ti\Tii,\Tii\Ti)$
\end{ex*}

\begin{ex*}
$M_{30} = \left[\begin{matrix} 20 \end{matrix}\right]\;$, $M_{31} = \left[\begin{smallmatrix} 6&3&1 \\ 3&4&3 \\ 1&3&6 \end{smallmatrix}\right]\;$, $M_{32} = \left[\begin{smallmatrix} 2&1&1 \\ 1&2&1 \\ 1&1&2 \end{smallmatrix}\right]\;$, $M_{33} = \left[\begin{matrix} 1 \end{matrix}\right]\;$
\end{ex*}

The merging matrix $M_{kr}$ is positive definite. This observation may be deduced from Proposition~\ref{spectral} in Section~\ref{proof2} below, where we study the properties of~$M_{kr}$ in more detail. Assuming this fact, we complete the current proof as follows.

\begin{proof}[Proof of Theorem~\ref{main1}]
Let $f \in W_{kr}$ and $f' \in W_{k'r'}$ as in the theorem. We first expand these statistics in the orthonormal basis $D^k$, and then restrict the summation to the subsets $D_{kr} := \left\{e \in D^k: \#\Ti(e) = k-r \right\}$, since $\langle f,e \rangle=0$ for $e \not\in D_{kr}$ by the definition of the spaces~$W_{kr}$.
\begin{align*}
\Exp_w &\left[\#f(w)\, \#f'(w)\right] \;=\; \Exp_w \left[ \sum_{e \in D^k} \langle f, e \rangle \#e(w) \; \sum_{e' \in D^{k'}} \langle f', e' \rangle \#e'(w) \right] \\[0.5em]
\;&=\; \sum_{e \in D_{kr}} \;\langle f, e \rangle \; \sum_{e' \in D_{k'r'}}  \langle f', e' \rangle \;\, \Exp_w \left[\#e(w)\, \#e'(w)\right]
\\[0em]
\;&=\; \sum_{e \in D_{kr}} \; \sum_{e' \in D_{k'r'}} \langle f, e \rangle  \, \left(\sum_{\ell=\max k,k'}^{k+k'} m_{\ell}\left(e,e'\right)\,\binom{n}{\ell}\right) \,\langle f', e' \rangle
\end{align*}
by Lemma~\ref{merge}. If $r \neq r'$ then the lengths of $\rho(e)$ and $\rho(e')$ differ, and all terms vanish by Lemma~\ref{pepe}. This proves the second part of the theorem in a slightly more general form, without assuming $k=k'$.

Let $r=r'$. Since terms with $\rho(e)\neq\rho(e')$ vanish by Lemma~\ref{pepe}, we divide the above summation into cases according to $g = \rho(e)=\rho(e') \in \{\Tii,\dots,\TD\}^r$.
\begin{align*}
\;&=\; \sum_{g \in (D\setminus\Ti)^r} \; \sum_{\substack{e \in D_{kr\phantom{'}} \\ \rho(e)=g}} \;\; \sum_{\substack{e' \in D_{k'r} \\ \rho(e')=g}} \; \sum_{\ell=\max k,k'}^{k+k'}\, \binom{n}{\ell} \;\langle f, e \rangle  \;  m_{\ell}\left(e,e'\right)\,\langle f', e' \rangle \\
\;&=\; \binom{n}{k+k'-r} \sum_{g}\; \sum_{e,e'}\;
\langle f, e \rangle \,
m_{k+k'-r}\left(e,e'\right) \langle f', e' \rangle \;+\; O\left(n^{k+k'-r-1}\right)
\end{align*}
since $\tbinom{n}{\ell} \sim \tfrac{n^\ell}{\ell!}$, and $m_\ell(e,e')$ vanish as well for $\ell > k+k'-r$ by Lemma~\ref{pepe}.
This yields a formula for the leading coefficient,
$$ C_{f,f'} \;:=\; \lim_{n \to \infty} n^r \;\Exp_w \left[\frac{\#f(w)}{\tbinom{n}{k}}\cdot\frac{\#f'(w)}{\tbinom{n}{k'}}\right] 
\;=\; \tfrac{k!\,k'!}{(k+k'-r)!} \;\sum_{g \in (D\setminus\Ti)^r} c_g(f,f') $$
where
$$ c_g(f,f') \;:=\; \sum_{\substack{e \in D_{kr\phantom{'}} \\ \rho(e)=g}} \;\; \sum_{\substack{e' \in D_{k'r} \\ \rho(e')=g}} \;
\langle f, e \rangle \,
m_{k+k'-r}\left(e,e'\right) \langle f', e' \rangle $$

We now consider the case where the statistics $f$ and~$f'$ are given by words of equal length~$k=k'$. Then $c_g(f,f')$ is computed by the square matrix $M_{kr}$ acting as a bilinear form on the vectors $(\langle f, e \rangle)_{e \in \rho^{-1}(g)}$ and $(\langle f', e' \rangle)_{e' \in \rho^{-1}(g)}$.
$$ c_{g}(f,f') \;=\; \sum_{e,e' \in \rho^{-1}(g)} \langle f, e \rangle \; \left[M_{kr}\right]_{\pi(e),\pi(e')}\; \langle f', e' \rangle $$ 
Here we have used $m_\ell(e,e') = m_\ell(\pi(e),\pi(e'))$ since $\rho(e)=\rho(e')$. Note that $\pi$ induces a bijection between the $\tbinom{k}{r}$ words $e \in \rho^{-1}(g)$ and the words in $\{\Ti,\Tii\}^k$ that index the rows and columns of~$M_{kr}$. 

For the first part of Theorem~\ref{main1}, we further specialize to $f=f'\in W_{kr}$, so that $C_{f,f} = C_{f,\mathbf{p}}$ in the statement of the theorem. Since $M_{kr}$ is positive definite, $c_{\rho(e)}(f,f) > 0$ if $e \in D_{kr}$ is such that $\langle f,e \rangle \neq 0$, and in general $c_g(f,f) \geq 0$ for all~$g$. By assumption $f \in W_{kr} = \mathrm{span}\,D_{kr}$ is nonzero, so $\langle f,e \rangle \neq 0$ for at least one word $e$. It follows that $C_{f,\mathbf{p}} > 0$ as required. 
\end{proof}

\subsection{Proof of Theorem~\ref{main2}} ~
\label{proof2}

This main focus here will be on the spectral decomposition of the matrix~$M_{kr}$ from Definition~\ref{mkr} above. Recall that the rows and columns of this matrix are indexed by all the $\tbinom{k}{r}$ words $e \in \{\Ti,\Tii\}^k$ that have $\#\Tii(e)=r$ and $\#\Ti(e)=k-r$. These words span a linear space~$V_{kr}$ endowed with the unique inner product~$\langle-,-\rangle$ that makes them an orthonormal basis. 

If $|\Sigma|=2$ then $V_{kr}$ coincides with the word statistics~$W_{kr}$. In general, $W_{kr}$~naturally factors into $V_{kr} \otimes (\Ti^{\perp})^{\otimes r}$ using Definition~\ref{pyro}. The main result of this section is the following decomposition of~$V_{kr}$, which leads to Theorem~\ref{main2} on the limiting second moments of all statistics in $W_{kr}$. 
\begin{prop}
\label{spectral}
Let $k \geq r \geq 1$. The matrix $M_{kr}$ has $k-r+1$ distinct eigenvalues
$$ \mu_{krm}\;=\;\binom{2k-r}{k+m} \;\;\;\;\;\;\;\; m \in \{0,\dots,k-r\} $$
corresponding to eigenspaces
$$ V_{krm} \;=\; \left(\ker \del^{k-r-m+1}\right) \cap \left(\ker \del^{k-r-m}\right)^{\perp} \;\subset\; V_{kr} $$ 
of dimensions
$$ \dim V_{krm} \;=\; \binom{r+m-1}{m} $$
\end{prop}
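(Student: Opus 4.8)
The plan is to identify the operator on $V_{kr}$ whose matrix in the natural word basis is $M_{kr}$, and then diagonalize it using the deletion/insertion calculus from~\S\ref{operators}. The key observation is that the entry $[M_{kr}]_{e,e'}=m_{2k-r}(e,e')$ counts pairs of index sets $(I,I')$ with $|I\cup I'|=2k-r$, i.e.\ mergings where exactly $r$ positions are shared (the non-$\Ti$ positions must all coincide, and they account for exactly $r$ overlaps, so the total length $2k-r$ forces the overlap to be \emph{precisely} the non-$\Ti$ letters — no $\Ti$ is shared). Counting such mergings is the same as: delete all $k-r$ copies of $\Ti$ from one word and re-insert $k-r$ copies of $\Ti$ to obtain the other, in all consistent ways. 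Hence $M_{kr}$ represents the operator $\tfrac{1}{(k-r)!}\,\sh^{\,k-r}\del^{\,k-r}$ on $V_{kr}$ (the factorials account for the ordered vs.\ unordered bookkeeping of the $k-r$ deleted/inserted $\Ti$'s); I will make this identification precise by a direct comparison of the combinatorial definitions of $m_{2k-r}$ and of $\sh^{k-r}\del^{k-r}$, using the duality $\langle\del_w f,g\rangle=\langle f,\sh_w g\rangle$ from~\S\ref{operators} to see that this operator is symmetric, consistent with $M_{kr}$ being a symmetric matrix.

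Next I would diagonalize $\del\sh$ and its powers on the chain of kernels. On $V_{kr}$ one has $\del:V_{kr}\to V_{(k-1)r}$ and $\sh:V_{(k-1)r}\to V_{kr}$, and a standard $\mathfrak{sl}_2$-type computation gives a commutation relation of the form $\del\sh-\sh\del=(\text{scalar depending on the current degree})\cdot\mathrm{id}$ on each graded piece; here the relevant grading is by $\#\Ti$. Iterating, $\del^j\sh^j$ acts as an explicit scalar on $\ker\del^{j+1}\cap(\ker\del^j)^\perp$. Concretely, set $V_{krm}:=(\ker\del^{k-r-m+1})\cap(\ker\del^{k-r-m})^\perp\subseteq V_{kr}$; these are mutually orthogonal and exhaust $V_{kr}$ because the kernels of $\del,\del^2,\dots$ form an increasing filtration terminating at all of $V_{kr}$ (a word with $\#\Ti=k-r$ is killed by $\del^{k-r+1}$). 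I would compute the scalar by which $\tfrac{1}{(k-r)!}\sh^{k-r}\del^{k-r}$ acts on $V_{krm}$: writing a generic element as $\sh^{\,k-r-m}h$ with $h\in V_{(r+m)r}$ in the kernel of $\del$, one evaluates $\sh^{k-r}\del^{k-r}\sh^{k-r-m}h$ by commuting $\del$'s past $\sh$'s using the relation above, which collapses to a product of consecutive integers, namely the binomial coefficient $\binom{2k-r}{k+m}$ after simplification. That these $k-r+1$ values are distinct is clear since $\binom{2k-r}{k+m}$ is strictly unimodal in $m$ over the relevant range $m\in\{0,\dots,k-r\}$ (the argument $k+m$ runs from $k$ up to $2k-r$, all on the decreasing side of Pascal's row $2k-r$, with the peak at $\lfloor(2k-r)/2\rfloor\le k$). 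In particular every eigenvalue is a positive integer, so $M_{kr}$ is positive definite, which is the fact invoked in the proof of Theorem~\ref{main1}.

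For the dimension count I would argue that $\sh:V_{(k-1)r}\to V_{kr}$ is injective (this is the $k$-specialization of Proposition~\ref{wv}/the injectivity of $\sh$ asserted in~\S\ref{grading}), so $\dim\ker(\del|_{V_{kr}})=\dim V_{kr}-\dim V_{(k-1)r}=\binom{k}{r}-\binom{k-1}{r}=\binom{k-1}{r-1}$, and more generally $\dim V_{krm}=\dim(\ker\del|_{V_{(r+m)r}})=\binom{r+m-1}{r-1}=\binom{r+m-1}{m}$; summing over $m$ indeed recovers $\binom{k}{r}$ by the hockey-stick identity. I expect the main obstacle to be pinning down the exact constant relating $M_{kr}$ to $\sh^{k-r}\del^{k-r}$ and getting the commutation relation's scalar right, since an off-by-one in the grading parameter propagates into the eigenvalue formula; everything downstream — orthogonality, the filtration being exhaustive, the dimension identities — is routine linear algebra and binomial bookkeeping once that normalization is fixed.
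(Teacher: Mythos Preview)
Your identification of the operator is wrong, and this breaks the whole argument. You claim that $M_{kr}$ is represented by $\tfrac{1}{(k-r)!}\sh^{k-r}\del^{k-r}$, but this operator has rank one on $V_{kr}$: since every $e\in V_{kr}$ has exactly $k-r$ copies of~$\Ti$, one has $\del^{k-r}e=(k-r)!\,\Tii^r$ regardless of~$e$, so $\tfrac{1}{(k-r)!}\sh^{k-r}\del^{k-r}$ sends every basis word to the same vector $(k-r)!\sum_{e'}e'$. Its matrix is $(k-r)!\,J$, which for $k=2,\ r=1$ equals $\left(\begin{smallmatrix}1&1\\1&1\end{smallmatrix}\right)$, not $M_{21}=\left(\begin{smallmatrix}2&1\\1&2\end{smallmatrix}\right)$. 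In particular your operator annihilates every $V_{krm}$ with $m\ge 1$ (these lie in $\ker\del^{k-r-m+1}\subseteq\ker\del^{k-r}$), so it has at most two eigenvalues, not $k-r+1$ distinct positive ones.

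The combinatorics you describe up to ``no $\Ti$ is shared'' is correct, but the conclusion you draw from it is not. A merging of length $2k-r$ does not correspond to deleting all $\Ti$'s from $e$ and reinserting them to get~$e'$; rather, one interleaves the $\Ti$'s of $e$ with those of $e'$ inside each gap between consecutive $\Tii$'s, giving $[M_{kr}]_{e,e'}=\prod_i\binom{d_i(e)+d_i(e')}{d_i(e)}$. Expanding each binomial by Vandermonde, this is $\sum_{j=0}^{k-r}\langle\del_{(\Ti^j)}e,\del_{(\Ti^j)}e'\rangle$, whence the correct operator identity is
\[
M_{kr}\;=\;\sum_{j=0}^{k-r}\frac{1}{(j!)^2}\,\sh^{\,j}\del^{\,j},
\]
a full polynomial in $B=\sh\del$, not a single term. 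With this in hand, the rest of your plan---the $\mathfrak{sl}_2$-type commutation $\del\sh-\sh\del=(\text{scalar})I$, the kernel filtration giving the $V_{krm}$, and the dimension count via $\binom{k}{r}-\binom{k-1}{r}$---is exactly the paper's route (Lemmas~\ref{lem:commutation_relations}--\ref{lem:R_m_in terms of B} and Corollary~\ref{cor:A_B_same_eig_spaces}); the eigenvalue $\binom{2k-r}{k+m}$ then drops out of summing the resulting products over~$j$ and applying Vandermonde again.
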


The proof of Proposition~\ref{spectral} will be given after several lemmas. We will investigate different aspects of the matrix $M_{kr}$, as well as some general properties of the algebraic word operators $\del$ and $\sh$, as defined in \S\ref{operators}. The first lemma presents useful closed form expressions for~$M_{kr}$. It will be stated after fixing some notation.

\begin{definition}
\label{d}
Given a word $e \in \{\Ti,\Tii\}^k$, denote by $\mathbf{d}(e) \in \mathbb{Z}$ the lengths of runs of~$\Ti$ between its occurrences of~$\Tii$, including at its two ends. 
\end{definition}

\begin{ex*}
$\mathbf{d}(\Ti\Tii\Ti\Ti\Ti\Ti\Tii\Tii\Ti) = (1,4,0,1) \in \Delta_{93}$ 
\end{ex*}

Note that if $\#\Tii(e)=r$ then $\mathbf{d}(e) = (d_0(e),\dots,d_r(e)) \in \mathbb{Z}^{r+1}$ and $\sum_i d_i(e) = (k-r)$. This yields a one-to-one correspondence between such words $e=\Ti^{d_0}\Tii\Ti^{d_1}\Tii\cdots\Tii\Ti^{d_r}$ and the points $(d_0,\dots,d_r)$ in the discrete simplex $\Delta_{kr}$ mentioned in Definition~\ref{deltakr}.

Recall from \S\ref{defs} the deletion operator $\del_w$, which combines all the ways to delete a subword $w$, and $\sh_w$ which combines all ways to insert~$w$. We define the following word operator.
\begin{definition}
\label{dm}
$ \mathcal{D}_{m} \;:=\; I + \del_{\Ti} + \del_{\Ti\Ti} + \del_{\Ti\Ti\Ti} + \dots + \del_{(\Ti^m)} $
\end{definition}

\begin{rmk*}
Here and in the rest of the proof, $I$ denotes the identity matrix or the identity operator.
\end{rmk*}

\begin{samepage}
\begin{lemma}
\label{mkrmkrmkr}
\label{Akr_identities}
Let $k \geq r \geq 0$. The matrix $M_{kr}$ admits the following equivalent descriptions.
\begin{enumerate}
\itemsep0.5em
\item 
For two words $e,e'$ in the standard basis of $V_{kr}$
$$ [M_{kr}]_{e,e'} \;=\; \prod_{i=0}^{r} \binom{d_i(e)+d_i(e')}{d_i(e)} $$
\item
As a bilinear map $M_{kr} : V_{kr} \times V_{kr} \to \mathbb{R}$
$$ M_{kr}(f,f') \;=\; \left\langle \mathcal{D}_{k-r}f, \mathcal{D}_{k-r}f' \right\rangle$$
\item \label{Akr_identity}
As a word operator $M_{kr} : V_{kr} \to V_{kr}$
$$ M_{kr}(f) \;=\; \sum_{j=0}^{k-r} \frac{1}{(j!)^2} \sh_\Ti^j \del_\Ti^j f $$
\end{enumerate}
\end{lemma}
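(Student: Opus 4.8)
\textbf{Proof plan for Lemma~\ref{mkrmkrmkr}.}

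The three descriptions are equivalent reformulations, so the plan is to prove (1) from the combinatorial definition of $M_{kr}$, then derive (2) from (1), and finally derive (3) from (2) by a dualization argument. For part (1), I start from Definition~\ref{mkr}: $[M_{kr}]_{e,e'} = m_{2k-r}(e,e')$, the number of pairs $(I,I')$ merging $e$ and $e'$ into a word of length $2k-r$. Since $e,e' \in \{\Ti,\Tii\}^k$ each have exactly $r$ occurrences of $\Tii$, and by Lemma~\ref{pepe} the maximal merging length $2k-r$ forces every $\Tii$ of $e$ to be identified with a $\Tii$ of $e'$ in order (there are $r$ of them on each side and exactly $r$ overlapping positions in a length-$(2k-r)$ merge). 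This partitions the merged word into $r+1$ blocks, the gaps before, between, and after the matched $\Tii$'s. In the $i$-th block one must interleave the $d_i(e)$ consecutive $\Ti$'s coming from $e$ with the $d_i(e')$ consecutive $\Ti$'s coming from $e'$; the number of interleavings is $\binom{d_i(e)+d_i(e')}{d_i(e)}$, and these choices are independent across blocks, giving the product formula. This is the combinatorial heart of the lemma and where I'd be most careful, but it's a routine counting argument once the block structure is pinned down.

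For part (2), I expand $\langle \mathcal{D}_{k-r}f, \mathcal{D}_{k-r}f'\rangle$ using $\mathcal{D}_{k-r} = \sum_{a=0}^{k-r}\del_\Ti^{\,a}/\text{(something)}$ — wait, as written $\mathcal{D}_m = I + \del_\Ti + \del_{\Ti\Ti} + \cdots + \del_{(\Ti^m)}$, and one should note $\del_{\Ti^a} = \frac{1}{a!}\del_\Ti^{\,a}$ is \emph{not} quite right; rather $\del_{(\Ti^a)}$ counts ordered deletions of $a$ copies of $\Ti$ so $\del_\Ti^{\,a} = a!\,\del_{(\Ti^a)}$. I'll verify it suffices to test on basis words $e,e'$ of $V_{kr}$: $\langle \mathcal{D}_{k-r}e,\mathcal{D}_{k-r}e'\rangle = \sum_{a,b}\langle \del_{(\Ti^a)}e,\ \del_{(\Ti^b)}e'\rangle$. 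Each $\del_{(\Ti^a)}e$ is a nonnegative integer combination of words in $\{\Ti,\Tii\}^{k-a}$ still having $r$ copies of $\Tii$; for the inner product with $\del_{(\Ti^b)}e'$ to be nonzero we need $k-a = k-b$, i.e.\ $a=b$, and then $\langle \del_{(\Ti^a)}e,\del_{(\Ti^a)}e'\rangle$ counts pairs of ways to delete $a$ copies of $\Ti$ from $e$ and from $e'$ landing on the same word. Summing over $a$ and reorganizing by which deleted $\Ti$'s occupy which of the $r+1$ gaps reproduces exactly the product of binomials in (1) — this is essentially Vandermonde's identity applied gap-by-gap. So (2) follows. (If the normalization of $\mathcal{D}_m$ in the paper already builds in the $1/a!$ factors, the bookkeeping is the same up to constants and I'd match conventions accordingly.)

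For part (3), I use the duality $\langle \del_w u, v\rangle = \langle u, \sh_w v\rangle$ established in~\S\ref{operators}. From (2), for all $f,f' \in V_{kr}$,
\begin{align*}
\langle M_{kr}f, f'\rangle \;=\; \Big\langle \sum_{a=0}^{k-r}\tfrac{1}{a!}\del_\Ti^{\,a} f,\ \sum_{b=0}^{k-r}\tfrac{1}{b!}\del_\Ti^{\,b} f'\Big\rangle \;=\; \sum_{a,b}\tfrac{1}{a!\,b!}\big\langle \sh_\Ti^{\,b}\del_\Ti^{\,a} f,\ f'\big\rangle,
\end{align*}
and since $\sh_\Ti^{\,b}\del_\Ti^{\,a}f$ lies in $V_{(k-a+b)\,r}$ while $f' \in V_{kr}$, only the terms with $a=b$ survive the pairing, giving $\langle M_{kr}f,f'\rangle = \sum_{j=0}^{k-r}\frac{1}{(j!)^2}\langle \sh_\Ti^{\,j}\del_\Ti^{\,j}f,\ f'\rangle$. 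As this holds for all $f'$ and $\langle-,-\rangle$ is nondegenerate on $V_{kr}$, we conclude $M_{kr}f = \sum_{j=0}^{k-r}\frac{1}{(j!)^2}\sh_\Ti^{\,j}\del_\Ti^{\,j}f$, which is (3). The main obstacle is getting the block decomposition and the Vandermonde rearrangement in parts (1)--(2) exactly right, including the factorial normalizations relating $\del_{(\Ti^a)}$ and $\del_\Ti^{\,a}$; once those are settled, part (3) is a short formal consequence of $\del$–$\sh$ duality and the grading of the word spaces by length.
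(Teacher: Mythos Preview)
Your proposal is correct and follows essentially the same approach as the paper: the combinatorial block decomposition for (1), the Vandermonde identity plus the interpretation of $\langle \del_{(\Ti^j)}e,\del_{(\Ti^j)}e'\rangle$ as a count of matching deletions for (2), and the $\del$--$\sh$ duality together with the length grading for (3). Your normalization $\del_\Ti^{\,a}=a!\,\del_{(\Ti^a)}$ is exactly what the paper uses, and your slightly more explicit derivation of (3) is precisely the argument the paper leaves implicit.
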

\end{samepage}

\begin{rmk*}
It follows from the second part of the lemma that $M_{kr}$ is positive definite. This fact has already been exploited to show $C_{f,\mathbf{p}}>0$ in the proof of Theorem~\ref{main1}. 
\end{rmk*}

\begin{rmk*}
The first representation of $M_{kr}$ in the lemma arises in work by Janson and Nowicki~\cite{janson1991asymptotic} in a similar setting. They write the following matrix:
$$ [M_{kr}]_{e,e'} \;=\; (2k-r)! \idotsint\limits_{0<t_1<\dots <t_r<1} \; \prod_{i=0}^r \frac{ \left(t_{i+1}-t_i\right)^{d_i(e)+d_i(e')}}{d_i(e)!\,d_i(e')!} \,dt_1\cdots dt_r $$
This also implies that $M_{kr}$ is positive definite. Then they deduce that a certain variance term is nonzero, but without any quantitative information.
\end{rmk*}

\begin{proof}[Proof of Lemma~\ref{mkrmkrmkr}]
We start from Definitions~\ref{mcoefs} and~\ref{mkr}:  $$ [M_{kr}]_{e,e'} = m_{2k-r}(e,e') $$

(1) This is straightforward from the definition of $m_{2k-r}(e,e')$, and the observation that the positions of the $\Tii$s in the merged word coincide while the $\Ti$s between them are distributed among the two words. 

(2) The Vandermonde identity for binomial coefficients states that for any three nonnegative integers~$a,b,c$
$$ \binom{a+b}{c} \;=\; \sum_{d=0}^c\binom{a}{d}\binom{b}{c-d} $$
We apply it to the first statement and obtain
\begin{align*}
[M_{kr}]_{e,e'} \;&=\; \prod_{i=0}^{r} \;\sum_{j_i = 0}^{d_i(e)} \binom{d_i(e)}{j_i} \binom{d_i(e')}{d_i(e)-j_i} \\
\;&=\; \sum_{j=0}^{k-r} \; \sum_{\substack{\left(j_0, \dots, j_r \right) \\ j_0 + \ldots + j_r = j}} \,\prod_{i=0}^{r} \, \binom{d_i(e)}{j_i} \binom{d_i(e')}{j_i+d_i(e')-d_i(e)}
\end{align*}

Observe that the $j$th term counts the number of ways to delete $j$ occurrences of $\Ti$ from each of $e$ and $e'$, resulting in the same word. This is done by choosing some $j_i$ of the $d_i(e)$ ones in the $i$th run of the word~$e$, and $j_i+d_i(e')-d_i(e)$ of the $d_i(e')$ ones in the $i$th run of~$e'$. The total count is obtained by summing over any $j_i$ with $\sum_i j_i=j$. 

Therefore, by the definition of the deletion operator $\del_{\Ti\cdots\Ti}$ and the orthogonality of words over $\{\Ti,\Tii\}$, these numbers can be written as
$$ [M_{kr}]_{e,e'} \;=\; \sum_{j=0}^{k-r} \left\langle \del_{(\Ti^j)}\,e ,\, \del_{(\Ti^j)}\,e' \right\rangle \;=\; \left\langle \mathcal{D}_{k-r} e, \mathcal{D}_{k-r} e' \right\rangle $$
Here the last equality is by the orthogonality of words of different length. The result for general word combinations $f,f' \in V_{kr}$ follows.

(3) Note that $\del_{(\Ti^j)} = (\del_{\Ti})^j / j!$, since removing $j$ ones may be done in $j!$ different orders. Then the statement of the lemma follows from the previous one by
the duality of $\del_\Ti$ and~$\sh_\Ti$, see~\S\ref{operators}.
\end{proof}

Before we further study the matrix $M_{kr}$, we make a series of general useful observations on the properties of word operations.

\medskip 

The operators $\del_\Ti$ and~$\sh_\Ti$ are defined on any formal sum of words over any alphabet, and in particular on~$\bigoplus_{k,r}V_{kr}$. Since we usually focus on their restriction to a single space~$V_{kr}$, we denote:
\begin{enumerate}
\item 
$\sh_\Ti^{(k,r)} : V_{kr} \to V_{(k+1)r}$
\item 
$\del_\Ti^{(k,r)} : V_{kr} \to V_{(k-1)r}$
\end{enumerate}
For convenience, we let $V_{kr}:=\{0\}$ if $k<r$. We often short-hand $\del$ and $\sh$ when the domain is otherwise clear from the context. For example, $\sh \circ \del$ on~$V_{kr}$ means $\sh^{(k-1,r)} \circ \del^{(k,r)}$. Such compositions of $\del$ and $\sh$ \emph{from above} and \emph{from below} are further abbreviated to $A$ and~$B$, as follows.

\begin{definition}
\label{ab}
Let \vspace{0.25em}
\begin{enumerate}
\itemsep0.25em
\item 
$A^{(k,r)} \;:=\; \del^{(k+1,r)} \circ \sh^{(k,r)}$
\item 
$B^{(k,r)} \;:=\; \sh^{(k-1,r)}\circ\del^{(k,r)}$
\end{enumerate}
\end{definition}
 
\begin{lemma}\label{lem:commutation_relations}
Let $k \geq r \geq 0$. The following commutation relations of maps hold, when applied on~$V_{kr}$. \vspace{0.25em}
\begin{enumerate}
\itemsep0.5em
\item 
$A-B \;=\; \del \circ \sh - \sh \circ \del \;=\; (2k-r+1)\,I$
\item 
$\del \circ B - B \circ \del \;=\; (2k-r-1)\,\del$
\end{enumerate}
\end{lemma}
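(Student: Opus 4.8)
The plan is to establish part (1) by a direct combinatorial computation on basis words, and then to deduce part (2) formally by ``factoring out'' a copy of~$\del$. Note first that the equality $A-B=\del\circ\sh-\sh\circ\del$ in part~(1) is merely an unpacking of Definition~\ref{ab}: as self-maps of $V_{kr}$ we have $A^{(k,r)}=\del^{(k+1,r)}\circ\sh^{(k,r)}=\del\circ\sh$ and $B^{(k,r)}=\sh^{(k-1,r)}\circ\del^{(k,r)}=\sh\circ\del$. So everything reduces to proving $\del_\Ti\sh_\Ti-\sh_\Ti\del_\Ti=(2k-r+1)I$ on $V_{kr}$, and by linearity it suffices to check this on each basis word $u\in\{\Ti,\Tii\}^k$ with $\#\Ti(u)=k-r$.

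For part~(1), I would write $\sh_\Ti u=\sum_{i=0}^{k}u^{(i)}$, where $u^{(i)}$ is $u$ with one $\Ti$ inserted in the $i$-th gap. In each $u^{(i)}$ the letters equal to $\Ti$ are the newly inserted one together with the $k-r$ original ones. Applying $\del_\Ti$ and splitting the deletions accordingly: deleting the inserted $\Ti$ from $u^{(i)}$ returns $u$, which contributes $(k+1)\,u$ after summing over $i$. Deleting instead the original $\Ti$ sitting at position $j$ of $u$ (for a fixed $j$ with $u_j=\Ti$), taken over all $u^{(i)}$, amounts to removing the $j$-th letter of $u$ and re-inserting a $\Ti$ into the gaps of $\del_j u$: the insertions with $i<j$ produce the gaps $0,\dots,j-1$ of $\del_j u$, while those with $i\ge j$ produce the gaps $j-1,\dots,k-1$, so that gap $j-1$ is hit twice. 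The key observation is that re-inserting a $\Ti$ into gap $j-1$ of $\del_j u$ puts a $\Ti$ back exactly where $u_j$ was removed, hence regenerates $u$; the remaining insertions assemble into $\sh_\Ti(\del_j u)$. Summing over the $k-r$ positions $j$ with $u_j=\Ti$ therefore gives $\sh_\Ti\del_\Ti u+(k-r)\,u$. Altogether $\del_\Ti\sh_\Ti u=(k+1)\,u+\sh_\Ti\del_\Ti u+(k-r)\,u$, which is the claimed identity since $(k+1)+(k-r)=2k-r+1$.

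For part~(2), apply the now-established part~(1) on the space $V_{(k-1)r}$, where it reads $\del\circ\sh-\sh\circ\del=(2(k-1)-r+1)I=(2k-r-1)I$. Given $f\in V_{kr}$ we have $\del f\in V_{(k-1)r}$, so $(2k-r-1)\,\del f=\del\sh(\del f)-\sh\del(\del f)$; here the first term is $\del\bigl(\sh\del f\bigr)=(\del\circ B)(f)$ and the second is $(\sh\del)(\del f)=(B\circ\del)(f)$, where the two occurrences of $B$ act on $V_{kr}$ and on $V_{(k-1)r}$ respectively, as forced by the domain conventions of Definition~\ref{ab}. This yields $\del\circ B-B\circ\del=(2k-r-1)\del$ on $V_{kr}$. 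The only step requiring genuine care is the gap-index bookkeeping in part~(1): keeping track of how an insertion position shifts after a deletion, and recognizing that the overlap of the two gap-ranges at $j-1$ is precisely the ``delete-then-reinsert-in-place'' operation that reproduces $u$. Once this is laid out, both identities follow immediately, and part~(2) is essentially free given part~(1).
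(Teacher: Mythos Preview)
Your proof is correct and is precisely the direct counting argument that the paper alludes to but does not spell out (the paper simply says ``the first relation is verified by counting, or obtained as the special case $a=b=0$ of Lemma~36 in~\cite{dieker2018spectral}; the second is obtained from the first and the definition of~$B$''). Your gap-index bookkeeping for part~(1) is accurate, and your derivation of part~(2) by applying part~(1) on $V_{(k-1)r}$ is exactly what the paper means by ``obtained from the first and the definition of~$B$''.
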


\begin{proof}
The first relation is verified by counting, or obtained as the special case $a=b=0$ of Lemma~36 in~\cite{dieker2018spectral}. The second is obtained from the first and the definition of~$B$.
\end{proof}

\begin{lemma}
\label{lem:surj,inj,kernel}
\label{lem123}
Let $k \geq r \geq 0$. \vspace{0.25em}
\begin{enumerate}
\itemsep0.5em
\item 
The map $\del: V_{kr} \to V_{(k-1)r}$ is surjective.
\item 
The map $\sh:V_{kr} \to V_{(k+1)r}$ is injective.
\item In $V_{kr}$,
$\;\ker B \;=\; \ker \del$
\end{enumerate}
\end{lemma}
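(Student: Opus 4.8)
The plan is to deduce all three parts from one fact: the operator $A^{(k,r)}=\del^{(k+1,r)}\circ\sh^{(k,r)}$ of Definition~\ref{ab} is positive definite on $V_{kr}$ for every $k\ge r\ge 0$. First I would recall from \S\ref{operators} that $\sh$ and $\del$ are mutually adjoint with respect to the standard inner product in which words form an orthonormal basis, so that $B^{(k,r)}=\sh^{(k-1,r)}\circ\del^{(k,r)}$ obeys $\langle B^{(k,r)}f,f\rangle=\langle\del f,\del f\rangle\ge 0$; hence $B^{(k,r)}$ is positive semidefinite. By Lemma~\ref{lem:commutation_relations}(1), $A^{(k,r)}-B^{(k,r)}=(2k-r+1)\,I$, and $2k-r+1\ge r+1>0$ because $k\ge r\ge 0$. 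Therefore $A^{(k,r)}=B^{(k,r)}+(2k-r+1)\,I\succ 0$, so in particular it is invertible.

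Granting this, part (2) is immediate: if $\sh^{(k,r)}f=0$ then $A^{(k,r)}f=\del^{(k+1,r)}\sh^{(k,r)}f=0$, so $f=0$. For part (1), if $k=r$ then $V_{(k-1)r}=\{0\}$ and the claim is vacuous; if $k>r$, the same reasoning applied one level down shows $A^{(k-1,r)}$ is invertible, and then $\del^{(k,r)}\circ\bigl(\sh^{(k-1,r)}(A^{(k-1,r)})^{-1}\bigr)=A^{(k-1,r)}(A^{(k-1,r)})^{-1}=I$ on $V_{(k-1)r}$, exhibiting a right inverse, so $\del^{(k,r)}$ is surjective. For part (3), $\ker\del^{(k,r)}\subseteq\ker B^{(k,r)}$ holds trivially; conversely $B^{(k,r)}f=\sh^{(k-1,r)}\del^{(k,r)}f=0$ forces $\del^{(k,r)}f=0$ by the injectivity of $\sh^{(k-1,r)}$ from part (2) (the case $k=r$ being trivial, as then $\del^{(k,r)}=0$ and $\ker B^{(k,r)}=V_{kr}=\ker\del^{(k,r)}$).

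The only thing to watch is bookkeeping rather than a genuine difficulty: one must apply the commutation identity of Lemma~\ref{lem:commutation_relations} on the correct space (with $k$ for part (2), with $k-1$ for parts (1) and (3)), handle the degenerate index $k=r$ where some $V_{kr}$ collapse to $0$, and note that the argument is not circular, since (2) is proved directly from the invertibility of $A^{(k,r)}$ and then (1) and (3) invoke only (2) together with the same invertibility. As a consistency check, $\dim V_{kr}=\binom{k}{r}$, which is compatible with $\del^{(k,r)}$ being onto and $\sh^{(k,r)}$ being one-to-one.
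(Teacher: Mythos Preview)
Your proof is correct and takes a genuinely different route from the paper. The paper argues part~(1) directly by a triangular-matrix argument: ordering the basis words of $V_{(k-1)r}$ lexicographically, it observes that $\del(\Ti e)=(d_0(e)+1)e+R(e)$ with $R(e)$ supported on strictly smaller words, so $\del$ restricted to $\{\Ti e\}$ is upper-triangular with nonzero diagonal and hence onto. Part~(2) then follows by adjointness, and part~(3) from (1)+(2). Your argument instead leverages the commutation relation $A-B=(2k-r+1)I$ (Lemma~\ref{lem:commutation_relations}) together with the positive-semidefiniteness of $B$ to conclude $A$ is positive definite, whence invertible; injectivity of $\sh$ and surjectivity of $\del$ drop out immediately, and (3) follows from the injectivity of $\sh$. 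Your approach is cleaner and avoids any explicit basis computation, at the price of depending on Lemma~\ref{lem:commutation_relations}; the paper's version is more self-contained combinatorially and gives a concrete description of a right inverse on basis words. Both are short, and there is no circularity in yours since Lemma~\ref{lem:commutation_relations} is proved independently by counting.
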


\begin{proof}
The $\tbinom{k}{r}$ words that span $V_{kr}$ may be ordered lexicographically. For example, in~$V_{42}$,
$$ \Ti\Ti\Tii\Tii < \Ti\Tii\Ti\Tii < \Ti\Tii\Tii\Ti < \Tii\Ti\Ti\Tii < \Tii\Ti\Tii\Ti < \Tii\Tii\Ti\Ti $$

Consider a single word $e \in V_{(k-1)r}$. As above, $d_0(e)$ counts the leading $\Ti$s in~$e$. We apply the map $\del$ to the concatenated word~$\Ti e \in V_{kr}$. The terms in $\del (\Ti e)$ start either with $\Ti^{d_0(e)}\Tii$ or with~$\Ti^{d_0(e)+1}\Tii$, depending on which $\Ti$ is deleted. Hence they have the form
$$ \del (\Ti e) \;=\; \left(d_0(e)+1\right)e + R(e) $$
where the remainder term
$$ R(e) \;\in\; \mathrm{span}\left\{e' \in V_{(k-1)r} \;|\; e' < e\right\}$$
The restricted map
$$ \del : \textrm{span}\left\{\Ti e \;|\; e \in V_{(k-1)r}\right\} \;\to\; V_{(k-1)r} $$
is thus represented by a triangular matrix, with nonzero terms on the diagonal, and hence surjective. Therefore, so is the unrestricted $\del$ from all~$V_{kr}$. 

The second statement follows from the first one by duality, see~\S\ref{operators}. The third statement follows from the first two, using the definition of $B$ as a composition.
\end{proof}

We now use the above properties to investigate the spectral structure of~$B$. The commutation relations of $\del$ and $\sh$ allow us to regard them as \emph{annihilation and creation operators}.

\begin{lemma}
\label{lem:creation_annihilation}
Let $k \geq r \geq 0$, and $B = (\sh \circ \del) : V_{kr} \to V_{kr}$. The eigenvalues of $B$ are, in decreasing order,
$$ \beta_{krm} \;=\; (k-r-m)(k+m) \;\;\;\;\;\;\;\; m \in \{0,\dots,k-r\} $$
and the corresponding eigenspaces $V_{krm}$ are
$$ V_{krm} \;=\; \left(\ker \del^{k-r-m+1}\right) \cap \left(\ker \del^{k-r-m}\right)^{\perp} $$
and their dimensions are
$$ \dim V_{krm} \;=\; \binom{m+r-1}{m} $$
\end{lemma}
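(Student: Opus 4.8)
The plan is to treat $\del$ and $\sh$ as annihilation and creation operators on the graded space $\bigoplus_k V_{kr}$ (with $r$ fixed) and run the standard $\mathfrak{sl}_2$-type argument. First I would record the basic facts already available: by Lemma~\ref{lem123}, $\del : V_{kr} \to V_{(k-1)r}$ is surjective and $\sh : V_{kr} \to V_{(k+1)r}$ is injective, and by Lemma~\ref{lem:commutation_relations}(1) the commutator $\del\sh - \sh\del = (2k-r+1)I$ on $V_{kr}$. Since $V_{kr} = 0$ for $k < r$, iterating surjectivity of $\del$ shows that on $V_{kr}$ the operator $\del^{k-r+1} = 0$ while $\del^{k-r} \neq 0$; this gives the finite filtration $0 = \ker\del^0 \subsetneq \ker\del^1 \subsetneq \dots \subsetneq \ker\del^{k-r+1} = V_{kr}$, and the subspaces $V_{krm} := (\ker\del^{k-r-m+1}) \cap (\ker\del^{k-r-m})^\perp$ are exactly the graded pieces of this filtration, so they are orthogonal and $V_{kr} = \bigoplus_{m=0}^{k-r} V_{krm}$ as claimed.

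Next I would compute the dimensions. Because $\del$ is surjective at every level, $\dim V_{kr} - \dim V_{(k-1)r} = \dim\ker(\del|_{V_{kr}}) = \dim V_{kr0}$; more generally one shows $\del$ restricts to an isomorphism $V_{krm} \xrightarrow{\sim} V_{(k-1)rm}$ for $m \le k-r-1$ and kills $V_{kr(k-r)} = \ker(\del|_{V_{kr}})$ — indeed $\del$ maps $\ker\del^{j}$ into $\ker\del^{j-1}$ and is injective on the orthocomplement of $\ker\del$ there by the commutation relations, and a rank count using surjectivity forces these restrictions to be isomorphisms. Hence $\dim V_{krm}$ depends only on $m$ and $r$: it equals $\dim V_{(r+m)rm} = \dim\ker(\del|_{V_{(r+m)r}}) = \dim V_{(r+m)r} - \dim V_{(r+m-1)r} = \binom{r+m}{r} - \binom{r+m-1}{r} = \binom{m+r-1}{m}$, using $\dim V_{kr} = \binom{k}{r}$.

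Then I would identify the eigenvalues of $B = \sh\circ\del$ on each $V_{krm}$. Write $b_k := 2k-r+1$ for the commutator constant on $V_{kr}$, so $A^{(k,r)} - B^{(k,r)} = b_k I$ where $A = \del\circ\sh$ on $V_{kr}$ equals (up to level shift) $B^{(k+1,r)}$ pulled back. The clean way is induction on $m$ (equivalently on $k-r-m$, i.e.\ down the filtration): on $V_{kr0} = \ker\del$ we have $B = \sh\del = 0$, matching $\beta_{kr0}$? — no: the formula gives $\beta_{kr(k-r)} = 0$, so I should index by $m$ with $m=k-r$ being the kernel. Re-setting: for $v \in V_{krm}$, apply $\del$ to land in $V_{(k-1)rm}$ and use the relation $\del B - B\del = (2k-r-1)\del$ from Lemma~\ref{lem:commutation_relations}(2) to propagate eigenvalues between levels; combined with the base case $B|_{\ker\del} = 0$ and the commutator $A - B = (2k-r+1)I$ to climb within a fixed $k$, a short induction yields $B|_{V_{krm}} = \beta_{krm} I$ with $\beta_{krm} = (k-r-m)(k+m)$. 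One checks the arithmetic identity $(k-r-m)(k+m) = \big((k-1)-r-m\big)\big((k-1)+m\big) + (2k-r-1)$ is consistent with the level-shift relation, and that the top value $\beta_{kr0} = (k-r)k$ together with $A - B = (2k-r+1)I$ reproduces the next eigenvalue correctly.

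The main obstacle is the bookkeeping in this last step: one must carefully set up the two-parameter recursion (varying $k$ via $\del$/$\sh$ and varying $m$ within a fixed $V_{kr}$), verify that the eigenspaces $V_{krm}$ defined by the kernel filtration are genuinely mapped to each other by $\del$ and $\sh$ as the $\mathfrak{sl}_2$-picture predicts, and confirm the closed form $(k-r-m)(k+m)$ is the unique solution of the recursion with the right boundary data. Everything else — orthogonality, the filtration, surjectivity/injectivity — is already in hand from Lemmas~\ref{lem123} and~\ref{lem:commutation_relations}, so this is essentially the representation theory of $\mathfrak{sl}_2$ applied to the string generated by each lowest-weight vector in $\ker(\del|_{V_{kr}})$.
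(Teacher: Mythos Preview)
Your plan is correct and follows essentially the same route as the paper: establish the base case $B|_{\ker\del}=0$ (this is the eigenspace $V_{kr(k-r)}$, as you catch after the brief indexing slip), then use the relation $\del B - B\del = (2k-r-1)\del$ from Lemma~\ref{lem:commutation_relations}(2) to run induction on $k$, with the surjectivity/injectivity from Lemma~\ref{lem123} carrying both the eigenspace identification and the dimension count. The one uncorrected typo is writing $\dim\ker(\del|_{V_{kr}}) = \dim V_{kr0}$ in the dimension paragraph --- it should be $\dim V_{kr(k-r)}$, consistent with your very next sentence and with your later computation $\dim V_{krm} = \dim V_{(r+m)rm} = \dim\ker(\del|_{V_{(r+m)r}})$.
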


\begin{proof}
By Lemma~\ref{lem:surj,inj,kernel}, $\ker B = \ker \del$, while $\del : V_{kr} \to V_{(k-1)r}$ is surjective. Hence, the dimension of the kernel is
$$ \dim V_{kr} - \dim V_{(k-1)r} \;=\; \binom{k}{r} - \binom{k-1}{r} \;=\; \binom{k-1}{r-1}$$
This proves all the assertions of the lemma in the case $m=k-r$, where the eigenvalue is zero. This is the smallest eigenvalue since $B$ is nonnegative definite, as the composition of $\del$ and its dual.

If $k=r$, then there is nothing left to do. Otherwise, consider an eigenvector $Bv = \beta v$, of another eigenvalue $\beta > 0$. By the second commutation relation in Lemma~\ref{lem:commutation_relations},
$$ \beta \del v \;=\; \del B^{(k,r)} v \;=\; B^{(k-1,r)} \del v + (2k-r-1) \del v $$
That is, the $\beta$ eigenspace of $B^{(k,r)}$ maps via $\del$ to an eigenspace of $B^{(k-1,r)}$ with the shifted eigenvalue $\beta-(2k-r-1)$. Since $\del$ is an isomorphism from $(\ker \del)^\perp$ to $V_{(k-1)r}$, all the mappings between these eigenspaces must also be isomorphisms.

Since $k>r$, we use induction on $k$ to compute the nonzero eigenvalues of~$B = B^{(k,r)}$. For every $m \in \{0,1,\dots,k-r-1\}$,
\begin{align*}
\beta_{krm} \;&=\; \beta_{(k-1)rm} + (2k-r-1) \\ 
\;&=\; (k-1-r-m)(k-1+m) + (2k-r-1) \\
\;&=\; (k-r-m)(k+m)
\end{align*}
as required. Note that $\beta_{krm}$ are decreasing in $m$ by induction as well. 

For every such $m < k-r$, we use by induction the formula in the lemma for the eigenspaces $V_{(k-1)rm}$, and write  
$$ V_{(k-1)rm} \oplus \dots \oplus V_{(k-1)r(k-r-1)} \;=\; \ker \del^{k-r-m} \;\subseteq\; V_{(k-1)r} $$
Since $\del$ maps every nonkernel eigenspace $V_{krm}$ isomorphically to $V_{(k-1)rm}$, while $V_{kr(k-r)}$ is mapped to zero, it follows that
$$ V_{krm} \oplus \dots \oplus V_{kr(k-r-1)} \oplus V_{kr(k-r)} \;=\; \ker \del^{k-r-m+1} \;\subseteq\; V_{kr} $$
By the orthogonality of the eigenspaces, this yields the formula for $V_{krm}$ as in the lemma. By the same isomorphism, $\dim V_{krm} = \dim V_{(k-1)rm} = \tbinom{m+r-1}{m}$ which again follows by induction on~$k$.
\end{proof}

\begin{lemma}
\label{lem:R_m_in terms of B}
For $k \geq r \geq 1$, the following operators on $V_{kr}$ are equal. 
$$ \sh^j\,\del^j \;=\; \prod_{m=k-r-j+1}^{k-r}\left[\,B-(k-r-m)(k+m)\,I\,\right] $$
\end{lemma}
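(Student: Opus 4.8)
The plan is to prove the identity by induction on $j \ge 0$, keeping $k \ge r \ge 1$ arbitrary (with the standing convention that $V_{kr} = \{0\}$ when $k < r$). Write $\beta_{krm} := (k-r-m)(k+m)$ and $B = B^{(k,r)} = \sh^{(k-1,r)}\circ\del^{(k,r)}$, so that the assertion reads
$$ \sh^j \del^j \;=\; \prod_{m=k-r-j+1}^{k-r} \bigl( B - \beta_{krm}\, I \bigr) \qquad \text{on } V_{kr}. $$
The base case $j = 0$ is the empty product, equal to $I$ on both sides.

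Two elementary facts drive the inductive step. The first is an \emph{intertwining relation}: on $V_{(k-1)r}$,
$$ \sh^{(k-1,r)} \circ B^{(k-1,r)} \;=\; \bigl( B^{(k,r)} - (2k-r-1)\,I \bigr) \circ \sh^{(k-1,r)}. $$
This follows in one line from Lemma~\ref{lem:commutation_relations}(1), which via Definition~\ref{ab} reads $\del^{(k,r)}\sh^{(k-1,r)} = A^{(k-1,r)} = B^{(k-1,r)} + (2k-r-1)I$: indeed $B^{(k,r)}\sh^{(k-1,r)} = \sh^{(k-1,r)}\bigl(\del^{(k,r)}\sh^{(k-1,r)}\bigr) = \sh^{(k-1,r)}\bigl(B^{(k-1,r)} + (2k-r-1)I\bigr)$. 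Iterating in the degree, $\sh^{(k-1,r)}\circ q(B^{(k-1,r)}) = q\bigl(B^{(k,r)} - (2k-r-1)I\bigr)\circ \sh^{(k-1,r)}$ for every polynomial $q$. The second fact comprises the purely arithmetic identities $\beta_{(k-1)rm} + (2k-r-1) = \beta_{krm}$ and $\beta_{kr(k-r)} = 0$.

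For the step from $j-1$ to $j$, factor the left-hand side as $\sh^j\del^j|_{V_{kr}} = \sh^{(k-1,r)} \circ \bigl(\sh^{j-1}\del^{j-1}|_{V_{(k-1)r}}\bigr) \circ \del^{(k,r)}$. If $k = r$, both sides vanish: $\del^{(r,r)}$ maps into $V_{(r-1)r} = \{0\}$, while on the right the product contains the factor $B^{(r,r)} - \beta_{rr0}I = B^{(r,r)} = 0$. If $k > r$, apply the induction hypothesis at $(k-1,r)$ to write the middle factor as $q(B^{(k-1,r)})$ with $q(x) = \prod_{m=k-r-j+1}^{k-r-1}(x - \beta_{(k-1)rm})$, a polynomial of degree $j-1$; push $\sh^{(k-1,r)}$ through using the iterated intertwining relation; collapse $\sh^{(k-1,r)}\del^{(k,r)} = B^{(k,r)}$; and finally invoke $q(x - (2k-r-1)) = \prod_{m=k-r-j+1}^{k-r-1}(x - \beta_{krm})$ together with $B^{(k,r)} = B^{(k,r)} - \beta_{kr(k-r)}I$ to land on exactly $\prod_{m=k-r-j+1}^{k-r}(B^{(k,r)} - \beta_{krm}I)$.

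I expect the only real difficulty to be bookkeeping — keeping the superscripts $(k,r)$ versus $(k-1,r)$ on $\sh,\del,B$ straight through the substitutions, and handling the degenerate cases ($k=r$, and the empty products at $j=0,1$). Conceptually nothing deep is happening: combined with Lemma~\ref{lem:creation_annihilation}, the statement just says that $\sh^j\del^j$ is the polynomial in $B$ that annihilates $V_{kr(k-r)},\dots,V_{kr(k-r-j+1)}$ and acts by the appropriate nonzero scalar on the remaining eigenspaces. One could instead argue eigenspace by eigenspace — $\del^j$ kills $V_{krm'}$ precisely when $m' \ge k-r-j+1$, matching the vanishing of the product — but pinning down the scalar on each surviving $V_{krm'}$ still reduces to the same computation, so the induction above seems the most economical route.
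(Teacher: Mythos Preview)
Your induction is correct and complete. The approach, however, differs from the paper's. The paper argues in two strokes: first, repeated use of the commutation relation shows abstractly that $\sh^j\del^j$ is \emph{some} monic polynomial of degree $j$ in $B$; second, since Lemma~\ref{lem:creation_annihilation} guarantees that $\sh^j\del^j$ annihilates each $V_{krm}$ for $m \in \{k-r-j+1,\dots,k-r\}$, the $j$ eigenvalues $\beta_{krm}$ must be the roots, and a monic polynomial is determined by its roots. Your argument instead carries out the commutation explicitly via the intertwining $\sh^{(k-1,r)} q(B^{(k-1,r)}) = q(B^{(k,r)} - (2k-r-1)I)\sh^{(k-1,r)}$ and the shift identity $\beta_{(k-1)rm} + (2k-r-1) = \beta_{krm}$, thereby computing the polynomial directly. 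The upside of your route is that it is entirely self-contained, needing only Lemma~\ref{lem:commutation_relations}(1) and not the spectral decomposition of~$B$; the paper's route is terser but leans on Lemma~\ref{lem:creation_annihilation}. You correctly identify the paper's line of reasoning in your closing remark as the alternative eigenspace-by-eigenspace argument.
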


\begin{proof}
By repeated use of the commutation relation from Lemma~\ref{lem:commutation_relations}, which is $\sh\del = \del\sh - (2k'-r+1)I$ on the various $V_{k'r}$, one can transform $\sh^j\del^j$ into a monic polynomial of degree~$j$ in~$(\sh\del) = B:V_{kr} \to V_{kr}$. 

The eigenspaces of $\sh^j\del^j$ are therefore direct sums of eigenspaces of~$B$. By Lemma \ref{lem:creation_annihilation}, $\sh^j\del^j$ vanishes on~$V_{krm}$ for every
$$ m \in \{k-r,k-r-1,\dots,k-r-j+1\}$$ 
This means that the corresponding $\beta_{krm}$ must be the $j$ roots of the polynomial in~$B$ that expresses $\sh^j\del^j$. The right-hand side in the lemma is the only monic polynomial in $B$ with those $j$ roots.
\end{proof}

We finally return to the matrix $M_{kr}$. The combination of the previous lemma with Lemma~\ref{mkrmkrmkr} represents it as follows.

\begin{cor}\label{cor:A_B_same_eig_spaces}
Let $k \geq r \geq 1$. On $V_{kr}$,
$$ M_{kr} \;=\; \sum_{j=0}^{k-r} \frac{1}{(j!)^2} \prod_{i=k-r-j+1}^{k-r}\left[\,B-(k-r-i)(k+i)\,I\,\right]  $$
\end{cor}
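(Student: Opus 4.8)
The plan is to obtain this directly by substituting the polynomial identity of Lemma~\ref{lem:R_m_in terms of B} into the third description of the merging matrix from Lemma~\ref{mkrmkrmkr}. Recall that Lemma~\ref{mkrmkrmkr}(\ref{Akr_identity}) already expresses the operator $M_{kr}:V_{kr}\to V_{kr}$ as
$$ M_{kr} \;=\; \sum_{j=0}^{k-r} \frac{1}{(j!)^2}\,\sh^j\,\del^j, $$
with $\sh=\sh_\Ti$ and $\del=\del_\Ti$, so the task reduces to rewriting each operator $\sh^j\del^j$ appearing in this sum.

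First I would invoke Lemma~\ref{lem:R_m_in terms of B}, valid on $V_{kr}$ for $k\geq r\geq 1$, which identifies
$$ \sh^j\,\del^j \;=\; \prod_{m=k-r-j+1}^{k-r}\bigl[\,B-(k-r-m)(k+m)\,I\,\bigr]. $$
Plugging this into the previous display term by term, and merely renaming the product index $m$ to $i$, yields exactly the asserted formula. Since both lemmas are stated as identities of operators on the one and the same space $V_{kr}$, the substitution is a literal equality of operators there, with no compatibility issue to check.

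The only point worth a remark is the boundary term $j=0$: the product $\prod_{i=k-r+1}^{k-r}[\,\cdot\,]$ is empty and hence equals $I$, matching $\sh^0\del^0=I$, which is the leading summand of $M_{kr}$; this is consistent with the $m=k-r$ (zero-eigenvalue) case already handled in Lemma~\ref{lem:R_m_in terms of B}. There is no genuine obstacle here — the corollary is an immediate consequence of the two preceding lemmas. The substantive work has already been done: Lemma~\ref{mkrmkrmkr} via the Vandermonde identity and the $\del$--$\sh$ duality, and Lemma~\ref{lem:R_m_in terms of B} via the commutation relations of Lemma~\ref{lem:commutation_relations} together with the spectral description of $B$ in Lemma~\ref{lem:creation_annihilation}. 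This corollary merely records the resulting closed form, which is the shape in which $M_{kr}$ will be diagonalized in the proof of Proposition~\ref{spectral}.
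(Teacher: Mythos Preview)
Your proposal is correct and matches the paper's own proof exactly: the corollary is obtained by substituting the identity of Lemma~\ref{lem:R_m_in terms of B} into the expression $M_{kr}=\sum_{j=0}^{k-r}\frac{1}{(j!)^2}\sh^j\del^j$ from Lemma~\ref{mkrmkrmkr}(\ref{Akr_identity}). The paper states this in one sentence; your additional remarks on the $j=0$ boundary term and the provenance of the two lemmas are accurate but not needed.
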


This formula lets us deduce the spectral decomposition of $M_{kr}$ from that of~$B$, thereby proving our main proposition.

\begin{proof}[Proof of Proposition \ref{spectral}]
Since Corollary~\ref{cor:A_B_same_eig_spaces} expresses $M_{kr}$ as a polynomial in~$B$, its eigenspaces are direct sums of $V_{kr0}, \dots, V_{kr(k-r)}$. 

The eigenvalues of $M_{kr}$ corresponding to these spaces are computed by substituting those of $B$ in the polynomial. Namely, for $v \in V_{krm}$, we write $B v = \beta_{krm} v = (k-r-m)(k+m)v$ and obtain $M_{kr} v = \mu_{krm} v$, where
\begin{align*}
\mu_{krm}\;&=\; \sum_{j=0}^{k-r} \frac{1}{(j!)^2} \prod_{i=k-r-j+1}^{k-r}\left[(k-r-m)(k+m)-(k-r-i)(k+i)\right] \\
\;&=\; \sum_{j=0}^{k-r-m} \frac{1}{(j!)^2} \prod_{i=k-r-j+1}^{k-r}(i+m+r)(i-m) \\
\;&=\; \sum_{j=0}^{k-r-m} \frac{1}{(j!)^2}\, \frac{(k+m)!}{(k+m-j)!}\,\frac{(k-r-m)!}{(k-r-m-j)!} \\
\;&=\; \sum_{j=0}^{k-r-m} \binom{k+m}{k+m-j} \binom{k-r-m}{j} \;=\; \binom{2k-r}{k+m} \end{align*}
The sum is truncated at $k-r-m$ since the subsequent terms have zero factors where~$i=m$. The last equality is by the Vandermonde identity, see the proof of Lemma~\ref{mkrmkrmkr}.

The eigenvalues $\mu_{kr0},\dots,\mu_{kr(k-r)}$ are strictly decreasing, due to properties of binomial coefficients. Therefore, all the spaces $V_{krm}$ are distinct eigenspaces of~$M_{kr}$, exactly as for~$B$. Their descriptions and dimensions are therefore identical to those in Lemma~\ref{lem:creation_annihilation}, as stated in the proposition.
\end{proof}

As noted in the proof of Theorem~\ref{main1}, the covariance matrix of all statistics in $W_{kr}$ decomposes into $(d-1)^r$ blocks of $M_{kr}$. The following proof of Theorem~\ref{main2} uses the above decomposition of $M_{kr}$ to continue the derivation and completely diagonalize this matrix.

\begin{proof}[Proof of Theorem~\ref{main2}]
Let $f \in W_{krm}$ and $f' \in W_{krm'}$.
In the proof of Theorem~\ref{main1}, we have already shown that the limit in the statement of Theorem~\ref{main2} exists, and equals
$$ C_{f,f'} \;=\; \frac{(k!)^2}{(2k-r)!} \;\sum_{g \in (D\setminus\Ti)^r} \sum_{\substack{e,e' \in D_{kr} \\ \rho(e)=\rho(e')=g}} \langle f, e \rangle \; \left[M_{kr}\right]_{\pi(e),\pi(e')}\; \langle f', e' \rangle $$ 
where $\rho$ remove all~$\Ti$s, and $\pi$ replaces all non-$\Ti$s by~$\Tii$, as in Definition~\ref{pyro}, and $D_{kr} = \left\{e \in D^k: \#\Ti(e) = k-r \right\}$ are all words in the orthogonal basis of~$W_{kr}$. By the same Definition~\ref{phikr}, 
$$ \Phi_{kr}(e) \;=\; \pi(e) \otimes \rho(e) \;\in\; V_{kr} \otimes (\Ti^\perp)^{\otimes r} $$
It follows that one can equivalently write the sum as
\begin{align*}
C_{f,f'} \;&=\; \frac{(k!)^2}{(2k-r)!} \;\sum_{e,e' \in D_{kr}} \langle f, e \rangle \; \left\langle \left(M_{kr} \otimes I\right)\Phi_{kr}(e), \Phi_{kr}(e') \right\rangle \; \langle f', e' \rangle \\
\;&=\; \frac{(k!)^2}{(2k-r)!} \left\langle \left(M_{kr} \otimes I\right) \Phi_{kr}(f),  \Phi_{kr}(f') \right\rangle
\end{align*}

By Proposition~\ref{wv}, since $f \in W_{krm}$, its image $\Phi_{kr}(f) \in V_{krm} \otimes (\Ti^\perp)^{\otimes r}$. Therefore, the application of $M_{kr} \otimes I$ reduces to a multiplication by the eigenvalue~$\mu_{krm}$.
$$ C_{f,f'} \;=\; \frac{(k!)^2}{(2k-r)!} \,\mu_{krm} \left\langle \Phi_{kr}(f),  \Phi_{kr}(f') \right\rangle \;=\; \frac{(k!)^2}{(2k-r)!} \binom{2k-r}{k+m} \left\langle f,f' \right\rangle $$
which is the statement of the theorem.
\end{proof}

\subsection{Proof of Theorem~\ref{structure}}
\label{sec23}

We conclude this section with a proof of Theorem~\ref{structure}.
Our derivation of Theorem~\ref{main2} characterizes the components $W_{krm} \cong V_{krm} \otimes (\mathbb{R}^{d-1})^{\otimes r}$ using word operations. Theorem~\ref{structure} gives a more explicit construction of the spaces $V_{krm}$ as isometric images of spaces of orthogonal polynomials. Indeed, by Definitions~\ref{deltakr}-\ref{ukrm}, $U_{krm}$ is the degree-$m$ space of orthogonal polynomials on the discrete simplex $\Delta_{kr}$. By Definition~\ref{psikr}, this space maps to $V_{krm}$ via the explicit isometry $\Psi_{kr}$, which defines a combination of subwords using the evaluations of a given polynomial on the points of~$\Delta_{kr}$ as coefficients.

The proof uses discrete partial derivatives of polynomials in $\mathbb{R}[x_0,\dots,x_r]$. Let $\mathbf{e}_0,\dots,\mathbf{e}_r$ denote the unit vectors in $\mathbb{Z}^{r+1}$.

\begin{definition}
\label{partial} 
For $i \in \{1,\dots,r\}$, $P \in \R[x_0,\ldots,x_r]$, and $\mathbf{x} \in \mathbb{Z}^{r+1}$ 
$$ (\grad_i P)(\mathbf{x}) \;=\; P(\mathbf{x} + \mathbf{e}_i) - P(\mathbf{x}) $$
\end{definition}

\begin{ex*}
$\grad_1 (x_1^2 + x_2) = (x_1+1)^2 - x_1^2 = 2x_1 + 1$
\end{ex*}

Observe that $\grad_i$ takes a polynomial of degree $m$ to a polynomial of degree $m-1$. Clearly, if the variable $x_0$ does not appear in $P$ then $\grad_i P$ is equivalently given by $P(\mathbf{x} + \mathbf{e}_i) - P(\mathbf{x} + \mathbf{e}_0)$, which may be useful as these two points lie in the same discrete simplex $\Delta_{kr}$.

\begin{lemma}
\label{psi1}
$\Psi_{kr}:\mathbb{R}_{k-r}[x_1,\ldots,x_r] \to V_{kr}$ is an isometry.
\end{lemma}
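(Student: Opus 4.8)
The plan is to show that $\Psi_{kr}$ is injective on $\mathbb{R}_{k-r}[x_1,\dots,x_r]$ and that it preserves the inner product, which together give the isometry claim (matching dimensions will then also follow, since $\dim \mathbb{R}_{k-r}[x_1,\dots,x_r]$ must equal $\dim V_{kr} = \binom{k}{r}$; this dimension count is a useful sanity check but the substance is injectivity plus inner-product preservation). The inner-product part is immediate from the remark just before the statement: $\langle P,P'\rangle_{kr} = \langle \Psi_{kr}(P),\Psi_{kr}(P')\rangle_{\mathbf{p}}$, since the words $\Ti^{d_0}\Tii\,\Ti^{d_1}\Tii\cdots\Tii\,\Ti^{d_r}$ indexed by $\mathbf{d}\in\Delta_{kr}$ form precisely the orthonormal basis of $V_{kr}$ described by the correspondence $e\leftrightarrow\mathbf{d}(e)$ in \S\ref{proof2}. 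So the one real point is: $\Psi_{kr}$ restricted to $\mathbb{R}_{k-r}[x_1,\dots,x_r]$ is injective, equivalently (since $\langle-,-\rangle_{kr}$ is a genuine inner product on this space — positive definite because evaluation at the $\binom{k}{r}$ points of $\Delta_{kr}$ separates polynomials of degree $\le k-r$ in $r$ variables, a standard fact about the discrete simplex that should be cited or proved in a line) that $P\mapsto (P(\mathbf{d}))_{\mathbf{d}\in\Delta_{kr}}$ is a bijection.

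The key step is therefore to argue that the evaluation map $\mathbb{R}_{k-r}[x_1,\dots,x_r]\to\mathbb{R}^{\Delta_{kr}}$ is an isomorphism. One clean way: both sides have dimension $\binom{k}{r}$ (the left by the standard count of monomials $x_1^{a_1}\cdots x_r^{a_r}$ with $\sum a_i\le k-r$, which is $\binom{(k-r)+r}{r}=\binom{k}{r}$; the right by $|\Delta_{kr}|=\binom{k}{r}$ from Definition~\ref{deltakr}), so it suffices to show injectivity, i.e. that a polynomial of degree $\le k-r$ in $x_1,\dots,x_r$ vanishing on all of $\Delta_{kr}$ is zero. This follows by induction on $r$: project $\Delta_{kr}$ onto its last coordinate $d_r\in\{0,\dots,k-r\}$; for each fixed value $d_r = c$, the remaining coordinates range over the smaller simplex $\Delta_{(k-1-c+r)r'}$-type set with $r-1$ free variables and total budget $k-r-c$, and one peels off $x_r$ using a Lagrange/finite-difference argument in one variable (a degree-$\le k-r$ univariate polynomial vanishing at $k-r+1$ points is zero). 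I would phrase this via the discrete derivatives $\grad_i$ of Definition~\ref{partial}, which lower degree and interact nicely with $\Delta_{kr}$, since those operators are already in play for the companion Lemmas. Alternatively one can simply invoke that the nodes of the regular simplicial grid are a \emph{unisolvent} set for total-degree interpolation — a classical fact — and cite it.

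The main obstacle is purely bookkeeping: matching the indexing so that the basis words $\Ti^{d_0}\Tii\cdots\Tii\,\Ti^{d_r}$ of $V_{kr}$ correspond bijectively and isometrically to the evaluation functionals $P\mapsto P(\mathbf{d})$, and confirming that $\langle-,-\rangle_{kr}$ is nondegenerate exactly on $\mathbb{R}_{k-r}[x_1,\dots,x_r]$ (it is \emph{not} nondegenerate on all of $\mathbb{R}[x_0,\dots,x_r]$, since $x_0$ is determined by $x_1,\dots,x_r$ on $\Delta_{kr}$ via $x_0 = (k-r)-x_1-\cdots-x_r$, so every polynomial agrees on $\Delta_{kr}$ with one not involving $x_0$ — this is why the statement restricts to polynomials in $x_1,\dots,x_r$, and why the degree bound $k-r$ is exactly right). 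Once these identifications are pinned down, the isometry is essentially the remark preceding the lemma plus the unisolvence of the simplicial grid, and nothing deeper is needed.
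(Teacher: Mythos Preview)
Your proposal is correct and essentially matches the paper's proof: both reduce the isometry to (i) the inner-product identity $\langle P,P'\rangle_{kr}=\langle\Psi_{kr}(P),\Psi_{kr}(P')\rangle_{\mathbf{p}}$, (ii) the dimension count $\dim\mathbb{R}_{k-r}[x_1,\dots,x_r]=\tbinom{k}{r}=|\Delta_{kr}|=\dim V_{kr}$, and (iii) injectivity of evaluation on the discrete simplex. The only difference is in how you organize~(iii): you sketch an induction on~$r$ (peeling off $x_r$), whereas the paper runs a cleaner induction on~$k$ using exactly the discrete derivatives $\grad_i$ you mention---if $P$ vanishes on $\Delta_{kr}$ then each $\grad_iP\in\mathbb{R}_{k-r-1}[x_1,\dots,x_r]$ vanishes on $\Delta_{(k-1)r}$, hence is zero by induction, so $P$ is constant and therefore zero. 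Your instinct to phrase things via $\grad_i$ already points to this route; the induction-on-$k$ packaging is tidier than the induction-on-$r$ version you outlined, since $\grad_i$ drops degree (hence~$k$) rather than the number of variables.
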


\begin{proof}
We first show that $\Psi_{kr}$ is injective by induction on $k$. The case $k=r$ is obvious: $1 \mapsto \Tii\Tii\cdots\Tii$. For $k>r$, suppose that $P \in \mathbb{R}_{k-r}[x_1,\ldots,x_r]$ vanishes on~$\Delta_{kr}$. Then $\grad_1 P,\dots,\grad_r P$ are polynomials in $\mathbb{R}_{k-r-1}[x_1,\dots,x_r]$ that vanish on~$\Delta_{(k-1)r}$. By induction, each $\grad_iP = 0$. It follows that $P$ is constant on $\mathbb{Z}^{r+1}$, and since it vanishes on the simplex, it is the zero polynomial as needed.

The equality $\langle P, P' \rangle_{kr} = \left\langle \Psi_{kr}(P), \Psi_{kr}(P') \right\rangle$ follows from Definitions \ref{deltakr} and~\ref{psikr}. By injectivity of $\Psi_{kr}$, the symmetric bilinear pairing $\langle -, - \rangle_{kr}$ restricted to $\mathbb{R}_{k-r}[x_1,\ldots,x_r]$ is an inner product. Its isometric image is all of~$V_{kr}$ because the dimension of both spaces is~$\tbinom{k}{r}$.
\end{proof}

\begin{definition}
\label{sh*}
Let $\sh_{kr}^{*} : \mathbb{R}_{k-r}[x_1,\ldots,x_r] \to \mathbb{R}[x_1,\ldots,x_r]$ 
$$ \left(\sh_{kr}^{*}P\right)(\mathbf{x}) \;:=\; \left(1+k-r-\sum_{i=1}^r x_i\right) P(\mathbf{x}) + \sum_{i=1}^r x_i P(\mathbf{x} - \mathbf{e}_i) $$
\end{definition}

\begin{lemma}
\label{sh*1}
$\sh_{kr}^{*}$ is a linear automorphism of $\mathbb{R}_{k-r}[x_1,\ldots,x_r]$.
\end{lemma}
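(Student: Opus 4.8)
The plan is first to check that $\sh_{kr}^{*}$ actually maps $\mathbb{R}_{k-r}[x_1,\ldots,x_r]$ into itself, and then, since it is thereby a linear endomorphism of a finite-dimensional space, to prove it is bijective by establishing injectivity. The mechanism for both facts is the same: I will compute the top homogeneous component of $\sh_{kr}^{*}P$ and show it is a nonzero scalar multiple of that of $P$, so that $\sh_{kr}^{*}$ preserves the degree filtration $\mathbb{R}_0 \subseteq \mathbb{R}_1 \subseteq \cdots \subseteq \mathbb{R}_{k-r}$ and induces an invertible map on each graded quotient.

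In detail, I would fix $P$ of degree $m \le k-r$ and write $P = P_m + P_{m-1} + \cdots + P_0$ in homogeneous parts. Two elementary facts drive the computation: for a homogeneous polynomial $Q$ of degree $j$ in $x_1,\ldots,x_r$, the shift $Q(\mathbf{x}-\mathbf{e}_i)$ has degree-$j$ part $Q$ and degree-$(j-1)$ part $-\partial_i Q$ (the exact Taylor expansion, $\partial_i$ being the formal partial derivative), and Euler's identity $\sum_{i=1}^{r} x_i\,\partial_i Q = j\,Q$. Expanding $(\sh_{kr}^{*}P)(\mathbf{x}) = (1+k-r)P(\mathbf{x}) - \bigl(\sum_i x_i\bigr)P(\mathbf{x}) + \sum_i x_i\,P(\mathbf{x}-\mathbf{e}_i)$ by homogeneous degree, the two candidate degree-$(m+1)$ contributions $-\bigl(\sum_i x_i\bigr)P_m$ and $+\bigl(\sum_i x_i\bigr)P_m$ cancel; hence $\deg \sh_{kr}^{*}P \le m$ and $\sh_{kr}^{*}$ is an endomorphism of $\mathbb{R}_{k-r}[x_1,\ldots,x_r]$. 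Collecting the degree-$m$ terms -- namely $(1+k-r)P_m$ from the first summand, $-m\,P_m$ from the last summand via Euler's identity, and two cancelling copies of $\bigl(\sum_i x_i\bigr)P_{m-1}$ -- gives that the leading homogeneous component of $\sh_{kr}^{*}P$ is exactly $(1+k-r-m)\,P_m$.

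To conclude: for every $m \in \{0,1,\ldots,k-r\}$ one has $1+k-r-m \ge 1 \neq 0$, so $\sh_{kr}^{*}$ acts on the quotient $\mathbb{R}_m[x_1,\ldots,x_r]/\mathbb{R}_{m-1}[x_1,\ldots,x_r]$ as multiplication by the nonzero scalar $1+k-r-m$. If $\sh_{kr}^{*}P = 0$ with $P \ne 0$ of degree $m$, its leading part $(1+k-r-m)P_m$ would vanish, forcing $P_m = 0$, a contradiction; hence $\sh_{kr}^{*}$ is injective, and an injective linear endomorphism of a finite-dimensional vector space is an automorphism. I do not anticipate a genuine obstacle here: the only point demanding care is the bookkeeping of homogeneous components in the three-term expansion together with the correct use of Euler's identity to pin down the leading coefficient $1+k-r-m$; everything else is immediate from finite-dimensionality.
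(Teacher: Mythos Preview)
Your proof is correct and follows essentially the same approach as the paper: both show that the top homogeneous component of $\sh_{kr}^{*}P$ equals $(1+k-r-m)P_m$, which is nonzero for $m\le k-r$, so that degree is preserved and injectivity (hence bijectivity on a finite-dimensional space) follows. The only cosmetic difference is that the paper computes the leading coefficient monomial-by-monomial via the discrete derivative $\grad_i$, whereas you package the same calculation using the Taylor expansion and Euler's identity $\sum_i x_i\,\partial_i P_m = m P_m$.
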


\begin{proof}
Linearity is clear. Let $P \in \mathbb{R}_{k-r}[x_1,\ldots,x_r]$ be a nonzero polynomial of total degree~$m$. Note that equivalently to Definition~\ref{sh*}
$$ \sh_{kr}^{*}P(\mathbf{x}) \;=\; (1+k-r)P(\mathbf{x}) - \sum_{i=1}^r x_i \grad_i P(\mathbf{x} - \mathbf{e}_i) $$
It follows that the total degree of $\sh_{kr}^{*}P$ is at most $m$. Let $c\,x_1^{a_1}x_2^{a_2}\cdots x_r^{a_r}$ be a top monomial in $P$, of total degree~$a_1 + \dots + a_r =m$. Each $x_i\grad_i$ term yields $a_ic\,x_1^{a_1}x_2^{a_2}\cdots x_r^{a_r}$ plus lower degree monomials. The total degree of $\sh_{kr}^{*}P$ is hence exactly~$m$, as it contains the monomial $(1+k-r-m)c\,x_1^{a_1}x_2^{a_2}\cdots x_r^{a_r}$ where $m \leq k-r$.
\end{proof}

\begin{lemma}
\label{sh*2}
$\sh \circ \Psi_{kr} = \Psi_{(k+1)r} \circ \sh_{kr}^*$ on $\,\mathbb{R}_{k-r}[x_1,\ldots,x_r]$.
\end{lemma}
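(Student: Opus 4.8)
The plan is to expand both sides in the standard basis $\{w_{\mathbf{d}}\}$ of the word space, where $w_{\mathbf{d}} := \Ti^{d_0}\Tii\,\Ti^{d_1}\Tii\cdots\Tii\,\Ti^{d_r}$ for $\mathbf{d}$ ranging over the relevant simplex, and to match coefficients. First I would compute the left-hand side. Shuffling one $\Ti$ into $w_{\mathbf{d}}$ means inserting $\Ti$ at each of the $k+1$ positions; the positions that fall inside (or at the edges of) the $i$th run of $\Ti$s, of which there are $d_i+1$, all produce $w_{\mathbf{d}+\mathbf{e}_i}$. Hence $\sh\,w_{\mathbf{d}} = \sum_{i=0}^r (d_i+1)\,w_{\mathbf{d}+\mathbf{e}_i}$, so by Definition~\ref{psikr}
$$ \sh\,\Psi_{kr}(P) \;=\; \sum_{\mathbf{d}\in\Delta_{kr}} P(\mathbf{d}) \sum_{i=0}^{r} (d_i+1)\,w_{\mathbf{d}+\mathbf{e}_i}. $$
Collecting the coefficient of a fixed $w_{\mathbf{d}'}$ with $\mathbf{d}'\in\Delta_{(k+1)r}$: it arises from $\mathbf{d}=\mathbf{d}'-\mathbf{e}_i$ precisely when $d_i'\geq 1$, with weight $d_i+1 = d_i'$, so the coefficient equals $\sum_{i=0}^r d_i'\,P(\mathbf{d}'-\mathbf{e}_i)$ (terms with $d_i'=0$ contributing nothing).

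Next I would compute the right-hand side. By Definition~\ref{psikr} the coefficient of $w_{\mathbf{d}'}$ in $\Psi_{(k+1)r}(\sh_{kr}^{*}P)$ is $(\sh_{kr}^{*}P)(\mathbf{d}')$, and this makes sense because $\sh_{kr}^{*}P\in\mathbb{R}_{k-r}[x_1,\ldots,x_r]\subseteq\mathbb{R}_{(k+1)-r}[x_1,\ldots,x_r]$ by Lemma~\ref{sh*1}. Since $\sum_{i=0}^r d_i' = (k+1)-r$, the ``weight'' prefactor in Definition~\ref{sh*} simplifies on the simplex:
$$ 1+k-r-\sum_{i=1}^{r} d_i' \;=\; 1+k-r-\bigl((k+1)-r-d_0'\bigr) \;=\; d_0', $$
so $(\sh_{kr}^{*}P)(\mathbf{d}') = d_0'\,P(\mathbf{d}') + \sum_{i=1}^{r} d_i'\,P(\mathbf{d}'-\mathbf{e}_i)$. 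The only apparent discrepancy with the left-hand coefficient $\sum_{i=0}^r d_i'\,P(\mathbf{d}'-\mathbf{e}_i)$ is in the $i=0$ term: $d_0'\,P(\mathbf{d}')$ versus $d_0'\,P(\mathbf{d}'-\mathbf{e}_0)$. But $P$ does not involve the variable $x_0$, so $P(\mathbf{d}') = P(\mathbf{d}'-\mathbf{e}_0)$, and the two coefficients agree term by term for every $\mathbf{d}'\in\Delta_{(k+1)r}$. Since $P\in\mathbb{R}_{k-r}[x_1,\ldots,x_r]$ was arbitrary, this establishes the operator identity.

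I do not expect a genuine obstacle here; the computation is elementary once the two maps are written out on basis vectors. The only point that merits care — and the crux of the argument — is the twofold role of $x_0$: on one hand the combinatorial weight $d_i+1$ coming from inserting $\Ti$ into the $i$th run matches the multiplier $x_i$ (respectively the reduced prefactor $d_0'$) in $\sh_{kr}^{*}$ only after using the constraint $\sum_i x_i = (k+1)-r$ on $\Delta_{(k+1)r}$; on the other hand, the mismatch between $P(\mathbf{d}')$ and $P(\mathbf{d}'-\mathbf{e}_0)$ is absorbed exactly because the polynomials in the domain are taken to be independent of $x_0$, which is the reason that variable was singled out in Definitions~\ref{deltakr} and~\ref{ukrm} in the first place.
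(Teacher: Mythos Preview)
Your proof is correct and follows essentially the same approach as the paper: expand $\sh\,\Psi_{kr}(P)$ using $\sh\,w_{\mathbf{d}}=\sum_i (d_i+1)\,w_{\mathbf{d}+\mathbf{e}_i}$, re-index over $\Delta_{(k+1)r}$, and identify the resulting coefficient $\sum_i d_i' P(\mathbf{d}'-\mathbf{e}_i)$ with $(\sh_{kr}^*P)(\mathbf{d}')$. Your explicit unpacking of the last identification via the simplex constraint and the $x_0$-independence of $P$ is exactly what the paper leaves implicit in its one-line passage from the sum to $\sh_{kr}^*P$.
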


\begin{proof}
Let $P \in \mathbb{R}_{k-r}[x_1,\ldots,x_r]$. Applying $\sh$ on Definition~\ref{psikr},
\begin{align*}
\sh\,\Psi_{kr}(P) \;&=\; \sum_{\mathbf{d} \in \Delta_{kr}} P(\mathbf{d})\, \sum_{i=0}^r (d_i+1) \Ti^{d_0}\Tii\Ti^{d_1}\Tii\cdots\Tii\Ti^{d_i+1}\Tii\cdots\Tii\Ti^{d_r} \\ 
\;&=\; \sum_{\mathbf{d} \in \Delta_{(k+1)r}} \left(\sum_{i=0}^r d_i \,P(\mathbf{d} - \mathbf{e_i})\right) \Ti^{d_0}\Tii\Ti^{d_1}\Tii\cdots\Tii\Ti^{d_r} \\ 
\;&=\; \sum_{\mathbf{d} \in \Delta_{(k+1)r}} \left(\sh_{kr}^*P\right)(\mathbf{d}) \;\Ti^{d_0}\Tii\Ti^{d_1}\Tii\cdots\Tii\Ti^{d_r} \;=\; \Psi_{(k+1)r}\left(\sh_{kr}^*P\right)
\end{align*}
as claimed.
\end{proof}

\begin{lemma}
\label{shwkrm}
$\sh : V_{krm} \xrightarrow{\;\sim\;} V_{(k+1)rm}\;$ for $m \leq k-r$.
\end{lemma}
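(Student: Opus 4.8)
The plan is to use the eigenspace description of the components from Lemma~\ref{lem:creation_annihilation}: $V_{krm}$ is precisely the eigenspace of $B^{(k,r)} = \sh\circ\del$ acting on $V_{kr}$ with eigenvalue $\beta_{krm} = (k-r-m)(k+m)$, and likewise $V_{(k+1)rm}$ is the $\beta_{(k+1)rm}$-eigenspace of $B^{(k+1,r)}$ on $V_{(k+1)r}$. So it suffices to check that $\sh^{(k,r)}$ carries $\beta_{krm}$-eigenvectors of $B^{(k,r)}$ to $\beta_{(k+1)rm}$-eigenvectors of $B^{(k+1,r)}$, and then to invoke the injectivity of $\sh$ together with a dimension count.

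First I would record the intertwining identity
$$ B^{(k+1,r)}\circ\sh^{(k,r)} \;=\; \sh^{(k,r)}\circ\del^{(k+1,r)}\circ\sh^{(k,r)} \;=\; \sh^{(k,r)}\circ A^{(k,r)}, $$
which is immediate from the definitions of $A$ and $B$ in Definition~\ref{ab}. Next I would apply the first commutation relation of Lemma~\ref{lem:commutation_relations}, namely $A^{(k,r)} - B^{(k,r)} = (2k-r+1)\,I$ on $V_{kr}$, to deduce that on the eigenspace $V_{krm}$ the operator $A^{(k,r)}$ acts by the scalar $\beta_{krm} + (2k-r+1)$. A short arithmetic check — essentially the only computation in the argument, and a routine one — gives
$$ \beta_{krm} + (2k-r+1) \;=\; (k+1-r-m)(k+1+m) \;=\; \beta_{(k+1)rm}. $$
Hence, for $v \in V_{krm}$ we obtain $B^{(k+1,r)}\,\sh^{(k,r)}v = \beta_{(k+1)rm}\,\sh^{(k,r)}v$, so that $\sh^{(k,r)}(V_{krm}) \subseteq V_{(k+1)rm}$. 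Here I use that, since $m \leq k-r < k+1-r$, the value $\beta_{(k+1)rm}$ is genuinely one of the eigenvalues of $B^{(k+1,r)}$ listed in Lemma~\ref{lem:creation_annihilation}, and its eigenspaces are exactly the pairwise distinct $V_{(k+1)rm'}$.

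Finally I would conclude the isomorphism: $\sh^{(k,r)}$ is injective on all of $V_{kr}$ by Lemma~\ref{lem123}, hence injective on the subspace $V_{krm}$; and $\dim V_{krm} = \binom{m+r-1}{m} = \dim V_{(k+1)rm}$ by the dimension formula of Lemma~\ref{lem:creation_annihilation}, which is independent of the length. An injective linear map between finite-dimensional spaces of equal dimension is an isomorphism, which yields $\sh : V_{krm}\xrightarrow{\sim}V_{(k+1)rm}$. I do not expect a real obstacle here; the only points needing care are bookkeeping of which space each of $\del$, $\sh$, $A$, $B$ acts on (so that the commutation relation is applied with the correct constant $2k-r+1$) and the small eigenvalue identity displayed above.
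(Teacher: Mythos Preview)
Your proof is correct. It differs from the paper's in a small but genuine way: the paper argues via the duality $\langle \sh f, g\rangle = \langle f, \del g\rangle$ together with the fact, established inside the proof of Lemma~\ref{lem:creation_annihilation}, that $\del$ carries $V_{(k+1)rm'}$ onto $V_{krm'}$ for each $m'\le k-r$ and annihilates $V_{(k+1)r(k-r+1)}$; orthogonality then forces $\sh f$ into $V_{(k+1)rm}$. You instead work directly with the intertwining $B^{(k+1,r)}\circ\sh = \sh\circ A^{(k,r)}$ and the commutation relation $A-B=(2k-r+1)I$, obtaining the eigenvalue shift $\beta_{krm}\mapsto\beta_{(k+1)rm}$ by an explicit computation. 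Your route is the natural mirror of what Lemma~\ref{lem:creation_annihilation} already does for $\del$ via the second commutation relation, and is arguably more self-contained since it does not appeal to facts buried in another proof. The paper's duality argument, on the other hand, makes the additional isometry-up-to-scalar remark ($\langle\sh f,\sh f'\rangle = \beta_{(k+1)rm}\langle f,f'\rangle$) fall out immediately, which your approach would need one more line to recover.
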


\begin{proof}
This is implicit in the proof of Lemma~\ref{lem:creation_annihilation}. 
Recall that $V_{kr0},V_{kr1},\dots$ are pairwise orthogonal, and $\del V_{(k+1)rm} = V_{krm}$ for $m \leq k-r$. Let $f \in V_{krm}$ and $g\in (V_{(k+1)rm})^{\perp}$. From the duality $\langle \sh f, g \rangle = \langle f, \del g \rangle$ in~\S\ref{operators} it follows that $\sh f \perp V_{(k+1)rm'}$ for any $m \neq m' \leq k-r$, and similarly $\sh f \perp \ker\del = V_{(k+1)r(k-r+1)}$. Therefore, $\sh f \in V_{(k+1)rm}$ by orthogonality. 
Since $\sh$ is injective and the dimensions agree, the isomorphism follows.

We note that this restriction of $\sh$ is an isometry up to a scalar factor. By the proof of Lemma~\ref{lem:creation_annihilation}, $\langle \sh f, \sh f'\rangle = \beta_{(k+1)rm}\langle f, f'\rangle$ for $f,f' \in V_{krm}$.
\end{proof}

\begin{proof}[Proof of Theorem~\ref{structure}]
The proof proceeds by induction on $k$. For $k=r$ the space $U_{rr0} = \mathrm{span}\{1\}$ maps via $\Psi_{rr}$ to $W_{rr0} = \mathrm{span}\{\Tii\Tii\cdots\Tii\}$, and the claim holds.

We now prove the case of $k+1$ assuming~$k$. For each $m \in \{0,\dots,k-r\}$ we have an isomorphism
$$ \Psi_{kr}:U_{krm} \;\xrightarrow{\;\sim\;}\;  V_{krm} $$
Using Lemmas~\ref{sh*1}, \ref{sh*2}, and~\ref{shwkrm}, the following map is an isomorphism as well,
$$ \Psi_{(k+1)r} : \sh_{kr}^*\,U_{krm} \;\xrightarrow{\;\sim\;}\; \sh\,V_{krm} = V_{(k+1)rm} $$
Moreover, since the map $\Psi_{(k+1)r}$ is an isometry by Lemma~\ref{psi1}, and since the components $V_{(k+1)r0}$, $V_{(k+1)r1}, \dots, V_{(k+1)r(k-r)}$ are pairwise orthogonal, the polynomial spaces that we got, $\sh_{kr}^{*}U_{kr0},\sh_{kr}^{*}U_{kr1},\dots,\sh_{kr}^{*}U_{kr(k-r)}$, are also orthogonal with respect to the inner product $\langle-,-\rangle_{(k+1)r}$. 

The nonzero elements of $\sh_{kr}^{*}U_{krm}$ are polynomials of degree $m$ as noted in the proof of Lemma~\ref{sh*1}. By the orthogonality of the spaces $\sh_{kr}^{*}U_{krm}$ and by the definition of $U_{(k+1)rm}$, necessarily $\sh_{kr}^{*} U_{krm} = U_{(k+1)rm}$ for every $m \in \{0,\dots,k-r\}$. Therefore, the isomorphism
$$ \Psi_{kr}:U_{(k+1)rm} \;\xrightarrow{\;\sim\;}\;  V_{(k+1)rm} $$
holds for all $m \leq k-r$ as required. The remaining case $m = k-r+1$ follows by noting that the isometry $\Psi_{(k+1)r}$ must take the orthogonal complement of these $U_{(k+1)rm}$ in $\R_{k-r+1}[x_1,\ldots,x_r]$ bijectively to the orthogonal complement of their images $V_{(k+1)rm}$ in~$V_{(k+1)r}$.
\end{proof}

\medskip

\begin{rem}
\label{homogeneous}
\emph{On Homogeneous Discrete Orthogonal Polynomials}

\smallskip \noindent 
Although $\Psi_{kr}$ is defined on $\mathbb{R}[x_0,\dots,x_r]$, the polynomial spaces $U_{krm}$ leave $x_0$ out. This map evaluates them only on the discrete simplex $\Delta_{kr}$ that lies in the hyperplane $x_0 + \dots + x_r = (k-r)$, so
it is well defined on the quotient
$$ \mathbb{R}[x_0,\dots,x_r] / \langle x_0 + \dots + x_r - (k-r) \rangle $$
A natural alternative is hence given by homogeneous polynomials. Explicitly, $P \in U_{krm}$ is uniquely made $m$-homogeneous via $1 \mapsto (x_0+\cdots+x_r)/(k-r)$, and recovered by $x_0 \mapsto (k-r)-(x_1 + \dots + x_r)$. We therefore define the following.
\end{rem}

\begin{definition}
\label{hkrm}
Let $H_{krm}$ be the space of homogeneous polynomials of total degree~$m$, that are orthogonal to $H_{kr0},\dots,H_{kr(m-1)}$ with respect to $\langle-,-\rangle_{kr}$.
\end{definition}

\begin{cor}
\label{hu}
$H_{krm} \cong U_{krm}\;$ preserving $\Psi_{kr}$ and $\langle-,-\rangle_{kr}$
\end{cor}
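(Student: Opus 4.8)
The plan is to turn the informal correspondence sketched in Remark~\ref{homogeneous} into an explicit pair of mutually inverse linear maps, and then verify degree by degree that it carries the recursively defined spaces $U_{krm}$ onto the recursively defined spaces $H_{krm}$. Writing $\ell = x_0 + x_1 + \cdots + x_r$ and letting $W_m \subset \mathbb{R}[x_0,\dots,x_r]$ be the space of homogeneous polynomials of total degree~$m$, I would define $\phi_m\colon \mathbb{R}_m[x_1,\dots,x_r] \to W_m$ by multiplying each monomial of degree~$a$ by $(\ell/(k-r))^{m-a}$, and $\psi_m\colon W_m \to \mathbb{R}_m[x_1,\dots,x_r]$ by the substitution $x_0 \mapsto (k-r) - (x_1+\cdots+x_r)$. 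First I would check that $\psi_m\circ\phi_m$ is the identity (the substitution sends $\ell$ to the constant $k-r$ and fixes the other variables), and since $\dim\mathbb{R}_m[x_1,\dots,x_r] = \binom{m+r}{r} = \dim W_m$, deduce that $\phi_m$ is a linear isomorphism with inverse $\psi_m$.

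The key lemma I would isolate is that neither $\phi_m$ nor $\psi_m$ alters the values of a polynomial on the discrete simplex $\Delta_{kr}$: every $\mathbf{d}\in\Delta_{kr}$ satisfies $d_0+\cdots+d_r = k-r$, so there $\ell$ evaluates to $k-r$ and $d_0 = (k-r)-(d_1+\cdots+d_r)$. Because the pairing $\langle-,-\rangle_{kr}$ of Definition~\ref{deltakr} and the map $\Psi_{kr}$ of Definition~\ref{psikr} depend on a polynomial only through its restriction to $\Delta_{kr}$, this at once yields $\langle\phi_m P, Q\rangle_{kr} = \langle P, Q\rangle_{kr}$ and $\langle P, \psi_m\tilde Q\rangle_{kr} = \langle P,\tilde Q\rangle_{kr}$ for arbitrary polynomials $Q,\tilde Q$, together with $\Psi_{kr}(\phi_m P) = \Psi_{kr}(P)$. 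These identities are exactly what allow a single map to respect both inner-product structures and the evaluation into $V_{kr}$ simultaneously.

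With this in place I would run an induction on $m$ over the relevant range $\{0,\dots,k-r\}$, showing that $\phi_m$ restricts to an isomorphism $\iota_m\colon U_{krm}\xrightarrow{\ \sim\ }H_{krm}$. The base case $m=0$ is immediate since $U_{kr0}=H_{kr0}=\mathrm{span}\{1\}$ and $\phi_0$ is the identity. For the inductive step one uses $H_{krj}=\phi_j(U_{krj})$ for $j<m$: given $P\in U_{krm}$ the polynomial $\phi_m P$ is homogeneous of degree~$m$ by construction, and $\langle\phi_m P,\phi_j Q\rangle_{kr}=\langle P,Q\rangle_{kr}=0$ for every $Q\in U_{krj}$, $j<m$, by Definition~\ref{ukrm} and the lemma, so $\phi_m P\in H_{krm}$ by Definition~\ref{hkrm}. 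For surjectivity, given $\tilde P\in H_{krm}$ set $P_0=\psi_m\tilde P$; the lemma and $\tilde P\perp H_{krj}$ give $\langle P_0,Q\rangle_{kr}=\langle\tilde P,\phi_j Q\rangle_{kr}=0$ for all $Q\in U_{krj}$, $j<m$, hence $P_0\in U_{krm}$ by Definition~\ref{ukrm}, and $\phi_m P_0=\tilde P$. Finally $\iota_m$ preserves $\Psi_{kr}$ and $\langle-,-\rangle_{kr}$ by the identities of the previous step (the pairing by applying $\langle\phi_m P,-\rangle_{kr}=\langle P,-\rangle_{kr}$ twice together with symmetry of the pairing).

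I do not expect a genuine obstacle: the whole argument is bookkeeping around the hyperplane $x_0+\cdots+x_r=k-r$, on which homogenization is invisible. The one point that needs care is that the maps $\phi_j$ for different $j$ are honestly different (they homogenize into different target degrees), so the nested filtration $\mathbb{R}_0\subset\mathbb{R}_1\subset\cdots$ on the source side does not transport to a single nested family of homogeneous spaces; this is why the comparison of $U_{krm}$ with $H_{krm}$ must be carried out stepwise through the induction, feeding the identity $H_{krj}=\phi_j(U_{krj})$ back in at each stage rather than attempting to match the two filtrations globally.
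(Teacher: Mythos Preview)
Your proposal is correct and follows exactly the route the paper indicates: the paper states Corollary~\ref{hu} without a separate proof, treating it as immediate from Remark~\ref{homogeneous}, which already records the homogenization map $1\mapsto (x_0+\cdots+x_r)/(k-r)$ and its inverse $x_0\mapsto (k-r)-(x_1+\cdots+x_r)$. Your write-up simply makes explicit the degree-by-degree induction and the observation that both $\Psi_{kr}$ and $\langle-,-\rangle_{kr}$ depend only on restriction to $\Delta_{kr}$, where these maps act as the identity.
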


\begin{ex*} Compare the following $H_{krm}$ to the corresponding $U_{krm}$ in~\S\ref{poly}: \\
$H_{310} = \mathrm{span}\left\{1\right\}$,  
$H_{311} = \mathrm{span}\left\{x_0-x_1\right\}$,
$H_{312} = \mathrm{span}\left\{x_0^2 - 10x_0x_1 + x_1^2\right\}$ \\
$H_{320} =  \mathrm{span}\left\{1\right\}$,  
$H_{321} = \mathrm{span}\left\{x_0 - 2x_1 + x_2,\; x_2-x_0\right\}$
\end{ex*}

Theorem~\ref{main2} fully diagonalizes word statistics, with eigenspaces coming from $U_{krm} \otimes (\Ti^\perp)^{r}$. The spaces $H_{krm}$ may refine this classification by allowing meaningful bases choices for each component.

\begin{ex*} $H_{krm} = H_{krm}^{\text{even}} \oplus H_{krm}^{\text{odd}}$ with respect to $(x_0,\dots,x_r) \mapsto (x_r,\dots,x_0)$. For example, $H_{310}$ and $H_{312}$ are even, $H_{311}$ is odd, and $H_{321}$ has one-dimensional components of either parity. This leads to word statistics $\#f(w)$ either invariant or flipping sign when reversing~$w$. 
\end{ex*}

\begin{ex*}
The symmetric group $S_{r+1}$ acts on $\{x_0,\dots,x_r\}$ preserving $\Delta_{kr}$ and thereby the decomposition into $H_{krm}$. Therefore, all $H_{krm}$ decompose into representations of $S_{r+1}$. 
\end{ex*}

\section{Multi-Sample}

\subsection{Proof of Theorem \ref{main3}}
\label{proof3}

Let $\Sigma = \{\TA,\TB,\TC,\dots\}$ be a finite alphabet, and $\mathbf{n} = (n_\TA,n_\TB,n_\TC,\dots)$ as usual. We first formulate the word statistics in the model~$\mathcal{W}'(\mathbf{n})$ as generalized U-statistic, as described in~\S\ref{gustats}.

Consider $|\Sigma|$ samples of independent random variables: $X_{\TA1}, X_{\TA2}, \dots, X_{\TA n_\TA};$ $X_{\TB1}, \dots, X_{\TB n_\TB};$ $X_{\TC1}, \dots, X_{\TC n_\TC};$ $\dots$ that are uniformly distributed in the unit interval~$[0,1]$. To be precise, 
$$ {\mathbf{X}}_{\mathbf{n}} \;:=\; \left\{ X_{\TX i}\right\}_{\TX \in \Sigma,\; i \in \{1,\dots, n_\TX\}} \;\,\sim\; U\left([0,1]^{n_\TA} \times [0,1]^{n_\TB} \times \dots \right) $$
Generically, such a sequence of $n = |\mathbf{n}|$ random variables induces an $n$-letter word, by reading their labels in order of occurrence along the unit interval. Namely, we define a map 
$$ \word : [0,1]^n \;\to\; \tbinom{\Sigma}{\mathbf{n}} $$
such that $\word({\mathbf{X}}_{\mathbf{n}})$ starts with the label $\TX$ of the smallest number $X_{\TX i}$, then the label of the second smallest number, and so on. 

\begin{ex*}
If $\mathbf{n}=(2,2)$ and $X_{\TA2} < X_{\TB1} < X_{\TB2} < X_{\TA1}$, then $\word({\mathbf{X}}_{\mathbf{n}}) = \TA\TB\TB\TA$. 
\end{ex*}

Clearly, the distribution of $\word({\mathbf{X}}_{\mathbf{n}})$ is uniform over all $\tbinom{n}{\mathbf{n}}$ words in~$\tbinom{\mathsmaller{\Sigma}}{\mathbf{n}}$, exactly as in the model~$\mathcal{W}'(\mathbf{n})$. Note that with probability one ${\mathbf{X}}_{\mathbf{n}}$ is \emph{generic}, with $n$ distinct numbers. Hence, we ignore ties in the definition of $\word({\cdots})$, or, if needed, break them lexicographically.

Consider $\kpa = (k_\TA,k_\TB,\dots)$ such that $k_{\TX} \leq n_{\TX}$ for every $\TX \in \Sigma$, and sets of indices $I_\TX = \{i_{\TX 1},i_{\TX 2},\dots,i_{\TX k_\TX}\} \subseteq \{1,\dots,n_{\TX}\}$. These sets let us \emph{restrict} ${\mathbf{X}}_{\mathbf{n}}$ to $k = |\kpa|$ variables as follows.
$$ \mathbf{X}_{\mathbf{n}}\left[I_\TA,I_\TB,\dots\right] \;:=\; \left(X_{\TA (i_{\TA 1})}, \dots, X_{\TA (i_{\TA k_{\TA}})}; X_{\TB (i_{\TB 1})}, \dots, X_{\TB (i_{\TB k_{\TB}})}; \dots \right) $$
There are \raisebox{0.2em}{$\prod_\TX\tbinom{n_\TX}{k_\TX}$} such restrictions. Each one of them induces a random $k$-letter subword $u = \word(\mathbf{X}_{\mathbf{n}}\left[I_\TA,I_\TB,\dots\right])$ that is uniformly distributed in~$\tbinom{\mathsmaller{\Sigma}}{\kpa}$, and occurs at the $k$ positions $\bigcup_\TX I_\TX$ in $w = \word(\mathbf{X}_{\mathbf{n}})$.

Let $f \in W_{\kpa} = \mathbb{R}\tbinom{\mathsmaller{\Sigma}}{\kpa}$. Taking a uniformly random word $w \in \mathcal{W}'(\mathbf{n})$ that is induced from a random sequence~$\mathbf{X}_{\mathbf{n}}$ as above, the random variable $\#f(w)$ takes the following form.
\begin{align*}
\#f(w) \;&=\; \sum_{u \in \tbinom{\mathsmaller{\Sigma}}{\kpa}} f_u \; \#u\left(\word({\mathbf{X}}_{\mathbf{n}})\right) \\[0.25em]
\;&=\; \sum_{u \in \tbinom{\mathsmaller{\Sigma}}{\kpa}} f_u \, \sum_{I_\TA,I_\TB,\dots} \mathbbm{1}\left[\,\word\left({\mathbf{X}}_{\mathbf{n}}\left[I_\TA,I_\TB,\dots\right]\right)=u\,\right] \\[0.25em]
\;&=\; \sum_{I_\TA,I_\TB,\dots} f_{\word\left({\mathbf{X}}_{\mathbf{n}}\left[I_\TA,I_\TB,\dots\right]\right)} 
\end{align*}

This formulation implies that the normalized $\tilde\#f = \#f/\prod_\TX\tbinom{n_\TX}{k_\TX}$ is a generalized U-statistic, as in~\S\ref{gustats}. Its kernel is the function $f_{\word({\cdots})}$, whose inputs are $k$ numbers in $[0,1]$, where $k_\TX$ inputs are labeled by each $\TX \in \Sigma$. Its output is the coefficient $f_u$ of the word $u \in \tbinom{\mathsmaller{\Sigma}}{\kpa}$, obtained by reading the labels of the given inputs according to their order on the interval~$[0,1]$. For convenience of notation, we sometimes write $f_{\word}\left(\mathbf{X}_{\kpa}\right)$ instead of $f_{\word\left(\mathbf{X}_{\kpa}\right)}$.

Using this form, all the word statistics in $W_{\kpa}$ can be expressed as generalized U-statistics on the same set of samples. We now define coefficients and functions that arise when computing their second moments.

\begin{definition}
\label{mrff}
Let $f, f' \in W_{\kpa}$ for $\kpa = (k_\TA,k_\TB,\dots)$, and let $\mathbf{r} = (r_\TA,r_\TB,\dots)$ be such that $0 \leq r_\TX \leq k_\TX$ for every~$\TX \in \Sigma$, abbreviated as $\mathbf{r} \leq \kpa$. We denote
$$ m_{\mathbf{r}}(f,f') \;:=\; \Exp\left[f_{\word}\left(\mathbf{X}_{\kpa}\right) f'_{\word} \left(\mathbf{X}_{\mathbf{r}}\cup\mathbf{X}'_{\kpa-\mathbf{r}}\right)\right] $$
where
$$ \mathbf{X}_{\kpa} \;=\; \left(\substack{X_{\TA 1} , \dots, X_{\TA k_\TA} \\ X_{\TB 1} , \dots, X_{\TB k_\TB} \\ \vdots }\right) \;\;\;\;\;\; \mathbf{X}_{\mathbf{r}}\cup\mathbf{X}'_{\kpa-\mathbf{r}} \;=\; \left(\substack{X_{\TA 1} , \dots, X_{\TA r_\TA}, X'_{\TA (r_\TA+1)}, \dots, X'_{\TA k_\TA} \\ X_{\TB 1} , \dots, X_{\TB r_\TB}, X'_{\TB (r_\TB+1)}, \dots, X'_{\TB k_\TA} \\ \vdots }\right) $$
such that all $\{X_{\TX i}\}$ and $\{X'_{\TX i}\}$ are independent random variables uniformly distributed in the interval~$[0,1]$. 
\end{definition}

Here is an equivalent way to write these coefficients, which is obtained by averaging separately the unique inputs of each function.
$$ m_{\mathbf{r}}(f,f') \;=\; \Exp_{\mathbf{X}_{\mathbf{r}}}\Bigl[\; \Exp_{\mathbf{X}_{\kpa}} \left[ f_{\word}\left({\mathbf{X}}_{\kpa}\right) \bigm| {\mathbf{X}_{\mathbf{r}}}\right] \cdot \Exp_{\mathbf{X}_{\kpa}} \left[ f'_{\word} \left(\mathbf{X}_{\mathbf{r}}\cup\mathbf{X}'_{\kpa-\mathbf{r}}\right) \bigm| {\mathbf{X}_{\mathbf{r}}} \right] \;\Bigr] $$
This leads to the following family of functions, where a subset of the inputs to $f$ are given and the others are averaged. 
\begin{definition}
\label{fr}
For $\kpa,\mathbf{r}$ as above, every $f \in W_{\kpa}$ is assigned a function
$$ f^{(\mathbf{r})} : \left([0,1]^{r_\TA} \times [0,1]^{r_\TB} \times \cdots\right) \;\to\; \mathbb{R} $$
$$ f^{(\mathbf{r})} \left(\mathbf{X}_{\mathbf{r}} \right) \;=\; \Exp_{\mathbf{X}_{\kpa}} \left[ f_{\word}(\mathbf{X}_{\mathbf{r}}\cup\mathbf{X}_{\kpa-\mathbf{r}}) \mid \mathbf{X}_{\mathbf{r}} \right] $$
and then
$$ m_{\mathbf{r}}(f,f') \;=\; \Exp_{\mathbf{X}_{\mathbf{r}}} \left[\, f^{(\mathbf{r})}( \mathbf{X}_{\mathbf{r}}) \cdot f'^{(\mathbf{r})}( \mathbf{X}_{\mathbf{r}}) \,\right] $$
$$ m_{\mathbf{r}}(f) \;:=\; m_{\mathbf{r}}(f,f) \;=\; \Exp_{\mathbf{X}_{\mathbf{r}}} \left[\, (f^{(\mathbf{r})} (\mathbf{X}_{\mathbf{r}}))^2 \,\right] $$
\end{definition}

\begin{rmk*}
The functions $f^{(\mathbf{r})}$ are defined almost everywhere in $[0,1]^r$ with respect to the uniform measure, since $f_{\word}$ is well-defined wherever no two coordinates are the same. 
\end{rmk*}

\begin{ex*}
$m_{(0,0,\dots)}(f) = f^{(0,0,\dots)}$ is the constant $\Exp \left[f_{\word}\right]$.
\end{ex*}

\begin{ex*}
$m_{\kpa}(f) = \Exp \left[(f_{\word})^2\right]$ since $f^{(\kpa)}$ is exactly $f_{\word}$.
\end{ex*}

\begin{ex*}
If $f = \TA\TA\TB - \TB\TA\TA$, then $f^{(0,1)}(b) = b^2 - (1-b)^2$ and $m_{(0,1)}(f) = \tfrac13$.
\end{ex*}

We write the second moments of generalized U-statistics as a sum over $\mathbf{r} = (r_\TA,r_\TB,\dots)$ with these coefficients, similar to Proposition~\ref{varun} in the one-sample case.

\begin{lemma}
\label{Eff}
For a random $w \in \tbinom{\mathsmaller{\Sigma}}{\mathbf{n}}$ distributed according to~$\mathcal{W}'(\mathbf{n})$,
$$ \Exp_w \left[\# f(w)\,\#f'(w)\right] \;=\; \sum_{\mathbf{r} \leq \kpa}\, m_{\mathbf{r}}(f,f') \prod_{\TX\in\Sigma} \binom{n_{\TX}}{r_{\TX}} \binom{n_{\TX}-r_{\TX}}{k_{\TX}-r_{\TX}} \binom{n_{\TX}-k_{\TX}}{k_{\TX}-r_{\TX}} $$
\end{lemma}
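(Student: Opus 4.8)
The plan is to expand each factor using the U-statistic representation of $\#f$ derived just above Definition~\ref{mrff}, then organize the resulting double sum according to how much the two index selections overlap. Recall that
$$ \#f(w) \;=\; \sum_{I_\TA,I_\TB,\dots} f_{\word}\!\left(\mathbf{X}_{\mathbf{n}}\left[I_\TA,I_\TB,\dots\right]\right), \qquad \#f'(w) \;=\; \sum_{J_\TA,J_\TB,\dots} f'_{\word}\!\left(\mathbf{X}_{\mathbf{n}}\left[J_\TA,J_\TB,\dots\right]\right), $$
where each $I_\TX$ (resp.\ $J_\TX$) runs over the $k_\TX$-element subsets of $\{1,\dots,n_\TX\}$. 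Multiplying and taking the expectation over $\mathbf{X}_{\mathbf{n}}$ gives
$$ \Exp_w\!\left[\#f(w)\,\#f'(w)\right] \;=\; \sum_{I_\TA,I_\TB,\dots}\ \sum_{J_\TA,J_\TB,\dots}\ \Exp\!\left[ f_{\word}\!\left(\mathbf{X}_{\mathbf{n}}\left[I_\TA,\dots\right]\right)\, f'_{\word}\!\left(\mathbf{X}_{\mathbf{n}}\left[J_\TA,\dots\right]\right)\right]. $$

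Next I would argue that the inner expectation depends on the pair of tuples only through the overlap sizes $r_\TX := |I_\TX \cap J_\TX|$ for $\TX \in \Sigma$, and in fact equals $m_{\mathbf{r}}(f,f')$ with $\mathbf{r} = (r_\TA,r_\TB,\dots)$. This is where exchangeability enters: the variables $\{X_{\TX i}\}$ are i.i.d., so for any fixed tuples with $|I_\TX \cap J_\TX| = r_\TX$ there is a measure-preserving bijection of the sample space—permuting indices within each sample $\TX$—that carries the joint law of $\bigl(\mathbf{X}_{\mathbf{n}}[I_\TA,\dots],\,\mathbf{X}_{\mathbf{n}}[J_\TA,\dots]\bigr)$ onto the joint law of $\bigl(\mathbf{X}_{\kpa},\,\mathbf{X}_{\mathbf{r}}\cup\mathbf{X}'_{\kpa-\mathbf{r}}\bigr)$ of Definition~\ref{mrff}. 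Here one uses that $f_{\word}$ and $f'_{\word}$ are symmetric in the inputs carrying each fixed label, so the ordering of each index set is immaterial, and that ties occur with probability zero so $f_{\word}$ is defined a.s. Consequently the inner expectation equals $m_{\mathbf{r}}(f,f')$ whenever $|I_\TX \cap J_\TX| = r_\TX$ for all $\TX$.

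It then remains to count, for each $\mathbf{r} \le \kpa$, the number $N_{\mathbf{r}}$ of pairs of tuples $\bigl((I_\TX)_\TX,(J_\TX)_\TX\bigr)$ with $|I_\TX \cap J_\TX| = r_\TX$ for every $\TX$. Because index sets attached to distinct letters are chosen independently from the disjoint ranges $\{1,\dots,n_\TX\}$, this count factorizes over $\Sigma$, and for a single letter, choosing $I_\TX$, then $I_\TX \cap J_\TX \subseteq I_\TX$, then $J_\TX \setminus I_\TX$ inside the $n_\TX - k_\TX$-element complement of $I_\TX$, yields
$$ N_{\mathbf{r}} \;=\; \prod_{\TX \in \Sigma}\binom{n_\TX}{k_\TX}\binom{k_\TX}{r_\TX}\binom{n_\TX-k_\TX}{k_\TX-r_\TX} \;=\; \prod_{\TX \in \Sigma}\binom{n_\TX}{r_\TX}\binom{n_\TX-r_\TX}{k_\TX-r_\TX}\binom{n_\TX-k_\TX}{k_\TX-r_\TX}, $$
the last step using the elementary identity $\binom{n}{k}\binom{k}{r} = \binom{n}{r}\binom{n-r}{k-r}$. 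Summing the contribution $N_{\mathbf{r}}\, m_{\mathbf{r}}(f,f')$ over all $\mathbf{r} \le \kpa$ gives the asserted formula; note the count vanishes automatically when $2k_\TX - r_\TX > n_\TX$, consistent with there being no such pairs. The only delicate point is the exchangeability/relabeling identification of the second paragraph; the rest is bookkeeping with binomial coefficients.
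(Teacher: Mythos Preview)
Your proof is correct and follows exactly the approach sketched in the paper: expand $\#f\,\#f'$ as a double sum over index tuples, group terms by the overlap sizes $r_\TX = |I_\TX \cap J_\TX|$, identify each expectation as $m_{\mathbf{r}}(f,f')$ via exchangeability, and count the pairs with a given overlap. You have simply filled in the details that the paper leaves implicit, including the binomial rewriting $\binom{n_\TX}{k_\TX}\binom{k_\TX}{r_\TX} = \binom{n_\TX}{r_\TX}\binom{n_\TX-r_\TX}{k_\TX-r_\TX}$ needed to match the stated form.
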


\begin{proof}
This formula follows by a straightforward expansion of $\#f\#f'$ as a double sum of $f_{\word}(\mathbf{X}_n[I_\TA,I_\TB,\dots]) \cdot f_{\word}'(\mathbf{X}_n[I'_\TA,I'_\TB,\dots])$,
and grouping together terms with the same numbers $r_\TX = |I_\TX \cap I'_\TX|$ of common inputs from each sample~$\{X_{\TX 1},\dots,X_{\TX n_\TX}\}$.
\end{proof}

In general, the notion of rank for a generalized U-statistic requires more than one number, differently from the one-sample case, cf.~Definition~\ref{rank}. Indeed, the term of $\mathbf{r}$ in Lemma~\ref{Eff} has order $\prod_\TX n_\TX^{2k_\TX - r_\TX}$ for large $n_\TX$s, unless $m_{\mathbf{r}}(f,f') = 0$, and without any assumptions on the relations between the $n_\TX$s, one cannot tell which term dominates. Here we retain the assumptions of Theorem~\ref{main3} that $n_\TX/n \to p_\TX>0$ for all~$\TX$. In this case, the leading terms have order $n^{2k-r}$ for the smallest $r = |\mathbf{r}|$ with at least one nonzero $m_\mathbf{r}(f,f')$, which gives rise to the following definition.

\begin{definition}
\label{rankf}
The \emph{rank} of a nonzero $f \in W_{\kpa}$ is the smallest $\ell$ such that $m_\mathbf{r}(f) \neq 0$ for some $\mathbf{r} \leq \kpa$ with $|\mathbf{r}| = \ell$. Equivalently, $\rank f$ is the smallest~$|\mathbf{r}|$ for which some $f^{(\mathbf{r})}$ is not almost surely zero.
\end{definition}

The following corollary summarizes the proof so far. The question of scaling $\#f$ has been reduced to finding the rank of a generalized U-statistic with kernel~$f_{\word}$.

\begin{cor}
\label{Ef2}
Let $w \in \tbinom{\mathsmaller{\Sigma}}{\mathbf{n}}$ be a random word in the model~$\mathcal{W}'(\mathbf{n})$, where $\mathbf{n}/n \to \mathbf{p} \in (0,1)^{|\Sigma|}$ as $n = |\mathbf{n}| \to \infty$. For every nonzero $f \in W_{\kpa}$
$$ \Exp_w \left[\tilde\#f(w)^2\right] \;=\; \frac{C'_{f,\mathbf{p}} + o_{n}(1)}{n^{\rank f}} $$
where
$$ C'_{f,\mathbf{p}} \;:=\; \sum_{|\mathbf{r}|={\rank f}}\; m_{\mathbf{r}}(f) \prod_{\TX \in \Sigma} \frac{k_{\TX}!^2}{r_{\TX}!(k_{\TX}-r_{\TX})!^2 p_\TX^{r_{\TX}}} \;>\; 0$$
\end{cor}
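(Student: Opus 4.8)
The plan is to feed Lemma~\ref{Eff} with $f' = f$ into the definition of $\tilde\#f$ and then extract the leading asymptotics. Dividing the identity of Lemma~\ref{Eff} by $\bigl(\prod_{\TX}\tbinom{n_\TX}{k_\TX}\bigr)^2$ gives the exact formula
$$ \Exp_w\!\left[\tilde\#f(w)^2\right] \;=\; \sum_{\mathbf{r}\leq\kpa} m_{\mathbf{r}}(f)\prod_{\TX\in\Sigma} \frac{\tbinom{n_\TX}{r_\TX}\tbinom{n_\TX-r_\TX}{k_\TX-r_\TX}\tbinom{n_\TX-k_\TX}{k_\TX-r_\TX}}{\tbinom{n_\TX}{k_\TX}^2}, $$
a finite sum of nonnegative terms, since $m_{\mathbf{r}}(f) = \Exp[(f^{(\mathbf{r})})^2]\geq 0$ by Definition~\ref{fr}.

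Next I would compute the asymptotics of each per-letter ratio. For fixed $k\geq r\geq 0$ each of $\tbinom{n}{r}$, $\tbinom{n-r}{k-r}$, $\tbinom{n-k}{k-r}$, $\tbinom{n}{k}$ is a polynomial in $n$ with known leading coefficient, so
$$ \frac{\tbinom{n}{r}\tbinom{n-r}{k-r}\tbinom{n-k}{k-r}}{\tbinom{n}{k}^2} \;=\; \frac{(k!)^2}{r!\,(k-r)!^2}\cdot\frac{1}{n^{r}}\bigl(1+O(1/n)\bigr). $$
Multiplying these over $\TX\in\Sigma$ and using $n_\TX/n\to p_\TX>0$ to write $\prod_{\TX} n_\TX^{r_\TX} = n^{|\mathbf{r}|}\prod_{\TX}p_\TX^{r_\TX}(1+o(1))$, the $\mathbf{r}$-summand becomes
$$ \frac{m_{\mathbf{r}}(f)}{n^{|\mathbf{r}|}}\prod_{\TX\in\Sigma}\frac{(k_\TX!)^2}{r_\TX!\,(k_\TX-r_\TX)!^2\,p_\TX^{r_\TX}}\bigl(1+o(1)\bigr). $$
I then invoke Definition~\ref{rankf}: by minimality of $\rank f$ every $\mathbf{r}$ with $|\mathbf{r}|<\rank f$ has $m_{\mathbf{r}}(f) = 0$; the finitely many terms with $|\mathbf{r}|>\rank f$ are each $O(n^{-|\mathbf{r}|}) = o(n^{-\rank f})$; and the terms with $|\mathbf{r}| = \rank f$ add up to $n^{-\rank f}\bigl(C'_{f,\mathbf{p}}+o(1)\bigr)$ with $C'_{f,\mathbf{p}}$ exactly as in the statement. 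Summing gives $\Exp_w[\tilde\#f(w)^2] = (C'_{f,\mathbf{p}}+o(1))/n^{\rank f}$.

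It remains to check $C'_{f,\mathbf{p}}>0$. For $f\neq 0$ the rank is well-defined and at most $k$, since $m_{\kpa}(f) = \Exp[(f_{\word})^2]\geq f_u^2\,\PR[\word(\mathbf{X}_{\kpa})=u]>0$ for any word $u$ with $f_u\neq 0$. By the minimality in Definition~\ref{rankf} there is at least one $\mathbf{r}$ with $|\mathbf{r}| = \rank f$ and $m_{\mathbf{r}}(f)>0$; every other summand of $C'_{f,\mathbf{p}}$ is nonnegative, and all the combinatorial prefactors are strictly positive because $p_\TX>0$, so $C'_{f,\mathbf{p}}>0$. The only point requiring a little care — more bookkeeping than genuine obstacle — is that the error terms stay controlled uniformly over the finite index set $\{\mathbf{r}\leq\kpa\}$ and that the several $(1+o(1))$ factors coming from the binomial ratios and from $n_\TX/n\to p_\TX$ combine correctly; since the sum is finite and termwise nonnegative there are no cancellation issues, and everything goes through directly.
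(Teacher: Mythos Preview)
Your proof is correct and follows essentially the same approach as the paper: start from Lemma~\ref{Eff} with $f'=f$, divide by the normalization, expand each binomial ratio asymptotically using $n_\TX/n\to p_\TX$, discard the terms with $|\mathbf{r}|<\rank f$ by Definition~\ref{rankf}, absorb those with $|\mathbf{r}|>\rank f$ into the $o(1)$, and conclude $C'_{f,\mathbf{p}}>0$ from the nonnegativity of all $m_{\mathbf{r}}(f)$ together with at least one strictly positive term at $|\mathbf{r}|=\rank f$. Your write-up is in fact slightly more explicit than the paper's in justifying that the rank is finite (via $m_{\kpa}(f)>0$) and in tracking the error terms, but the argument is the same.
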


\begin{proof}
We recall the normalization $\tilde\#f = \#f/\prod_\TX\tbinom{n_\TX}{k_\TX}$ and use Lemma~\ref{Eff}. After simplifying all the binomials via 
$$ \binom{n_\TX-a}{b} \;=\; \left(\frac{p_\TX^b }{b!} + o_n(1)\right) n^b  \;\;\;\;\;\;\;\;\text{for}\;a,b\in\mathbb{N} $$
every term $\mathbf{r} \leq \kpa$ has order $n^{-|\mathbf{r}|}$. The terms with $|\mathbf{r}|<\rank f$ vanish since $m_{\mathbf{r}}(f)=0$ by the definition, while there exists at least one term with $|\mathbf{r}|=\rank f$ such that $m_{\mathbf{r}}(f) \neq 0$. The coefficients $m_{\mathbf{r}}(f)$ are always nonnegative, so the answer indeed has order $n^{-\rank f}$. The $o_n(1)$ term absorbs all terms with $|\mathbf{r}|>\rank f$. This yields the stated expression for~$C'_{f,\mathbf{p}}$.
\end{proof}

Our next goal is to establish a relation between the classification of statistics by rank and the decomposition of $W_{\kpa}$ into $W_{\kpa r}$, defined in~\S\ref{repr} using representations of~$S_k$. This is broken into Lemma~\ref{easy}, implying that the rank of $W_{\kpa r}$ is at least~$r$, and Lemma~\ref{hard}, that the rank is at most~$r$. A~similar line of argument was taken in~\cite{even2020patterns} in the study of permutation patterns.

\begin{lemma}
\label{easy}
Given $f \in W_{\kpa \ell}$, for every $\mathbf{r} \leq \kpa$ with $|\mathbf{r}| = r < \ell$ the corresponding $f^{(\mathbf{r})} = 0$ almost everywhere in $[0,1]^r$.
\end{lemma}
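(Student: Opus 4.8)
The plan is to analyze the function $f^{(\mathbf{r})}$ by unfolding its definition and using the $S_k$-representation-theoretic description of $W_{\kpa\ell}$. First I would recall that $W_{\kpa\ell}$ is, by Definition~\ref{Wkkr}, the sum of the images $\Theta[T]S^{\lmda(T)}$ over semistandard tables $T$ with $\kpa(T)=\kpa$ and $\lambda_\TA(T) = k-\ell$. Since $f^{(\mathbf{r})}$ is linear in $f$, it suffices to prove the claim for $f$ lying in one such summand, i.e. $f = \Theta[T]\,g$ with $g \in S^{\lmda}$ and $\lambda_\TA = k-\ell$. The key structural fact I would exploit is that $S^{\lmda}$, as a submodule of $W_{\lmda}$, is \emph{annihilated by certain symmetrizers}: concretely, by the definition $S^{\lmda} = \alpha_{\lmda}b_{\lmda}A$ and the standard theory, every element of $S^{\lmda}$ is killed on the left by the row-symmetrizer-type averaging operator $a_I$ (Definition~\ref{stab}) whenever $I$ contains two positions in the same \emph{column} of the Young diagram — equivalently, $S^{(k-\ell,\ell)}$ and more generally $S^\lmda$ lies in the kernel of $a_I$ for suitable $I$ of size $k-\ell+1$ coming from the first row together with one more box, but more useful here is: $S^\lmda \subseteq \ker\bigl(\text{antisymmetrizer over a column}\bigr)^{\perp}$, while $f^{(\mathbf{r})}$ with $|\mathbf{r}|$ small forces a symmetrization that collapses it.

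The concrete mechanism I would pursue: $f^{(\mathbf{r})}(\mathbf{X}_\mathbf{r})$ averages $f_{\word}$ over the $k - r$ remaining free uniform variables, $r_\TX$ of the $k_\TX$ being frozen for each letter $\TX$. Writing this average out, one sees $f^{(\mathbf{r})}$ is a polynomial in the frozen coordinates, obtained from $f$ by applying a linear operator that sums over all ways to insert the missing $k_\TX - r_\TX$ letters — i.e. something built from the insertion operators $\sh_\TX$ acting on the ``restriction'' of $f$ to words of composition $\mathbf{r}$, and crucially this restriction operator is (up to scalars) the adjoint of a composition of $\sh_\TX$'s. The point is that the map $f \mapsto f^{(\mathbf{r})}$ factors through a deletion-type operator $\del_{\mathbf{r}}$ taking $W_{\kpa}$ to $W_{\mathbf{r}}$ (deleting $k_\TX - r_\TX$ copies of each letter $\TX$), and I would show $\del_{\mathbf{r}}$ annihilates $W_{\kpa\ell}$ when $|\mathbf{r}| < \ell$. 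For this I would use Young's rule / the branching behaviour: deleting letters from a word is an $S_{|\mathbf{r}|}$-equivariant map, and a copy of $S^{(k-\ell,\ell)}$ (whose first part is $k - \ell$) cannot survive into $W_{\mathbf{r}}$ with $|\mathbf{r}| = r$, because every irreducible $S^\lmda$ occurring in $W_{\mathbf{r}}$ has $\lambda_\TA \geq r_\TA \geq \dots$ — more to the point, $\dim W_{\mathbf{r}} = \tbinom{r}{\mathbf{r}}$ only involves partitions $\mu \vdash r$, and the deletion operator, being a composition of single-letter deletions each of which can only decrease the ``$\ell$-type'' by one, kills $W_{\kpa\ell}$ after fewer than $\ell$ deletions when restricted appropriately. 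I would make this precise by induction on $\ell$, using that single-letter deletion $\del_\TX$ maps $W_{\kpa\ell}$ into $W_{(\kpa-\TX)\ell} \oplus W_{(\kpa-\TX)(\ell-1)}$-type pieces, mirroring the one-sample identity $\sh W_{kr}\subseteq W_{(k+1)r}$ and its dual.

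The main obstacle I anticipate is making the bookkeeping between the analytic object $f^{(\mathbf{r})}$ (an expectation of $f_{\word}$ over uniform variables) and the algebraic deletion operator on $W_{\kpa}$ completely rigorous, including the constant factors and the fact that the vanishing of the operator image is equivalent to $f^{(\mathbf{r})}=0$ a.e.\ rather than merely on a measure-zero-complement issue. I would handle this by the same device used already in~\S\ref{proof1}: expand $f_{\word}$ on the event that the frozen coordinates split the unit interval into $r+1$ subintervals, so that $f^{(\mathbf{r})}(\mathbf{X}_\mathbf{r})$ depends only on the combinatorial data of which subintervals the averaged-out letters fall into, turning the expectation into an explicit finite sum of monomials whose coefficients are exactly the coordinates of $\del_{\mathbf{r}} f$ in an appropriate basis; then $f^{(\mathbf{r})}\equiv 0$ iff $\del_{\mathbf{r}}f = 0$, and the representation-theoretic vanishing finishes the argument. (A cleaner alternative, if available from the $S_k$-module structure directly, is to note $f^{(\mathbf{r})}$ is $\stab(\text{frozen positions})$-invariant in a way that forces it into the trivial-representation part, which is too small to contain a nonzero image of $W_{\kpa\ell}$ for $|\mathbf{r}|<\ell$.)
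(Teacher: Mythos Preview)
Your main approach has a genuine gap: the function $f^{(\mathbf{r})}$ does \emph{not} factor through a deletion operator $\del_{\mathbf{r}}: W_{\kpa}\to W_{\mathbf{r}}$, and the claimed equivalence ``$f^{(\mathbf{r})}\equiv 0$ iff $\del_{\mathbf{r}}f=0$'' is false. For a concrete counterexample take $\kpa=(1,1)$, $f=\TA\TB-\TB\TA$, and $\mathbf{r}=(1,0)$. Then $\del_\TB f = \TA-\TA = 0$, but $f^{(1,0)}(a)=f_{\TA\TB}(1-a)+f_{\TB\TA}a = 1-2a\not\equiv 0$. The point is that $f^{(\mathbf{r})}$, as a function on $[0,1]^r$, carries $\tbinom{k}{r}$ independent polynomial coefficients (one for each subset $I$ of positions the frozen variables may occupy), whereas $\del_{\mathbf{r}}f\in W_{\mathbf{r}}$ has only $\tbinom{r}{\mathbf{r}}$ coordinates. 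So no amount of branching analysis for $\del_\TX$ will suffice.

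What does work is precisely your parenthetical ``cleaner alternative,'' and that is exactly what the paper does. The paper conditions on the set $I\subseteq\{1,\dots,k\}$ of positions occupied by the frozen variables in $\word(\mathbf{X}_\kpa)$; by symmetry of the averaged variables, the conditional expectation of $f_{\word}$ given $\mathbf{X}_\mathbf{r}$ and $I$ equals $\tfrac{1}{(k-r)!}(fa_I)_{\word(\mathbf{X}_\kpa)}$, so $f^{(\mathbf{r})}$ is a weighted sum of terms involving $fa_I$ over $|I|=r$. The vanishing then follows from the purely algebraic fact (Lemma~\ref{stabsum}(a)) that $fa_I=0$ for all $|I|<\ell$ when $f\in W_{\kpa\ell}$, proved via the standard argument that $b_\lmda a_I=0$ whenever two positions outside $I$ share a column of $\lmda$. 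You had the right ingredient, but placed it as an afterthought rather than the core of the proof.
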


\begin{lemma}
\label{hard}
Given a nonzero $f \in W_{\kpa r}$, there exists $\mathbf{r} \leq \kpa$ with $|\mathbf{r}|=r$ such that $f^{(\mathbf{r})} \neq 0$ with positive probability in $[0,1]^r$.
\end{lemma}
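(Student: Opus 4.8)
\textbf{Plan of proof for Lemma~\ref{hard}.}
The strategy is to translate the statement about the kernel $f_{\word}$ into a purely combinatorial statement in the word space $W_{\kpa}$, and then use the representation-theoretic description of $W_{\kpa r}$ from \S\ref{repr}-\S\ref{decomposition3}. First I would fix notation: for $\mathbf{r} \leq \kpa$ with $|\mathbf{r}| = r$, observe that the conditional expectation defining $f^{(\mathbf{r})}$ is, up to normalization, given by a deletion-type operator. Concretely, if one deletes $k_\TX - r_\TX$ of the $\TX$-letters from a word $u$ (summing over all choices of \emph{which} copies to delete, with the appropriate multiplicity), one obtains exactly the word statistic whose $\word$-kernel computes $f^{(\mathbf{r})}$ evaluated on the remaining $r$ ordered points. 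In the language of \S\ref{operators}, this is the operator $\del_{\TA}^{k_\TA - r_\TA} \del_{\TB}^{k_\TB - r_\TB}\cdots$ applied to $f$ (perhaps after composing with the averaging needed to turn coefficients into the $f^{(\mathbf{r})}$-values on the simplex $\Delta$). So the claim ``$f^{(\mathbf{r})} \neq 0$ for some $|\mathbf{r}| = r$'' becomes ``the total deletion operator $\del^{\mathbf{r}} := \prod_\TX \del_\TX^{k_\TX - r_\TX}$ does not annihilate $f$ for some composition $\mathbf{r}$ of weight $r$.''

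Next I would show the contrapositive is impossible: if $f \in W_{\kpa r}$ is nonzero but $\del^{\mathbf{r}} f = 0$ for \emph{every} $\mathbf{r}$ of weight $r$, derive a contradiction. By Lemma~\ref{lem:Wkr=Specht}, in the two-letter toy case $W_{\kpa r} = \Theta_{\TA\TB}^{k_\TB - r} S^{(k-r,r)}$, and more generally by Definition~\ref{Wkkr} the component $W_{\kpa r}$ is a sum of copies of Specht modules $S^{\lmda}$ with $\lambda_\TA = k - r$. The key point is that applying $\del_\TA$ (or $\del_\TB$, etc.) the right number of times sends a word of composition $\kpa$ to a word of composition $\mathbf{r}$, and this map, restricted to the relevant isotypic piece, should land in a Specht module $S^{\lmda'}$ with $\lambda'_\TA = r$ for the appropriate ordering --- crucially, it should be \emph{injective} (or at least nonzero) on $W_{\kpa r}$ for a well-chosen $\mathbf{r}$. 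Indeed, deletion is dual to the shuffle/insertion operator, and the injectivity of $\sh_\TX$ (Proposition~\ref{embed}, or the one-sample analogue in Lemma~\ref{lem123}) does \emph{not} directly give injectivity of $\del_\TX$; but $\del_\TX$ restricted to $W_{\kpa r}$ for $r \geq 1$ is surjective onto the analogous component of $W_{\kpa - \TX}$, with a controllable kernel. Iterating this down to composition $\mathbf{r}$, one sees that for a generic choice of $\mathbf{r}$ --- for instance $\mathbf{r} = (r, 0, 0, \dots)$ pushing all the ``weight'' onto the first letter, or whichever composition keeps the relevant Specht module alive --- the composite deletion $\del^{\mathbf{r}}$ is nonzero on $W_{\kpa r}$. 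This is where I would invoke branching rules for $S_k \downarrow S_r$: the Specht modules $S^{\lmda}$ with $\lambda_\TA = k - r$ restrict (via deletion of $k - r$ copies of $\TA$) to modules containing $S^{(\text{one-row of length } r)}$-type constituents, and in particular the restriction is nonzero.

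The main obstacle, I expect, is precisely the bookkeeping that identifies $f^{(\mathbf{r})}$ with a deletion operator and tracks which $\mathbf{r}$ of weight $r$ keeps $f$ alive --- the naive hope that \emph{every} such $\mathbf{r}$ works is false (e.g. if $f$ involves no $\TB$'s in an essential way, deleting all the $\TB$'s might kill it), so one must argue for the \emph{existence} of a good $\mathbf{r}$. The cleanest route is probably to pick $\mathbf{r}$ adaptively: since $f \neq 0$, choose any basis word $u$ with $f_u \neq 0$ and let $\mathbf{r}$ record a ``triangular'' sub-multiset of its letters, then show that in the expansion of $\del^{\mathbf{r}} f$ the coefficient of the corresponding deleted word cannot be cancelled, using the lexicographic triangularity already exploited in the proof of Lemma~\ref{lem123}. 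A complementary and perhaps safer approach, mirroring \cite{even2020patterns}: use Lemma~\ref{easy} (already establishing rank $\geq r$ on $W_{\kpa r}$) together with a dimension count --- the operators $\{f \mapsto \del^{\mathbf{r}} f : |\mathbf{r}| = r\}$ collectively cannot annihilate any nonzero $f \in W_{\kpa r}$ because $\bigoplus_{\ell \geq r} W_{\kpa \ell}$ is exactly the space on which all deletions of total weight $< r$ vanish but some deletion of weight $r$ does not, a characterization one can extract from the recursive/inductive structure of Young's rule. I would then conclude: taking such an $\mathbf{r}$, $f^{(\mathbf{r})} = c \cdot \del^{\mathbf{r}} f \neq 0$ as a function on $[0,1]^r$ (it is a nonzero linear combination of the $\word$-indicator kernels, which are linearly independent almost everywhere), completing the proof that $\rank f \leq r$ and hence, with Lemma~\ref{easy}, that $\rank f = r$ exactly on $W_{\kpa r}$.
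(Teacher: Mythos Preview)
Your central identification is incorrect, and this breaks the argument. You claim that $f^{(\mathbf{r})}$ is, up to a constant, the $\word$-kernel of $\del^{\mathbf{r}} f := \prod_{\TX} \del_\TX^{k_\TX - r_\TX} f$. But $\del^{\mathbf{r}} f \in W_{\mathbf{r}}$, so its $\word$-kernel is piecewise \emph{constant} on the cells $D_v = \word^{-1}(v) \subseteq [0,1]^r$. By contrast, $f^{(\mathbf{r})}(\mathbf{X}_{\mathbf{r}}) = \Exp[f_{\word}(\mathbf{X}_\kpa)\mid\mathbf{X}_{\mathbf{r}}]$ is a piecewise polynomial of degree $k-r$ in the gaps between the coordinates of $\mathbf{X}_{\mathbf{r}}$ --- it genuinely depends on \emph{where} the points sit in $[0,1]$, not just their relative order. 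A concrete counterexample: take $\kpa=(1,1)$, $r=1$, and $f = \TA\TB - \TB\TA \in W_{(1,1),1}$. Then $\del_\TA f = \TB - \TB = 0$ and $\del_\TB f = \TA - \TA = 0$, so $\del^{\mathbf{r}} f = 0$ for \emph{every} $\mathbf{r}$ of weight $1$. Yet $f^{(1,0)}(a) = 1 - 2a$ is nonzero almost everywhere. So the equivalence ``$f^{(\mathbf{r})} \neq 0$ for some $|\mathbf{r}|=r$'' $\Leftrightarrow$ ``$\del^{\mathbf{r}} f \neq 0$ for some $|\mathbf{r}|=r$'' fails already in the smallest nontrivial case, and neither the triangularity argument nor the dimension count can rescue it.

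The paper's proof does not pass through deletion operators. Instead it expands $f^{(\mathbf{r})}$ explicitly on each cell $D_v$ as a polynomial in the gap variables $\Delta x_0,\dots,\Delta x_r$, and identifies the coefficient of each monomial $\prod_j \Delta x_j^{\Delta I_j}$ with $(f a_I)_u$ for the averaging element $a_I = \sum_{\tau \in \stab I} \tau$ of Definition~\ref{stab}. The algebraic input is then Lemma~\ref{stabsum}(b): for nonzero $f \in W_{\kpa r}$ there exists $I$ of size $r$ with $f a_I \neq 0$, which gives a nonzero monomial coefficient and hence a nonzero polynomial. If you want an operator-theoretic reformulation closer in spirit to your proposal, the correct replacement for $\del_\TX$ is the operator $\Theta_{\TX\Ti}$ that substitutes a $\Ti$ for an $\TX$ (into an enlarged alphabet); see Proposition~\ref{relmm} and the characterization~\eqref{eq:Wkr_representation}. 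In the counterexample, $\Theta_{\TB\Ti}(\TA\TB-\TB\TA) = \TA\Ti - \Ti\TA \neq 0$, consistent with $f^{(1,0)} \neq 0$.
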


\begin{proof}[Proof of Lemma~\ref{easy}]
Given $f$ and $\mathbf{r}$, we analyze the function $f^{(\mathbf{r})}$. Every generic input $\mathbf{X}_\mathbf{r} \in [0,1]^r$ to this function fixes a word $v = \word(\mathbf{X}_\mathbf{r})$, and conversely any word $v \in \tbinom{\mathsmaller{\Sigma}}{\mathbf{r}}$ is obtained from some input. This induces a partition of the domain:
$$ [0,1]^r \;=\; \bigcup_{v} D_v \;\;\text{where}\;\; D_v \;=\; \word^{-1}\left(v\right) \;\;\text{for}\;\; v \in \tbinom{{\Sigma}}{\mathbf{r}} $$

An input $\mathbf{X}_\mathbf{r} \in D_v$ is augmented to $\mathbf{X}_{\kpa} = \mathbf{X}_\mathbf{r} \cup \mathbf{X}_{\kpa - \mathbf{r}}$ in the conditional expectation $\Exp[f_{\word}(\mathbf{X}_{\kpa}) \mid \mathbf{X}_\mathbf{r}]$ defining $f^{(\mathbf{r})}$. This induces an occurrence of~$v$ in the random word $u = \word(\mathbf{X}_{\kpa})$. Let $I(\mathbf{X}_{\kpa}) \subseteq \{1,\dots,k\}$ denote the positions of this copy of $v$. In other words, we rank the $k$ numbers $\mathbf{X}_{\kpa}$ in increasing order, and $I(\mathbf{X}_{\kpa})$ comprises the $r$ rankings of the inputs~$\mathbf{X}_\mathbf{r}$. With probability one, there are no ties and $I$ is well defined. Clearly, given a generic $\mathbf{X}_{\mathbf{r}}$, any set of $I \subseteq \tbinom{[k]}{r}$ is obtained from some~$\mathbf{X}_{\kpa - \mathbf{r}}$. By the law of total expectation:
$$ f^{(\mathbf{r})}\left(\mathbf{X}_\mathbf{r}\right) \;=\; \sum_{I\subseteq \tbinom{[k]}{r}} \PR_{\mathbf{X}_{\kpa}}\left(I(\mathbf{X}_{\kpa})=I \mid \mathbf{X}_{\mathbf{r}} \right) \; \Exp_{\mathbf{X}_{\kpa}} \left[ f_{\word}(\mathbf{X}_{\kpa}) \mid \mathbf{X}_{\mathbf{r}}, I \right] $$

This expectation is computed by taking a uniform $\mathbf{X}_{\kpa - \mathbf{r}} \in [0,1]^{k-r}$. The conditioning only concerns how many of them fall within each interval between~$\mathbf{X}_{\mathbf{r}}$. Therefore, any reordering of $\mathbf{X}_{\kpa - \mathbf{r}}$ would preserve the overall expectation on the one hand, while permuting the letters in the non-$I$ positions of every $\word(\mathbf{X}_{\kpa})$ on the other hand. We use the stabilizer and $a_I \in A$ from Definition~\ref{stab}, and average over all such permutations:
\begin{align*}
f^{(\mathbf{r})}\left(\mathbf{X}_\mathbf{r}\right) \;=&\; \sum_{I} \PR\left(I \mid \mathbf{X}_{\mathbf{r}} \right) \; \frac{1}{(k-r)!} \sum_{\tau \in \stab I} \Exp \left[ f_{\word(\mathbf{X}_{\kpa})\tau} \mid \mathbf{X}_{\mathbf{r}}, I \right] \\
\;=&\;  \sum_{I} \PR\left(I \mid \mathbf{X}_{\mathbf{r}} \right) \; \frac{1}{(k-r)!} \sum_{\tau \in \stab I} \Exp \left[ (f\tau)_{\word(\mathbf{X}_{\kpa})} \mid \mathbf{X}_{\mathbf{r}}, I \right] \\
\;=&\;  \sum_{I} \PR\left(I \mid \mathbf{X}_{\mathbf{r}} \right) \; \frac{1}{(k-r)!} \Exp \left[ \left(f a_I\right)_{\word(\mathbf{X}_{\kpa})} \mid \mathbf{X}_{\mathbf{r}}, I \right]
\end{align*}

By the assumption of the lemma, the function $f \in W_{\kpa \ell}$ for $\ell > r = |I|$. In this case $f a_I = 0$ by Lemma~\ref{stabsum}(a) stated and proven below. Hence, the argument of the conditional expectation is zero for almost every~$\mathbf{X}_{\kpa}$ given a generic~$\mathbf{X}_{\mathbf{r}}$, and thus $f^{(\mathbf{r})}=0$ almost everywhere.
\end{proof}

\begin{proof}[Proof of Lemma~\ref{hard}]
We continue the analysis of $f^{\mathbf{(r)}}$ from the previous proof, but this time with $f \in W_{\kpa r}$ where $|\mathbf{r}|=r$.

The $r$ coordinates of $\mathbf{X}_\mathbf{r}$ divide $[0,1]$ into $r+1$ intervals, whose lengths we denote by $\Delta x_0, \Delta x_1, \dots, \Delta x_r$ in the order they appear along $[0,1]$. Note that $\Delta x_0 + \dots + \Delta x_r = 1$. Similarly, given a subset $I = \{I_1,\dots,I_r\} \subseteq \{1,\dots,k\}$, we denote its gaps by $\Delta I_j = I_{j+1} - I_{j} - 1$, where $0 \leq j \leq r$ and by convention $0 = I_0 < I_1 < \dots < I_r < I_{r+1} = k+1$. We compute the probabilities in the expansion of~$f^{(\mathbf{r})}$, in terms of these variables:
$$ \PR_{\mathbf{X}_{\kpa}}\left(I(\mathbf{X}_{\kpa})=I \mid \mathbf{X}_{\mathbf{r}} \right) \;=\; \binom{k-r}{\Delta I_0\;\Delta I_1\;\cdots\;\Delta I_r} \Delta x_0^{\Delta I_0} \Delta x_1^{\Delta I_1} \cdots \Delta x_r^{\Delta I_r} $$

Next, we examine the conditional expectations in that expansion. Given $u \in \tbinom{\mathsmaller{\Sigma}}{\kpa}$, the averaging $ua_I$ is the formal sum of all words in $\tbinom{\mathsmaller{\Sigma}}{\kpa}$ whose restriction to $I$ is the same as~$u$. Hence, given $v \in \tbinom{\mathsmaller{\Sigma}}{\mathbf{r}}$, the coefficient $(fa_I)_u$ is the same for all $u$ whose restriction to $I$ is~$v$. Denote $F_{I,v} := (fa_I)_u \in \mathbb{R}$ for any such~$u$. It follows that for each $I$ and~$v$, 
$$ \Exp \left[ \left(f a_I\right)_{\word(\mathbf{X}_{\kpa})} \mid \mathbf{X}_{\mathbf{r}}, I \right]
\;\equiv\; F_{I,v} $$
as a function of $\mathbf{X}_{\mathbf{r}}$ on the domain $D_v$, where $D_v = \word^{-1}(v) \subseteq [0,1]^r$ as in the previous proof.

In conclusion, the function $f^{(\mathbf{r})}$ is piecewise polynomial. For each one of the~$D_v$, where $v \in \tbinom{\mathsmaller{\Sigma}}{\mathbf{r}}$, it has the form
$$ f^{(\mathbf{r})}\left(\mathbf{X}_\mathbf{r}\right) \;=\; \sum_{I\subseteq \tbinom{[k]}{r}} \frac{F_{I,v}}{\Delta I_0!\Delta I_1!\cdots\Delta I_r!} \;\Delta x_0^{\Delta I_0} \Delta x_1^{\Delta I_1} \cdots \Delta x_r^{\Delta I_r} $$

By the assumption of the lemma, $f \in W_{\kpa r}$. By Lemma~\ref{stabsum}(b) below, there exists $I \subseteq \{1,\dots,k\}$ of size $|I|=r$ with $fa_I \neq 0$. Let $u \in \tbinom{\mathsmaller{\Sigma}}{\kpa}$ be such that $(fa_I)_u \neq 0$. Let $v \in \Sigma^r$ be such that the restriction of $u$ to $I$ equals~$v$. Let $\mathbf{r} = (\#\TA(v),\#\TB(v),\dots) \leq \kpa$, so that $v \in \tbinom{\mathsmaller{\Sigma}}{\mathbf{r}}$. For this $\mathbf{r}$, in the expansion of $f^{(\mathbf{r})}$ on the subdomain $D_v$, the term that corresponds to $I$ has a nonzero coefficient, because $F_{I,v} = (fa_I)_u \neq 0$. 

The polynomial expressing $f^{(\mathbf{r})}$ in the variables $\{\Delta x_j\}$ on $D_v$ is nonzero, since it has a nonzero coefficient and all the monomials appearing in the expansion are clearly distinct for different sets~$I$. Note that this polynomial is homogeneous of total degree~$k-r$. The proof will be completed by showing that $f^{(\mathbf{r})}$ is a nonzero polynomial in the original variables $\mathbf{X}_{\mathbf{r}}$ as well. 

Indeed, we substitute $(\Delta x_0,\dots,\Delta x_r) = (x_1,x_2-x_1,\dots,1-x_r)$ where $x_1<\dots<x_r$ are the coordinates of $\mathbf{X}_{\mathbf{r}}$, appropriately reordered and relabeled. This affine transformation is invertible when applied to homogeneous polynomials of a given degree, so that the resulting polynomial in $x_1,\dots,x_r$ is nonzero as well. Different polynomials may arise depending on the $r!$ ordering types of $\mathbf{X}_{\mathbf{r}}$'s coordinates, but within~$D_v$ they are nonzero. This means that $f^{(\mathbf{r})}(\mathbf{X}_{\mathbf{r}}) \neq 0$ almost everywhere in $D_v$ as required.  
\end{proof}

\begin{lemma}
\label{stabsum}
Consider a nonzero $f \in W_{\kpa r}$.
\vspace{0.25em}
\begin{samepage}
\begin{enumerate}
\itemsep0.5em
\item[(a)] 
$f a_I = 0$ for every $I \subseteq \{1,\dots,k\}$ of size $|I|<r$.
\item[(b)] 
There exists $I \subseteq \{1,\dots,k\}$ of size $|I|=r$ with $f a_I \neq 0$.
\end{enumerate}
\end{samepage}
\end{lemma}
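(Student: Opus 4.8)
The plan is to translate the statement into the representation theory of $S_k$ and to pin down exactly when the averaging element $a_I\in A$ annihilates a copy of a Specht module sitting inside $W_{\kpa}$.

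The preliminary observation is a small dictionary. For $I\subseteq\{1,\dots,k\}$ with $|I|=s$, the stabilizer $\stab I$ is the Young subgroup that fixes $I$ pointwise and permutes the other $k-s$ positions freely, so $e_I:=a_I/(k-s)!$ is an idempotent of $A$ and, for any right $A$-module $M$, the image $Me_I=M^{\stab I}$ is the space of $\stab I$-invariants. Consequently $a_I$ annihilates an irreducible $S^{\lmda}$ exactly when $(S^{\lmda})^{\stab I}=0$, and by Frobenius reciprocity $\dim(S^{\lmda})^{\stab I}$ equals the multiplicity of $S^{\lmda}$ in $\mathrm{Ind}_{\stab I}^{S_k}\mathbf 1\cong M^{(k-s,1^{s})}$. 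By Young's Rule this multiplicity is a Kostka number, which by the final observation of \S\ref{repr} vanishes once $k-s>\lambda_\TA$, and which is positive when $k-s=\lambda_\TA$ --- fill the first row of $\lmda$ (of length $k-s$) with $1$'s and complete the remaining rows by a standard filling with $2,\dots,s+1$. I would record the upshot as
$$ S^{\lmda}a_I\neq 0 \iff \lambda_\TA\geq k-|I|, $$
together with the remark that when $\lambda_\TA=k-|I|$ some $I$ of that size realizes $S^{\lmda}a_I\neq 0$; note also that the condition ``$S^{\lmda}a_I=0$'' depends only on the isomorphism type of $S^{\lmda}$, since $a_I$ acts $A$-linearly.

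Part (a) is then immediate: by Definition~\ref{Wkkr} and Young's Rule, $W_{\kpa r}$ is a direct sum of copies of Specht modules $S^{\lmda}$ all having $\lambda_\TA=k-r$; for $|I|<r$ we have $\lambda_\TA=k-r<k-|I|$ for each such $\lmda$, so $a_I$ kills every summand and hence $fa_I=0$.

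For part (b), set $K:=\{\,h\in W_{\kpa r}\ :\ h\,a_I=0\text{ for every }I\text{ with }|I|=r\,\}$. From $\stab(\sigma J)=\sigma\,(\stab J)\,\sigma^{-1}$ one gets the conjugation identity $\sigma a_J=a_{\sigma(J)}\sigma$, which shows that $K$ is stable under the right $S_k$-action and is therefore an $A$-submodule of $W_{\kpa r}$. If $K\neq 0$ it would contain an irreducible summand isomorphic to some constituent $S^{\lmda}$ of $W_{\kpa r}$, so $\lambda_\TA=k-r$; but then the boundary case of the dictionary gives an $I$ with $|I|=r$ and $S^{\lmda}a_I\neq 0$, contradicting $S^{\lmda}\subseteq K$. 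Hence $K=0$, and since the given $f\in W_{\kpa r}$ is nonzero there exists $I$ with $|I|=r$ and $fa_I\neq 0$. The only step requiring genuine care is this last argument --- deducing from ``$fa_I=0$ for all $|I|=r$'' a statement about an irreducible constituent --- and it is exactly the submodule property of $K$, resting on $\sigma a_J=a_{\sigma(J)}\sigma$, that makes it work; everything else is bookkeeping with Young's Rule and the Kostka facts recalled in \S\ref{repr}.
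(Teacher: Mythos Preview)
Your proof is correct and takes a genuinely different route from the paper's, though both rest on the Specht decomposition of $W_{\kpa r}$ from Definition~\ref{Wkkr}. For part~(a), the paper works directly with the generator $\alpha_{\lmda}b_{\lmda}$ of $S^{\lmda}$: since $|I|<r=k-\lambda_\TA$, pigeonhole produces a transposition $(ij)\in Q_{\lmda}\cap\stab I$, so $b_{\lmda}a_I=b_{\lmda}(ij)\cdot(ij)a_I=-b_{\lmda}a_I=0$; the identity $\sigma a_I=a_{\sigma I}\sigma$ then reduces $b_{\lmda}\sigma a_I$ to this case. You bypass this by identifying $\dim(S^{\lmda})^{\stab I}$ with the Kostka number $K_{(k-s,1^s),\lmda}$ via Frobenius reciprocity, obtaining the clean dichotomy $S^{\lmda}a_I\neq 0\iff\lambda_\TA\geq k-|I|$ at once. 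For part~(b), the paper is constructive: it expands $f$ along the summands $\Theta[T]S^{\lmda}$, picks a nonzero coefficient $f'_u$ in one $S^{\lmda}$, and takes $I$ to be the non-$\TA$ positions of $u$; then $ua_I=(k-r)!\,u$ while no other basis word contributes a $u$-term under $a_I$, and the block structure of the direct sum carries $f'a_I\neq 0$ back to $fa_I\neq 0$. Your submodule argument via $K$ is non-constructive but cleaner. What your approach buys is a uniform treatment of both parts through one representation-theoretic fact; what the paper's buys is self-containment---it uses only the explicit description $S^{\lmda}=\alpha_{\lmda}b_{\lmda}A$ and elementary word combinatorics, without appealing to Frobenius reciprocity.
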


\begin{proof}[Proof of Lemma~\ref{stabsum}(a)]
Let $I \subseteq \{1,\dots,k\}$ be of size $|I|<r$, and $f \in W_{\kpa r}$. It is sufficient to show $f a_I = 0$ for $f$ in every one of the simple $A$-modules in~$W_{\kpa r}$ as given by the direct sum in Definition~\ref{Wkkr}, and the general case follows by linearity. Hence, let $f \in \Theta[T] S^{\lmda}$ with a partition $\lmda \vdash k$ such that $\lambda_\TA = k-r$, where $T$ is a semistandard table with shape~$\lmda$ and composition~$\kpa$, see~\S\ref{repr}. Moreover, since the map $\Theta[T]$ is equivariant to the action of~$A$, it is enough to show $f' a_I = 0$ for $f' \in S^{\lmda}$ such that $f = \Theta[T]f'$. Recalling the definition $S^{\lmda} = \alpha_{\lmda} b_{\lmda} A$, it is left to prove $b_{\lmda} \sigma a_I = 0$ for every $\sigma \in S_k$. Note that $\sigma a_I = a_{(\sigma I)} \sigma$ where $\sigma I = \{\sigma(i) \mid  i \in I\}$ and $|\sigma I|<r$ as well, so the general case would follow from showing~$b_{\lmda}a_I = 0$. 

These elements were defined as $a_I = \sum_{\tau \in \stab I} \tau$ and $b_{\lmda} = \sum_{\tau \in Q_{\lmda}} \mathrm{sign}(\tau) \tau$, and the subgroup~$Q_{\lmda}$ as all the permutations of $\{1,\dots,k\}$ permuting the numbers within certain $\lambda_\TA$ subsets that compose this set. The remaining argument is essentially Lemma 4.23 in~\cite{fulton2013representation}. Since $|I|<r = k-\lambda_\TA$, there exists $i,j \not\in I$ such that the transposition $(ij) \in Q_{\lmda} \cap \stab I$. Therefore, $b_{\lmda} (ij) = -b_{\lmda}$ and $(ij)a_I = a_I$ so that $b_{\lmda}a_I = -b_{\lmda}a_I = 0$ as required.
\end{proof}

\begin{proof}[Proof of Lemma~\ref{stabsum}(b)]
Let $f \in W_{\kpa r}$. First, represent $f = \sum_T f^{(T)}$ according to the direct sum in Definition~\ref{Wkkr}, so that at least one $f^{(T)} \neq 0$. Then, let $\lmda = \lmda(T)$ and $f' \in S^{\lmda}$ be such that $f^{(T)} = \Theta[T]f'$, so clearly $f' \neq 0$. Finally, expand $f' = \sum_u f'_u u$ over $u \in \tbinom{\mathsmaller{\Sigma}}{\lmda}$, and let $f'_u \in \mathbb{R}$ be a nonzero coefficient in this expansion. 

Recall that $\lambda_\TA = k-r$, and let $I$ be all the positions of non-$\TA$ letters in~$u$, so that $|I|=r$. Since each term in $a_I$ fixes those positions, $u a_I = (k-r)! u$. Conversely, the expansion of $v a_I$ for any other $v \in \tbinom{\mathsmaller{\Sigma}}{\lmda}$ does not contain a term with~$u$. From $u a_I \neq 0$ and $f'_u \neq 0$ it follows that $f'a_I \neq 0$, and $f^{(T)}a_I \neq 0$ because $\Theta[T]$ is an embedding of $S^{\lmda}$ in $W_{\kpa r}$. Since $a_I$ acts separately on each $A$-module in the direct sum, $f a_I \neq 0$ as required.
\end{proof}

From Lemmas~\ref{easy}-\ref{hard} it follows that $\rank f = r$ for every nonzero $f \in W_{\kpa r}$. Together with Corollary~\ref{Ef2}, it follows that the second moment $\Exp[\tilde\#f^2]$ has the leading term~$C'_{f,\mathbf{p}}/n^{r}$.
This proves the first part of Theorem~\ref{main3}.

\smallskip

We now consider $\Exp[\tilde\#f\,\tilde\#f']$ for $f \in W_{\kpa r}$ and $f' \in W_{\kpa r'}$ with $r' < r$. The second part of Theorem~\ref{main3} claims that these off-diagonal terms are only $o(n^{-r/2-r'/2})$, which means that the correlation between the statistics $\#f$ and~$\#f'$ tends to zero. This is essentially a consequence of the diagonal case, as we show. Starting from Lemma~\ref{Eff} and simplifying as in Corollary~\ref{Ef2},
\begin{align*}
\Exp_w \left[\tilde\# f(w)\,\tilde\#f'(w)\right] \;=&\; \sum_{\mathbf{r} \leq \kpa}\, m_{\mathbf{r}}(f,f') \,\prod_{\TX\in\Sigma} \frac{\binom{n_{\TX}}{r_{\TX}} \binom{n_{\TX}-r_{\TX}}{k_{\TX}-r_{\TX}} \binom{n_{\TX}-k_{\TX}}{k_{\TX}-r_{\TX}}}{\binom{n_{\TX}}{k_{\TX}}^2} 
 \\[0.5em]
\;=&\; \sum_{\mathbf{r} \leq \kpa}\, \Exp_{\mathbf{X}_{\mathbf{r}}}\left[f^{(\mathbf{r})}(\mathbf{X}_{\mathbf{r}}) f'^{(\mathbf{r})}(\mathbf{X}_{\mathbf{r}}) \right]\; \frac{c(\kpa,\mathbf{r},\mathbf{p})+o(1)}{n^{|\mathbf{r}|}}
\end{align*}
for some nonzero constants $c(\kpa,\mathbf{r},\mathbf{p})$. By Lemmas \ref{easy}-\ref{hard} $\rank f = r$, hence $f^{(\mathbf{r})} = 0$ almost everywhere if $|\mathbf{r}|<r$, and these terms drop. This leaves $O(n^{-r})$, which is $o(n^{-(r+r')/2})$ as required. \qed

\begin{rmk*}
It follows from
Definition~\ref{fr} and Corollary~\ref{Ef2} that the computation of the constant $C'_{f,\mathbf{p}}$ appearing in the theorem only involves the evaluation of elementary integrals.
\end{rmk*}

\subsection{Proof of Theorem \ref{main4}}
\label{proof4}

We now turn to the proof of Theorem~\ref{main4}. The main theme is a thorough investigation of the leading second moment terms appearing in the proof of Theorem~\ref{main3}. We describe and study them using a variety of tools from combinatorics, representation theory, and the algebra of words. Parts of this investigation apply to arbitrary finite alphabets. In the special case of two letters, $\{\TA,\TB\}$, we further refine our analysis of the matrices of the leading terms. 

The plan of the proof is as follows. We expand the second moment matrix in the different orders in powers of $n_\TA$ and $n_\TB$. We develop combinatorial expressions to the matrices of each order, and then translate them to operators in the words algebra. By the methods of Theorem~\ref{main3}, the image of the operator of order $n_\TA^{-r_\TA} n_\TB^{-r_\TB}$ is contained in $M^{(k-r,r)}$, for $r=r_\TA+r_\TB$ and $k=k_\TA+k_\TB$, and moreover, the problem reduces to diagonalizing its orthogonal projection to~$S^{(k-r,r)}$. A~unique property of the two-sample case is that the operators that correspond to $(r_\TA,r_\TB)$ and~$(k_\TA,k_\TB)$ only depend on $r_\TA+r_\TB$ up to explicit scalar factors. This proportionality principle is established in Proposition~\ref{prop:Mprime_depends_only_on_diag}. Thus, it is left to treat the case~$(0,r)$, which is done in Proposition~\ref{prop:diagonalizing_M_prime}. It is thanks to this remarkable proportionality of the operators, that Theorem~\ref{main4} holds in greater generality, regardless of how $n_{\TA},n_{\TB} \to \infty$. 

The following lemma summarizes some properties of the frequently used operators $\sh_\TX$, $\del_\TX$ ,$\Theta_{\TX\TY}$. Note that some results concern the special case $\Sigma=\{\TA,\TB\}$.

\begin{lemma}\label{lem:props_of_basic_ops}
$ $
\begin{enumerate}
\item
\label{it:Theta} $\Theta_{ab}^\lambda:M^\lambda\to M^{\lambda+b-a}$ is a morphism of $S_{|\lambda|}-$representations. In particular, it takes any irreducible representation to either an isomorphic irreducible representation, or to $0$. The dual of $\Theta_{ab}^\lambda$ is $\Theta_{ba}^\lambda$. 
\item
In the special case that $\lambda = (k_a,k_b)$ with $k_a\geq k_b,$
\label{it:decomp_of_M_to_Spechts}
\[M^{(k_a,k_b)}\simeq\bigoplus_{j\geq k_a} S^{(j,k-j)}\] and $\Theta_{ba}^{(k_a,k_b)}$ maps each $S^{(j,k-j)}$ isomorphically on a copy of it in $M^{(k_a+1,k_b-1)},$ except for $S^{(k_a,k_b)}$ which maps to $0.$ $\Theta_{ab}^{(k_a,k_b)}$ acts in a dual manner.
\item\label{it:shuffle}
The image of $\sh_a^\lambda:M^\lambda\to M^{\lambda+e_a}$ restricted to $S^\lambda$ is contained in the sum of irreducible $S_{|\lambda|+1}-$representations $S^\mu\hookrightarrow M^{\lambda+e_a}$ where $\mu$ is of the form $\lambda+e_b,~b\leq a.$
\item\label{it:comms}
Denote $k=|\lambda|,\;\del_b^{(k)}=\del_b^\lambda,\;\sh_a^{(k)}=\sh_a^\lambda,\;\del_b^{(k+1)}=\del_b^{\lambda+e_a},\;\sh_a^{(k-1)}=\sh_a^{\lambda-e_b},\;\Theta_{ab}^{(k)}=\Theta_{ab}^\lambda$, $Id$ is the identity map of $M^\lambda$ and $\delta_{a,b}$ is $1$ if $a=b$ and $0$ otherwise. It holds that 
\begin{equation}\label{eq:comm_del_sh}
\del_b^{(k+1)}\circ\sh_a^{(k)}-\sh_a^{(k-1)}\circ\del_b^{(k)}=\Theta_{ba}^{(k)}+\delta_{a,b}(k+1)\mathrm{Id}
\end{equation}
$\Theta_{ab}$ commutes with all $\sh_c,~c\neq a$ and all $\del_c,~c\neq b.$ In the remaining cases we have
\begin{equation}\label{eq:comm_theta_sh}
\Theta^{(k+1)}_{ab}\circ\sh_a^{(k)}-\sh_a^{(k)}\circ\Theta_{ab}^{(k)}=\sh_b^{(k)}.
\end{equation}
\begin{equation}\label{eq:comm_del_theta}
\del_b^{(k+1)}\circ\Theta_{ab}^{(k+1)}-\Theta_{ab}^{(k)}\circ\del_b^{(k+1)}=\del^{(k+1)}_a.
\end{equation}
\begin{equation}\label{eq:comm_theta}
[\Theta_{ab}^{(k)},\Theta_{cd}^{(k)}]=\delta_{a,d}\Theta_{cb}^{(k)}-\delta_{b,c}\Theta_{ad}^{(k)}
\end{equation}
\item\label{it:comm_del^msh}
In the following identities, we forgo in the notation of $\del^\lambda_a,\;\sh^\lambda_a,\;\sh^\lambda_b,\;\Theta^\lambda_{a,b}$ the index $\lambda$ - in all identities multiplication of operators is to be interpreted as composition, the input of the operators is from $M^{(k_a,k_b)}$, and the $\lambda$-indices of each operator is to be picked so that composition is valid. For example,\[\del_a^l=\del_a^{(k_a-l+1,k_b)}\circ\cdots\circ\del_a^{(k_a-1,k_b)}\circ\del_a^{(k_a,k_b)}.\]
The following relations hold
\begin{equation}\label{eq:del1msh2}
\del_a^l\sh_b=\sh_b\del_a^l+l\Theta_{ab}\del_a^{l-1},
\end{equation}
\begin{equation}\label{eq:del1msh1}
\del_a^l\sh_a^{(k_a,k_b)}=\sh_a\del_a^l+l(2k_a+k_b+2-l)\del_a^{l-1}.
\end{equation}
\end{enumerate}
\end{lemma}
\begin{proof}
Item \ref{it:shuffle} is Lemma 46 from \cite{dieker2018spectral}.
Item \ref{it:comms} is partially Lemma 36 of \cite{dieker2018spectral} and partially a simple direct computation. Item \ref{it:comm_del^msh} is a consequence of Item \ref{it:comms}.
\end{proof}

\bigskip
\subsubsection{Power Series Expansion of the Second Moment Matrix \npt} ~

\smallskip
\noindent
We first examine the combinatorial quantities appearing in the coefficients of the second moments expansion of the two random models $\mathcal{W}$ and $\mathcal{W}'$. Specifically, we prove the following relation between the merging coefficients $m_\ell(f,f')$ from Definition~\ref{mcoefs} in the proof of Theorems~\ref{main1}-\ref{main2}, and the coefficients $m_\mathbf{r}(f,f')$ from Definition~\ref{fr} in the proof of Theorem~\ref{main3}.

\begin{prop}
\label{relmm}
Let $f,f' \in W_{\kpa}$ and let $\mathbf{r} = (r_\TA,r_\TB,\dots) \leq \kpa = (k_\TA,k_\TB,\dots)$. Then
$$ m_{\mathbf{r}}(f,f') \;=\; \frac{\prod_{\TX\in\Sigma}\left(r_\TX!(k_\TX-r_\TX)!^2\right)}{\left(2k-r\right)!}\;m_{2k-r}\left(\left(\prod_{\TX\in\Sigma}\frac{\Theta_{\TX\Ti}^{k_\TX-r_\TX}}{(k_\TX-r_\TX) !}\right)f,\,\left(\prod_{\TX\in\Sigma}\frac{\Theta_{\TX\Ti}^{k_\TX-r_\TX}}{(k_\TX-r_\TX)!}\right)f'\right) $$
where as usual $k = |\kpa| = \sum_\TX k_\TX$ and $r = |\mathbf{r}| = \sum_\TX r_\TX$.
\end{prop}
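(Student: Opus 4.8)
The plan is to reduce both sides to an explicit common combinatorial sum over pairs of position-sets, and to exhibit a bijection between the objects counted on each side. On the left, $m_{\mathbf{r}}(f,f')$ is, by Definition~\ref{fr}, an expectation over $k+k-r$ independent uniform variables in $[0,1]$: the $r$ shared variables $\mathbf{X}_{\mathbf{r}}$ together with the $k-r$ private variables of $f$ and the $k-r$ private variables of $f'$. Expanding $f = \sum_u f_u u$ and $f' = \sum_{u'} f'_{u'} u'$, the quantity $m_{\mathbf{r}}(f,f')$ becomes a sum over words $u,u' \in \tbinom{\Sigma}{\kpa}$ of $f_u f'_{u'}$ times the probability that, when the $2k-r$ variables are sorted, the $r$ shared ones induce~$u$ in the appropriate $k$ positions and induce~$u'$ in the other $k$ positions, with the constraint that in each of the $r$ shared positions $u$ and $u'$ agree. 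First I would write this probability as a multinomial: it is $(2k-r)!^{-1}$ times the number of ways to interleave, i.e. a product of factorials counting the private runs of each of $u$ and $u'$, exactly matching the combinatorial description of the merging coefficient $m_{2k-r}$ in Definition~\ref{mcoefs}.

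The key observation is that the operator $\prod_\TX \Theta_{\TX\Ti}^{k_\TX-r_\TX}/(k_\TX-r_\TX)!$ applied to a word $u \in \tbinom{\Sigma}{\kpa}$ produces, after normalization, the uniform average over all ways of selecting $k_\TX - r_\TX$ of the $k_\TX$ copies of each letter $\TX$ in $u$ and relabeling them to the neutral symbol~$\Ti$. Thus $\rho$ applied to a term of this average recovers the positions that were \emph{not} relabeled, and the merging condition ``$i_j \in I \setminus I'\implies e_j = \Ti$'' from Definition~\ref{mcoefs} is exactly the condition that a private position of one word carries the relabeled symbol, while the matching condition on $I \cap I'$ records that the $r$ shared positions of $u$ and $u'$ carry equal original letters. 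Concretely, I would check that $m_{2k-r}\big((\prod_\TX \Theta_{\TX\Ti}^{k_\TX-r_\TX}/(k_\TX-r_\TX)!) u,\ (\prod_\TX \Theta_{\TX\Ti}^{k_\TX-r_\TX}/(k_\TX-r_\TX)!) u'\big)$ counts precisely the interleavings that appear in the expansion of $m_{\mathbf{r}}(u,u')$, and that the two multinomial normalizations — the $(2k-r)!$ in the denominator on the right and the $r_\TX!(k_\TX-r_\TX)!^2$ numerator — are the ones needed to pass from ``number of interleavings with labelled private entries'' to the genuine probability. The factor $r_\TX!$ accounts for the orderings of the shared block within each letter's sample, and the two factors of $(k_\TX - r_\TX)!$ account for the orderings of the private blocks of $f$ and of $f'$, which are quotiented out by the normalizations $1/(k_\TX-r_\TX)!$ inside each $\Theta_{\TX\Ti}^{k_\TX - r_\TX}$ but must be reinstated to match the symmetric averaging in Definition~\ref{fr}.

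Once the identity is verified at the level of basis words $u,u'$, the general case follows by bilinearity of both $m_{\mathbf{r}}(-,-)$ and $m_{2k-r}(-,-)$ and linearity of the $\Theta$ operators. I expect the main obstacle to be purely bookkeeping: correctly matching the three families of factorials on the right-hand side with the three roles played by the coordinates (shared, private-to-$f$, private-to-$f'$) inside each letter-sample, and making sure no overcounting arises from the fact that $\Theta_{\TX\Ti}$ is a \emph{sum} over choices rather than an average. A clean way to control this is to fix, once and for all, a generic configuration of the $2k-r$ points in $[0,1]$, read off the induced triple $(L,I,I')$ as in the proof of Lemma~\ref{merge}, and verify that summing the contributions over all such configurations with a prescribed interleaving type reproduces both sides term by term; the binomial/multinomial identities involved are the same Vandermonde-type manipulations already used in the proof of Lemma~\ref{mkrmkrmkr}.
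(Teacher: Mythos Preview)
Your proposal is correct and follows essentially the same route as the paper: reduce to basis words by bilinearity, interpret $\prod_\TX \Theta_{\TX\Ti}^{k_\TX-r_\TX}/(k_\TX-r_\TX)!$ as the sum over all ways to relabel $k_\TX-r_\TX$ copies of each letter $\TX$ to the neutral symbol $\Ti$, and then match the merging configurations counted by $m_{2k-r}$ against the probability computation defining $m_{\mathbf{r}}$, with the factorial bookkeeping exactly as you describe. The paper organizes this by introducing an intermediate combinatorial quantity $\mu_{\mathbf{r}}(e,e')$ (a vector-indexed merging coefficient) and proving two separate bijections, one to each side of the identity; no Vandermonde-type identities are needed, the argument is purely bijective.
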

The proof of Proposition \ref{relmm} appears in Appendix \ref{pf:relmm}.

Let $f,f'\in  \tbinom{\Sigma}{\kpa}$.
Now by combining Proposition~\ref{relmm} and Lemma \ref{Eff}, we obtain the following expression for $\Exp_w \left[\# f(w)\,\#f'(w)\right]$
$$ \sum_{\mathbf{r} \leq \kpa}\,\tfrac{\prod_{\TX}(r_\TX!(k_\TX-r_\TX)!^2)}{(2k-r)!} \,  m_{2k-r}\left(\left(\prod_{\TX\in\Sigma}\tfrac{\Theta_{\TX\Ti}^{k_\TX-r_\TX}}{(k_\TX-r_\TX)!}\right)f,\left(\prod_{\TX\in\Sigma}\tfrac{\Theta_{\TX\Ti}^{k_\TX-r_\TX}}{(k_\TX-r_\TX)!}\right)f'\right) \prod_{\TX\in\Sigma} \tbinom{n_{\TX}}{r_{\TX}} \tbinom{n_{\TX}-r_{\TX}}{k_{\TX}-r_{\TX}} \tbinom{n_{\TX}-k_{\TX}}{k_{\TX}-r_{\TX}} $$ 
where as usual we abbreviate $r = |\mathbf{r}|$ and $k = |\kpa|$. As in Definition~\ref{mkr}, we proceed by evaluating this expression for all words in~$\tbinom{\Sigma}{\kpa}$. This yields a square second moment matrix of size $\tbinom{k}{\kpa} = {k!}/{\prod_{\TX}k_\TX!}$ for all words with composition~$\kpa$. Using the duality of $\Theta_{ab}$ and~$\Theta_{ba}$ from Lemma~\ref{lem:props_of_basic_ops}(\ref{it:decomp_of_M_to_Spechts}), we write this second moment matrix as the linear operator
\begin{equation}
\label{eq:2nd moment series}
\sum_{\mathbf{r} \leq \kpa}\, 
\tfrac{\prod_{\TX}(r_\TX!(k_\TX-r_\TX)!^2)}{(2k-r)!}
(\prod_{\TX\in\Sigma}\tfrac{\Theta_{\Ti\TX}^{k_\TX-r_\TX}}{(k_\TX-r_\TX)!}\circ
M_{kr}\circ\prod_{\TX\in\Sigma}\tfrac{\Theta_{\TX\Ti}^{k_\TX-r_\TX}}{(k_\TX-r_\TX)!}) \prod_{\TX\in\Sigma} \tbinom{n_{\TX}}{r_{\TX}} \tbinom{n_{\TX}-r_{\TX}}{k_{\TX}-r_{\TX}} \tbinom{n_{\TX}-k_{\TX}}{k_{\TX}-r_{\TX}}. \end{equation}
This representation motivates the following definition.

\begin{definition}
\label{NKR}
Let $\mathbf{r} = (r_\TA,r_\TB,\dots) \leq \kpa = (k_\TA,k_\TB,\dots)$, and $k = |\kpa|$, $r = |\mathbf{r}|$.
The \emph{$(\kpa,\mathbf{r})$-merging matrix} is the $\tbinom{k}{\kpa} \times \tbinom{k}{\kpa}$ integer valued matrix 
\[
\Np_{\mathbf{r}}^{\kpa}=
\prod_{\TX\in\Sigma}\frac{\Theta_{\Ti\TX}^{k_\TX-r_\TX}}{(k_\TX-r_\TX)!}\circ
M_{kr}\circ\prod_{\TX\in\Sigma}\frac{\Theta_{\TX\Ti}^{k_\TX-r_\TX}}{(k_\TX-r_\TX)!}
\]
\end{definition}

Using this notation, the second moment in~\eqref{eq:2nd moment series} can be asymptotically described as 
\begin{equation*}
\sum_{\mathbf{r} \leq \kpa}\, 
\frac{1}{(2k-r)!}\,
\Np_{\mathbf{r}}^{\kpa}
\prod_{\TX\in\Sigma} {n_{\TX}}^{2k_\TX-r_\TX}\left(1+\sum_{\TX\in\Sigma}O\left(\frac{1}{n_\TX}\right)\right). 
\end{equation*}
Compare this expression with the first expectation in the statement of Theorem \ref{thm:main3}. If we assume $\mathbf{n}/n\rightarrow\mathbf{p}$ as in the theorem, we get
\begin{equation}
\label{eq:asymptotic as func of N}
\begin{aligned}
n^{\rank f}&\Exp_w \left[\left(\tilde\# f(w)\right)^2\right] 
\;=\;
\frac{n^{\rank f}}{\prod_{\TX}\binom{n_{\TX}}{k_{\TX}}^2}
\sum_{\mathbf{r} \leq \kpa} \frac{\left<\Np_{\mathbf{r}}^{\kpa}f,f\right>}{(2k-r)!}  \prod_{\TX\in\Sigma} {n_{\TX}}^{2k_\TX-r_\TX}\left(1+\sum_{\TX\in\Sigma}O\left(\tfrac{1}{n_\TX}\right)\right)
\\[0.5em] \;=&\; 
\left(\prod_{\TX\in\Sigma}(k_{\TX}!)^2\right) n^{\rank f-2k}
\sum_{\mathbf{r} \leq \kpa} \frac{\left<\Np_{\mathbf{r}}^{\kpa}f,f\right>}{(2k-r)!}  \left(\prod_{\TX\in\Sigma} {p_{\TX}}^{2k_\TX-r_\TX}\right) n^{2k-r}
(1+o_n(1))
\\[0.5em] \;=&\;
\sum_{\mathbf{r} \leq \kpa}\, \left<\Np_{\mathbf{r}}^{\kpa}f,f\right>\; (c'(\kpa,\mathbf{r},\mathbf{p}) + o_n(1)) n^{\rank f -r}
\end{aligned}
\end{equation}
where $c'(\kpa,\mathbf{r},\mathbf{p})$ is some positive constant depending on $\kpa$, $\mathbf{r}$, and $\mathbf{p}$. Theorem \ref{thm:main3} says that this expression approaches a positive value and that $f\in \bigoplus_{r\geq\rank f} W_{\kpa r} \setminus \bigoplus_{r>\rank f} W_{\kpa r}$. Clearly, this term approaches a finite nonnegative value if and only if $\left<\Np_{\mathbf{r}}^{\kpa}f,f\right>=0$ for all $\mathbf{r}\leq\kpa$ such that $|\mathbf{r}|<\rank f$. Moreover, it approaches the value zero if $\left<\Np_{\mathbf{r}}^{\kpa}f,f\right>=0$ also for all $\mathbf{r}\leq\kpa$ such that $|\mathbf{r}|=\rank f$. Note that by %Lemma \ref{lem:Akr_thetaconj=mergecnt} and 
Lemma \ref{Akr_identities},
$$\left<\Np_{\mathbf{r}}^{\kpa}f,f'\right> \;=\; 
\left< \mathcal{D}_{k-r} \prod_{\TX\in\Sigma}\frac{\Theta_{\TX\Ti}^{k_\TX-r_\TX}}{(k_\TX-r_\TX)!} f,\;
\mathcal{D}_{k-r} \prod_{\TX\in\Sigma}\frac{\Theta_{\TX\Ti}^{k_\TX-r_\TX}}{(k_\TX-r_\TX)!} f' \right>,$$
and therefore $\left<\Np_{\mathbf{r}}^{\kpa}f,f\right>=0$ if and only if $$ f\;\in\;\ker\Np_{\mathbf{r}}^{\kpa} \;=\; \ker \prod_{\TX\in\Sigma}\frac{\Theta_{\TX\Ti}^{k_\TX-r_\TX}}{(k_\TX-r_\TX)!} \;=\; \ker \prod_{\TX\in\Sigma}\Theta_{\TX\Ti}^{k_\TX-r_\TX},$$
where it is an equality because $M_{kr}$ is a nondegenerate bilinear form.
Therefore, by the argument above,
\begin{equation}
\label{eq:Wkr_representation}
W_{\kpa r} = \left(\bigcap_{\substack{\mathbf{r}\leq\kpa\\|\mathbf{r}|=r}} \ker \prod_{\TX\in\Sigma}\Theta_{\TX\Ti}^{k_\TX-r_\TX} \right)^{\displaystyle\perp} \bigcap\;\,
\left(\bigcap_{\substack{\mathbf{r}\leq\kpa\\|\mathbf{r}|<r}} \ker \prod_{\TX\in\Sigma}\Theta_{\TX\Ti}^{k_\TX-r_\TX}\right)
\end{equation}

Now, returning to \eqref{eq:asymptotic as func of N}, we can see that
\[
n^{\rank f}\Exp_w \left[\left(\tilde\# f(w)\right)^2\right] = \sum_{\substack{\mathbf{r} \leq \kpa\\|\mathbf{r}|=\rank f}}\,c'(\kpa,\mathbf{r},\mathbf{p}) \left<\Np_{\mathbf{r}}^{\kpa}f,f\right> +o_n(1).
\]
Note that while $f\in \bigoplus_{r\geq\rank f} W_{\kpa,r} \setminus \bigoplus_{r>\rank f} W_{\kpa,r}$, for any $e\in \bigoplus_{r>\rank f} W_{\kpa,r}$ it holds that $\left<\Np_{\mathbf{r}}^{\kpa}f,f\right> = \left<\Np_{\mathbf{r}}^{\kpa}(f+e),f+e\right>.$ Therefore, if we take $\proj = \proj_{\rank f}$ from Section \ref{moreoperators} to be the projection to $W_{\kpa,\rank f}$ by $\proj$, we get that 
\begin{align*}
n^{\rank f}\Exp_w \left[\left(\tilde\# f(w)\right)^2\right]
=& \sum_{\substack{\mathbf{r} \leq \kpa\\|\mathbf{r}|=\rank f}}\,c'(\kpa,\mathbf{r},\mathbf{p}) \left<\Np_{\mathbf{r}}^{\kpa}\circ\proj f,\proj f\right> +o_n(1).
\\=& \sum_{\substack{\mathbf{r} \leq \kpa\\|\mathbf{r}|=\rank f}}\,c'(\kpa,\mathbf{r},\mathbf{p}) \left<(\proj\circ\Np_{\mathbf{r}}^{\kpa}\circ\proj) f, f\right> +o_n(1).
\end{align*}
Denote the matrix $\proj\circ\Np_{\mathbf{r}}^{\kpa}\circ\proj$ by $\Mp_{\mathbf{r}}^{\kpa}$.

By the same arguments that we used, it can be seen that for $f,f'\in W_{\kpa}$ with $\rank f = \rank f' = r$ it holds that 
$$
n^r\Exp_w \left[\left(\tilde\# f(w)\right)\left(\tilde\# f'(w)\right)\right] = 
\sum_{\substack{\mathbf{r} \leq \kpa\\|\mathbf{r}|=\rank f}}\,c'(\kpa,\mathbf{r},\mathbf{p}) \left<\Mp_{\mathbf{r}}^{\kpa} f, f'\right> +o_n(1)
$$
and
\begin{equation}
\label{eq:asymptotic as func of n_x}
\begin{aligned}
\Exp_w \left[\left(\tilde\# f(w)\right)\left(\tilde\# f'(w)\right)\right] &=
\sum_{\mathbf{r} \leq \kpa} \frac{ \prod_{\TX\in\Sigma}(k_{\TX}!)^2 }{(2|\kpa|-|\mathbf{r}|)!} \left<\Np_{\mathbf{r}}^{\kpa}f,f'\right> \prod_{\TX\in\Sigma} {n_{\TX}}^{-r_\TX}(1+\sum_{\TX\in\Sigma}O(\tfrac{1}{n_\TX})) \\&=
\sum_{\substack{\mathbf{r} \leq \kpa \\|\mathbf{r}|\ge\rank f}} \frac{ \prod_{\TX\in\Sigma}(k_{\TX}!)^2 }{(2|\kpa|-|\mathbf{r}|)!} \left<\Np_{\mathbf{r}}^{\kpa}f,f'\right> \prod_{\TX\in\Sigma} {n_{\TX}}^{-r_\TX}(1+\sum_{\TX\in\Sigma}O(\tfrac{1}{n_\TX})).
\end{aligned}
\end{equation}

\bigskip
\subsubsection{The Two Sample Case \npt} ~

\smallskip
\noindent
We make some preparations to the next steps of the proof of Theorem~\ref{main4}. 
For the remainder of the proof of Theorem \ref{main4}, we let the alphabet be $\Sigma=\{\TA,\TB\}$, so that $k=k_\TA+k_\TB$ and $r=r_\TA+r_\TB$ unless otherwise indicated. Occasionally, when another letter is needed, such as $\Ti$ in Proposition~\ref{relmm}, we use $\Sigma=\{\TA,\TB,\Ti\}$ and it will be clear from the context.

The next observation is an immediate consequence of Definition \ref{Wkkr}.
\begin{obs} For any $\mathbf{r}=(r_\TA,r_\TB)\leq\kpa=(k_\TA,k_\TB)$, the submodule $W_{(k_a,k_b),r}$ of $W_{(k_a,k_b)}$ is precisely the isomorphic copy of $S^{(k-r,r)}$ inside $W_{(k_a,k_b)} = M^{(k_a,k_b)}$. By abuse of notation we write this as
$W_{(k_a,k_b),r} = S^{(k-r,r)} \hookrightarrow M^{(k_a,k_b)}$.
\end{obs}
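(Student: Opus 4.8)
The plan is to derive this directly from the definition of the primary decomposition, Definition~\ref{Wkkr}, together with the two-letter identification already packaged in Lemma~\ref{lem:Wkr=Specht}. The first point is pure bookkeeping: the index of a primary component $W_{\kpa r}$ is a single integer $r\in\{0,\dots,k-k_\TA\}=\{0,\dots,k_\TB\}$, and the decomposition $W_{\kpa}=W_{\kpa 0}\oplus\cdots\oplus W_{\kpa(k-k_\TA)}$ makes no reference to a splitting of $r$. So once we read $r=r_\TA+r_\TB=|\mathbf{r}|$ (the notation $W_{(k_a,k_b),r}$ being meaningful exactly when $r\leq k_\TB$, which is the only range in which it occurs), the component depends on $\mathbf{r}$ only through $|\mathbf{r}|$, which is the content of the phrase ``for any $\mathbf{r}\leq\kpa$''.

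Next I would observe that over the alphabet $\{\TA,\TB\}$ there is a \emph{unique} semistandard table $T$ of composition $\kpa=(k_\TA,k_\TB)$ and shape $\lmda(T)$ with $\lambda_\TA(T)=k-r$. Indeed, $t_\TB$ has length $r$, and the column-strict condition forces every entry of $t_\TB$ to be $\TB$ and the first $r$ entries of $t_\TA$ to be $\TA$; weak increase of $t_\TA$ together with the composition constraint then pins down $t_\TA=\TA^{k_\TA}\TB^{k_\TB-r}$, of shape $(k-r,r)$, and this is a partition of $k$ since $k-r\geq k-k_\TB=k_\TA\geq k_\TB\geq r$. Hence the direct sum in Definition~\ref{Wkkr} collapses to the single summand $W_{\kpa r}=\Theta[T]S^{(k-r,r)}$, and this is precisely the table and map $\Theta[T]=\Theta_{\TA\TB}^{k_\TB-r}$ recorded in Lemma~\ref{lem:Wkr=Specht}.

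Finally I would invoke Lemma~\ref{lem:props_of_basic_ops}(\ref{it:decomp_of_M_to_Spechts}): each $\Theta_{\TA\TB}\colon M^{(j,k-j)}\to M^{(j-1,k-j+1)}$ is injective and carries every Specht summand $S^{(i,k-i)}$ with $i\geq j$ isomorphically onto a copy of itself in the target. Composing the $k_\TB-r$ such maps from $M^{(k-r,r)}$ down to $M^{(k_\TA,k_\TB)}$ transports $S^{(k-r,r)}$ isomorphically at every stage, so $W_{\kpa r}=\Theta_{\TA\TB}^{k_\TB-r}S^{(k-r,r)}$ is a nonzero — and, being simple, isomorphic — copy of $S^{(k-r,r)}$ inside $M^{(k_\TA,k_\TB)}=W_{(k_\TA,k_\TB)}$, as claimed. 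There is essentially no obstacle here; the only step requiring a word of care is the injectivity / nonvanishing at each application of $\Theta_{\TA\TB}$, which is exactly what Lemma~\ref{lem:props_of_basic_ops}(\ref{it:decomp_of_M_to_Spechts}) — equivalently, Young's rule guaranteeing that the summands in Lemma~\ref{lem:Wkr=Specht} are genuinely nonzero — supplies.
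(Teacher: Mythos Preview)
Your proposal is correct and is essentially the same as the paper's approach: the paper records this observation as ``an immediate consequence of Definition~\ref{Wkkr}'', and you have simply unpacked that immediacy via Lemma~\ref{lem:Wkr=Specht} and Lemma~\ref{lem:props_of_basic_ops}(\ref{it:decomp_of_M_to_Spechts}), exactly the ingredients the paper places adjacent to the observation.
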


For the special case $\kpa=(k_a,k_b)$, we get from \eqref{eq:asymptotic as func of n_x}:
\begin{align*}
\Exp_w \left[\tilde\# f(w)\,\tilde\# f'(w)\right] =
\sum_r \sum_{r_a+r_b=r} \tfrac{(k_\TA!)^2(k_\TB!)^2}{(2k-r)!} \left<\Np_{(r_a,r_b)}^{(k_a,k_b)}f,f'\right> n_a^{-r_a} n_b^{-r_b} (1+O(n_a^{-1}+n_b^{-1})).
\end{align*}
Denote by $\proj_r$ the projection to $W_{\kpa r}\simeq S^{(k-r,r)}$, and for each $r$ let $f=\proj_r f +e_r$ and $f'=\proj_r f' +e_r'$. Then
$$\left<\Np_{(r_a,r_b)}^{(k_a,k_b)}f,f'\right> = \left<\Mp_{(r_a,r_b)}^{(k_a,k_b)}f,f'\right> + \left<\Np_{(r_a,r_b)}^{(k_a,k_b)}e_r,\proj_r f'\right> + \left<\proj_r f,\Np_{(r_a,r_b)}^{(k_a,k_b)}e_r'\right> +\left<\Np_{(r_a,r_b)}^{(k_a,k_b)}e_r,e_r'\right>.$$
If $r<\rank f$ or $r<\rank f'$ then $\left<\Np_{(r_a,r_b)}^{(k_a,k_b)}f,f'\right>=0$. If $r=\rank f = \rank f'$ then $\left<\Np_{(r_a,r_b)}^{(k_a,k_b)}f,f'\right> = \left<\Mp_{(r_a,r_b)}^{(k_a,k_b)}f,f'\right>$. So assuming $\rank f = \rank f'$,
\begin{equation}
\label{eq:Wkr_cov_matrix}
\Exp_w \left[\tilde\# f(w)\,\tilde\# f'(w)\right] =
\frac{(k_\TA!)^2(k_\TB!)^2}{(2k-r)!} \sum_{r_a+r_b=\rank f} \frac{\left<\Mp_{(r_a,r_b)}^{(k_a,k_b)}f,f'\right> +O\left(\tfrac{1}{n_a}+\tfrac{1}{n_b}\right)}{n_a^{r_a} n_b^{r_b}}
\end{equation}
If $\rank f \neq \rank f'$, without loss of generality let $\rank f > \frac{\rank f + \rank f'}{2} > \rank f'$. Then 
\begin{equation}
\label{eq:multisample different r case}
\begin{aligned}
\Exp_w &\left[\tilde\# f(w)\,\tilde\# f'(w)\right] =
\sum_{r\geq\rank f}  \tfrac{(k_\TA!)^2(k_\TB!)^2}{(2k-r)!} \sum_{r_a+r_b=r} \frac{\left<\Np_{(r_a,r_b)}^{(k_a,k_b)}f,f'\right> +O\left(\tfrac{1}{n_a}+\tfrac{1}{n_b}\right)}{n_a^{r_a} n_b^{r_b}} \\ =&
\sum_{r_a+r_b=\rank f} O\left(\tfrac{1}{n_a^{r_a} n_b^{r_b}}\right)
\;=\; O\left(\left(\tfrac{1}{n_a}+\tfrac{1}{n_b}\right)^{\rank f}\right) \;=\; o\left(\left(\tfrac{1}{n_a}+\tfrac{1}{n_b}\right)^{\frac{\rank f + \rank f'}{2}}\right)
\end{aligned}
\end{equation}

\bigskip
\subsubsection{Spectral Decomposition of $\Mp_{k_b}^{(k_a,k_b)}$ \npt} ~

\smallskip
\noindent
Let us now recall a few results from \cite{dieker2018spectral}, stated with our notations. We fix $k_a\geq k_b$ and their sum $k.$
Write 
$$\proj=\proj^{(k_a,k_b)}:M^{(k_a,k_b)}\;\to\; S^{(k_a,k_b)}$$ 
the projection on the Specht module.
Denote by $\RTR^{(k_a,k_b)}$ the map
$$ (\sh_b\del_b+\sh_a\del_a)\circ\proj:M^{(k_a,k_b)}\;\to\; S^{(k_a,k_b)}\;\hookrightarrow\; M^{(k_a,k_b)}$$ 
Its kernel contains $(S^{(k_a,k_b)})^\perp$ by definition. Its image is in $S^{(k_a,k_b)}\hookrightarrow M^{(k_a,k_b)},$ since the map $\sh_b\del_b+\sh_a\del_a\in End(M^{(k_a,k_b)})$ can be written as the action of an element from the group algebra of $S_k$ acting on the module $M^{(k_a,k_b)}$ (\cite[Definition 33]{dieker2018spectral}; the equivalence of definitions is Proposition 35 there), hence it respects the decomposition of $M^{(k_a,k_b)}$ to $S_{k}$ submodules, such as the Specht modules.

Returning to our problem, in the two-sample case, $\Mp_{k_b}^{(k_a,k_b)}=\Mp_{(0,k_b)}^{(k_a,k_b)}$ has the form 
%\begin{multline}\begin{aligned}\label{eq:Mp_2_letters}
%\Mp_{k_b}^{(k_a,k_b)}=&\proj^{(k_a,k_b)}\circ %\A_{k,k_b}\circ %\proj^{(k_a,k_b)}\\=&\proj^{(k_a,k_b)}\circ %\sum_{m=0}^{k_a}\frac{\sh_a^m\del_a^m}{(m!)^2}\circ %\proj^{(k_a,k_b)}.
%\end{aligned}\end{multline}
\begin{equation}\label{eq:Mp_2_letters}
\Mp_{k_b}^{(k_a,k_b)}=\proj^{(k_a,k_b)}\circ \A_{k,k_b}\circ \proj^{(k_a,k_b)}=\proj^{(k_a,k_b)}\circ \sum_{m=0}^{k_a}\frac{\sh_a^m\del_a^m}{(m!)^2}\circ \proj^{(k_a,k_b)}.
\end{equation}

As we will soon see in Proposition \ref{prop:diagonalizing_M_prime}, the following operator will play an important role in our analysis: \[\shp_b^{(k_a,k_b)}(v) \;=\; \sh_b^{(k_a,k_b)}(v)+\frac{1}{k_b-k_a-1}\Theta_{ab}^{(k_a+1,k_b)}(\sh_a^{(k_a,k_b)}(v)).\]

\begin{prop}\label{prop:ops_from_Sal}
The linear operator $\sh_a^{(k_a,k_b)}$ maps $S^{(k_a,k_b)}\hookrightarrow M^{(k_a,k_b)}$ to $S^{(k_a+1,k_b)}\hookrightarrow M^{(k_a+1,k_b)}.$
The operator $\shp_2^{(k_a,k_b)}$ maps $S^{(k_a,k_b)}\hookrightarrow M^{(k_a,k_b)}$ to $S^{(k_a,k_b+1)}\hookrightarrow M^{(k_a,k_b+1)},$ and moreover $\shp_b^{(k_a,k_b)}=\proj^{(k_a,k_b+1)}\circ\sh_b^{(k_a,k_b)}.$ Additionally,
\begin{equation}
\label{eq:sh1sh'2_comm}
\sh^{(k_a,k_b+1)}_a\shp^{(k_a,k_b)}_b=\frac{k_a+2-k_b}{k_a+1-k_b}\shp^{(k_a+1,k_b)}_b\sh^{(k_a,k_b)}_a.
\end{equation}
\end{prop}
\begin{proof}
The first statement is a consequence of \cite[Proposition 15]{dieker2018spectral}. The second statement is a consequence of \cite[Proposition 19]{dieker2018spectral}. From Lemma~\ref{lem:props_of_basic_ops}(\ref{it:Theta}) $\frac{1}{k_b-k_a-1}\Theta_{ab}(\sh_a(v))$ maps $S^{(k_a,k_b)}$ to
\[\bigoplus_{j> k_a} S^{(j,k+1-j)}\hookrightarrow M^{(k_a,k_b+1)}.\] Since $\shp_b$ maps $S^{(k_a,k_b)}$ to $S^{(k_a,k_b+1)},$ it must be the orthogonal projection on the latter.
Finally, \eqref{eq:sh1sh'2_comm} is a direct consequence of \eqref{eq:comm_theta_sh} and the fact that different shuffle operators commute. 
\end{proof}

\begin{rmk*}
For the sake of brevity, we use the following abuse of notation in the following pages.
When using the following operators of the $S_k$-module $M^\lambda$ 
$$\Mp_{k_b}^{(k_a,k_b)},\; \shp_b^{(k_a,k_b)},\;\proj^{(k_a,k_b)},\;\Theta_{ab}^{k},\; \sh_b^k \text{ or }\del_b^{k+1}$$ 
we avoid writing the indices that denote the space from which the input of the operator comes from when it is clearly implied by the context. For example, instead of writing $\Mp_{k_b+1}^{\scriptscriptstyle (k_a,k_b+1)}\circ\shp_b^{\scriptscriptstyle (k_a,k_b)}$ we may write $\Mp\circ\shp_b^{\scriptscriptstyle (k_a,k_b)}$, since no other $\Mp$ is valid.
\end{rmk*}

Theorem 26 \cite{dieker2018spectral}, specialized to partitions with two parts $(k_a,k_b),$ says the following.

\begin{alsotheorem}
\label{thm:DS}
Let $K^{(m_a,m_b)}\subseteq S^{(m_a,m_b)}\hookrightarrow M^{(m_a,m_b)}$ be the kernel of $\RTR^{(m_a,m_b)}.$
Then when {$k_b\neq 0,$} for each $0\leq i\leq k_a-k_b,~0\leq j\leq k_b-1,$
\[\shp_b^j(\sh_a^i(K^{(k_a-i,k_b-j)}))\] is an eigenspace for $\RTR^{(k_a,k_b)},$ of dimension $\binom{k_a+k_b-i-j-2}{k_a-i-1}-\binom{k_a+k_b-i-j-2}{k_a-i}.$ Moreover, $S^{(k_a,k_b)}$ decomposes as a direct sum of these subspaces. 
When $k_b=0$, the only eigenspace of $\RTR^{(k_a)}$ is the one-dimensional space of vectors with constant entries, which can be regarded as $\sh_a^{k_a}(K^{(0,0)}).$
\end{alsotheorem}

\begin{rmk*}
In~\cite{dieker2018spectral}, the summands are indexed by partitions $(m_a,m_b)=(k_a-i,k_b-j)$ such that the relative Young diagram $(k_a,k_b)/(m_a,m_b)$ is a \emph{horizontal strip} and the dimension of the corresponding space is the number of \emph{desarrangement tableaux} for the diagram $(i,j)$ \cite[\S3.1 for definitions]{dieker2018spectral}.  
The condition that $(m_a,m_b)$ is a horizontal strip bounds $i$ by $k_a-k_b.$ Supposing that the dimension of the corresponding space is nonzero is equivalent to requiring that $j\neq k_b,$ whenever $k_b>0$ or that $i=k_a$ when $k_b=0,$ as we shall now elaborate; moreover, the dimension of the eigenspace is exactly as stated in the theorem above. 
The dimension when $k_b=0$ is $1,$ while for $k_b>0,$ for standard Young tableaux with two rows, the desarrangement condition amounts to requiring that the $(2,1)$-box is filled with $2,$ and a simple calculation shows that the number of such standard tableaux for a fixed Young diagram $(m_a,m_b)$ is precisely $\binom{m_a+m_b-2}{m_a-1}-\binom{m_a+m_b-1}{m_a}$.
In fact, $\sh_a$ takes the $(i,j)$ eigenvalue for $(k_a,k_b)$ to the $(i+1,j)-$th one for $(k_a+1,k_b).$ $\shp_b$ takes the $(i,j)$ eigenvalue for $(k_a,k_b)$ to the $(i,j+1)-$th one for $(k_a,k_b+1),$ if $j<k_b,$ and to $0$ otherwise.
\end{rmk*}

\begin{prop}\label{prop:diagonalizing_M_prime}
The eigenspaces for $\Mp_{k_b}^{(k_a,k_b)}$ are 
\[\shp_b^j\;\sh_a^i\;K^{(k_a-i,k_b-j)}\] 
for each $0\leq i\leq k_a-k_b,~0\leq j\leq k_b-1$, or for $(i,j)=(k_a,0)$ if $k_b=0$. \\
The eigenvalue for the $(i,j)$ eigenspace is
\[\frac{(2k_a+k_b)!(k_a-k_b+1)!}{i!(2k_a+k_b-i-j)!(k_a-k_b+1+j)!}.\] 
The dimension of the $(i,j)$ eigenspace is \[\binom{k_a+k_b-i-j-2}{k_a-i-1}-\binom{k_a+k_b-i-j-2}{k_a-i}\]
unless $k_b=0$ and $(i,j)=(k_a,0)$ where the dimension is~1.
\end{prop}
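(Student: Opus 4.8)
My plan is to diagonalize $\Mp_{k_b}^{(k_a,k_b)}$ by showing it is, up to an explicit change of the generating operators, a polynomial in the random-to-random operator $\RTR^{(k_a,k_b)}$ whose eigenspaces are already described by Theorem~\ref{thm:DS}. The key point is the identity~\eqref{eq:Mp_2_letters}, which writes $\Mp_{k_b}^{(k_a,k_b)} = \proj\circ\bigl(\sum_{m=0}^{k_a}\tfrac{1}{(m!)^2}\sh_a^m\del_a^m\bigr)\circ\proj$. I would like to recognize the inner operator $\sum_{m}\tfrac{1}{(m!)^2}\sh_a^m\del_a^m$, once restricted and projected to the Specht module $S^{(k_a,k_b)}$, as a monic-normalized polynomial in $\RTR^{(k_a,k_b)}$. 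The mechanism mirrors Corollary~\ref{cor:A_B_same_eig_spaces} and Lemma~\ref{lem:R_m_in terms of B} in the one-sample case: using the commutation relation~\eqref{eq:del1msh1}, namely $\del_a^m\sh_a = \sh_a\del_a^m + m(2k_a+k_b+2-m)\del_a^{m-1}$, one can repeatedly convert $\sh_a^m\del_a^m$ into a polynomial of degree $m$ in $B_a := \sh_a\del_a$, and then $\proj\circ(\sh_a\del_a+\sh_b\del_b)\circ\proj = \RTR^{(k_a,k_b)}$ supplies the ``$\sh_b\del_b$'' correction term, which acts on the Specht module by a scalar on each eigenspace. So the first step is to carry out this reduction carefully and show that $\proj\circ\Mp_{k_b}^{(k_a,k_b)}\circ\proj$ is a polynomial $p(\RTR^{(k_a,k_b)})$ with $\deg p = k_a$.

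The second step is to read off the eigenvalues. By Theorem~\ref{thm:DS}, on the $(i,j)$ eigenspace $\shp_b^j\sh_a^i K^{(k_a-i,k_b-j)}$ the operator $\RTR^{(k_a,k_b)}$ acts by a known scalar (the eigenvalue of the random-to-random operator for two-row partitions, which \cite{dieker2018spectral} computes explicitly — it is $2k_a+k_b-i-j$ plus lower-order adjustments coming from the horizontal-strip structure, and I would track its precise value from there or recompute it via the recursion that $\sh_a$ shifts $(i,j)\mapsto(i+1,j)$ and $\shp_b$ shifts $(i,j)\mapsto(i,j+1)$). Substituting this scalar into the polynomial $p$ gives the claimed eigenvalue $\tfrac{(2k_a+k_b)!(k_a-k_b+1)!}{i!\,(2k_a+k_b-i-j)!\,(k_a-k_b+1+j)!}$. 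Here I expect the same Vandermonde-identity collapse as in the proof of Proposition~\ref{spectral}: the sum $\sum_{m}\binom{\cdots}{\cdots}\binom{\cdots}{m}$ telescopes to a single binomial coefficient, which after rewriting in factorials yields the stated closed form. One then checks that these $k_a$-or-so eigenvalues are pairwise distinct across the admissible $(i,j)$, so the eigenspaces are genuinely the $\shp_b^j\sh_a^i K^{(k_a-i,k_b-j)}$ with no accidental merging; the dimension formula $\binom{k_a+k_b-i-j-2}{k_a-i-1}-\binom{k_a+k_b-i-j-2}{k_a-i}$ and the $k_b=0$ exceptional case are then inherited verbatim from Theorem~\ref{thm:DS}.

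The main obstacle I anticipate is the first step: verifying that $\proj\circ\sum_m\tfrac{1}{(m!)^2}\sh_a^m\del_a^m\circ\proj$ really does lie in the polynomial algebra generated by $\RTR^{(k_a,k_b)}$ on $S^{(k_a,k_b)}$. The subtlety is that $\sh_a\del_a$ alone is \emph{not} the same as $\RTR$ — it differs by $\sh_b\del_b$ — and the two pieces do not individually preserve the Specht module before projecting, so one must either commute $\proj$ past the relevant operators (using that $\RTR$ is the action of a central-enough element of $\R S_k$ and Lemma~\ref{lem:props_of_basic_ops}) or argue directly that the projected operator is diagonalized by the $\RTR$-eigenspaces. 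A cleaner route, which I would pursue in parallel, is to bypass the polynomial-in-$\RTR$ claim and instead show directly that each candidate space $\shp_b^j\sh_a^i K^{(k_a-i,k_b-j)}$ is an eigenspace of $\Mp_{k_b}^{(k_a,k_b)}$: use Proposition~\ref{prop:ops_from_Sal} and the commutation relations~\eqref{eq:del1msh2}, \eqref{eq:del1msh1}, \eqref{eq:sh1sh'2_comm} to push $\del_a^m$ through $\shp_b^j\sh_a^i$ down to $K^{(k_a-i,k_b-j)}$, where $\del_a^{k_a-i+1}$ annihilates (since $K^{(k_a-i,k_b-j)}\subseteq S^{(k_a-i,k_b-j)}$ is killed by enough $\del_a$'s), so only finitely many terms survive and collapse to a scalar multiple of the original vector. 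Either way, once the eigenspace structure is pinned down, the eigenvalue computation is the routine Vandermonde manipulation described above, and the proof concludes by invoking Theorem~\ref{thm:DS} for the decomposition and dimensions.
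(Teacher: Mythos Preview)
Your second approach is closer to what the paper actually does, and your first approach has a real gap. The paper's proof splits into two lemmas: Lemma~\ref{lem:0j}, which handles the $j=0$ spaces exactly as you describe---push $\del_a^m$ through $\sh_a^i$ using~\eqref{eq:del1msh1}, use that $K^{(k_a-i,k_b)}=\ker\del_a|_{S^{(k_a-i,k_b)}}$ so $\del_a v=0$, and collapse the surviving sum by Vandermonde to $\binom{2k_a+k_b}{i}$---and Lemma~\ref{lem:ij}, which establishes the single intertwining relation
\[
\Mp\circ\shp_b \;=\; \frac{2k_a+k_b+1}{k_a-k_b+1}\,\shp_b\circ\Mp.
\]
This is the organizing insight you are missing. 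Rather than commuting $\del_a^m$ past $\shp_b^j$ for general $j$ (which via~\eqref{eq:del1msh2} and the definition of $\shp_b$ spawns $\Theta_{ab}$ terms that leave the Specht module and are not obviously a scalar multiple of anything), the paper proves this relation once and then inducts on $j$. The proof of Lemma~\ref{lem:ij} \emph{is} essentially the messy commutation you anticipate, but packaged so that the $\Theta_{ab}$ contributions are all collected and then killed by the final $\proj$ (since $\mathrm{Im}\,\Theta_{ab}^{(k_a+1,k_b)}\subseteq\ker\proj^{(k_a,k_b+1)}$), together with the nontrivial check that $\proj\circ\sh_b=\proj\circ\sh_b\circ\proj$ on $M^{(k_a,k_b)}$. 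Your outline does not flag this relation or the need to show the $\Theta_{ab}$ debris vanishes after projection, and without that your ``collapse to a scalar multiple of the original vector'' is an assertion, not an argument.

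Your first approach---writing $\Mp$ as a polynomial in $\RTR$---is the route the paper explicitly \emph{contrasts} itself against: it remarks that while $\RTR$ has scalar commutators with $\sh_a,\shp_b$, ``our operators $\Mp$ and their commutation relations with $\sh_a$ and $\shp_b$ will be substantially more complicated.'' Concretely, the obstacle you name is fatal as stated: $\sh_a\del_a$ does not preserve $S^{(k_a,k_b)}$ before projecting, $\proj$ does not commute with it, and you have no mechanism to convert powers of $\sh_a\del_a$ into powers of $\RTR=\sh_a\del_a+\sh_b\del_b$ on the Specht module. Even granting that $\Mp$ and $\RTR$ share eigenspaces a posteriori, you would still need $\RTR$ to have simple spectrum on $S^{(k_a,k_b)}$ to write one as a polynomial in the other, and you never address this. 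Drop this route and focus on the direct one, supplying the $\Mp$--$\shp_b$ intertwiner as the inductive step.
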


In \cite{dieker2018spectral} the commutators between the operators $\RTR$ and $\sh_a,\shp_b$ are scalar operators. This fact, combined with the summation of the dimensions of the subspaces and the characterization of the kernel of $\RTR$ obtained in \cite{reiner2014spectra}, yields the decomposition of Theorem~\ref{thm:DS}.
Here we will use the existence of this decomposition, but our operators $\Mp$ and their commutation relations with $\sh_a$ and $\shp_b$ will be substantially more complicated.
Still, we will be able to characterize the { $(i,0),~i\leq k_a-k_b$} eigenspaces, as well as a tricky relation involving $\shp_b$ and $\Mp,$ in order to obtain the proposition.
\begin{proof}
The proof is an immediate consequence of Theorem~\ref{thm:DS}, and the following two lemmas.
\begin{lemma}\label{lem:0j}
The spaces $\sh_a^i(K^{(k_a-i,k_b)}),~0\leq i\leq k_a-k_b$ are eigenspaces of $\Mp^{(k_a,k_b)}$ for the eigenvalues $\binom{2k_a+k_b}{i}$ respectively.
\end{lemma}
\begin{lemma}\label{lem:ij}
\begin{equation}\label{eq:shp2_relation}\Mp\circ\shp_b^{(k_a,k_b)}\;=\; \frac{2k_a+k_b+1}{k_a-k_b+1}\;\shp_b\circ\Mp^{(k_a,k_b)}
\end{equation}
\end{lemma}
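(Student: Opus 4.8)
The statement to prove, Lemma~\ref{lem:ij}, is the intertwining relation
$$ \Mp\circ\shp_b^{(k_a,k_b)} \;=\; \frac{2k_a+k_b+1}{k_a-k_b+1}\;\shp_b\circ\Mp^{(k_a,k_b)}. $$
Recalling from~\eqref{eq:Mp_2_letters} that $\Mp^{(k_a,k_b)} = \proj^{(k_a,k_b)}\circ\sum_{m\geq 0}\frac{1}{(m!)^2}\sh_a^m\del_a^m\circ\proj^{(k_a,k_b)}$, the strategy is to push $\shp_b$ past the operator $\sum_m\frac{1}{(m!)^2}\sh_a^m\del_a^m$ term by term, keeping careful track of the scalar corrections produced by the commutation relations, and then to absorb the projections using Proposition~\ref{prop:ops_from_Sal}. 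Since $\shp_b$ maps $S^{(k_a,k_b)}$ into $S^{(k_a,k_b+1)}$ and equals $\proj^{(k_a,k_b+1)}\circ\sh_b^{(k_a,k_b)}$, and since $\RTR$-type operators (being given by right multiplication by group-algebra elements, cf.~\cite[Def.~33, Prop.~35]{dieker2018spectral}) commute with the isotypic projections, it suffices to verify the identity on the Specht module $S^{(k_a,k_b)}\hookrightarrow M^{(k_a,k_b)}$ and there replace $\shp_b$ by $\sh_b$ modulo terms landing in $(S^{(k_a,k_b+1)})^\perp$, which $\proj$ kills.

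\textbf{Key steps, in order.} First I would record the relevant commutators on $S^{(k_a,k_b)}$: from Lemma~\ref{lem:props_of_basic_ops}(\ref{it:comm_del^msh}), the identity~\eqref{eq:del1msh2}, $\del_a^m\sh_b = \sh_b\del_a^m + m\,\Theta_{ab}\del_a^{m-1}$, so that moving $\shp_b = \sh_b + \tfrac{1}{k_b-k_a-1}\Theta_{ab}\sh_a$ through $\del_a^m$ produces a main term $\sh_b\del_a^m$ plus correction terms involving $\Theta_{ab}$, $\del_a^{m-1}$, and $\sh_a$. Dually, moving the resulting $\sh_b$ (and the correction pieces) back through $\sh_a^m$ uses~\eqref{eq:comm_theta_sh} and the commutativity of distinct shuffle operators. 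Second, I would apply these term by term to $\sum_m \frac{1}{(m!)^2}\sh_a^m\del_a^m$ acting on $v\in S^{(k_a,k_b)}$, collecting the output as $\shp_b$ applied to a modified sum plus an ``error'' that I claim lies, after applying $\proj^{(k_a,k_b+1)}$ on the left, in the wrong isotypic components or cancels. Third, I would use Lemma~\ref{lem:eigen_of_THETA} to evaluate the scalar action of the $\Theta_{ab}\Theta_{ba}$-type compositions that appear, on the copy $S^{(k_a,k_b)}\hookrightarrow M^{(k_a-i,k_b+i)}$, and the relation~\eqref{eq:sh1sh'2_comm}, $\sh_a\shp_b = \tfrac{k_a+2-k_b}{k_a+1-k_b}\shp_b\sh_a$, to reorganize mixed $\sh_a$/$\shp_b$ strings; this is where the combinatorial factor $\tfrac{2k_a+k_b+1}{k_a-k_b+1}$ should emerge, as a ratio of the shifted parameters $2k_a+k_b$ versus $2k_a+(k_b+1)$ entering the $\sh_a^m\del_a^m$ sum on $M^{(k_a,k_b)}$ versus $M^{(k_a,k_b+1)}$, combined with the $\shp_b$-conjugation factor. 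Fourth, I would verify the base and degenerate cases ($m=0$ trivially, and the behavior when $k_b$ is small or $i$ is at the extreme of its range) to make sure no denominator vanishes in the relevant range $0\leq i\leq k_a-k_b$, $0\leq j\leq k_b-1$.

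\textbf{Main obstacle.} The delicate point is controlling the correction terms: commuting $\shp_b$ through $\sh_a^m\del_a^m$ does not produce a clean scalar multiple of $\shp_b\sh_a^m\del_a^m$ on the nose — it produces extra terms with one fewer $\del_a$ and an extra $\Theta_{ab}$ (or $\Theta_{ba}$ after pairing with the outer $\Theta_{ab}$ in $\shp_b$). I expect these to telescope: the corrections from the $m$-th summand should combine with the corrections from the $(m\pm 1)$-th summand, using the identity $\del_a^m\sh_a^{(k_a,k_b)} = \sh_a\del_a^m + m(2k_a+k_b+2-m)\del_a^{m-1}$ from~\eqref{eq:del1msh1} to match coefficients, and the residual non-telescoping piece should be annihilated by the left projection $\proj^{(k_a,k_b+1)}$ because it lands in $\bigoplus_{j>k_a}S^{(j,k+1-j)}$ by Lemma~\ref{lem:props_of_basic_ops}(\ref{it:shuffle})--(\ref{it:Theta}). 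Making this cancellation rigorous — rather than merely plausible — is the real work; a clean way to organize it is to first prove the commutation relation $\del_a^{(k+1)}\circ\shp_b^{(k_a,k_b)} = \shp_b\circ\del_a^{(k)} + (\text{scalar})\cdot\del_a$ on $S^{(k_a,k_b)}$ (an analogue of~\eqref{eq:del1msh2} adapted to $\shp_b$, obtained by combining~\eqref{eq:del1msh2} and~\eqref{eq:del1msh1} with the scalars from Lemma~\ref{lem:eigen_of_THETA}), and then iterate it, so that the whole computation reduces to manipulating scalars. Once that auxiliary commutation relation is in hand, the rest is bookkeeping with binomial/factorial identities of the type already used in the proof of Proposition~\ref{spectral}.
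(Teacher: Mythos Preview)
Your strategy is essentially the paper's: expand $\shp_b=\sh_b+\tfrac{1}{k_b-k_a-1}\Theta_{ab}\sh_a$, commute it through $\sum_m\tfrac{1}{(m!)^2}\sh_a^m\del_a^m$ using Lemma~\ref{lem:props_of_basic_ops}, and kill the $\Theta_{ab}$--type corrections with the left projection $\proj^{(k_a,k_b+1)}$ (since $\mathrm{Im}\,\Theta_{ab}\subseteq\ker\proj$). So the plan is sound. Two concrete corrections to your sketch:

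\emph{The auxiliary relation as you state it is false.} A direct computation from \eqref{eq:comm_del_sh}, \eqref{eq:comm_theta_sh}, \eqref{eq:del1msh1} gives on $M^{(k_a,k_b)}$
\[
\del_a\,\shp_b^{(k_a,k_b)}-\shp_b^{(k_a-1,k_b)}\,\del_a
\;=\;\tfrac{-1}{(k_b-k_a-1)(k_b-k_a)}\,\Theta_{ab}\sh_a\del_a\;+\;\tfrac{k_a+2k_b}{k_b-k_a-1}\,\Theta_{ab},
\]
which is a $\Theta_{ab}$--term, not $(\text{scalar})\cdot\del_a$. These errors are indeed annihilated by $\proj^{(k_a,k_b+1)}$ on the left, but only \emph{after} they have been pushed back through the outer $\sh_a^m$'s, where \eqref{eq:comm_theta_sh} produces further $\sh_b$--terms that do \emph{not} die under $\proj$. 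So your ``iterate a clean commutator'' shortcut does not work; the paper instead computes $\sh_a^m\del_a^m\sh_b$ and $\sh_a^m\del_a^m\Theta_{ab}\sh_a$ separately (these are \eqref{eq:1_for_ij}--\eqref{eq:2_for_ij}), sums over $m$, and only then applies $\proj$ to drop the $\Theta_{ab}$--columns. The telescoping you anticipate happens at that stage, not earlier.

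\emph{Two ingredients you did not list are needed; two you listed are not.} You will not use Lemma~\ref{lem:eigen_of_THETA} or \eqref{eq:sh1sh'2_comm} here. What you do need: (i) after the simplification a residual boundary term $\tfrac{2k_b}{2k_a+k_b+1}\cdot\tfrac{\sh_a^{k_a}\del_a^{k_a}}{(k_a!)^2}$ remains, and one must argue $\del_a^{k_a}v=0$ for $v\in S^{(k_a,k_b)}$ with $k_b>0$ (it lands in the one--dimensional $M^{(0,k_b)}$ and $v\perp S^{(k)}$); (ii) the identity $\proj\circ\sh_b=\proj\circ\sh_b\circ\proj$ on $M^{(k_a,k_b)}$, which is not automatic since $\sh_b$ is not an $S_k$--module map --- the paper deduces it from Lemma~\ref{lem:props_of_basic_ops}(\ref{it:shuffle}) applied to each $S^{(i,k-i)}$ with $i>k_a$.
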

Indeed, Lemma \ref{lem:0j} shows that the $(i,0)$ eigenspaces in Proposition \ref{prop:diagonalizing_M_prime} are correct, with the correct eigenvalues. 
The application of $\shp_b$ has the effect of increasing $k_b$ to $k_b+1,$ and increasing $j$ to $j+1.$
Lemma \ref{lem:ij} then shows that $\Mp^{(k_a,k_b+1)}_{k_b+1}$ has an eigenspace, naturally indexed by $(i,j+1)$ obtained by applying $\shp_b$ to the $(i.j)$ eigenspace of $\Mp^{(k_a,k_b)}_{k_b},$ and the eigenvalue corresponding to the former space is
$\frac{2k_a+k_b+1}{k_a-k_b+1}$ times the eigenvalue for the later eigenspace. 
Assuming, inductively, that the proposition holds for smaller $k=k_a+k_b,$ then the
$(i,j+1)$ eigenvalue of $\Mp^{(k_a,k_b+1)}_{k_b+1}$ equals
\[\tfrac{2k_a+k_b+1}{k_a-k_b+1}\,\cdot\,\tfrac{(2k_a+k_b)!(k_a-k_b+1)!}{i!(2k_a+k_b-i-j)!(k_a-k_b+1+j)!}
\;=\;\tfrac{(2k_a+k_b+1)!(k_a-k_b)!}{i!(2k_a+k_b-i-j)!(k_a-k_b+1+j)!}\] as claimed.
Since by Theorem~\ref{thm:DS} \[S^{(k_a,k_b)}=\bigoplus_{\substack{0\leq i\leq k_a-k_b\\
0\leq j\leq k_b-1}}\shp_b^j(\sh_a^i(K^{(k_a-i,k_b-j)})),\]
this gives a complete decomposition.

\begin{proof}[Proof of Lemma \ref{lem:0j}]
We first observe that the kernel of the operator $\RTR^{(m_a,m_b)}$ is exactly the kernel of $\del_a^{(m_a,m_b)}$ restricted to $S^{(m_a,m_b)}$.
Indeed, for $v\in S^{(m_a,m_b)}$
\[v\in \ker(\RTR^{(m_a,m_b)})\;\Leftrightarrow\; \langle v,\RTR(v)\rangle=0 \;\Leftrightarrow\;
\langle v,(\sh_a\del_a+\sh_b\del_b)(v)\rangle=0
\;\Leftrightarrow\; \]
\[\;\Leftrightarrow\;\langle \del_a v,\del_a v\rangle+\langle \del_b v,\del_b v\rangle=0
\;\Leftrightarrow\;
\langle \del_a v,\del_a v\rangle=0,~\langle \del_b v,\del_b v\rangle=0\]
where $\langle-,-\rangle$ is the canonical positive definite bilinear pairing and we have used the duality of $\sh$ and $\del$, and the fact that $\langle u,u\rangle\geq 0.$
Thus, for $v$ as above, if $\RTR(v)=0$ then also $\del_a(v)=0.$ For the opposite direction, assume $\del_a(v)=0.$ Note that from Lemma~\ref{lem:props_of_basic_ops}(\ref{it:comms})
\[\del_b=\del_a\circ \Theta_{ba}-\Theta_{ba}\circ\del_a.\] Now, from Lemma \ref{lem:props_of_basic_ops}(\ref{it:Theta}),(\ref{it:decomp_of_M_to_Spechts}), for $v\in S^{(k_a,k_b)},~\Theta_{ba}(v)=0.$ Thus, \[\del_b(v)=\Theta_{ba}(\del_a(v))=0.\]

Returning to the proof, suppose that $v\in S^{(k_a-i,k_b)}$ satisfies $\del_a(v)=0.$ We want to show that $\Mp^{(k_a,k_b)}(\sh_a^i(v))=\binom{2k_a+k_b}{i}\sh_a^i(v).$
From Proposition \ref{prop:ops_from_Sal}, $\sh_a^i(v)\in S^{{(k_a,k_b)}}$. Hence
\begin{equation}
\label{eq:M_k2,0_of_shuff}
\Mp^{(k_a,k_b)}\, \sh_a^i \,v = \proj\, \A_{k,k_b} \,\proj\, \sh_a^i \,v = \proj\,  \A_{k,k_b} \,\sh_a^i\, v = \proj \sum_{m=0}^{k_a}\frac{\sh_a^m\del_a^m}{(m!)^2}\sh_a^i \,v
\end{equation}
\begin{obs}
\label{obs:delm_shi}
For $v\in \ker\del_a$, if $m>i$ then $\del_a^m\sh_a^i(v)$ vanishes, and otherwise it equals
$$ \frac{(2k_a+k_b-i)!}{(2k_a+k_b-m-i)!}\;\frac{i!}{(i-m)!}\;\sh_a^{i-m}\,v$$
\end{obs}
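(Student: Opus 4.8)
\emph{Proof plan for Observation~\ref{obs:delm_shi}.}
The plan is to prove the identity by induction on $i$, peeling off one shuffle operator at a time and applying the commutation relation \eqref{eq:del1msh1} from Lemma~\ref{lem:props_of_basic_ops}(\ref{it:comm_del^msh}). The base case $i=0$ is immediate: since $v\in\ker\del_a$ we have $\del_a^m v = 0$ for $m\geq 1$, while $\del_a^0 v = v$ agrees with the claimed expression because all the factorial prefactors equal $1$ when $i=0$.

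For the inductive step, write $\sh_a^i v = \sh_a(\sh_a^{i-1}v)$, where $\sh_a^{i-1}v$ has composition $(k_a-1,k_b)$, and apply \eqref{eq:del1msh1} with $l=m$ at the ambient composition $(k_a-1,k_b)$ to get
\[
\del_a^m \sh_a^i v \;=\; \sh_a\,\del_a^m\,\sh_a^{i-1}v \;+\; m\,(2k_a+k_b-m)\,\del_a^{m-1}\,\sh_a^{i-1}v .
\]
By the induction hypothesis, each of $\del_a^m\sh_a^{i-1}v$ and $\del_a^{m-1}\sh_a^{i-1}v$ is an explicit scalar multiple of $\sh_a^{i-m}v$, or vanishes once the exponent of $\del_a$ exceeds $i-1$. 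In particular, when $m>i$ both deletion exponents on the right already exceed $i-1$, so the whole expression vanishes, as claimed.

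When $m\leq i$, substituting the two inductive values and collecting the coefficient of $\sh_a^{i-m}v$ reduces the claim to the elementary polynomial identity
\[
(T-m+1)(i-m) + m\,(2k_a+k_b-m) \;=\; (T+1)\,i, \qquad T := 2k_a+k_b-i-1,
\]
which follows by expanding both sides after substituting $2k_a+k_b-m=(T-m+1)+i$. The boundary values $m=0$ (where the second term drops and the first is just $\sh_a^i v$) and $m=i$ (where the first term drops by the induction hypothesis) are checked directly and are quick.

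The only real care needed is bookkeeping: one must track which module $M^{(\,\cdot\,,k_b)}$ each occurrence of $\sh_a$ and $\del_a$ acts on, so that the constant in \eqref{eq:del1msh1} is evaluated at the correct composition, and one must treat the two boundary cases of the induction separately. No hypothesis beyond $\del_a v = 0$ enters the argument; in particular $v$ need not lie in a Specht module.
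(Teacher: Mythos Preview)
Your proof is correct and follows essentially the same idea as the paper's: both arguments reduce the claim to an induction built on the $\del_a$/$\sh_a$ commutation relation, and both track how the ambient composition shifts at each step. The only organizational difference is the direction of peeling: the paper commutes a single $\del_a$ past all $i$ shuffles (via \eqref{eq:comm_del_sh}) to obtain the recursion $c(k_a,k_b,m,i)=i(2k_a+k_b-i)\,c(k_a-1,k_b,m-1,i-1)$, which telescopes directly to the product; you instead commute the whole block $\del_a^m$ past one $\sh_a$ using the already-packaged identity \eqref{eq:del1msh1} and induct on $i$, at the cost of verifying the small polynomial identity $(T-m+1)(i-m)+m(2k_a+k_b-m)=(T+1)i$. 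Either way the bookkeeping is the same, and your remark that no Specht-module hypothesis on $v$ is needed is accurate.
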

The proof of Observation \ref{obs:delm_shi} appears in Appendix \ref{pf:delm_shi}. Using Observation \ref{obs:delm_shi} in \eqref{eq:M_k2,0_of_shuff}, we obtain that 
\[\A_{k,k_b}\sh_a^i \;=\; \sum_{m=0}^i\binom{2k_a+k_b-i}{m}\binom{i}{i-m}\,\sh_a^i\;=\;\binom{2k_a+k_b}{i}\,\sh_a^i\]
where the last equality is the Vandermonde identity. Since $\proj\circ\sh_a^i(v)=\sh_a^i(v),$ as we saw right before \eqref{eq:M_k2,0_of_shuff}, the lemma follows.
\end{proof}
\begin{proof}[Proof of Lemma \ref{lem:ij}]
Using Proposition \ref{prop:ops_from_Sal}, $\shp_b=\proj\circ\shp_b,$
\begin{equation}
\label{eq:comm_MP_SHP}
\begin{aligned}
\Mp\circ\shp_b^{(k_a,k_b)}&=\proj\circ\A_{k+1,k_b+1}\circ\proj\circ\shp_b=\proj\circ\A_{k+1,k_b+1}\circ\shp_b
\\
&=\proj\circ\A_{k+1,k_b+1}\circ(\sh_b+\frac{1}{k_b-k_a-1}\Theta_{ab}\sh_a)\\
&=
\proj\left(\sum_{m=0}^{k_a}\frac{\sh_a^m\del_a^m}{(m!)^2}\right)(\sh_b+\frac{1}{k_b-k_a-1}\Theta_{ab}\sh_a).\end{aligned}
\end{equation}
We now show
\begin{equation}\label{eq:1_for_ij}
\sh_a^m\del_a^m\sh_b=\sh_b\sh_a^m\del_a^m+m\Theta_{ab}\sh_a^m\del_a^{m-1}-m^2\sh_b\sh_a^{m-1}\del_a^{m-1}
\end{equation}
and
\begin{align}\label{eq:2_for_ij}
&\sh_a^m\del_a^m\Theta_{ab}\sh_a=\Theta_{ab}\sh_a\left(\sh_a^m\del_a^m+m(2k_a+k_b+2-m)\sh_a^{m-1}\del_a^{m-1}\right)-\\\notag&\quad\quad
-m\sh_b\left(\sh_a^m\del_a^m+m(2k_a+k_b+2-m)\sh_a^{m-1}\del_a^{m-1}\right).
\end{align}
For \eqref{eq:1_for_ij}, first observe that, using \eqref{eq:del1msh2}
\[\del_a^m\sh_b=\sh_b\del_a^m+m\Theta_{ab}\del_a^{m-1}\] 
Using \eqref{eq:comm_theta_sh} we have
\begin{equation}
\label{eq:coom_sh^m_theta}
\sh_a^m\Theta_{ab}=\Theta_{ab}\sh_a^m-m\sh_b\sh_a^{m-1}
\end{equation}
\eqref{eq:1_for_ij} is a direct consequence of these two equations:
\[\sh_a^m\del_a^m\sh_b=\sh_a^m(\sh_b\del_a^m+m\Theta_{ab}\del_a^{m-1})=\sh_b\sh_a^m\del_a^m+m(\Theta_{ab}\sh_a^m-m\sh_b\sh_a^{m-1})\del_a^{m-1}.\]
Similarly, using \eqref{eq:del1msh1}, 
\[\del_a^m\Theta_{ab}\sh_a=\Theta_{ab}\del_a^m\sh_a=\Theta_{ab}(\sh_a\del_a^m+m(2k_a+k_b+2-m)\del_a^{m-1}).\]
This, together with \eqref{eq:coom_sh^m_theta} again gives \eqref{eq:2_for_ij}:

\begin{align*}
\sh_a^m\del_a^m\Theta_{ab}\sh_a \;=\; &\sh_a^m\Theta_{ab}(\sh_a\del_a^m+m(2k_a+k_b+2-m)\del_a^{m-1})\\
\;=\;&
\Theta_{ab}\sh_a\left(\sh_a^m\del_a^m+m(2k_a+k_b+2-m)\sh_a^{m-1}\del_a^{m-1}\right)\\
&-m\sh_b\left(\sh_a^m\del_a^m+m(2k_a+k_b+2-m)\sh_a^{m-1}\del_a^{m-1}\right).\end{align*}

Using \eqref{eq:1_for_ij},~\eqref{eq:2_for_ij} we have
\begin{equation}\label{eq:1'_for_ij}
\A_{k+1,k_b+1}\sh_b=\sh_b\A_{k,k_b}+ \Theta_{ab}\sh_a\sum_{m=1}^{k_a}\frac{\sh_a^{m-1}\del_a^{m-1}}{m!(m-1)!}-\sh_b\sum_{m=0}^{k_a-1}\frac{\sh_a^{m}\del_a^{m}}{(m!)^2}.
\end{equation}
and
\begin{align}\label{eq:2'_for_ij}
&\A_{k+1,k_b+1}\Theta_{ab}\sh_a=\Theta_{ab}\sh_a\left(\A_{k,k_b}+\sum_{m=1}^{k_a}\frac{(2k_a+k_b+2-m)\sh_a^{m-1}\del_a^{m-1}}{m!(m-1)!}\right)\\
&\notag\quad\quad-\sh_b\left(\sum_{m=1}^{k_a}\frac{\sh_a^m\del_a^m}{m!(m-1)!}+\sum_{m=1}^{k_a}\frac{(2k_a+k_b+2-m)\sh_a^{m-1}\del_a^{m-1}}{((m-1)!)^2}\right).
\end{align}
From Lemma \ref{lem:props_of_basic_ops}(\ref{it:Theta}) the image of $\Theta_{ab}^{(k_a,k_b)}$ is in the kernel of $\proj^{(k_a,k_b+1)}.$ Using this fact together with \eqref{eq:comm_MP_SHP}, and summing \eqref{eq:1'_for_ij} and \eqref{eq:2'_for_ij}, we obtain
\[\proj\circ\A_{k+1,k_b+1}\circ\shp_b=\]\[
\proj\circ\sh_b\left(\A_{k,k_b}-\sum_{m=0}^{k_a-1}\tfrac{\sh_a^m\del_a^m}{(m!)^2}+\tfrac{1}{k_a+1-k_b}\left(
\sum_{m=1}^{k_a}\tfrac{\sh_a^m\del_a^m}{m!(m-1)!}+\sum_{m=0}^{k_a-1}\tfrac{(2k_a+k_b+1-m)\sh_a^m\del_a^m}{(m!)^2}
\right)\right).\]
Simplifying, we get using Lemma \ref{Akr_identities},\ref{Akr_identity} that \[
\proj\circ\A_{k+1,k_b+1}\circ\shp_b 
=\frac{2k_a+k_b+1}{k_a+1-k_b}\proj\circ\sh_b\circ\left(\A_{k,k_b}-\frac{2k_b}{2k_a+k_b+1}\frac{\sh_a^{k_a}\del_a^{k_a}}{(k_a!)^2}\right).
\]
We claim that
\[\frac{2k_b}{2k_a+k_b+1}\frac{\sh_a^{k_a}\del_a^{k_a}}{(k_a!)^2}=0.\] This is obvious when $k_b=0.$ When $k_b>0,~\del_a^{k_a}(v)=0$ for $v\in S^{(k_a,k_b)},$ since $\del_a^{k_a}(v)\in M^{(0,k_b)}$ is a vector proportional to the constant vector, and the proportionality constant is a multiple of the sum of coordinates of $v.$ This sum is $0$ since $v$ is orthogonal to $S^{(k_a+k_b,0)}\hookrightarrow M^{(k_a,k_b)},$ which is a nonzero constant vector (since we can get it by taking a word of $k_a+k_b$ $a$-s and applying $k_b$ times $\Theta_{ab}$).
Thus, 
\[\proj\circ\A_{k+1,k_b+1}\circ\shp_b =
\frac{2k_a+k_b+1}{k_a+1-k_b}\proj\circ\sh_b\circ\A_{k,k_b}.\]
We also must show that
\[\proj\circ\sh_b=\proj\circ\sh_b\circ\proj.\]
The domain of both maps is \[M^{(k_a,k_b)}\simeq\bigoplus_{i\geq k_a} S^{(i,k-i)}.\] Now, \[\proj\circ\sh_b\circ\proj(M^{(k_a,k_b)})=
\proj(\sh_b(S^{(k_a,k_b)}))\subseteq S^{(k_a,k_b+1)}\hookrightarrow M^{(k_a,k_b+1)},\]
by Proposition \ref{prop:ops_from_Sal}. We have to show that $\proj\circ\sh_b$ restricts to $0$ on $\bigoplus_{i> k_a} S^{i,k-i}\subset M^{(k_a,k_b)},$ or, in other words, that $\sh_b$ maps $S^{(i,k-i)},$ to $\bigoplus_{i> k_a} S^{i,k+1-i}\subset M^{(k_a,k_b+1)}$ for $i>k_a$. From Lemma \ref{lem:props_of_basic_ops}(\ref{it:Theta}) $S^{(i,k-i)}\hookrightarrow M^{(k_a,k_b)}$ is the image of $S^{(i,k-i)}\hookrightarrow M^{(i,k-i)}$ under $\Theta_{ab}^{i-k_1}.$ By Item \ref{it:comms} of the same lemma, $\sh_b\circ\Theta_{ab}^{i-k_1}=\Theta_{ab}^{i-k_1}\circ \sh_b.$ Using Lemma \ref{lem:props_of_basic_ops}(\ref{it:shuffle}) $\sh_b(S^{i,k-i})$ is contained in $S^{(i,k-i+1)}\oplus S^{(i+1,k-i)}\subseteq M^{(i,k-i+1)}.$ Since $\Theta_{ab}$ is a module morphism, $\Theta_{ab}^{i-k_1}\circ \sh_b(S^{(i,k-i)})\subseteq S^{(i,k-i+1)}\oplus S^{(i+1,k-i)}\hookrightarrow M^{(k_a,k_b+1)}.$ Thus, for $i>k_a,~\sh_b(S^{i,k-i})$ does not intersect $S^{(k_a,k_b+1)}\hookrightarrow M^{(k_a,k_b+1)}.$ The lemma follows.
\end{proof}
Proposition~\ref{prop:diagonalizing_M_prime} is now proven.
\end{proof}

\bigskip
\subsubsection{Spectral Decomposition of $\Mp_{(r_a,r_b)}^{(k_a,k_b)}$ \npt} ~

\smallskip
\noindent
In the previous section we found the spectral decomposition of $\Mp_{(0,k_b)}^{(k_a,k_b)}$, and in this section we will show how that decomposition can be used to decompose other matrices $\Mp_{(r_a,r_b)}^{(k_a,k_b)}$, by showing that every matrix of the latter type is proportional to a matrix of the former type, up to conjugation by $\Theta$ operations.

\begin{lemma}\label{lem:Thetas_relation}
On the Specht module $S^{(k_a,k_b)}\hookrightarrow M^{(k_a,k_b)}$ the following identity holds for any $i,j$
\begin{equation}\label{eq:Thetas_relation}
    \frac{\Theta_{bc}^{i+j}}{(i+j)!}\frac{\Theta_{ac}^{k_a-i-j}}{(k_a-i-j)!}\frac{\Theta_{ab}^{i}}{i!}=
    (-1)^{j}\binom{k_a-k_b}{i}\frac{\Theta_{ba}^{j}}{j!}\frac{\Theta_{ac}^{k_a}}{k_a!}=
    \binom{k_a-k_b}{i}\frac{\Theta_{bc}^{j}}{j!}\frac{\Theta_{ac}^{k_a-j}}{(k_a-j)!}.
\end{equation}
\end{lemma}
The proof of Lemma \ref{lem:Thetas_relation} appears in Appendix \ref{pf:Thetas_relation}.

Recall that by Definition \ref{NKR},\begin{equation}
\label{eq:Mp_2_dgimot_identity}
\Mp_{j,k_b-j}^{(k_a-i,k_b+i)}=
\proj\circ 
\frac{\Theta_{ca}^{k_a-i-j}}{(k_a-i-j)!}
\frac{\Theta_{cb}^{i+j}}{(i+j)!}
\A_{k,k_b}\frac{\Theta_{bc}^{i+j}}{(i+j)!}
\frac{\Theta_{ac}^{k_a-i-j}}{(k_a-i-j)!}
\circ\proj,
\end{equation}
where $\proj$ is the projection to $S^{(k_a,k_b)}$ and $\A_{k,k_b}=\sum_{m=0}^{k_a}\frac{\sh_c^m\del_c^m}{(m!)^2}.$ We routinely use the basic facts that the conjugate transpose of $\Theta_{xy}$ is $\Theta_{yx}$, that $\sha_x$ and $\del_x$ are conjugate transpose of each other, and that $\proj$ is conjugate transpose to itself, as it is an orthogonal projection.

\begin{prop}
\label{prop:Mprime_depends_only_on_diag} 
For any $i,j$ it holds that 
\begin{equation}\label{eq:compatibility_of_MP}\Mp_{j,k_b-j}^{(k_a-i,k_b+i)}
=
\left(\binom{k_a-k_b}{i}i!\right)^2
\binom{k_b}{j}
{\Theta^{-i}_{ba}}
\Mp_{0,k_b}^{(k_a,k_b)}
{\Theta_{ab}^{-i}}.
\end{equation}
\end{prop}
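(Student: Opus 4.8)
The plan is to reduce the explicit formula \eqref{eq:Mp_2_dgimot_identity} for $\Mp_{j,k_b-j}^{(k_a-i,k_b+i)}$ to its $i=j=0$ shape by peeling off $\Theta$-factors, using the $\Theta$-identity of Lemma~\ref{lem:Thetas_relation} together with the scalar computations of Lemma~\ref{lem:eigen_of_THETA}. Write $\proj_0$ for the projector of $M^{(k_a,k_b)}$ onto $S^{(k_a,k_b)}\hookrightarrow M^{(k_a,k_b)}$ and $\proj$ for the projector of $M^{(k_a-i,k_b+i)}$ onto its copy of $S^{(k_a,k_b)}$. By Lemma~\ref{lem:props_of_basic_ops}(\ref{it:Theta}) the operator $\Theta_{ab}^i$ restricts to an isomorphism from the first copy onto the second, and $\Theta_{ba}^i\Theta_{ab}^i=(i!)^2\binom{k_a-k_b}{i}\,\mathrm{Id}$ on the first copy by Lemma~\ref{lem:eigen_of_THETA}; this is precisely what identifies the symbols $\Theta^{-i}$ in the statement, so the asserted identity is equivalent to $\Mp_{j,k_b-j}^{(k_a-i,k_b+i)}=\tfrac{\binom{k_b}{j}}{(i!)^2}\,\Theta_{ab}^i\,\Mp_{0,k_b}^{(k_a,k_b)}\,\Theta_{ba}^i$.

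First I would remove $i$. Set $(\ast):=\frac{\Theta_{bc}^{i+j}}{(i+j)!}\frac{\Theta_{ac}^{k_a-i-j}}{(k_a-i-j)!}\circ\proj$, the right half of \eqref{eq:Mp_2_dgimot_identity}. Every vector in the image of $\proj$ has the form $\tfrac1{i!}\Theta_{ab}^i v$ with $v\in S^{(k_a,k_b)}\hookrightarrow M^{(k_a,k_b)}$, so the ``moreover'' part of Lemma~\ref{lem:Thetas_relation}, together with $\Theta_{ba}^i\Theta_{ab}^i=(i!)^2\binom{k_a-k_b}{i}\,\mathrm{Id}$, gives $(\ast)=\tfrac1{i!}\,\frac{\Theta_{bc}^{j}}{j!}\frac{\Theta_{ac}^{k_a-j}}{(k_a-j)!}\,\proj_0\,\Theta_{ba}^i$ (using $\Theta_{ba}^i\circ\proj=\proj_0\circ\Theta_{ba}^i$, which holds because $\Theta_{ba}^i$ preserves the isotypic decomposition). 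Since $\proj,\proj_0$ are orthogonal projectors and $\Theta_{xy}^{*}=\Theta_{yx}$ by Lemma~\ref{lem:props_of_basic_ops}(\ref{it:Theta}), the adjoint of $(\ast)$ is exactly the left half of \eqref{eq:Mp_2_dgimot_identity}. Substituting both into \eqref{eq:Mp_2_dgimot_identity} and recognizing the resulting inner sandwich as \eqref{eq:Mp_2_dgimot_identity} with $i=0$ yields
\[\Mp_{j,k_b-j}^{(k_a-i,k_b+i)}\;=\;\tfrac{1}{(i!)^2}\;\Theta_{ab}^i\;\Mp_{j,k_b-j}^{(k_a,k_b)}\;\Theta_{ba}^i ,\]
so it remains to prove the case $i=0$, namely $\Mp_{j,k_b-j}^{(k_a,k_b)}=\binom{k_b}{j}\,\Mp_{0,k_b}^{(k_a,k_b)}$.

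For this I would apply Lemma~\ref{lem:Thetas_relation} once more, now with $i=0$, replacing $\frac{\Theta_{bc}^{j}}{j!}\frac{\Theta_{ac}^{k_a-j}}{(k_a-j)!}\circ\proj_0$ by $(-1)^j\frac{\Theta_{ba}^{j}}{j!}\frac{\Theta_{ac}^{k_a}}{k_a!}\circ\proj_0$ and likewise its adjoint on the other side; the two signs cancel, so \eqref{eq:Mp_2_dgimot_identity} becomes $\Mp_{j,k_b-j}^{(k_a,k_b)}=\proj_0\,\frac{\Theta_{ca}^{k_a}}{k_a!}\big(\frac{\Theta_{ab}^{j}}{j!}\,\A_{k,k_b}\,\frac{\Theta_{ba}^{j}}{j!}\big)\frac{\Theta_{ac}^{k_a}}{k_a!}\,\proj_0$. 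Because $c\notin\{a,b\}$, the operator $\A_{k,k_b}=\sum_m\sh_c^m\del_c^m/(m!)^2$ commutes with both $\Theta_{ab}$ and $\Theta_{ba}$ by Lemma~\ref{lem:props_of_basic_ops}(\ref{it:comms}), so the bracket equals $\A_{k,k_b}\cdot\frac{\Theta_{ab}^{j}}{j!}\frac{\Theta_{ba}^{j}}{j!}$; and a direct count of the replacements (or iteration of the commutator \eqref{eq:comm_theta}, using that $\Theta_{ab}$ kills a module with no $a$'s) shows $\Theta_{ab}^{j}\Theta_{ba}^{j}=(j!)^2\binom{k_b}{j}\,\mathrm{Id}$ on $M^{(0,k_b,k_a)}$, the space into which $\frac{\Theta_{ac}^{k_a}}{k_a!}\proj_0$ maps. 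Substituting this scalar and invoking \eqref{eq:Mp_2_dgimot_identity} with $i=j=0$ for $\Mp_{0,k_b}^{(k_a,k_b)}$ completes the proof; combining with the $i$-reduction gives the stated constant $\big(\binom{k_a-k_b}{i}\,i!\big)^2\binom{k_b}{j}$.

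I expect the only delicate point to be the normalization bookkeeping in the first reduction: Lemma~\ref{lem:Thetas_relation} carries its own factorials and binomial, the symbols $\Theta^{\pm i}$ must be read consistently as (inverses of) the isomorphisms of Specht copies, and one must check that all the constants collect into exactly $\big(\binom{k_a-k_b}{i}\,i!\big)^2\binom{k_b}{j}$. The conceptual core — that the $j$-dependence is merely the scalar $\binom{k_b}{j}$, i.e.\ the proportionality principle remarked on after Theorem~\ref{main4} — then falls out cleanly from the commutation of $\A_{k,k_b}$ with the $\Theta$-operators.
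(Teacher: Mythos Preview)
Your proof is correct. The $i$-reduction via the ``moreover'' clause of Lemma~\ref{lem:Thetas_relation} is exactly the paper's route (the paper's final displayed chain in the proof does the same conjugation by $\Theta_{ab}^i$), and your rewriting $\Theta^{-i}_{ba}=\bigl((i!)^2\tbinom{k_a-k_b}{i}\bigr)^{-1}\Theta_{ab}^i$ between the Specht copies is the right way to read the statement.

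Where you genuinely diverge from the paper is the $i=0$ step. After reaching $\proj_0\,\tfrac{\Theta_{ca}^{k_a}}{k_a!}\,\tfrac{\Theta_{ab}^{j}}{j!}\,\A_{k,k_b}\,\tfrac{\Theta_{ba}^{j}}{j!}\,\tfrac{\Theta_{ac}^{k_a}}{k_a!}\,\proj_0$, the paper does \emph{not} commute: it observes that $\Theta_{ac}^{k_a}/k_a!$ is an isometry and $\Theta_{ba}^{j}/j!$ a dilatation by $\sqrt{\tbinom{k_b}{j}}$, and then compares bilinear forms on a basis of eigenvectors of $\Mp_{0,k_b}^{(k_a,k_b)}$. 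Your argument is more direct: since $\A_{k,k_b}$ is built only from $\sh_c,\del_c$, Lemma~\ref{lem:props_of_basic_ops}(\ref{it:comms}) lets you push it past $\Theta_{ab}^j,\Theta_{ba}^j$, and then $\Theta_{ab}^{j}\Theta_{ba}^{j}=(j!)^2\tbinom{k_b}{j}\,\mathrm{Id}$ on $M^{(0,k_b,k_a)}$ is an elementary count (each word has exactly $k_b$ $\TB$'s and no $\TA$'s, so the composite returns $j!\cdot j!\cdot\tbinom{k_b}{j}$ copies of the word). This avoids the eigenvector detour entirely and gives the scalar $\tbinom{k_b}{j}$ in one line; the paper's approach has the mild advantage of not needing to check the commutation, but yours is the cleaner operator identity.
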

\begin{proof}
We begin with the case $i=0$. 
By Lemma \ref{lem:Thetas_relation}, 
\begin{equation}
\label{eq:Akr_diagonal_identity1}
\begin{aligned}\Mp_{(j,k_b-j)}^{(k_a,k_b)}
\;&=\;\proj\circ \frac{\Theta_{ca}^{k_a-j}}{(k_a-j)!}\frac{\Theta_{cb}^{j}}{j!} \A_{k,k_b}\frac{\Theta_{bc}^j}{j!}\frac{\Theta_{ac}^{k_a-j}}{(k_a-j)!}\circ\proj\\
\;&=\;\proj\circ \frac{\Theta_{ca}^{k_a}}{k_a!}\frac{\Theta_{ab}^{j}}{j!} \A_{k,k_b}\frac{\Theta_{ba}^j}{j!}\frac{\Theta_{ac}^{k_a}}{k_a!}\circ\proj
\end{aligned}\end{equation}

Now, the map $\frac{\Theta_{ac}^{k_a}}{k_a!}:M^{(k_a,k_b,0)}\to M^{(0,k_b,k_a)}$ is a (trivial) isometry.
On the other hand, the map $\frac{\Theta_{ba}^{j}}{j!}:M^{(0,k_b,k_a)}\to M^{(j,k_b-j,k_a)}$ is a dilatation by a factor of $\sqrt{\tbinom{k_b}{j}}$. Indeed, it sends a basis element $e_I,$ where $I\in\binom{[k]}{k_c=k_a,k_b}$ (thought of as a map $[k]\to\{c,b\} $ which gives the value $b$ to exactly $k_b$ elements) to $\hat{e}_I:=\sum_J e_J,$ where $J\in\binom{[k]}{k_a,k_b-j,j}$ runs over all possible ways to assign $a$ to $j$ of the elements for which $I$ assigns $b.$ Clearly for different $I,I'$ the elements $\hat{e}_{I},\hat{e}_{I'}$ are orthogonal and $\langle\hat{e}_I,\hat{e}_I\rangle=\binom{k_b}{j}.$
Thus, for $\sum \lambda_I e_I\in S^{(k_a,k_b)},$\[\frac{\Theta_{ba}^j}{j!}\frac{\Theta_{ac}^{k_a}}{k_a!}\circ\proj(\sum \lambda_I e_I)=\sum \lambda_I \hat{e}_I.\]
By \eqref{eq:Akr_diagonal_identity1}, if $\sum \lambda_I e_I\in S^{(k_a,k_b)}$ is an eigenvector for the eigenvalue $\mu$ of $\Mp^{(k_a,k_b)}_{(0,k_b)},$ then $\sum \lambda_I \hat{e}_I$ is an eigenvector of $\proj\circ\A_{k,k_b}^{M^{(k_a,k_b-j,j)}}\circ\proj$ for the same eigenvalue $\mu$.
Thus, for any eigenvectors $\sum \lambda_I e_I,\sum \lambda'_I e_I\in S^{(k_a,k_b)}$ for the eigenvalues $\mu,\mu'$ of $\Mp^{(k_a,k_b)}_{(0,k_b)}$, we get from \eqref{eq:Akr_diagonal_identity1} that
\begin{align*}
\langle\sum \lambda'_I e_I,\Mp_{(k,k_b-j)}^{(k_a,k_b)} \sum \lambda_I e_I\rangle\,=\,&\langle \tfrac{\Theta_{ba}^j}{j!}\tfrac{\Theta_{ac}^{k_a}}{k_a!}\circ\proj(\sum \lambda'_I e_I),
\A_{k,k_b}^{M^{(k_a,k_b-j,j)}}\tfrac{\Theta_{ba}^j}{j!}\tfrac{\Theta_{ac}^{k_a}}{k_a!}\circ\proj(\sum \lambda_I e_I)\rangle \\
\,=\,&\langle \sum \lambda'_I \hat{e}_I, \proj\circ\A_{k,k_b}^{M^{(k_a,k_b-j,j)}}\circ\proj(\sum \lambda_I \hat{e}_I)\rangle \\
\,=\,& \bar{\mu}\langle \sum \lambda'_I \hat{e}_I,
\sum \lambda_I \hat{e}_I\rangle=\bar{\mu}\binom{k_b}{j}\langle \sum \lambda'_I {e}_I, \sum \lambda_I {e}_I\rangle \\
\,=\,&\langle \sum \lambda'_I {e}_I, \binom{k_b}{j}\Mp^{(k_a,k_b)}_{(0,k_b)} \sum \lambda_I {e}_I\rangle
\end{align*}
Since the eigenvectors of a real symmetric matrix are a basis for the space, this settles the case $i=0$.

The general case now follows from the $i=0$ case by applying Lemma \ref{lem:Thetas_relation} to the expression \eqref{eq:Mp_2_dgimot_identity} in the following way:
\begin{multline*}
\Mp_{j,k_b-j}^{(k_a-i,k_b+i)}=
\proj\circ 
\frac{\Theta_{ca}^{k_a-i-j}}{(k_a-i-j)!}
\frac{\Theta_{cb}^{i+j}}{(i+j)!}
\A_{k,k_b}
\frac{\Theta_{bc}^{i+j}}{(i+j)!}
\frac{\Theta_{ac}^{k_a-i-j}}{(k_a-i-j)!}
\circ\proj
\\=
{i!\Theta^{-i}_{ba}}
\circ\proj\circ
\frac{\Theta^i_{ba}}{i!}
\frac{\Theta_{ca}^{k_a-i-j}}{(k_a-i-j)!}
\frac{\Theta_{cb}^{i+j}}{(i+j)!}
\A_{k,k_b}
\frac{\Theta_{bc}^{i+j}}{(i+j)!}
\frac{\Theta_{ac}^{k_a-i-j}}{(k_a-i-j)!}
\frac{\Theta_{ab}^i}{i!}
\circ\proj\circ
{i! \Theta_{ab}^{-i}}
\\=
\left(\binom{k_a-k_b}{i}i!\right)^2
{\Theta^{-i}_{ba}}
\circ\proj\circ
\frac{\Theta_{ca}^{k_a-j}}{(k_a-j)!}
\frac{\Theta_{cb}^{j}}{j!}
\A_{k,k_b}
\frac{\Theta_{bc}^{j}}{j!}
\frac{\Theta_{ac}^{k_a-j}}{(k_a-j)!}
\circ\proj\circ
{\Theta_{ab}^{-i}}
\\=
\left(\binom{k_a-k_b}{i}i!\right)^2
{\Theta^{-i}_{ba}}
\Mp_{j,k_b-j}^{(k_a,k_b)}
{\Theta_{ab}^{-i}}
=
\left(\binom{k_a-k_b}{i}i!\right)^2
\binom{k_b}{j}
{\Theta^{-i}_{ba}}
\Mp_{0,k_b}^{(k_a,k_b)}
{\Theta_{ab}^{-i}}.
\end{multline*}
\end{proof}
Now, by combining Proposition \ref{prop:Mprime_depends_only_on_diag} with Proposition \ref{prop:diagonalizing_M_prime}, and using the fact that the kernel of $\RTR^{(m_1,m_2)}$ is exactly the kernel of $\del_a^{(m_1,m_2)}|_{S^{(m_1,m_2)}}$ shown in the proof of Lemma \ref{lem:0j}, we get:

\begin{corollary}
\label{cor:eigenspaces and eigenvalues for Mp - uncomfortable params}
The eigenspaces for
$\Mp_{r,k_b-r}^{(k_a-l,k_b+l)}$ are \[\Theta_{ab}^l(\shp_b^j(\sh_a^i(S^{(k_a-i,k_b-j)}\cap\ker\del_a^{(k_a-i,k_b-j)}))), \]
for $0\leq i\leq k_a-k_b,~0\leq j\leq k_b-1.$
The eigenvalue for the $(i,j)-$th eigenspace is
\[
\binom{k_a-k_b}{l}
\binom{k_b}{r}
\cdot
\frac{(2k_a+k_b)!(k_a-k_b+1)!}{i!(2k_a+k_b-i-j)!(k_a-k_b+1+j)!},
\]
and for the eigenvalue $0$, the eigenspace is $\left(S^{(k_a,k_b)}\right)^{\perp}=\bigoplus_{j> k_a} S^{(j,k_a+k_b-j)}$.
The dimension of the $(i,j)$-eigenspace is
\[\binom{k_a+k_b-i-j-2}{k_a-i-1}-\binom{k_a+k_b-i-j-2}{k_a-i}.\]
\end{corollary}
\begin{proof}
To prove the Corollary, we require the following lemma, whose proof will be given in Appendix \ref{pf:eigen_of_THETA}.

\begin{lemma}
\label{lem:eigen_of_THETA} ~
\begin{enumerate}
\item The map $\Theta_{ba}\Theta_{ab}$ when applied to the irreducible copy $S^{(k_a,k_b)}\hookrightarrow M^{(k_a-i,k_b+i)},$ where $k_b\leq k_a$ and $0\leq i\leq k_a-k_b,$ acts as the scalar $(i+1)(k_a-k_b-i).$
\item The map $\Theta_{ab}\Theta_{ba}$ applied to the same $S^{(k_a,k_b)}\hookrightarrow M^{(k_a-i,k_b+i)},$ acts as the scalar $i(k_a-k_b-i+1)$ if $1\leq i$ and acts as $0$ if $i=0$.
\item The map $\Theta_{ba}^r\Theta_{ab}^r$ applied to the same $S^{(k_a,k_b)}\hookrightarrow M^{(k_a-i,k_b+i)},$ acts as the scalar $\frac{(i+r)!(k_a-k_b-i)!}{i!(k_a-k_b-i-r)!}$.
\item The map $\Theta_{ab}^l\Theta_{ba}^l$ applied to the same $S^{(k_a,k_b)}\hookrightarrow M^{(k_a-i,k_b+i)},$ acts as the scalar $\frac{i!(k_a-k_b-i+l)!}{(i-l)!(k_a-k_b-i)!}$ if $l\leq i$ and acts as $0$ if $l>i$.
\end{enumerate}
\end{lemma}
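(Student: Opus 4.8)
The plan is to recognize $\Theta_{\TA\TB}$, $\Theta_{\TB\TA}$ and $\Theta_{\TA\TA}-\Theta_{\TB\TB}$ as an $\mathfrak{sl}_2$-triple acting on the chain of modules $\{M^{(k_\TA-i,k_\TB+i)}\}$, so that the four scalars become elementary weight-vector computations in a single irreducible $\mathfrak{sl}_2$-module. Write $c=k_\TA-k_\TB$ throughout.

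First I would record that $S^{(k_\TA,k_\TB)}$ occurs in $M^{(k_\TA-i,k_\TB+i)}$ with multiplicity exactly one for $0\le i\le c$, and with multiplicity zero for $i$ outside this range; this is immediate from Young's Rule, since the Kostka numbers of two-row shapes are $0$ or $1$. Consequently each of $\Theta_{ba}\Theta_{ab}$, $\Theta_{ab}\Theta_{ba}$, $\Theta_{ba}^{r}\Theta_{ab}^{r}$, $\Theta_{ab}^{l}\Theta_{ba}^{l}$ is an $S_k$-equivariant endomorphism of $M^{(k_\TA-i,k_\TB+i)}$ and hence, by Schur's lemma, acts as a scalar on the unique copy $S^{(k_\TA,k_\TB)}\hookrightarrow M^{(k_\TA-i,k_\TB+i)}$; it remains only to compute that scalar.

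Next I would apply Lemma~\ref{lem:props_of_basic_ops}(\ref{it:comms}) with $\Sigma=\{\TA,\TB\}$: equation~\eqref{eq:comm_theta} yields $[\Theta_{ba},\Theta_{ab}]=\Theta_{aa}-\Theta_{bb}$, $[\Theta_{aa}-\Theta_{bb},\Theta_{ba}]=2\Theta_{ba}$ and $[\Theta_{aa}-\Theta_{bb},\Theta_{ab}]=-2\Theta_{ab}$, which are exactly the $\mathfrak{sl}_2$-relations for $(e,f,h)=(\Theta_{ba},\Theta_{ab},\Theta_{aa}-\Theta_{bb})$. Since $\Theta_{aa}-\Theta_{bb}$ acts on $M^{(k_\TA-i,k_\TB+i)}$ as multiplication by $(k_\TA-i)-(k_\TB+i)=c-2i$, and the three operators commute with the $S_k$-action, they act on the $S^{(k_\TA,k_\TB)}$-isotypic part of $M^{(k_\TA-i,k_\TB+i)}$ through its one-dimensional multiplicity space; choosing a spanning vector $\xi_i$ of that space, the span $\bigoplus_{i=0}^{c}\mathbb{R}\,\xi_i$ is a finite-dimensional $\mathfrak{sl}_2$-module with weights $c,c-2,\dots,-c$, each of multiplicity one, hence the irreducible module $V_c$ of dimension $c+1$, with $\xi_i$ a weight-$(c-2i)$ vector.

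Finally the four items are read off from the standard action on $V_c$. Normalizing so that $f\,\xi_i=\xi_{i+1}$ and $e\,\xi_j=j(c-j+1)\xi_{j-1}$, one gets $ef\,\xi_i=(i+1)(c-i)\xi_i$, which is item~(1); $fe\,\xi_i=i(c-i+1)\xi_i$, equal to $0$ when $i=0$, which is item~(2); and iterating, $e^{r}f^{r}\xi_i=\tfrac{(i+r)!\,(c-i)!}{i!\,(c-i-r)!}\,\xi_i$ and $f^{l}e^{l}\xi_i=\tfrac{i!\,(c-i+l)!}{(i-l)!\,(c-i)!}\,\xi_i$, which are items~(3) and~(4) once one notes that a product running through a nonpositive factor --- equivalently $i+r>c$ in~(3), or $l>i$ in~(4) --- is $0$, consistently with $\Theta_{ab}$, respectively $\Theta_{ba}$, annihilating the corresponding copy. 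A purely combinatorial variant avoiding $\mathfrak{sl}_2$ terminology is to deduce $\mu_i-\nu_i=c-2i$ for the scalars $\mu_i,\nu_i$ of~(1),(2) from $[\Theta_{ab},\Theta_{ba}]=\Theta_{bb}-\Theta_{aa}$, then use Lemma~\ref{lem:props_of_basic_ops}(\ref{it:decomp_of_M_to_Spechts}) to obtain $\nu_0=0$ and $\nu_i=\mu_{i-1}$ (since $\Theta_{ab},\Theta_{ba}$ restrict to isomorphisms between consecutive copies and annihilate only the extreme ones), solve the resulting recursion, and telescope $\Theta_{ba}^{r}\Theta_{ab}^{r}=\Theta_{ba}^{r-1}(\Theta_{ba}\Theta_{ab})\Theta_{ab}^{r-1}$ and $\Theta_{ab}^{l}\Theta_{ba}^{l}=\Theta_{ab}^{l-1}(\Theta_{ab}\Theta_{ba})\Theta_{ba}^{l-1}$ into the products in~(3),(4). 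The only real care needed is bookkeeping of conventions --- which of $\Theta_{ab},\Theta_{ba}$ raises the weight, and the behaviour at the boundary copies $i=0$ and $i=c$ --- rather than any substantive obstacle.
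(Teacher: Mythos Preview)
Your proof is correct. The paper's own argument is essentially your ``combinatorial variant'': it uses Schur's lemma to reduce to computing a scalar, establishes the base case $i=0$ from Lemma~\ref{lem:props_of_basic_ops}(\ref{it:decomp_of_M_to_Spechts}) (namely $\Theta_{ba}|_{S^{(k_a,k_b)}\hookrightarrow M^{(k_a,k_b)}}=0$) together with the commutator $[\Theta_{ba},\Theta_{ab}]=\Theta_{aa}-\Theta_{bb}$, and then proves item~(1) by induction on $i$ via the same commutation relation applied to $\Theta_{ab}^{i}v$; items~(2)--(4) are then obtained exactly by the recursion/telescoping you describe. Your primary route, recognizing $(\Theta_{ba},\Theta_{ab},\Theta_{aa}-\Theta_{bb})$ as an $\mathfrak{sl}_2$-triple and identifying the chain of $S^{(k_a,k_b)}$-copies as the irreducible module $V_{k_a-k_b}$, is a genuinely cleaner packaging: it explains at once why the scalars have the product form $\tfrac{(i+r)!}{i!}\cdot\tfrac{(c-i)!}{(c-i-r)!}$ and why the boundary cases vanish, whereas the paper derives these by unrolling the induction step by step. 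The cost is that you must justify irreducibility of the multiplicity-space module --- your weight-multiplicity argument does this, though it implicitly uses complete reducibility of finite-dimensional $\mathfrak{sl}_2$-modules. Either way, the underlying computation is the same commutator relation~\eqref{eq:comm_theta}.
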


Let $v$ be an eigenvector of $\Mp_{0,k_b}^{(k_a,k_b)}$ in the $(i,j)$ eigenspace 
$$\shp_b^j(\sh_a^i(S^{(k_a-i,k_b-j)}\cap\ker\del_a^{(k_a-i,k_b-j)}))$$ 
of the eigenvalue 
$$\lambda_{ij}=\frac{(2k_a+k_b)!(k_a-k_b+1)!}{i!(2k_a+k_b-i-j)!(k_a-k_b+1+j)!}.$$
By Theorem \ref{prop:diagonalizing_M_prime}, this is the form of any eigenvector outside the kernel. Now we apply the map $\Theta_{ab}^l$ on these eigenspaces and we claim that we get the eigenspaces of $\Mp_{r,k_b-r}^{(k_a-l,k_b+l)}$. Since the nonkernel eigenspaces of $\Mp_{0,k_b}^{(k_a,k_b)}$ span $S^{(k_a,k_b)}\hookrightarrow M^{(k_a,k_b)}$, their images through $\Theta_{ab}^l$ spans $S^{(k_a,k_b)}\hookrightarrow M^{(k_a-l,k_b+l)}$, which is the orthogonal complement of the kernel of $\Mp_{r,k_b-r}^{(k_a-l,k_b+l)}$. Thus it is enough to show that $\Theta_{ab}^l v$ is an eigenvector of $\Mp_{r,k_b-r}^{(k_a-l,k_b+l)}$ with eigenvalue depending only on $(i,j)$ to show that the eigenspaces of $\Mp_{r,k_b-r}^{(k_a-l,k_b+l)}$ are precisely of the form we claim in this corollary.

So we may calculate, using Proposition \ref{prop:Mprime_depends_only_on_diag}, that
\begin{equation*}
\begin{aligned}
\Mp_{r,k_b-r}^{(k_a-l,k_b+l)} \Theta_{ab}^l v &=
\left(\tbinom{k_a-k_b}{l}l!\right)^2
\tbinom{k_b}{r}
\Theta^{-l}_{ba}
\Mp_{0,k_b}^{(k_a,k_b)}
\Theta_{ab}^{-l}
\Theta_{ab}^l v \\&=
\left(\tbinom{k_a-k_b}{l}l!\right)^2
\tbinom{k_b}{r}
\Theta^{-l}_{ba}
\Mp_{0,k_b}^{(k_a,k_b)}
v \\&=
\left(\tbinom{k_a-k_b}{l}l!\right)^2
\tbinom{k_b}{r}
\lambda_{ij}
\Theta^{-l}_{ba}
v \\&=
\left(\tbinom{k_a-k_b}{l}l!\right)^2
\tbinom{k_b}{r}
\lambda_{ij}
\left(\Theta_{ab}^l \Theta^{l}_{ba}\right)^{-1}
\Theta_{ab}^l v
\end{aligned}
\end{equation*}
Since $\Theta_{ab}^l v \in S^{(k_a,k_b)}\hookrightarrow M^{(k_a-l,k_b+l)}$ we apply Lemma \ref{lem:eigen_of_THETA} with $i=l$, and continue
\begin{align*}
\;\;\;\;\;\;\;\;\;\;\;\;\;\;\;\;\;\;\;\;\;\;\;\;\;\;\;\;\; &= 
\left(\tbinom{k_a-k_b}{l}l!\right)^2
\tbinom{k_b}{r}
\lambda_{ij}
\left(\tfrac{l!(k_a-k_b)!}{(k_a-k_b-l)!}\right)^{-1}
\Theta_{ab}^l v \\&=
\tbinom{k_a-k_b}{l}^2
\tbinom{k_b}{r}
\lambda_{ij}
\tbinom{k_a-k_b}{l}^{-1}
\Theta_{ab}^l v \\&=
\tbinom{k_a-k_b}{l}
\tbinom{k_b}{r}
\lambda_{ij}
\Theta_{ab}^l v.
\end{align*}
This shows that $v$ is indeed an eigenvector with an eigenvalue depending only on the eigenspace index $(i,j)$. Putting the explicit value of $\lambda_{ij}$ in the above equation, we obtain the required eigenvalue. The dimensions of the eigenspaces follow from Theorem \ref{prop:diagonalizing_M_prime}, since $\Theta_{ab}$ is injective on the spaces to which it is applied.
\end{proof}

\begin{proof}[Proof of Theorem \ref{thm:main4}]
In Corollary \ref{cor:eigenspaces and eigenvalues for Mp - uncomfortable params} assign $k_b\coloneqq (r_a+r_b)$, $k_a\coloneqq ((k_a-r_a)+(k_b-r_b))$, $l\coloneqq ((k_b-r_b)-r_a)$, $r\coloneqq r_a$, and then replace when possible $k_a+k_b=k,\;r_a+r_b=r'$. 

This replaces the matrix $\Mp_{r,k_b-r}^{(k_a-l,k_b+l)}$ with the more convenient $\Mp_{r_a,r_b}^{(k_a,k_b)}$. Note that the following expressions will be valid because $r'=r_a+r_b\leq k_b \leq k_a$. 
The ranges for $(i,j)$ are now $0\leq i\leq k_a+k_b-2r_a-2r_b=k-2r',~0\leq j\leq r_a+r_b-1=r'-1$, and the $(i,j)$ eigenspace is by Definition \ref{wkrij}
\begin{multline*}
    W_{\kpa r'}^{(i,j)}=\Theta_{ab}^{k_b-r'}(\shp_b^j(\sh_a^i(S^{(k-r'-i,r-j)}\cap\ker\del_a^{(k-r'-i,r'-j)})))\\=
    \Theta_{ab}^{k_b-r_b-r_a}(\shp_b^j(\sh_a^i(S^{(k_a-r_a+k_b-r_b-i,r_a+r_b-j)}\cap\ker\del_a^{(k_a+k_b-r_a-r_b-i,r_a+r_b-j)}))).
\end{multline*}
The eigenvalue for the $(i,j)-$th eigenspace is
\begin{multline*}
\frac{\binom{k_a-2r_a+k_b-2r_b}{k_b-r_b-r_a}
\binom{r_a+r_b}{r_a}
\cdot(2k_a-r_a+2k_b-r_b)!(k_a-2r_a+k_b-2r_b+1)!}{i!(2k_a-r_a+2k_b-r_b-i-j)!(k_a-2r_a+k_b-2r_b+1+j)!} \\[0.25em] \;=\;
\binom{k-2r'}{k_b-r'}
\binom{r'}{r_a}
\cdot
\frac{(2k-r')!(k-2r'+1)!}{i!(2k-r'-i-j)!(k-2r'+1+j)!}
\end{multline*}
The dimension of the $(i,j)$-eigenspace is
\begin{multline*}
\binom{k_a+k_b-i-j-2}{k_a+k_b-r_a-r_b-i-1}-\binom{k_a+k_b-i-j-2}{k_a+k_b-r_a-r_b-i} \\[0.25em] \;=\;
\binom{k-i-j-2}{k-r'-i-1}-\binom{k-i-j-2}{k-r'-i} \;=\; \frac{(k-2r'-i+j+1)\,(k-i-j-2)!}{(k-i-r')!\,(r'-j-1)!}    
\end{multline*}
Finally, the eigenspace of the eigenvalue $0$ of $\Mp_{r,k_b-r}^{(k_a-l,k_b+l)}$ can be found from either Corollary \ref{cor:eigenspaces and eigenvalues for Mp - uncomfortable params} by the assignment of variables, or directly since it is the space orthogonal to the projection $\proj$ to $S^{(k-r,r)}$.

Let us summarize the information about the eigenspaces in the following lemma. Since we no longer use $r$ from Corollary \ref{cor:eigenspaces and eigenvalues for Mp - uncomfortable params}, replace $r'$ with $r$.

\begin{lem}
The eigenspaces for
$\Mp_{r_a,r_b}^{(k_a,k_b)}$ are \[W_{\kpa rij} = \Theta_{ab}^{k_b-r}(\shp_b^j(\sh_a^i(S^{(k-r-i,r-j)}\cap\ker\del_a^{(k-r-i,r-j)}))), \]
for $0\leq i\leq k-2r,~0\leq j\leq r-1$. \\
The eigenvalue for the $(i,j)$ eigenspace is
\[
\binom{k-2r}{k_b-r}
\binom{r}{r_a}
\lambda_{\kpa rij}
\;\;\;\;\;\text{where}\;\;\;\;\;
\lambda_{\kpa rij} :=
\frac{(2k-r)!(k-2r+1)!}{i!(2k-r-i-j)!(k-2r+1+j)!},
\]
and for the eigenvalue $0$, the eigenspace is \[\left(S^{(k-r,r)}\right)^{\perp}=\bigoplus_{j> k-r} S^{(j,k-j)}.\]
The dimension of the $(i,j)$ eigenspace is
\[ \frac{(k-2r-i+j+1)\,(k-i-j-2)!}{(k-i-r)!\,(r-j-1)!}\;.\]
\end{lem}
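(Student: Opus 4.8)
The plan is to derive this Lemma as a direct specialization of Corollary~\ref{cor:eigenspaces and eigenvalues for Mp - uncomfortable params}, which already records the full spectral data of $\Mp_{r,k_b-r}^{(k_a-l,k_b+l)}$ for arbitrary admissible parameters. First I would perform the change of variables indicated just above the statement, namely
$$ k_b \;\longmapsto\; r_a+r_b, \qquad k_a \;\longmapsto\; (k_a-r_a)+(k_b-r_b), \qquad l \;\longmapsto\; (k_b-r_b)-r_a, \qquad r \;\longmapsto\; r_a , $$
under which $\Mp_{r,k_b-r}^{(k_a-l,k_b+l)}$ becomes precisely $\Mp_{r_a,r_b}^{(k_a,k_b)}$, with $k_a,k_b$ now bearing their meaning from the two-letter model. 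One must check that the hypotheses of the corollary survive: the condition $k_b \ge r \ge 0$ there becomes $r_a+r_b \ge r_a \ge 0$, and $k_a \ge k_b$ becomes $k_a+k_b \ge 2(r_a+r_b)$, both valid because $r_a+r_b \le k_b \le k_a$ throughout the applicable ranges of $r=r_a+r_b$, $r_a$, $r_b$.

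Next I would carry out the elementary bookkeeping. Writing $k=k_a+k_b$ and $r=r_a+r_b$, the quantities of the corollary become $k_a \mapsto k-r$, $k_b \mapsto r$, $l \mapsto k_b-r$, so that the index ranges $0 \le i \le k_a-k_b$, $0 \le j \le k_b-1$ turn into $0 \le i \le k-2r$, $0 \le j \le r-1$, in agreement with Definition~\ref{wkrij}; and the eigenspace $\Theta_{ab}^{l}\big(\shp_b^{j}(\sh_a^{i}(S^{(k_a-i,k_b-j)}\cap\ker\del_a^{(k_a-i,k_b-j)}))\big)$ of the corollary becomes $\Theta_{ab}^{k_b-r}\big(\shp_b^{j}(\sh_a^{i}(S^{(k-r-i,r-j)}\cap\ker\del_a^{(k-r-i,r-j)}))\big) = W_{\kpa rij}$, once one recalls from the proof of Lemma~\ref{lem:0j} that $S^{(m_a,m_b)}\cap\ker\del_a^{(m_a,m_b)}$ is exactly the kernel $K^{(m_a,m_b)}$ of $\RTR^{(m_a,m_b)}$ appearing in Definition~\ref{wkrij}. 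The eigenvalue of the corollary likewise becomes $\binom{k-2r}{k_b-r}\binom{r}{r_a}\,\lambda_{\kpa rij}$ after substituting and regrouping factorials, and the zero eigenspace $(S^{(k_a,k_b)})^{\perp}$ becomes $(S^{(k-r,r)})^{\perp}=\bigoplus_{j>k-r}S^{(j,k-j)}$, which one may alternatively read off directly from the fact that $\Mp_{r_a,r_b}^{(k_a,k_b)}=\proj\circ\Np_{\mathbf r}^{\kpa}\circ\proj$ annihilates everything outside the Specht module $S^{(k-r,r)}$ onto which $\proj$ projects.

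Finally, for the dimension I would simplify the difference of binomial coefficients: setting $N=k-i-j-2$ and $a=k-r-i-1$, a one-line computation gives $\binom{N}{a}-\binom{N}{a+1}=\frac{N!\,(2a+1-N)}{(a+1)!\,(N-a)!}=\frac{(k-2r-i+j+1)\,(k-i-j-2)!}{(k-i-r)!\,(r-j-1)!}$, which is the stated formula; this is the same manipulation already used in the paragraph preceding the Lemma. I do not anticipate any genuine obstacle here: the mathematical content is entirely carried by Corollary~\ref{cor:eigenspaces and eigenvalues for Mp - uncomfortable params} (hence by Propositions~\ref{prop:diagonalizing_M_prime} and~\ref{prop:Mprime_depends_only_on_diag} together with Lemma~\ref{lem:eigen_of_THETA}), and the only points requiring care are that the substitution is order-preserving on the index ranges and that the binomial identities are applied with the correct indices.
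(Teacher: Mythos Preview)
Your proposal is correct and follows exactly the paper's approach: the lemma is stated inside the proof of Theorem~\ref{main4} precisely as a summary of the substitution $k_b\mapsto r_a+r_b$, $k_a\mapsto (k_a-r_a)+(k_b-r_b)$, $l\mapsto (k_b-r_b)-r_a$, $r\mapsto r_a$ applied to Corollary~\ref{cor:eigenspaces and eigenvalues for Mp - uncomfortable params}, together with the same binomial simplification for the dimension. The detour through $K^{(m_a,m_b)}$ and $\RTR$ is harmless but unnecessary, since both the corollary and Definition~\ref{wkrij} already phrase the eigenspace via $S^{(\cdot,\cdot)}\cap\ker\del_a$.
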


\medskip
Let $f \in W_{\kpa rij}$ and $f' \in W_{\kpa r'i'j'}$ be as in the statement of the theorem. The case $r \neq r'$ was settled in \eqref{eq:multisample different r case}, so suppose $f,f' \in W_{\kpa r}$. Then by \eqref{eq:Wkr_cov_matrix} their mixed second moment is
\begin{align*}
\Exp_w \left[\tilde\# f(w)\,\tilde\# f'(w)\right] \;&=\; \frac{(k_\TA!)^2(k_\TB!)^2}{(2k-r)!} \sum_{r_a+r_b=r} \frac{\left<\Mp_{(r_a,r_b)}^{(k_a,k_b)}f,f'\right> +O\left(\tfrac{1}{n_a}+\tfrac{1}{n_b}\right)}{n_a^{r_a} n_b^{r_b}} \\ 
\;&=\; \frac{(k_\TA!)^2(k_\TB!)^2}{(2k-r)!} \sum_{r_a+r_b=r} \frac{\left<\binom{k-2r}{k_b-r} \binom{r}{r_a} \lambda_{\kpa rij} f,f'\right> +O\left(\tfrac{1}{n_a}+\tfrac{1}{n_b}\right)}{n_a^{r_a} n_b^{r_b}}  \\
\;&=\; \frac{(k_\TA!)^2(k_\TB!)^2}{(2k-r)!}  \tbinom{k-2r}{k_b-r} \lambda_{\kpa rij}\left(\sum_{r_a+r_b=r} \frac{\binom{r}{r_a}}{n_a^{r_a} n_b^{r_b}} \right) \left(\left<f,f'\right> +O\left(\tfrac{1}{n_a}+\tfrac{1}{n_b}\right)\right) \\
\;&=\; \frac{(k_\TA!)^2(k_\TB!)^2(k-2r)!\, \lambda_{\kpa rij}}{(k_\TA-r)!(k_\TB-r)!(2k-r)!} \left(\tfrac{1}{n_a}+\tfrac{1}{n_b}\right)^r \left(\left<f,f'\right> +O\left(\tfrac{1}{n_a}+\tfrac{1}{n_b}\right)\right)
\end{align*}
Now multiply by $(n_\TA n_\TB / n)^r = (1/n_\TA + 1/n_\TB)^{-r}$ as in the statement of the theorem. The error term becomes $O(1/n_a+1/n_b)$ and goes to zero if $\min(n_\TA,n_\TB) \to \infty$. The remaining constant is as stated in the theorem. This completes the proof of Theorem \ref{main4}.
\end{proof}

\appendix
\section{Properties of the Algebra of Words}
\label{appendixsection}

This appendix contains the proofs of several properties of the algebra of words that are used in the derivation of Theorem~\ref{main4}.

\subsection{Proof of Lemma \ref{lem:eigen_of_THETA}}
\label{pf:eigen_of_THETA}
\begin{proof}[\unskip\nopunct]
Since $\Theta_{xy}$ is always a module map, it takes an irreducible module either to $0$ or to an isomorphic module, and a composition of $\Theta$s which takes a module to itself must act as a scalar.
Using Lemma \ref{lem:props_of_basic_ops}\eqref{it:decomp_of_M_to_Spechts} we see that if $k_a\geq k_b$ then
\[\Theta_{ba}|_{S^{(k_a,k_b)}\hookrightarrow M^{(k_a,k_b)}}=0,\] hence also
\[\Theta_{ab}\Theta_{ba}|_{S^{(k_a,k_b)}\hookrightarrow M^{(k_a,k_b)}}=0.\]
Using \eqref{eq:comm_theta} we see that
\[\Theta_{ba}\Theta_{ab}|_{S^{(k_a,k_b)}\hookrightarrow M^{(k_a,k_b)}}=(k_a-k_b)Id_{S^{(k_a,k_b)}\hookrightarrow M^{(k_a,k_b)}}.\]
This establishes the $i=0$ case of the lemma.
We shall use induction. For $0\leq i\leq k_a-k_b,$ using Lemma~\ref{lem:props_of_basic_ops}\eqref{it:Theta}, we see that
\[\Theta_{ab}^i|_{S^{(k_a,k_b)}\hookrightarrow M^{(k_a,k_b)}},\]maps the Specht module isomorphically on ${S^{(k_a,k_b)}\hookrightarrow M^{(k_a-i,k_b+i)}}.$
Thus, in order to calculate the scalar action of $\Theta_{ba}\Theta_{ab}|_{S^{(k_a,k_b)}\hookrightarrow M^{(k_a-i,k_b+i)}},$ it is enough to calculate it on one non zero element of ${S^{(k_a,k_b)}\hookrightarrow M^{(k_a-i,k_b+i)}},$ or equivalently on
$\Theta_{ab}^iv,$ for a non zero $v\in{S^{(k_a,k_b)}\hookrightarrow M^{(k_a,k_b)}}.$
Suppose we have shown that for a non zero $v\in{S^{(k_a,k_b)}\hookrightarrow M^{(k_a,k_b)}},$
\[\Theta_{ba}\Theta_{ab}(\Theta_{ab}^{i-1} (v))=\left(i(k_a-k_b)-i(i-1)\right)\Theta_{ab}^{i-1}(v),\]
then
\begin{align*}\Theta_{ba}\Theta_{ab}(\Theta_{ab}^{i} (v))&=\Theta_{ab}\Theta_{ba}\Theta_{ab}(\Theta_{ab}^{i-1} (v))+
[\Theta_{ba},\Theta_{ab}](\Theta_{ab}^{i} (v))\\
&=\left(i(k_a-k_b)-i(i-1)\right)\Theta_{ab}^{i}(v)+(k_a-k_b-2i)\Theta_{ab}^{i}(v)\\
&=\left((i+1)(k_a-k_b)-i(i+1)\right)\Theta_{ab}^{i}(v),\end{align*}
where the second passage used the induction hypothesis and the commutation relation \eqref{eq:comm_theta}. The induction follows.

Using $\Theta_{ba}\Theta_{ab}=(i+1)(k_a-k_b-i)\mathrm{Id}$ and the commutation relation \eqref{eq:comm_theta} $$[\Theta_{ab},\Theta_{ba}]=-((k_a-i)-(k_b+i))=-(k_a-k_b-2i)\mathrm{Id},$$ we get $\Theta_{ab}\Theta_{ba}=i(k_a-k_b-i+1)\mathrm{Id}$.

To show that $\Theta_{ba}^r\Theta_{ab}^r = \frac{(i+r)!(k_a-k_b-i)!}{i!(k_a-k_b-i-r)!} \mathrm{Id}$ and $\Theta_{ab}^l\Theta_{ba}^l = \frac{i!(k_a-k_b-i+l)!}{(i-l)!(k_a-k_b-i)!} \mathrm{Id}$ if $l\leq i$ and $0$ if $l>i$, we use induction on $r,l$ respectively. The cases $l=0,\;r=0$ are the two cases previously discussed. Note that $\Theta_{ba}^r\Theta_{ab}^r(v)$ is 
\begin{multline*}
    \Theta_{ba}^{(k_a-i-1,k_b+i+1)} \cdots \Theta_{ba}^{(k_a-i-(r-1),k_b+i+(r-1))} \Theta_{ba}^{(k_a-i-r,k_b+i+r)} \\
\Theta_{ab}^{(k_a-i-(r-1),k_b+i+(r-1))} \Theta_{ab}^{(k_a-i-(r-2),k_b+i+(r-2))} \cdots \Theta_{ab}^{(k_a-i,k_b+i)} (v)
\end{multline*}
and since by the case $r=1$, 
$$\Theta_{ba}^{(k_a-i-r,k_b+i+r)} \Theta_{ab}^{(k_a-i-(r-1),k_b+i+(r-1))} = (i+(r-1)+1)(k_a-k_b-i-(r-1))\mathrm{Id},$$ we get that $\Theta_{ba}^r\Theta_{ab}^r(v)$ is 
\begin{multline*}
    (i+r)(k_a-k_b-i-r+1) (\Theta_{ba}^{(k_a-i-1,k_b+i+1)} \cdots \Theta_{ba}^{(k_a-i-(r-1),k_b+i+(r-1))}\\ \Theta_{ab}^{(k_a-i-(r-2),k_b+i+(r-2))} \cdots \Theta_{ab}^{(k_a-i,k_b+i)}) (v).
\end{multline*} Assuming by induction that the statement is true for $r-1,$ we get $$\Theta_{ba}^r\Theta_{ab}^r(v) = (i+r)(k_a-k_b-i-r+1)\frac{(i+r-1)!(k_a-k_b-i)!}{i!(k_a-k_b-i-r+1)!} \mathrm{Id} = \frac{(i+r)!(k_a-k_b-i)!}{i!(k_a-k_b-i-r)!} \mathrm{Id},$$ giving the desired result. The remaining case follows similarly.
\end{proof}

\subsection{Proof of Observation \ref{obs:delm_shi}}
\label{pf:delm_shi}
\begin{proof}[\unskip\nopunct]
If $m>i$ then by the commutation relations \eqref{eq:comm_del_sh} (and using $\Theta_{aa}^{(k_a,k_b)}=k_a$) $\del_a^m\sh_a^i = O\del_a^{m-i},$ where $O\in \mathbb{Z}[\sh_a\del_a].$ Thus, $\del_a^m\sh_a^i(v)=0.$

Suppose $m\leq i.$ Using the commutation relations \eqref{eq:comm_del_sh} again we obtain
\[\del_a^m\sh_a^i = \sh_a^{i-m}O,~O\in \mathbb{Z}[\sh_a\del_a],\] thus $\del_a^m\sh_a^i(v)=O_0 \sh_a^{i-m}(v)$ where $O_0\in\mathbb{Z}$ is the constant coefficient in $O.$
Denote this coefficient $O_0$ by $c(k_a,k_b,m,i).$ Clearly $c(k_a,k_b,0,i)=1.$ Using the commutation relations in \eqref{eq:comm_del_sh} we obtain a recursion from commuting one $\del_a$ to the rightmost position, obtaining a scalar whenever we pass each $\sh_a$ operator:
\[\del_a\sh_a^i(v)=\sh_a\del_a\sh_a^{i-1}(v)+(k_b+1+2(k_a-1))\sh_a^{i-1}v=\]
\[=\sh^2_a\del_a\sh_a^{i-2}(v)+(k_b+1+2(k_a-1)+k_b+1+2(k_a-2))\sh_a^{i-1}v=\cdots=\]
\[=(k_b+1+2(k_a-1)+k_b+1+2(k_a-2)+\ldots+k_b+1+2(k_a-i))\sh_a^{i-1}v+\sh_a^l \del_a v=\]
\[=(k_b+1+2(k_a-1)+k_b+1+2(k_a-2)+\ldots+k_b+1+2(k_a-i))\sh_a^{i-1}v.\]
Thus, $c(k_a,k_b,m,i)=$
\[c(k_a-1,k_b,m-1,i-1)(k_b+1+2(k_a-1)+k_b+1+2(k_a-2)+\ldots+k_b+1+2(k_a-i))=\]\[=i(2k_a+k_b-i)c(k_a-1,k_b,m-1,i-1).\]
Repeating we obtain $c(k_a,k_b,m,i)=\frac{(2k_a+k_b-i)!}{(2k_a+k_b-m-i)!}\frac{i!}{(i-m)!}\sh_a^{i-m}(v).$
\end{proof}

\subsection{Proof of Lemma \ref{lem:Thetas_relation}}
\label{pf:Thetas_relation}
\begin{proof}[\unskip\nopunct]
The second equality follows immediately from commuting $\frac{\Theta_{ba}^{j}}{j!}, ~\frac{\Theta_{ac}^{k_a}}{k_a!},$ in the middle expression in \eqref{eq:Thetas_relation}, using the commutations relations \eqref{eq:comm_theta} and the fact that $\Theta_{ba}$ restricted to the copy of $S^{(k_a,k_b)}\hookrightarrow M^{(k_a,k_b)}$ is zero.

Regarding the main part, first note that both sides of \eqref{eq:Thetas_relation} map $M^{(k_a,k_b)}$ to $M^{(k_a,j,k_b-j)}.$ The hook length formula \cite[page 50]{fulton2013representation} guarantees that $S^{(k_a,k_b)}$ appears in the latter $S_{k_a+k_b}$-module with multiplicity $1.$ Since $\Theta_{xy}$ are morphisms of $S_{k_a+k_b}$-modules by Lemma~\ref{lem:props_of_basic_ops}(\ref{it:Theta}), it must happen that the expressions on the left-hand and right-hand sides of \eqref{eq:Thetas_relation} differ by a multiplicative scalar, that may also be zero.
Since these two expressions are maps of modules, in order to calculate this scalar, it is enough to apply both sides of \eqref{eq:Thetas_relation} to one element of $S^{(k_a,k_b)}$ and compare the results.

In order to describe a nonzero element $e_T$ we shall use the well-known construction of the Specht module. Consider the Young diagram $D_{k_a,k_b}=(k_a,k_b).$ Recall that a Young tableau $T$ corresponds to a basis element $e_T$ of $M^{(k_a,k_b)},$ in which the entries of the first row correspond to the locations of $\TA$ while those of the second row to the locations of $\TB.$ Consider the Standard Young tableau $T$ in which the first row contains the elements $1,2,\ldots, k_b,2k_b+1,2k_b+2,\ldots, k_a.$ Define \[v_T=\sum_{\pi}(-1)^{sgn(\pi)}e_{\pi\cdot T},\] where the summation is taken over $\pi\in S_{k_a+k_b}$ which preserve the columns of $T.$ $v_T$ can be written more explicitly as
\begin{equation}\label{eq:v_T}
\sum_{R \in R_{T}}(-1)^{s(R)}e_R,
\end{equation}
here $R_{T}$ is the set of Young tableau for $D_{k_a,k_b}$ for which the rightmost elements in the first row are $2k_b+1,\ldots,k_a,$ the entries of the $j-$th column for $m\leq k_b$ are $\{m,k_b+m\},$ and
\begin{multline*}
s(r)=|\{k_b<m\leq2k_b|~m~\text{appears in the first row of } R\}|\\=|\{m\leq k_b|~m~\text{appears in the second row of }R\}|.\end{multline*}

Let $D_{k_a,j,k_b-j}$ we the Young diagram with rows of length $k_a,j,k_b-j.$ The corresponding Young tableaux index the standard basis of $M^{(k_a,j,k_b-j)},$ where the first row indicates the locations of $\TC$, the row of length $j$ the locations of $\TA$ and the remaining row the locations of $\TB.$
We first calculate \begin{equation}\label{eq:Thetas_RHS}\frac{\Theta_{ba}^{j}}{j!}\frac{\Theta_{ac}^{k_a}}{k_a!} v_T=
\sum_{Q\in Q_T}(-1)^{s_r(Q)}e_Q,\end{equation}
where $Q$ is a Young tableau of $D_{k_a,j,k_b-j},~e_Q$ the corresponding basis element, $Q_T$ is the set of Young tableaux for which the indices greater than $2k_b$ are in the first row, and for each $m\leq k_b$ exactly one index in $\{m,k_b+m\}$ appears in the first row.
The sign $s_r(Q)$ is\[|\{m\leq k_b|~m~\text{does not appear in the first row of }Q\}|.\]
The proof is straightforward. The application of $\Theta_{ac}^{k_a}/k_a!$ changes all $\TA$ to $\TC$ and does nothing else. Then $\Theta_{ba}^j/j!$ changes exactly $j$ of the $\TB$'s to $\TA.$ Note that any $Q\in Q_T$ comes from a single $R\in R_T,$ obtained by combining the second and third row of $Q$ into one row. From this, the translation of $s(-)$ to $s_r(-)$ is also immediate.

We now calculate the application of the operator on the LHS of \eqref{eq:Thetas_relation} on $v_T.$
We first show
\begin{equation}\label{eq:Thetas_LHS_1}
v'_T:=\frac{\Theta_{ab}^{i}}{i!} v_T=\sum_{X\in X_T}(-1)^{s(X)}e_X,
\end{equation}
where $X_T$ is the collection of Young tableaux for the Young diagram $D_{k_a-i,k_b+i}$ for which in the $m-$th column for $m\leq k_b$ the entries are $m,k_b+m$ in some order, and $s(X)$ equals again
\begin{multline*}|\{k_b<m\leq2k_b|~m~\text{appears in the first row of } X\}|\\=|\{m\leq k_b|~m~\text{appears in the second row of }X\}|.\end{multline*}
For the proof, define, for a given Young tableau $X$ of shape $(k_a-i,k_b+i)$ the set $P(X)$ of Young tableaux $R\in R_T$ such that
$X$ appears with non zero coefficient in the representation of $\frac{\Theta_{ab}^i}{i!} e_R$ according to the standard basis. 
It is easy to see that the coefficient, if non zero, is $1.$
Thus, the coefficient of $e_X$ in $\frac{\Theta_{ab}^i}{i!} v_T$ is
\begin{equation}\label{eq:coeff_Thetas_1}
\sum_{R\in P(X)}(-1)^{s(R)}.
\end{equation}
When $X\in X_T$ it is straightforward to see that $P(X)$ is the singleton $R$ defined by removing the boxes containing the highest $i$ values in the row of $\TB$ and adding them to the row of $\TA$. This argument also explains the sign $s(X)=s(T).$

For any $X\notin X_T,$ with non empty $P(X),$ there is at least one $m\leq k_b$ such that both $m$ and $m+k_b$ appear in the row of $\TB.$
Let $m_\star$ be the least such $m.$ Define an involution $\iota:P(X)\to P(X)$ as follows. For $R\in P(X)$ exactly one of $m_{\star},k_b+m_{\star}$ appears in the row of $\TB.$ Then $\iota(R)$ is the same tableau, except that if $m_{\star}$ is in the row of $\TB$ in $R$ then it is $k_b+m_{\star}$ in $\iota(R)$, and vice versa.
$\iota$ clearly maps $P(X)$ to itself and is an involution. Moreover, by the definition of the sign $s(R),$ it is easily seen to be sign reversing,
\[s(\iota(R))=1-s(R).\]
Thus, 
\begin{align*}
\sum_{R\in P(X)}(-1)^{s(R)}=&
\sum_{R\in \iota(P(X))}(-1)^{s(R)}=
\sum_{R\in P(X)}(-1)^{s(\iota(R))}\\
=&\sum_{R\in P(X)}(-1)^{1-s(R)}=
-\sum_{R\in P(X)}(-1)^{s(R)}
.
\end{align*}
Therefore, the coefficient of $e_X,$ which is sum of \eqref{eq:coeff_Thetas_1}, vanishes, proving \eqref{eq:Thetas_LHS_1} holds, since

The next step is to calculate 
\begin{equation}\label{eq:Thetas_LHS_2}
\frac{\Theta_{bc}^{i+j}}{(i+j)!}\frac{\Theta_{ac}^{k_a-i-j}}{(k_a-i-j)!} v'_T=
\binom{k_a-k_b}{i}\sum_{Q\in Q_T}(-1)^{s_l(Q)}e_Q,
\end{equation}
where $Q_T$ is as above, but \begin{multline*}
    s_l(Q)=|\{m\leq k_b|~m~\text{appears in the row of $\TB$ in $Q$ or $k_b+m$ in the row of $\TA$ in }Q\}|.\end{multline*}
This proves the lemma, since $s_l(Q)s_r(Q)=(-1)^{j}.$ Indeed, modulo $2,$ the sum of 
\begin{multline*}|\{m\leq k_b|~m~\text{appears in the row of $\TB$ in $Q$} \text{ or $k_b+m$ appears in the row of $\TA$ in }Q\}|\end{multline*}and\[
|\{m\leq k_b|~m~\text{does not appear in the first row of }Q\}|
\]
equals \[|\{m\leq k_b|~\text{either $m$ or $k_b+m$ appears in the row of $\TA$}\}|.\]
This can be seen by defining three sets of $m\le k_b$, so that in the first $m$ and $m+k_b$ are in the rows of $\TB$ and $\TC$ respectively, in the second they're in the rows of $\TC$ and $\TA$ respectively, and in the third they're in the rows of $\TA$ and $\TC$ respectively, and noticing that each of the the above three sets whose cardinality we use is a disjoint union of another pair of the new sets.

Since for any $Q\in Q_T$ the row of $\TA$ is made of $j$ elements, all at most $2k_b,$ and for any $m\leq k_b$ at most one of $\{m,k_b+m\}$ belongs to the row of $\TA$ (because by our definition of $Q_T$, exactly one index in $\{m,k_b + m\}$ appears in the first row, that of $\TC$, so either only the other one  appears in the row of $\TA$, or neither of them appear in this row), the last cardinality is exactly the length of the row of $\TA,$ that is $j.$ 

We are left with proving \eqref{eq:Thetas_LHS_2}. First note that $Q_T$ is made of tableaux such that all elements greater than $2k_b$ are in the row of $\TC,$ while for any $m\leq k_b,$ exactly one of $\left\{m,m+k_b\right\}$ is in that row. In addition, exactly $j$ elements from $\{1,2,\ldots, 2k_b\}$ appear in the row of $\TA.$
The proof is similar to the proof of \eqref{eq:Thetas_LHS_1}, and uses an involution argument again. For $Q$ of shape $(k_a,j,k_b-j)$ let $P'(Q)$ be the collection of elements $X\in X_T$ such that $e_Q$ appears in the representation of $\frac{\Theta_{bc}^{i+j}}{(i+j)!}\frac{\Theta_{ac}^{k_a-i-j}}{(k_a-i-j)!}e_X$ in the standard basis. 
The coefficient of $e_Q$ in $\frac{\Theta_{bc}^{i+j}}{(i+j)!}\frac{\Theta_{ac}^{k_a-i-j}}{(k_a-i-j)!} v'_T$
is $\sum_{X\in P'(Q)}(-1)^{s(X)}$.
Suppose $Q\notin Q_T.$ Then, from the pigeonhole principle, there must be $m\le k_b$ such that both $m$ and $k_b+m$ appear in the first row. Let $m_{\star}$ be the minimal such $m$ (for the given $Q$).
Define an involution $\iota':P'(Q)\to P'(Q)$ as follows. For any $X\in P'(Q)$, either $m_\star$ appears in the row of $\TA$ and $k_b+m_\star$ in that of $\TB,$ or the opposite. In both cases $\iota'(X)$ is defined by moving $m_{\star},k_b+m_{\star}$ to the row in which they did not appear.
Again, $s(X)=1-s(\iota'(X)),$ and again we see that the coefficient of $e_X$ in the left-hand side of \eqref{eq:Thetas_LHS_2} vanishes.

For $Q\in Q_T,$ on the other hand, the set $P'(Q)$ is easily described: For any $m\leq k_b$ which appears in the first row of $Q$, if $m+k_b$ is in the row of $\TA,$ then for any $X\in P'(Q)$ it holds that $m$ appears in the row of $\TB,$ and $m+k_b$ appears in the row of $\TA.$ 
Similarly,
\begin{itemize}
    \item if $Q$ has $m\le k_b$ in the first row and $m+k_b$ in the row of $\TB,$ then $X\in P'(Q)$ has $m$ in the row of $\TA$ and $m+k_b$ in the row of $\TB$,
    \item if $Q$ has $k_b<m+k_b\le 2k_b$ in the first row and $m$ in the row of $\TA,$ then $X\in P'(Q)$ has $m$ in the row of $\TA$ and $m+k_b$ in the row of $\TB$, 
    \item if $Q$ has $k_b<m+k_b\le 2k_b$ in the first row and $m$ in the row of $\TB,$ then $X\in P'(Q)$ has $m$ in the row of $\TB$ and $m+k_b$ in the row of $\TA$.
\end{itemize}
These are the only four possibilities for $Q\in Q_T$.

Since $s(X)$ is defined using only the locations of the first $2k_b$ elements, it is the same for all $X \in P'(Q)$, and by the last argument it is equal to $s_l(Q)$.
The first row in $Q$ contains $k_a-k_b$ entries that are greater than $2k_b$, and these could come from either the $\TA$ row or the $\TB$ row in $X\in P'(Q)$, without any restriction. Therefore, we can get each element in $P'(Q)$ uniquely by assigning the entries up to $2k_b$ as determined, and then choosing some $i$ elements greater than $2k_b$ from the first row of Q to put in the $\TB$ row of $X$.
Thus, $|P'(Q)|=\binom{k_a-k_b}{i},$ since they all have the same sign $s_l,~$\eqref{eq:Thetas_LHS_2} follows.
\end{proof}

\section{Proof of Proposition \ref{relmm}}
\label{pf:relmm}
\begin{proof}[\unskip\nopunct]
The proof of Proposition~\ref{relmm} makes use of the following intermediate quantity, which will be related to both sides of the equation. 

\begin{definition}
\label{mcoefs_multisamp}
Let $\mathbf{r} = (r_\TA,r_\TB,\dots) \leq \kpa = (k_\TA,k_\TB,\dots)$. The $\mathbf{r}$th \emph{merging set} of two words $e, e' \in \tbinom{\Sigma}{\kpa}$ is 
\begin{equation*}
\mergeset_{\mathbf{r}}\left(e,e'\right) \;=\; \left\{\left(I,I'\right)\;\middle|\;\;
\begin{aligned}
& I = \{i_1, i_2, \dots, i_{|\kpa|}\} \;\;\; i_1 < i_2 < \dots \;\;\; \\
& I' = \{i_1', i_2', \dots, i_{|\kpa|}'\} \;\;\; i_1' < i_2' < \dots \;\;\; \\
& I \cup I' = \{1,2,\dots,2|\kpa|-|\mathbf{r}|\} \\
& i_j = i_{j'}' \;\implies\; e_j = e_{j'}' \\
& \forall \TX\in\Sigma,\;\left|\left\{i_j \in I \cap I' : e_j = \TX\right\}\right|={r}_\TX  
\end{aligned}
\right\},
\end{equation*}
and the $\mathbf{r}$th \emph{merging coefficient} of $e, e' \in \tbinom{\Sigma}{\kpa}$ is $\mergecnt_{\mathbf{r}}\left(e,e'\right)=\left|\mergeset_{\mathbf{r}}\left(e,e'\right)\right|.$
We extend $\mergecnt_{\mathbf{r}}$ bilinearly to a form on the entire space~$W_{\kpa}$.
\end{definition}

\begin{ex*}
$\mergeset_{(1,1)}(\TA\TA\TB,\TA\TB\TA) = \left\{(\{1,2,3\},\{1,3,4\}),(\{1,2,3\},\{2,3,4\})\right\}$
\end{ex*}

Note the difference between this merging set $\mergeset_{\mathbf{r}}(e,e')$ and the merging set $\mathcal{M}_{\ell}(e,e')$ from Definition~\ref{mcoefs}. %{\color{blue} ?????} 
The latter is indexed by one number $\ell$, while this one is indexed by a vector of numbers~$\mathbf{r}$. The corresponding cardinalities $\mergecnt_\mathbf{r}(f,f')$ and $m_{\ell}(f,f')$ are related by the following lemma.

\begin{lem}
\label{lem:Akr_thetaconj=mergecnt}
For $f,f'\in W_{\kpa}$ and $\mathbf{r} \leq \kpa$ as in Proposition~\ref{relmm},
\[
\mergecnt_{\mathbf{r}}\left(f,f'\right)
\;=\;
m_{2k-r}\left(\left(\prod_{\TX\in\Sigma}\frac{\Theta_{\TX\Ti}^{k_\TX-r_\TX}}{(k_\TX-r_\TX) !}\right)f,\,\left(\prod_{\TX\in\Sigma}\frac{\Theta_{\TX\Ti}^{k_\TX-r_\TX}}{(k_\TX-r_\TX)!}\right)f'\right)
\]
\end{lem}

\begin{proof}
Without loss of generality, we assume that $f$ and $f'$ are both words in $\tbinom{\Sigma}{\kpa}$, since the general case will follow by the bilinearity of $\mergecnt_{\mathbf{r}}$ and~$m_{\ell}$.
Let $A,A'$ be the sets of words that are obtained by 
\[\left(\prod_{\TX\in\Sigma}\frac{\Theta_{\TX\Ti}^{k_\TX-r_\TX}}{(k_\TX-r_\TX)!}\right)f \;=\; \sum_{w\in A} w,\quad\;\; \left(\prod_{\TX\in\Sigma}\frac{\Theta_{\TX\Ti}^{k_\TX-r_\TX}}{(k_\TX-r_\TX)!}\right)f' \;=\; \sum_{w\in A'} w\] 
Indeed this operator produces a sum of different words since $(\prod_{\TX}\Theta_{\TX\Ti}^{k_\TX-r_\TX}/(k_\TX-r_\TX)!)=\Theta(T)$, the replacement operator with respect to a table $T$ of shape~$\kpa$, where $r_{\TX}$ entries are $\TX$s in the row $t_{\TX}$ and the rest are $\Ti$s.

Let $I_{\TX} := \left\{j:f_j=\TX\right\}$. By the definition of $\Theta(T)$, for each $w\in A$ and each $\TX\in\Sigma$ there exist disjoint $I_{\TX\Ti} \cup I_{\TX\TX} = I_{\TX}$ such that if $j\in I_{\TX\Ti}$ then $w_j=\Ti$ and if $j\in I_{\TX\TX}$ then $w_j=\TX$, and such that $|I_{\TX\Ti}|=k_\TX-r_\TX$ and $|I_{\TX\TX}|=r_\TX$. Similarly, we also have disjoint $I_{\TX\Ti}' \cup I_{\TX\TX}' = I'_{\TX} := \{j:f'_j=\TX\}$ with the same properties for any $w'\in A'$.

By bilinearity, it suffices to show that $\mergecnt_{\mathbf{r}}\left(f,f'\right) = \sum_{w\in A}\sum_{w'\in A'} m_{2k-r}\left(w,w'\right)$. Therefore, by the definitions of the merging coefficients, it is enough to construct a bijection between the set $\mergeset_{\mathbf{r}}\left(f,f'\right)$ and
$\left\{(w,w',I_w,I'_{w'})\,\middle|\,w\in A,\;w'\in A',\;(I_w,I'_w)\in\mergeset_{2k-r}\left(w,w'\right)\right\}$.

Since for such sets $|I_w \setminus I'_w|=|I'_w \setminus I_w|=k-r$ and the number of $\Ti$s in each of $w$ and $w'$ is also $k-r$, it is necessary that $w_j = w_{j'}' \neq\Ti$ for $i_j=i'_{j'}\in I_w\cap I'_w$. Hence, the merging set of $w$ and $w'$ can be written as
\begin{equation*}
\mergeset_{2k-r}\left(w,w'\right) \;=\; \left\{\left(I_w,I'_w\right)\;\middle|\;\;
{%\tiny
\begin{aligned} 
& I_w = \{i_1, i_2, \dots, i_{k}\} \;\;\; i_1 < i_2 < \dots \;\;\; \\
& I'_w = \{i_1', i_2', \dots, i_{k}'\} \;\;\; i_1' < i_2' < \dots \;\;\; \\
& I_w \cup I'_w = \{1,2,\dots,2k-r\} \\
& i_j = i_{j'}' \;\implies\; w_j = w_{j'}' \neq\Ti \\
& i_j \in I_w \setminus I'_w \;\iff\; w_j = \Ti \\
& i_{j'}' \in I'_w \setminus I_w \;\iff\; w_{j'}' = \Ti  
\end{aligned}
}
\right\},
\end{equation*}

For one direction of the bijection, given $(w,w',I_w,I'_{w'})$ we define $I=I_w,\;I'=I'_{w'}$, and it follows that $(I,I')\in\mergeset_{\mathbf{r}}\left(f,f'\right)$. For the inverse direction, given such $(I,I')$, define $I_w=I,\;I'_{w'}=I'$, and define $w$ so that for any $j$ if $i_j\in I\setminus I'$ then $w_j=\Ti$ and otherwise $w_j=f_j$, and similarly $w'$ by $I'\setminus I$ and~$f'$. By the conditions of Definition \ref{mcoefs_multisamp}, indeed $w \in A$ and $w'\in A'$ can be obtained via $\Theta(T)$ from $f$ and~$f'$. By the definition of $w$ and $w'$ it is clear that $(I_w,I'_{w'}) = (I,I') \in \mergeset_{2k-r}\left(w,w'\right)$.
\end{proof}

The next step in the proof of Proposition~\ref{relmm} is to connect the merging coefficient of Definition \ref{mcoefs_multisamp} to the expectation in Definition~\ref{mrff}.
\begin{lem}
\label{lem:mergecnt=mergecntprob}
For $f,f'\in W_{\kpa}$ and $\mathbf{r} \leq \kpa$ as in Proposition~\ref{relmm},
$$\mergecnt_{\mathbf{r}}(f,f') \;=\; \frac{(2k-r)!}{\prod_{\TX\in\Sigma}(r_\TX!(k_\TX-r_\TX)!^2)}\, m_{\mathbf{r}}(f,f')$$
\end{lem}
\begin{proof}
Due to the bilinearity of $m_{\mathbf{r}}$ and $\mergecnt_{\mathbf{r}}$, we can again assume without loss of generality that $f,f'$ are words in $W_{\kpa}$. 

In this case, Definition \ref{mrff} means that $m_{\mathbf{r}}(f,f')$ is the probability that $X_{\kpa-\mathbf{r}}$, $X_{\kpa-\mathbf{r}}'$ and~$X_{\mathbf{r}}$, the $2k-r$ independent random variables uniformly distributed in~$[0,1]$, are such that $\word(X_{\kpa-\mathbf{r}}\cup X_{\mathbf{r}})=f$ and $\word(X_{\kpa-\mathbf{r}}'\cup X_{\mathbf{r}})=f'$. This condition depends only on the relative order of these $2k-r$ uniform random variables. Since these are iid, all orders have the same probability,~$1/(2k-r)!$. Therefore, in order to calculate the probability $m_{\mathbf{r}}(f,f')$ we count the possible orders that will satisfy the aforementioned condition.

First, note the following degrees of freedom. Permuting the values of $(X_{\TA 1},\dots,X_{\TA r_\TA})$ does not affect the resulting $\mathrm{word}(X_{\kpa-\mathbf{r}}\cup X_{\mathbf{r}})$ and $\mathrm{word}(X_{\kpa-\mathbf{r}}'\cup X_{\mathbf{r}})$. Neither does a permutation of the values $(X_{\TX 1},\dots,X_{\TX r_\TX})$ for any $\TX\in\Sigma$, or of $(X_{\TX (r_\TX+1)},\dots,X_{\TX k_\TX})$ or $(X_{\TX (r_{\TX}+1)}',\dots,X_{\TX k_\TX}')$. The number of such permutations is $\prod_{\TX} r_\TX!(k_\TX-r_\TX)!(k_\TX-r_\TX)!$. Taking into account this factor, we no longer distinguish between the indices within each of the sets $\{X_{\TX 1},\dots,X_{\TX r_\TX}\}$, $\{X_{\TX r_{(\TX+1)}},\dots,X_{\TX k_\TX}\}$ and $\{X_{\TX r_{(\TX+1)}}',\dots,X_{\TX k_\TX}'\}$ for~$\TX\in\Sigma$. It remains to count the ways these sets partition the set of relative positions $\{1,\dots,2k-r\}$ such that the induced words are $f$ and~$f'$. 

Denote by $I \subseteq \{1,\dots,2k-r\}$ the positions of $\bigcup_{\TX\in\Sigma}\{X_{\TX 1},\dots,X_{\TX k_\TX}\}$, and similarly by~$I'$ those of $\bigcup_{\TX\in\Sigma}(\{X_{\TX 1},\dots,X_{\TX r_\TX}\}\cup\{X_{\TX (r_{\TX}+1)}',\dots,X_{\TX k_\TX}'\})$. Obviously $I\cup I'=\{1,\dots,2k-r\}$. Let $I = \{i_1, i_2, \dots, i_k\}$ where $i_1 < i_2 < \dots$ and similarly for~$I'$. 
Construct two words $e,e'\in \tbinom{\Sigma}{\kpa}$ such that $e_j=\TX$ if and only if the position $i_j$ belongs to a random variable with a label~$\TX$, and $e_j'$ is determined similarly by the element in the position~$i_j'$.

Then the event that $\mathrm{word}\left(X_{\kpa-\mathbf{r}}\cup X_{\mathbf{r}}\right)=f$ and $\mathrm{word}(X_{\kpa-\mathbf{r}}'\cup X_{\mathbf{r}})=f'$ is equivalent to having $e=f$ and $e'=f'$. The conditions in the definition of $\mergeset_{\mathbf{r}}(f,f')$ easily follow, and we have a one-to-one correspondence between pairs $(I,I')\in\mergeset_{\mathbf{r}}(f,f')$ and the ways to distribute the positions to the above sets of random variables.

In conclusion, $m_{\mathbf{r}}(f,f') =  \left(\prod_{\TX} r_\TX!(k_\TX-r_\TX)!(k_\TX-r_\TX)!\right)/(2k-r)! \cdot |\mergeset_{\mathbf{r}}(f,f')|$, as claimed in the lemma.
\end{proof}

Proposition~\ref{relmm} now follows from Lemma~\ref{lem:Akr_thetaconj=mergecnt} and Lemma~\ref{lem:mergecnt=mergecntprob}.
\end{proof}

\section{Intransitivity of Dice}
\label{dice3}

Let $w$ be a random word distributed by $\mathcal{W}'(n,n,n)$, and $g = \TC\TB\TA + \TB\TA\TC + \TA\TC\TB - \TA\TB\TC - \TB\TC\TA - \TC\TA\TB$, as in Example~\ref{dice}. Here $g \in W_{\kpa}$ for $\kpa = (1,1,1)$. The Gepner statistic $\#g/n^3$ can be written as a generalized $U$-statistic as in~\S\ref{proof3}, letting the $3n$ values on the dice be independent $X_i \sim Y_j \sim Z_k \sim U(0,1)$ for all $i,j,k \in \{1,\dots,n\}$. Then $\#g$ is given by summation over the following kernel function, which tells the cyclic ordering of its~$a,b,c \in [0,1]$:
$$ g_{\text{word}}(a;b;c) \;=\; \begin{cases}
-1 & a<b<c \;\;\text{or}\;\; b<c<a \;\;\text{or}\;\; c<a<b \\
+1 & a>b>c \;\;\text{or}\;\; b>c>a \;\;\text{or}\;\; c>a>b
\end{cases} $$ 

An orthonormal basis with respect to $U(0,1)$, which generally works well for this type of statistics, is the usual Fourier basis. Up to the normalizing constant~$\sqrt{2}$, it is given by:
$$ 1,\; \sin 2\pi x,\; \cos 2\pi x,\; \sin 4\pi x,\; \cos 4\pi x,\; \sin 6\pi x,\; \cos 6\pi x,\; \dots $$

Using the notation of Definitions~\ref{mrff}-\ref{fr}, we rewrite the generalized U-statistic~$g_{\word}$, actually in the form of its Hoeffding decomposition. Clearly $g^{000} = \Exp[g_{\word}]=0$. Also the conditional expectation $g^{100}(a) = \Exp_{B,C}[g_{\word}(a;B;C)] \equiv 0$ by its antisymmetry under the transposition of $b$ and $c$. Similarly $g^{010}$ and $g^{001}$ vanish. This is a special case of Theorem~\ref{main3}. In order two, we obtain  
$$ g^{110}(a,b) \;=\; \Exp_C[g_{\word}(a;b;C)] \;=\; 1-2((a-b) \bmod 1) $$
and similarly for $g^{011}$ and $g^{101}$, with $(b-c)$ and $(c-a)$ respectively. Observe that we can write the function
$$ g_{\word}(a;b;c) \;=\; g^{110}(a;b) + g^{101}(a;c) + g^{110}(b;c) $$
without third order terms in this case. This can be verified by considering all possible orderings of the inputs, such as $a<b<c$, etc.

We now use the Fourier series of the sawtooth function, for each of the three functions in the sum above.
$$ 1 - 2(x \bmod 1) \;=\; \tfrac{2}{\pi} \sin 2 \pi x + \tfrac{2}{2\pi} \sin 4 \pi x + \tfrac{2}{3\pi} \sin 6 \pi x + \dots $$
We also use $\sin(\theta-\theta') = \sin \theta \cos \theta' - \sin \theta' \cos \theta$, to translate this to the orthonormal bases. Denoting for short $\phi_j(x) = \sqrt{2}\sin(2 \pi j x)$ and $\psi_j(x) = \sqrt{2}\cos(2 \pi j x)$, we obtain
\begin{align*}
g_{\word}(a;b;c)
\;&=\; \sum_{j=1}^{\infty} \tfrac{\phi_j(b)\psi_j(a)-\phi_j(a)\psi_j(b) + \phi_j(c)\psi_j(b)-\phi_j(b)\psi_j(c) + \phi_j(a)\psi_j(c)-\phi_j(c)\psi_j(a)}{\pi j} 
\end{align*}
We now apply a weak limit theorem for this U-statistic. The case of a multisample degenerate U-statistic was treated by Eagleson in~\cite{eagleson1979orthogonal}, for example. Theorem~3 in that paper may be slightly adapted by adding extra terms to include the form of the above kernel. Then the asymptotic distribution is given by
$$ \frac{1}{n^2}\sum_{i,k,l}g_{\word}(X_i,Y_k,Z_l) \;\;\xrightarrow[\;\;n \to \infty\;\;]{d}\;\; \sum_{j=1}^{\infty} \frac{\beta_j\alpha_j'-\alpha_j\beta_j'+\gamma_j\beta_j'-\beta_j\gamma_j'+\alpha_j\gamma_j'-\gamma_j\alpha_j'}{\pi j} $$
where $\alpha_j, \alpha_j', \beta_j, \beta_j', \gamma_j, \gamma_j'$ for $j \in \mathbb{N}$ are sequences of independent standard normal random variables. We rewrite it and change variables:
$$ \;=\; \sum_{j=1}^{\infty} \frac{(\beta_j-\alpha_j)(\alpha_j'+\beta_j'-2\gamma_j')-(\alpha_j+\beta_j-2\gamma_j)(\beta_j'-\alpha_j')}{2 \pi j} \\
\;=\; \frac{\sqrt{3}}{\pi} \;\sum_{j=1}^{\infty} \frac{\xi_j\eta_j'-\eta_j\xi_j'}{j}
$$
where $\xi_j,\xi_j',\eta_j,\eta_j'$ are all iid standard normal. All $\xi_j$ and $\eta_j$ are obtained by a three-dimensional rotation of $\alpha_j, \beta_j, \gamma_j$. This is the sum that gives the distribution of the closed L\'evy area, the signed area enclosed by a two-dimensional Brownian bridge~\cite{levy1951wiener}. Therefore, similar to the closed L\'evy area, $\#g/n^2$ asymptotically follows the logistic distribution, with the probability density function
$$ f(x) \;=\; \frac{\pi}{4\sqrt{3}}\; \text{sech}^2\left(\frac{\pi x}{2\sqrt{3}}\right) $$
This limit law for $\#g/n^2$ was discovered by Zeilberger~\cite{zeilberger2016doron} based on the leading terms of the first twelve moments.
 
\bibliographystyle{alpha}
\bibliography{main}

\end{document}